%%%%%%%%%%%%%%%%%%%%%%%%%%%%%%%%%%%%%%%%%%%%%%%%%%%%%%%%%%%%%%%%%%%%%%%%
%%%%%%%%%%%%%%%%%%%%%%%%%%%%%%%%%%%%%%%%%%%%%%%%%%%%%%%%%%%%%%%%%%%%%%%%
%%% This is "A `Darboux Theorem' for shifted symplectic structures   %%%
%%%          on derived Artin stacks, with applications "            %%%
%%% by Oren Ben-Bassat, Christopher Brav, Vittoria Bussi             %%%
%%% and Dominic Joyce                                                %%%
%%% Geometry & Topology 19 (2015), 1287-1359.                        %%%
%%% arXiv:1312.0090, version 2, June 2015.                           %%%
%%%%%%%%%%%%%%%%%%%%%%%%%%%%%%%%%%%%%%%%%%%%%%%%%%%%%%%%%%%%%%%%%%%%%%%%
%%%%%%%%%%%%%%%%%%%%%%%%%%%%%%%%%%%%%%%%%%%%%%%%%%%%%%%%%%%%%%%%%%%%%%%%
\documentclass{article}
\usepackage{amsmath}
\usepackage{amssymb}
\usepackage{amsthm}
\usepackage{graphicx}
\usepackage{cite}
\usepackage[arrow,curve,matrix,arc,2cell]{xy}
\UseAllTwocells
\DeclareFontFamily{U}{rsfs}{} \DeclareFontShape{U}{rsfs}{n}{it}{<->
rsfs10}{} \DeclareSymbolFont{mscr}{U}{rsfs}{n}{it}
\DeclareSymbolFontAlphabet{\scr}{mscr}
\def\mathscr{\scr}
\begin{document}
%%%%%%%%%%%%%%%%%%%%%%%%%%%%%%%%%%%%%%%%%%%%%%%%%%%%%%%%%%%%%%%%%%%%%%%%
%%%%%%%%%%%%%%%%%%%%%%%%%%     Macros      %%%%%%%%%%%%%%%%%%%%%%%%%%%%%
%%%%%%%%%%%%%%%%%%%%%%%%%%%%%%%%%%%%%%%%%%%%%%%%%%%%%%%%%%%%%%%%%%%%%%%%
\def\e#1\e{\begin{equation}#1\end{equation}}
\def\ea#1\ea{\begin{align}#1\end{align}}
\def\eq#1{{\rm(\ref{#1})}}
\theoremstyle{plain}% default
\newtheorem{thm}{Theorem}[section]
\newtheorem{prop}[thm]{Proposition}
\newtheorem{lem}[thm]{Lemma}
\newtheorem{cor}[thm]{Corollary}
\newtheorem{conj}[thm]{Conjecture}
\newtheorem{quest}[thm]{Question}
\theoremstyle{definition}
\newtheorem{dfn}[thm]{Definition}
\newtheorem{ex}[thm]{Example}
\newtheorem{rem}[thm]{Remark}
\numberwithin{equation}{section}
\def\dim{\mathop{\rm dim}\nolimits}
\def\supp{\mathop{\rm supp}\nolimits}
\def\cosupp{\mathop{\rm cosupp}\nolimits}
\def\id{\mathop{\rm id}\nolimits}
\def\Hess{\mathop{\rm Hess}\nolimits}
\def\Sym{\mathop{\rm Sym}\nolimits}
\def\Crit{\mathop{\rm Crit}}
\def\an{{\rm an}}
\def\stk{{\rm st}}
\def\stm{{{\rm st},\hat\mu}}
\def\nai{{\text{\rm na\"\i}}}
\def\Aut{\mathop{\rm Aut}}
\def\Ext{\mathop{\rm Ext}\nolimits}
\def\coh{\mathop{\rm coh}\nolimits}
\def\Hom{\mathop{\rm Hom}\nolimits}
\def\Perv{\mathop{\rm Perv}\nolimits}
\def\Sch{\mathop{\rm Sch}\nolimits}
\def\Iso{{\mathop{\rm Iso}\nolimits}}
\def\fIso{{\mathop{\mathfrak{Iso}}\nolimits}}
\def\St{\mathop{\bf St}\nolimits}
\def\Art{\mathop{\rm Art}\nolimits}
\def\dSt{\mathop{\bf dSt}\nolimits}
\def\dArt{\mathop{\bf dArt}\nolimits}
\def\dSch{\mathop{\bf dSch}\nolimits}
\def\Sh{{\mathop{\rm Sh}\nolimits}}
\def\MF{\mathop{\rm MF}\nolimits}
\def\red{{\rm red}}
\def\DM{\mathop{\rm DM}\nolimits}
\def\GL{\mathop{\rm GL}\nolimits}
\def\Spec{\mathop{\rm Spec}\nolimits}
\def\bSpec{\mathop{\bs{\rm Spec}}\nolimits}
\def\rank{\mathop{\rm rank}\nolimits}
\def\SO{\mathop{\rm SO}\nolimits}
\def\Pin{\mathop{\rm Pin}\nolimits}
\def\Spin{\mathop{\rm Spin}\nolimits}
\def\qcoh{{\mathop{\rm qcoh}}}
\def\Kalg{{\text{\rm $\K$-alg}}}
\def\Kvect{{\text{\rm $\K$-vect}}}
\def\bs{\boldsymbol}
\def\ge{\geqslant}
\def\le{\leqslant\nobreak}
\def\bA{{\mathbin{\mathbb A}}}
\def\bD{{\mathbin{\mathbb D}}}
\def\bH{{\mathbin{\mathbb H}}}
\def\bL{{\mathbin{\mathbb L}}}
\def\bT{{\mathbin{\mathbb T}}}
\def\bU{{\bs U}}
\def\bV{{\bs V}}
\def\bW{{\bs W}}
\def\bX{{\bs X}}
\def\bY{{\bs Y}}
\def\bZ{{\bs Z}}
\def\cA{{\mathbin{\cal A}}}
\def\cB{{\mathbin{\cal B}}}
\def\cC{{\mathbin{\cal C}}}
\def\cD{{\mathbin{\scr D}}}
\def\cE{{\mathbin{\cal E}}}
\def\cH{{\mathbin{\cal H}}}
\def\cL{{\mathbin{\cal L}}}
\def\cM{{\mathbin{\cal M}}}
\def\oM{{\mathbin{\smash{\,\,\overline{\!\!\mathcal M\!}\,}}}}
\def\bcM{{\mathbin{\bs{\cal M}}}}
\def\O{{\mathbin{\cal O}}}
\def\cP{{\mathbin{\cal P}}}
\def\cQ{{\mathbin{\cal Q}}}
\def\PV{{\mathbin{\cal{PV}}}}
\def\bG{{\mathbin{\mathbb G}}}
\def\C{{\mathbin{\mathbb C}}}
\def\K{{\mathbin{\mathbb K}}}
\def\N{{\mathbin{\mathbb N}}}
\def\Q{{\mathbin{\mathbb Q}}}
\def\Z{{\mathbin{\mathbb Z}}}
\def\al{\alpha}
\def\be{\beta}
\def\ga{\gamma}
\def\de{\delta}
\def\io{\iota}
\def\ep{\epsilon}
\def\la{\lambda}
\def\ka{\kappa}
\def\th{\theta}
\def\ze{\zeta}
\def\up{\upsilon}
\def\vp{\varphi}
\def\si{\sigma}
\def\om{\omega}
\def\De{\Delta}
\def\La{\Lambda}
\def\Th{\Theta}
\def\Om{\Omega}
\def\Ga{\Gamma}
\def\Si{\Sigma}
\def\Tau{{\rm T}}
\def\Up{\Upsilon}
\def\pd{\partial}
\def\ts{\textstyle}
\def\st{\scriptstyle}
\def\sst{\scriptscriptstyle}
\def\sm{\setminus}
\def\bu{\bullet}
\def\op{\oplus}
\def\ot{\otimes}
\def\bigot{\bigotimes}
\def\boxt{\boxtimes}
\def\ov{\overline}
\def\ul{\underline}
\def\bigop{\bigoplus}
\def\iy{\infty}
\def\es{\emptyset}
\def\ab{\allowbreak}
\def\ra{\rightarrow}
\def\rra{\rightrightarrows}
\def\Ra{\Rightarrow}
\def\longra{\longrightarrow}
\def\Longra{\Longrightarrow}
\def\hookra{\hookrightarrow}
\def\bs{\boldsymbol}
\def\t{\times}
\def\ci{\circ}
\def\ti{\tilde}
\def\d{{\rm d}}
\def\od{\odot}
\def\bd{\boxdot}
\def\md#1{\vert #1 \vert}
\def\ha{{\ts\frac{1}{2}}}
\def\cS{{\mathbin{\cal S}}}
\def\cSz{{\mathbin{\cal S}\kern -0.1em}^{\kern .1em 0}}
\def\dd{{\rm d}_{dR}}
\def\cR{{\mathbin{\cal R}}}
\def\cN{{\mathbin{\cal N}}}
\def\Lm{{\mathop{\text{\rm Lis-me}}\nolimits}}
\def\Le{{\mathop{\text{\rm Lis-\'et}}\nolimits}}
\def\Lian{{\mathop{\text{\rm Lis-an}}\nolimits}}
\def\otL{{\kern .1em\mathop{\otimes}\limits^{\sst L}\kern .1em}}
\def\boxtL{{\kern .2em\mathop{\boxtimes}\limits^{\sst L}\kern .2em}}
\def\Lmm{{\mathop{\text{\rm LM-\'em}}\nolimits}}
\def\Mod{\mathop{\rm Mod}\nolimits}
%%%%%%%%%%%%%%%%%%%%%%%%%%%%%%%%%%%%%%%%%%%%%%%%%%%%%%%%%%%%%%%%%%%%%%%%
%%%%%%%%%%%%%%%%%%%%%% Text of paper %%%%%%%%%%%%%%%%%%%%%%%%%%%%%%%%%%%
%%%%%%%%%%%%%%%%%%%%%%%%%%%%%%%%%%%%%%%%%%%%%%%%%%%%%%%%%%%%%%%%%%%%%%%%
\title{A `Darboux Theorem' for shifted \\ symplectic structures on
derived \\ Artin stacks, with applications}
\author{Oren Ben-Bassat, Christopher Brav, Vittoria Bussi \\
and Dominic Joyce}
\date{}
\maketitle

\begin{abstract} This is the fifth in a series of papers
\cite{Joyc2}, \cite{BBJ}, \cite{BBDJS}, \cite{BJM} on the
`$k$-shifted symplectic derived algebraic geometry' of Pantev,
To\"en, Vaqui\'e and Vezzosi \cite{PTVV}. This paper extends the
results of \cite{BBJ}, \cite{BBDJS}, \cite{BJM} from (derived)
schemes to (derived) Artin stacks. We prove four main results:
\smallskip

\noindent{\bf(a)} If $(\bX,\om_\bX)$ is a $k$-shifted symplectic
derived Artin stack for $k<0$ in the sense of \cite{PTVV}, then near
each $x\in\bX$ we can find a `minimal' smooth atlas
$\bs\vp:\bU\ra\bX$ with $\bU$ an affine derived scheme, such that
$(\bU,\bs\vp^*(\om_\bX))$ may be written explicitly in coordinates
in a standard `Darboux form'.
\smallskip

\noindent{\bf(b)} If $(\bX,\om_\bX)$ is a $-1$-shifted symplectic
derived Artin stack and $X=t_0(\bX)$ the corresponding classical
Artin stack, then $X$ extends naturally to a `d-critical stack'
$(X,s)$ in the sense of \cite{Joyc2}.
\smallskip

\noindent{\bf(c)} If $(X,s)$ is an oriented d-critical stack, we can
define a natural perverse sheaf $\check P^\bu_{X,s}$ on $X$, such that
whenever $T$ is a scheme and $t:T\ra X$ is smooth of relative dimension $n$,
then $T$ is locally modelled on a critical locus
$\Crit(f:U\ra\bA^1)$ for $U$ a smooth scheme, and
$t^*(\check P^\bu_{X,s})[n]$ is locally modelled on the perverse sheaf of
vanishing cycles $\PV_{U,f}^\bu$ of $f$.
\smallskip

\noindent{\bf(d)} If $(X,s)$ is a finite type oriented d-critical
stack, we can define a natural motive $MF_{X,s}$ in a certain ring
of motives $\oM^\stm_X$ on $X,$ such that whenever $T$ is a finite
type scheme and $t:T\ra X$ is smooth of dimension $n$, then $T$ is
locally modelled on a critical locus $\Crit(f:U\ra\bA^1)$ for $U$ a
smooth scheme, and $\bL^{-n/2}\od t^*(MF_{X,s})$ is locally modelled
on the motivic vanishing cycle $MF^{{\rm mot},\phi}_{U,f}$ of $f$ in
$\oM^\stm_T$.
\smallskip

Our results will have applications to categorified and motivic
extensions of Donaldson--Thomas theory of Calabi--Yau 3-folds.
\end{abstract}

\setcounter{tocdepth}{2}
\tableofcontents

\baselineskip 11.5pt plus .5pt

\section{Introduction}
\label{sa1}

This is the fifth in a series of papers \cite{Joyc2}, \cite{BBJ},
\cite{BBDJS}, \cite{BJM} on the subject of the `$k$-shifted
symplectic derived algebraic geometry' of Pantev, To\"en, Vaqui\'e
and Vezzosi \cite{PTVV}, and its applications to generalizations of
Donaldson--Thomas theory of Calabi--Yau 3-folds, and to complex and
algebraic symplectic geometry.

Pantev et al.\ \cite{PTVV} defined notions of $k$-shifted symplectic
derived schemes and stacks $(\bX,\om)$, a new geometric structure on
derived schemes and derived stacks $\bX$ in the sense of To\"en
and Vezzosi \cite{Toen,ToVe}. They proved that any derived
moduli stack $\bs\cM$ of (complexes of) coherent sheaves on a
Calabi--Yau $m$-fold $Y$ carries a $(2-m)$-shifted symplectic
structure.

We are particularly interested in Calabi--Yau 3-folds, in which case
$k=-1$. Pantev et al.\ \cite{PTVV} also proved that the derived
critical locus $\bs\Crit(f:U\ra\bA^1)$ of a regular function $f$ on
a smooth $\K$-scheme $U$ is $-1$-shifted symplectic, and that the
derived intersection $L\cap M$ of two algebraic Lagrangian
submanifolds $L,M$ in an algebraic symplectic manifold $(S,\om)$ is
$-1$-shifted symplectic.

The first paper Joyce \cite{Joyc2} in our series defined and studied
`algebraic d-critical loci' $(X,s)$, a classical $\K$-scheme $X$
with a geometric structure $s$ which records information on how $X$
may Zariski locally be written as a classical critical locus
$\Crit(f:U\ra\bA^1)$ of a regular function $f$ on a smooth
$\K$-scheme $U$. It also discussed `d-critical stacks' $(X,s)$, a
generalization to Artin $\K$-stacks.

The second paper by Bussi, Brav and Joyce \cite{BBJ} proved a
`Darboux Theorem' for the $k$-shifted symplectic derived schemes
$(\bX,\om)$ of \cite{PTVV} when $k<0$, writing $(\bX,\om)$ Zariski
locally in a standard form, and defined a truncation functor from
$-1$-shifted symplectic derived schemes $(\bX,\om)$ to algebraic
d-critical loci $(X,s)$. By \cite{PTVV}, this implies that moduli
schemes $\cM$ of simple (complexes of) coherent sheaves on a
Calabi--Yau 3-fold $Y$ can be made into d-critical loci~$(\cM,s)$.

The third paper by Bussi, Brav, Dupont, Joyce and Szendr\H oi
\cite{BBDJS} proves that if $(X,s)$ is an algebraic d-critical locus
with an `orientation', then one can define a natural perverse sheaf
$P^\bu_{X,s}$, a $\cD$-module $D_{X,s}$, and (over $\K=\C$) a mixed
Hodge module $M_{X,s}$ over $X$, such that if $(X,s)$ is locally
modelled on $\Crit(f:U\ra\bA^1)$ then $P^\bu_{X,s}$ is locally
modelled on the perverse sheaf of vanishing cycles $\PV_{U,f}^\bu$
of $f$, and similarly for $D_{X,s},M_{X,s}$. We hope to apply this
to the categorification of Donaldson--Thomas theory of Calabi--Yau
3-folds, as in Kontsevich and Soibelman~\cite{KoSo2}.

The fourth paper by Bussi, Joyce and Meinhardt \cite{BJM} proves
that if $(X,s)$ is a finite type, oriented algebraic d-critical
locus then one can define a natural motive $MF_{X,s}$ in a ring of
motives $\oM^{\hat\mu}_X$ on $X$, such that if $(X,s)$ is locally
modelled on $\Crit(f:U\ra\bA^1)$ then $MF_{X,s}$ is locally modelled
on the `motivic vanishing cycle' $MF_{U,f}^{\rm mot,\phi}$ of $f$.
We hope to apply this to motivic Donaldson--Thomas invariants of
Calabi--Yau 3-folds, as in Kontsevich and Soibelman~\cite{KoSo1}.

The goal of this paper is to extend the results of \cite{BBJ},
\cite{BBDJS}, \cite{BJM} from $\K$-schemes to Artin $\K$-stacks,
using the notion of d-critical stack from \cite{Joyc2}. The next
four theorems summarize the main results of sections
\ref{sa2}--\ref{sa5} below, respectively:

\begin{thm} Let\/ $\K$ be an algebraically closed field of characteristic zero, $(\bX,\om_\bX)$ a $k$-shifted symplectic derived Artin $\K$-stack as in\/ {\rm\cite{PTVV}} for\/ $k<0,$ and\/ $p\in\bX(\K)$ be a $\K$-point of\/ $\bX$. Then we can construct the following~data:
\begin{itemize}
\setlength{\itemsep}{0pt}
\setlength{\parsep}{0pt}
\item[{\bf(a)}] Affine derived $\K$-schemes $\bU=\bSpec A,$ $\bV=\bSpec B,$ where $A,B$ are commutative differential graded\/ $\K$-algebras (cdgas) in degrees $\le 0,$ of an explicit `standard form' defined in {\rm\S\ref{sa23}.}
\item[{\bf(b)}] A morphism of derived stacks $\bs\vp:\bU=\bSpec A\ra\bX$ which is smooth of the minimal possible relative dimension $n=\dim H^1(\bL_\bX\vert_p).$
\item[{\bf(c)}] An inclusion $\io:B\hookra A$ of\/ $B$ as a dg-subalgebra of\/ $A,$ so that\/ $\bs i=\bSpec\io:\bU\ra\bV$ is a morphism of derived\/ $\K$-schemes. On classical schemes, $i=t_0(\bs i):U=t_0(\bU)\ra V=t_0(\bV)$ is an isomorphism.
\item[{\bf(d)}] A\/ $\K$-point\/ $\ti p\in\Spec H^0(A)$ with\/ $\bs\vp(\ti p)\!=\!p,$ such that the `standard form' cdgas $A,B$ have the minimal possible numbers of generators $\dim H^j(\bL_\bU\vert_{\ti p}),\ab\dim H^j(\bL_\bV\vert_{\bs i(\ti p)})$ in each degree $j=0,-1,\ldots,k,k-1$.
\item[{\bf(e)}] An equivalence of relative (co)tangent complexes $\bL_{\bU/\bV} \simeq \bT_{\bU/\bX}[1-k]$. Hence $\bL_{\bU/\bV}$ is a vector bundle of rank $n$ in degree $k-1$.
\item[{\bf(f)}] A $k$-shifted symplectic structure $\om_B=(\om^0_B,0,\ldots)$ on $\bV=\bSpec B$ which is in `Darboux form' in the sense of\/
{\rm\cite[\S 5]{BBJ}} and\/ {\rm\S\ref{sa24},} with\/ $\bs\vp^*(\om_\bX)\sim \bs i^*(\om_B)$ in $k$-shifted closed\/ $2$-forms on $\bU$.
\end{itemize}

For example, if\/ $k=-2d-1$ for\/ $d=0,1,\ldots$ then the `standard form' and `Darboux form' conditions above mean the following. The degree $0$ part\/ $B^0$ of\/ $B$ is a smooth\/ $\K$-algebra of dimension $m_0,$ and we are given $x^0_1,\ldots,x^0_{m_0}\in B^0$ such that\/ $(x^0_1,\ldots,x^0_{m_0})$ are \'etale coordinates on all of\/ $V(0)=\Spec B^0$. As a graded commutative algebra, $B$ is freely
generated over $B^0$ by variables
\begin{align*}
&x_1^{-i},\ldots,x^{-i}_{m_i} &&\text{in degree $-i$ for
$i=1,\ldots,d,$} \\
&y_1^{i-2d-1},\ldots,y^{i-2d-1}_{m_i} &&\text{in degree $i-2d-1$
for $i=0,1,\ldots,d$.}
\end{align*}
We have $\om^0_B=\sum_{i=0}^d\sum_{j=1}^{m_i}\dd y^{i-2d-1}_j\,\dd x^{-i}_j$ in $(\La^2\Om^1_B)^{-2d-1}$. The differential\/ $\d$ on the cdga $B$ is $\d b=\{H,b\}$ for $b\in B,$ where $\{\,,\,\}:B\t B\ra B$ is the Poisson bracket defined using the inverse of\/ $\om^0_B,$ and\/ $H\in B^{-2d}$ is a Hamiltonian function satisfying the classical master equation $\{H,H\}=0$. Also $B\subset A,$ and\/ $A$ is freely generated as a graded commutative algebra over $B$ by additional variables $w_1^{-2d-2},\ldots,w^{-2d-2}_n$ in degree $-2d-2.$
\label{sa1thm1}
\end{thm}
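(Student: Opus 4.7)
The plan is to reduce this stacky Darboux theorem to its scheme counterpart in \cite{BBJ} via a carefully chosen minimal smooth atlas. First, I would invoke the local structure theory of derived Artin stacks to produce a smooth morphism $\bs\vp_0:\bU_0\ra\bX$ with $\bU_0$ an affine derived scheme, mapping near $p$, and with relative dimension equal to the minimum $n=\dim H^1(\bL_\bX\vert_p)$; this uses standard arguments about transverse slices through $p$ together with upper semicontinuity of the fibre dimensions of $\bL_\bX$. Then, applying the techniques of \S\ref{sa23} (which extend \cite[\S 4]{BBJ} to this setting), I would replace $\bU_0$ by $\bU=\bSpec A$ with $A$ in standard form and the minimal number of generators $\dim H^j(\bL_\bU\vert_{\ti p})$ in each degree $j$. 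This establishes (a), (b), and the $A$-part of (d).

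The pulled-back form $\om_\bU:=\bs\vp_0^*(\om_\bX)$ is a $k$-shifted closed 2-form on $\bU$ which is degenerate: from the fibre sequence $\bT_{\bU/\bX}\ra\bT_\bU\ra\bs\vp_0^*(\bT_\bX)$ and the non-degeneracy of $\om_\bX$, the induced map $\bT_\bU\ra\bL_\bU[k]$ has kernel and cokernel controlled by $\bT_{\bU/\bX}$, a vector bundle of rank $n$ concentrated in degree~$0$. The goal is then to find a sub-cdga $B\hookra A$ with $A$ freely generated over $B$ by $n$ variables $w_1,\ldots,w_n$ in degree $k-1$, with differentials $\d w_j\in B^k$ chosen to precisely cancel the degeneracy, so that $\bV=\bSpec B$ carries a non-degenerate $k$-shifted symplectic form $\om_B$ in Darboux form with $\bs i^*(\om_B)\sim\om_\bU$. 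The identification $\bL_{\bU/\bV}\simeq\bT_{\bU/\bX}[1-k]$ in (e) then reflects the natural duality between the extra cotangent generators $\dd w_j$ (in degree $k-1$) and the vertical tangent complex of the atlas (in degree $0$), tied together by the shift $1-k$ coming from the symplectic pairing.

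The construction of $B$ and its Darboux form would proceed via an inductive procedure along the lines of \cite[\S 5]{BBJ}, producing the Darboux generators $x^{-i}_j,y^{i-2d-1}_j$ of $B$ and the Hamiltonian $H\in B^{-2d}$ (for odd $k=-2d-1$, with analogous data for even $k$) degree by degree, while simultaneously identifying the ``vertical'' generators of $A$ as the $w_j$'s in degree $k-1$ and solving for compatible differentials $\d w_j\in B^k$. The main obstacle I anticipate is the simultaneous arrangement of all of the conditions: the minimality of generators in each degree, the Darboux form on $B$, the smoothness of $\bs\vp$, and above all the compatibility $\bs\vp^*(\om_\bX)\sim\bs i^*(\om_B)$ in $k$-shifted closed 2-forms on $\bU$. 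This last step requires Poincar\'e-lemma-type arguments for shifted closed 2-forms on smooth cdga's, adapted here to exploit the freedom provided by the extra generators $w_j$ when modifying representatives of a given cohomology class of closed 2-forms.
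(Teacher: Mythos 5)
Your overall architecture matches the paper's proof of this theorem (Theorem \ref{sa2thm4} plus Theorem \ref{sa2thm6}): cut a given smooth atlas down to the minimal relative dimension $n=\dim H^1(\bL_\bX\vert_p)$, pass to a minimal standard form cdga $A$ via \S\ref{sa23}, pull back $\om_\bX$, run the Darboux algorithm of {\rm\cite[\S 5]{BBJ}} on $A$, and realise the symplectic derived scheme $\bV=\bSpec B$ inside $\bU=\bSpec A$. But there is a genuine conceptual gap in how you describe the passage from $A$ to $B$, and it would lead you astray if you tried to fill in the details. You say the differentials $\d w_j\in B^k$ are ``chosen to precisely cancel the degeneracy,'' and later that the compatibility $\bs\vp^*(\om_\bX)\sim\bs i^*(\om_B)$ requires ``exploiting the freedom provided by the extra generators $w_j$.'' Neither is right. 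The $\d w_j$ are determined by the ambient cdga $A$, and in any case the nondegeneracy of $\om_B$ is a statement about $\bT_\bV\ra\bL_\bV[k]$, which simply does not see the $w_j$ or their differentials; there is nothing there to choose. Likewise no freedom from the $w_j$ is used in matching $\bs\vp^*(\om_\bX)$ and $\bs i^*(\om_B)$.

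The observation you are missing, which does all the work and makes these concerns evaporate, is a \emph{degree count}: since $\om^0\in(\La^2\Om^1_A)^k$ and $H\in A^{k+1}$, and $\dd w_j$, $w_j$ sit in degree $k-1$, any monomial in $\om^0$ or $H$ involving a $w_j$-factor would need a partner of \emph{positive} degree, and $A$ has none. Hence $\om^0$ and $H$ automatically live in the subalgebra $B$ generated by $A^0$ and the $x^i_j,y^i_j$. This single fact simultaneously shows that $B$ is a dg-subalgebra (since $\d x^{-i}_j,\d y^{i-2d-1}_j$ are partials of $H\in B$), that $\om^0=\io_*(\om^0_B)$ for a form $\om^0_B$ on $\bV$, that $\bs i^*(\om_B)\sim\bs\vp^*(\om_\bX)$ is automatic once Proposition \ref{sa2prop1} has been applied to $\om^0$, and that nondegeneracy of $\om_B$ follows from the cohomology computation of $\om^0\cdot\vert_{\ti p}$ (an isomorphism for $0\le i\le -k$) now that the degenerate edges in degrees $-1$ and $1-k$ have been excised by dropping the $w_j$. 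The $\d w_j$ themselves land in $A^k=B^k$ by the same degree count, but play no role in any of this. Finally, establishing the equivalence $\bL_{\bU/\bV}\simeq\bT_{\bU/\bX}[1-k]$ in (e) is not just a ``reflection of duality'' as you suggest: the paper runs a real argument filling in diagram \eq{sa2eq16} to \eq{sa2eq17} using the vanishing of two $\Ext$ groups (since $\bL_{\bU/\bX},\bT_{\bU/\bX}$ are free) and then Nakayama, and you would need to supply something equivalent.
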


Theorem \ref{sa1thm1} says that given a $k$-shifted derived Artin
stack $(\bX,\om_\bX)$ for $k<0$, near each $p\in\bX(\K)$ we can find a
smooth atlas $\bs\vp:\bU\ra\bX$ with $\bU=\bSpec A$ an affine
derived scheme, such that $(\bU,\bs\vp^*(\om_\bX))$ is in a standard
`Darboux form'. Although $(\bU,\bs\vp^*(\om_\bX))$ is not
$k$-shifted symplectic, as $\bs\vp^*(\om_\bX)$ is not nondegenerate,
we can build from $(\bU,\bs\vp^*(\om_\bX))$ in a natural way a `Darboux form' $k$-shifted symplectic derived scheme $(\bV,\om_B)$, which is equivalent to $(\bU,\bs\vp^*(\om_\bX))$ except in degree $k-1$.

\begin{thm} Let\/ $(\bX,\om_\bX)$ be a $-1$-shifted symplectic
derived Artin $\K$-stack in the sense of\/ {\rm\cite{PTVV}} over
$\K$ algebraically closed of characteristic zero, and\/ $X=t_0(\bX)$
the corresponding classical Artin $\K$-stack. Then $X$ extends
naturally to a d-critical stack\/ $(X,s)$ in the sense of\/
{\rm\cite{Joyc2}}. If\/ $T$ is a $\K$-scheme and\/ $t:T\ra X$ a
smooth\/ $1$-morphism, this gives a d-critical structure $s(T,t)$ on
$T$ making $(T,s(T,t))$ into an algebraic d-critical locus, in the
sense of\/~{\rm\cite{Joyc2}}.
\label{sa1thm2}
\end{thm}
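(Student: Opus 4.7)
The plan is to reduce to the derived-scheme case of \cite{BBJ} via the Darboux-type smooth atlases produced by Theorem \ref{sa1thm1}, and then to verify the compatibility under smooth pullbacks that characterises a d-critical stack in \cite{Joyc2}. Recall that a d-critical stack structure on $X$ amounts, in the formulation of \cite{Joyc2}, to the data of an algebraic d-critical locus structure $s(T,t)$ on $T$ for every smooth $1$-morphism $t:T\ra X$ from a $\K$-scheme $T$, compatible under smooth pullbacks: for $h:T_1\ra T_2$ smooth with $t_2\ci h\simeq t_1$ one requires $s(T_1,t_1)=h^*s(T_2,t_2)$. The final sentence of the theorem is then a tautology from this formulation.

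First I would construct $s(T,t)$ locally on $T$. Given smooth $t:T\ra X$ and a point $q\in T$, set $p=t(q)$ and apply Theorem \ref{sa1thm1} at $p$ to obtain a smooth atlas $\bs\vp:\bU\ra\bX$, a Darboux-form $-1$-shifted symplectic derived scheme $(\bV,\om_B)$, and an inclusion $\bs i:\bU\hookra\bV$ whose classical truncation $i:U\ra V$ is an isomorphism. Applying \cite{BBJ} to the symplectic derived scheme $(\bV,\om_B)$ endows $V$ with an algebraic d-critical locus structure $s_V$, which transports via $i$ to a d-critical structure $s_U$ on $U$. Form the classical $2$-fibre product $W=T\t_X U$; both projections $W\ra T$ and $W\ra U$ are smooth (as base changes of smooth morphisms), and $W\ra T$ is surjective onto a Zariski neighbourhood of $q$. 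Pulling $s_U$ back along $W\ra U$ via the smooth-pullback functoriality of algebraic d-critical loci from \cite{Joyc2} gives $s_W$, and the étale sheaf property of d-critical structures from \cite{Joyc2} then lets one descend $s_W$ along the smooth surjection $W\ra T$ to a d-critical structure $s(T,t)$ defined on a neighbourhood of $q$ in $T$. Varying $q$ and gluing yields $s(T,t)$ globally.

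The main obstacle, and the bulk of the work, will be to verify that this local construction is independent of the choice of Darboux atlas and is compatible with smooth pullbacks. To see independence, compare two atlases $\bs\vp_j:\bU_j\ra\bX$ $(j=1,2)$ near $p$ by forming the derived fibre product $\bU_1\t_\bX\bU_2$, which is a smooth cover of each $\bU_j$; Theorem \ref{sa1thm1} applied in a relative form, together with the uniqueness statement implicit in the derived-scheme Darboux theorem of \cite{BBJ}, ensures the two resulting d-critical structures on the common refinement agree, so that $s_{U_1}$ and $s_{U_2}$ pull back to the same structure on $U_1\t_X U_2$. The smooth-pullback compatibility $h^*s(T_2,t_2)=s(T_1,t_1)$ for $h:T_1\ra T_2$ smooth over $X$ then follows by reducing both sides, via a common Darboux atlas $\bs\vp:\bU\ra\bX$ and the fibre products $T_i\t_X U$, to pullbacks of the same $s_U$, and appealing to functoriality of smooth pullback for d-critical loci. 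Once compatibility is in hand, the family $\{s(T,t)\}$ assembles into the desired d-critical stack structure $s$ on $X$ as a section of the appropriate sheaf on the lisse-étale site of $X$, following the definitions in \cite{Joyc2}.
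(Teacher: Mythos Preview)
Your overall strategy is correct and matches the paper's: produce Darboux-form critical charts smoothly covering $X$, equip each with its d-critical structure, and then descend to a global section $s\in H^0(\cSz_X)$. The paper does this via Corollary~\ref{sa2cor1} (which already packages the $k=-1$ Darboux data as an explicit critical chart $T=\Crit(f:U\ra\bA^1)$ with smooth $\vp:T\ra X$), and then uses the descent statement Proposition~\ref{sa3prop5} for the sheaf $\cSz_X$ rather than constructing $s(T,t)$ chart-by-chart and checking compatibility afterwards. So your route through Theorem~\ref{sa1thm1} and the BBJ truncation functor applied to $(\bV,\om_B)$ is a mild detour that lands in the same place: the d-critical structure you get on $U\cong V$ is exactly the one coming from the critical chart $(T,U,f,i)$.

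The genuine gap is in your comparison step. You appeal to ``the uniqueness statement implicit in the derived-scheme Darboux theorem of \cite{BBJ}'' and to ``Theorem~\ref{sa1thm1} applied in a relative form'', but neither of these exists as stated. The Darboux theorem is an existence result, not a uniqueness one, and the fibre product $\bU_1\t_\bX\bU_2$ carries only the \emph{degenerate} pulled-back $2$-form $\bs\vp^*(\om_\bX)$, so you cannot apply the BBJ truncation functor to it directly. What is actually needed, and what the paper supplies as Proposition~\ref{sa2prop2} (distilled from the overlap analysis of \S\ref{sa27}), is an explicit comparison: given two critical atlases $(U_j,f_j,T_j,\vp_j)$, $j=1,2$, one constructs a smooth $V$, a closed subscheme $R\subseteq V$ with an \'etale map $R\ra T_1\t_X T_2$, and morphisms $\th_j:V\ra U_j$ with $\th_j(R)\subseteq T_j$, such that $f_1\ci\th_1-f_2\ci\th_2\in I_{R,V}^2$. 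This $I^2$-condition is precisely what forces $\th_1\vert_R^\star(s_1)=\th_2\vert_R^\star(s_2)$ in $H^0(\cSz_R)$, which is the compatibility you need. You should either invoke this proposition or reproduce its content; without it the independence-of-atlas claim is unjustified.
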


Theorem \ref{sa1thm2} implies that Artin moduli stacks $\cM$ of
(complexes of) coherent sheaves on a Calabi--Yau 3-fold $Y$ extend
naturally to d-critical stacks~$(\cM,s)$.

\begin{thm} Let\/ $(X,s)$ be an oriented d-critical stack over an algebraically closed field\/ $\K$ with\/ $\mathop{\rm char}\K\ne 2$. Fix a theory of perverse sheaves or $\cD$-modules over $\K$-schemes and Artin $\K$-stacks, for instance Laszlo and Olsson's $l$-adic perverse sheaves {\rm\cite{LaOl1,LaOl2,LaOl3}}. Then there is a natural perverse sheaf or $\cD$-module\/ $\check P^\bu_{X,s}$ on $X$ with Verdier duality and monodromy isomorphisms
\begin{equation*}
\smash{\Si_{X,s}:\check P^\bu_{X,s}\longra \bD_X(\check P^\bu_{X,s}),\qquad
\Tau_{X,s}:\check P^\bu_{X,s}\longra \check P^\bu_{X,s},}
\end{equation*}
such that if\/ $T$ is a $\K$-scheme and\/ $t:T\ra X$ a $1$-morphism
smooth of relative dimension $n,$ then
$t^*(\check P^\bu_{X,s})[n],t^*(\Si_{X,s})[n],t^*(\Tau_{X,s})[n]$ are
isomorphic to the perverse sheaf or $\cD$-module $P_{T,s(T,t)}^\bu$
on the oriented algebraic d-critical locus $(T,s(T,t))$ defined in
{\rm\cite[\S 6]{BBDJS},} and its Verdier duality and monodromy
isomorphisms $\Si_{T,s(T,t)},\Tau_{T,s(T,t)}$. So in particular,
if\/ $(T,s(T,t))$ is locally modelled on a critical locus
$\Crit(f:U\ra\bA^1)$ for $U$ a smooth\/ $\K$-scheme, then
$t^*(\check P^\bu_{X,s})[n]$ is locally modelled on the perverse sheaf or\/
$\cD$-module of vanishing cycles of\/~$f$.
\label{sa1thm3}
\end{thm}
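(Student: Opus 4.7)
The plan is to construct $\check P^\bu_{X,s}$ by smooth descent from its pullbacks along smooth atlases, using the scheme-level construction of \cite{BBDJS} as the building block. First, choose a smooth surjection $t:T\ra X$ from a $\K$-scheme $T$. By Theorem \ref{sa1thm2} applied to $(X,s)$ (or by the d-critical stack formalism of \cite{Joyc2}), the pair $(T,s(T,t))$ is an algebraic d-critical locus. The orientation on $(X,s)$ pulls back to an orientation on $(T,s(T,t))$, since an orientation is a square root of a naturally associated line bundle with a well-defined smooth pullback. Then apply \cite[\S 6]{BBDJS} to obtain the perverse sheaf $P^\bu_{T,s(T,t)}$ with its Verdier duality $\Si_{T,s(T,t)}$ and monodromy $\Tau_{T,s(T,t)}$.

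Second, I would establish the following key smooth functoriality of the \cite{BBDJS} construction: given a smooth morphism $f:(T',s')\ra(T,s)$ of relative dimension $m$ of oriented algebraic d-critical loci with compatible orientations, there is a canonical isomorphism $f^*(P^\bu_{T,s})[m]\simeq P^\bu_{T',s'}$ intertwining the Verdier duality and monodromy isomorphisms. Locally on $T'$, by the Zariski local structure of d-critical loci one can reduce a smooth morphism to the standard model $(U\t\bA^m,f\op q)\ra (U,f)$ where $q$ is a nondegenerate quadratic form in the auxiliary coordinates; on this model the assertion is a direct Thom--Sebastiani-type computation for vanishing cycles, with the shift by $m$ and a sign accounted for by the orientation of $q$ compared with the orientation data on $T,T'$.

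Third, apply this functoriality to the \v Cech nerve
\begin{equation*}
\cdots \; T\t_XT\t_XT \rra T\t_XT \rra T \longra X,
\end{equation*}
whose face and degeneracy maps are all smooth of known relative dimension. The perverse sheaf $P^\bu_{T,s(T,t)}$ together with the descent isomorphisms from step two gives a simplicial object satisfying the cocycle condition, automatic from the canonical nature of the comparison isomorphisms. By the theory of perverse sheaves (or $\cD$-modules) on Artin stacks, e.g.\ Laszlo--Olsson \cite{LaOl1,LaOl2,LaOl3}, such a descent datum descends to a perverse sheaf $\check P^\bu_{X,s}$ on $X$ with the required identifications $t^*(\check P^\bu_{X,s})[n]\simeq P^\bu_{T,s(T,t)}$; independence from the choice of atlas follows by comparing two atlases through their fibre product. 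The duality and monodromy $\Si_{X,s},\Tau_{X,s}$ descend from their scheme-level counterparts in the same way.

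The main obstacle is the smooth functoriality in step two: \cite{BBDJS} is written with Zariski/\'etale local models in mind, and one must upgrade the canonical comparisons to cover smooth morphisms of arbitrary relative dimension and verify their compatibility with compositions, so that the cocycle condition holds strictly on triple overlaps. Once this is secured, the Laszlo--Olsson descent machinery packages the construction into a perverse sheaf on $X$ essentially formally, and the final claim comparing $t^*(\check P^\bu_{X,s})[n]$ with $P^\bu_{T,s(T,t)}$ for arbitrary smooth $t:T\ra X$ then holds by construction.
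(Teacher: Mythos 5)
Your overall strategy is the same as the paper's: establish smooth functoriality of the Brav--Bussi--Dupont--Joyce--Szendr\H oi perverse sheaf under smooth morphisms of d-critical loci (this is exactly the paper's Proposition \ref{sa4prop3}, which asserts a canonical isomorphism $\De_\phi:\phi^*[d](P^\bu_{Y,t})\to P^\bu_{X,s}$ compatible with $\Si,\Tau$ and with composition), then package the data via smooth descent. The paper encodes the descent by defining a category $\Perv_\nai(X)$ of compatible systems indexed by \emph{all} smooth $t:T\to X$, rather than by the \v Cech nerve of a single atlas, and then invokes $\Perv_\nai(X)\simeq\Perv(X)$ from Laszlo--Olsson; these packagings are equivalent, and your cocycle condition is the paper's Proposition \ref{sa4prop3}(b).

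However, there is a concrete error in your step two. You claim the local model for a smooth morphism $\phi:(T',s')\to(T,s)$ of relative dimension $m$ is $(U\t\bA^m,\,f\boxplus q)\to(U,f)$ with $q$ a nondegenerate quadratic form, and that the isomorphism is a Thom--Sebastiani computation. That is the local model for an \emph{embedding} of critical charts, i.e.\ for stabilization: here $\Crit(f\boxplus q)=\Crit(f)\t\{0\}$ and the projection to $\Crit(f)$ is an isomorphism of relative dimension $0$, not a smooth morphism of relative dimension $m$. The correct local model for a smooth morphism of relative dimension $m$ is $\Phi:U'=U\t\bA^m\to U$ the projection with $f'=f\ci\Phi$ (so $f'$ does \emph{not} involve $q$), giving $\Crit(f')=\Crit(f)\t\bA^m$ and $\Phi\vert_{\Crit(f')}$ genuinely smooth of dimension $m$. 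On this model the identification is not Thom--Sebastiani but the compatibility of vanishing cycles with smooth pullback (the paper's Proposition \ref{sa4prop2}(b), producing $\Xi_\Phi:\Phi\vert_X^*[d](\PV^\bu_{V,g})\simeq\PV^\bu_{U,f}$).

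Thom--Sebastiani and orientation signs do enter, but in a different role than you assign them: they are needed to compare two such smooth-morphism models on overlaps, via embeddings of critical charts and the associated principal $\Z/2\Z$-bundles $Q_{R,U,f,i}$. In the paper this is the gluing step in the proof of Proposition \ref{sa4prop3}, which combines the smooth-pullback isomorphism $\Xi_\Phi$ with the embedding comparisons $\Th_\Psi,\La_\Psi$ of \cite[Th.~5.4, Th.~6.9(ii)]{BBDJS}. If you untangle these two roles, the remainder of your argument goes through as outlined.
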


\begin{thm} Let\/ $(X,s)$ be an oriented d-critical stack over $\K$
algebraically closed of characteristic zero, with\/ $X$ of finite
type and locally a global quotient. Then there exists a unique
motive $MF_{X,s}$ in a certain ring $\oM^\stm_X$ of\/
$\hat\mu$-equivariant motives on $X,$ such that if\/ $T$ is a finite type
$\K$-scheme and\/ $t:T\ra X$ is smooth of relative dimension $n,$ so
that\/ $(T,s(T,t))$ is an oriented algebraic d-critical locus over
$\K,$ then
\begin{equation*}
\smash{t^*\bigl(MF_{X,s}\bigr)=\bL^{n/2}\od MF_{T,s(T,t)} \quad\text{in $\oM^\stm_T,$}}
\end{equation*}
where $MF_{T,s(T,t)}\in\oM^\stm_T$ is as in\/ {\rm\cite[\S 5]{BJM}}.
So in particular, if\/ $(T,s(T,t))$ is locally modelled on
$\Crit(f:U\ra\bA^1)$ for\/ $U$ a smooth\/ $\K$-scheme, then
$\bL^{-n/2}\od t^*\bigl(MF_{X,s}\bigr)$ is locally modelled on the
motivic vanishing cycle $MF_{U,f}^{\rm mot,\phi}$ of\/~$f$.
\label{sa1thm4}
\end{thm}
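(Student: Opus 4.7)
The plan is to construct $MF_{X,s}$ by smooth descent from an atlas. Since $X$ is locally a global quotient, we may choose a smooth atlas $t:T\ra X$ by a finite type scheme. By Theorem \ref{sa1thm2}, each such $T$ inherits an algebraic d-critical locus structure $(T,s(T,t))$, and by the definition of an orientation on a d-critical stack, the orientation on $(X,s)$ supplies compatible orientations on all such smooth pullbacks. Hence \cite[\S 5]{BJM} produces a motive $MF_{T,s(T,t)}\in\oM^\stm_T$, and the natural candidate for the local restriction of $MF_{X,s}$ along $t$ is $\bL^{n/2}\od MF_{T,s(T,t)}$, where $n$ is the relative dimension of $t$.

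The main technical step is to verify the smooth-pullback compatibility: if $\pi:T'\ra T$ is a smooth morphism of relative dimension $d$ between oriented algebraic d-critical loci compatible with both the d-critical structures and the orientations, then
\[
\pi^*\bigl(MF_{T,s(T,t)}\bigr)=\bL^{d/2}\od MF_{T',s(T',t\ci\pi)} \quad\text{in $\oM^\stm_{T'}$.}
\]
To establish this, I would work locally. On sufficiently small charts, both $(T,s)$ and $(T',s')$ can be modelled as critical loci of regular functions on smooth schemes by the results of \cite{Joyc2,BBJ}. A smooth morphism of relative dimension $d$ between two such charts may then, after further refinement, be brought into the standard form $U\t\bA^d\ra U$ with function $f'=f\boxplus q$ for a nondegenerate quadratic form $q$ of rank $d$ on $\bA^d$. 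The motivic Thom--Sebastiani formula yields
\[
MF^{{\rm mot},\phi}_{U\t\bA^d,\,f\boxplus q}=MF^{{\rm mot},\phi}_{U,f}\od MF^{{\rm mot},\phi}_{\bA^d,q},
\]
and the motivic vanishing cycle of a nondegenerate quadratic form of rank $d$ is $\bL^{d/2}$ up to the square root of its discriminant; that sign is exactly what the compatibility of orientations absorbs, producing the desired equality.

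Granted the pullback compatibility, descent is essentially formal. Applying it to the two projections $T\t_X T\rra T$ shows that $\bL^{n/2}\od MF_{T,s(T,t)}$ satisfies the cocycle condition defining objects of the ring $\oM^\stm_X$, whose construction for locally global-quotient stacks is carried out in \S\ref{sa5}; hence it descends to a motive $MF_{X,s}\in\oM^\stm_X$, with uniqueness guaranteed by the same descent. The required identity $t^*(MF_{X,s})=\bL^{n/2}\od MF_{T,s(T,t)}$ for an arbitrary smooth $t:T\ra X$ then follows by comparing the two candidates on a common refinement with the atlas used in the construction. The principal obstacle is precisely the smooth-pullback compatibility: it is where the orientation hypothesis is essential, providing the global square root of the discriminant of the transverse quadratic form, and where the deeper inputs — the motivic Thom--Sebastiani theorem together with the Darboux-style local presentation of d-critical loci — enter the argument.
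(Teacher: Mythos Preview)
Your argument has a genuine gap at the descent step. You write that ``descent is essentially formal'' once the smooth-pullback compatibility is known, and that the cocycle condition on $T\t_X T$ suffices to glue the local motives $\bL^{n/2}\od MF_{T,s(T,t)}$ into an element of $\oM^\stm_X$. But the rings $\oM^\stm_X$ are \emph{not} known to satisfy descent in the smooth topology: there is no motivic analogue of Theorem~\ref{sa4thm1}, and the paper says so explicitly just before the statement of Theorem~\ref{sa5thm2}. The construction of $\oM^\stm_X$ in \S\ref{sa53} is via generators $[R,\rho,\hat r]$ with $R$ a scheme and relations, not as a sheaf in the smooth topology; a compatible family of elements on an atlas need not arise from a global element, and even if it did, uniqueness would not be automatic.

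The paper circumvents this with Proposition~\ref{sa5prop3}: the hypothesis that $X$ is locally a global quotient yields a finite stratification $X=\coprod_j X_j$ and smooth morphisms $\phi_j:S_j\ra X$ from schemes such that the class $[S_j\t_X X_j,\pi_{X_j},\hat\io]$ is \emph{invertible} in $\oM^\stm_{X_j}$ (because over $X_j$ the map is a principal $\GL(n_j,\K)$-bundle). One then writes down an explicit candidate for $MF_{X,s}$ as a finite sum over $j$ involving these inverses and the pushforwards $(\phi_j)_*\bigl(\bL^{n_j/2}\od MF_{S_j,s(S_j,\phi_j)}\bigr)$, and checks directly that any smooth $t:T\ra X$ pulls this back to $\bL^{n/2}\od MF_{T,s(T,t)}$, using only pushforward/pullback functoriality, the projection formula, base change in 2-Cartesian squares, and your pullback compatibility (which is Proposition~\ref{sa5prop1}). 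Uniqueness follows from the same calculation run backwards. This is where the ``locally a global quotient'' hypothesis actually does work; in your outline it is used only to produce an atlas, which the weaker ``affine geometric stabilizers'' hypothesis already provides.

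A minor point: your route to the pullback compatibility via Thom--Sebastiani and a transverse quadratic form is plausible but not what the paper does. Proposition~\ref{sa5prop1} is proved by pulling back an embedded resolution along the smooth map $\Phi:U\ra V$ and comparing the explicit formula \eq{sa5eq14} for the motivic nearby cycle directly; no quadratic stabilization is needed, and the orientation enters only through the $\Z/2\Z$-bundles $Q_{R,U,f,i}$ in \eq{sa5eq16}.
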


We expect that Theorems \ref{sa1thm3} and \ref{sa1thm4} will have
applications in categorified and motivic extensions of
Donaldson--Thomas theory of Calabi--Yau 3-folds, as in Kontsevich
and Soibelman~\cite{KoSo1,KoSo2}.
\medskip

\noindent{\bf Conventions and notation.} Throughout $\K$ will be an
algebraically closed field with  $\mathop{\rm char}\K=0$, except
that we allow  $\K$ algebraically closed with $\mathop{\rm
char}\K\ne 2$ in \S\ref{sa4}. Classical $\K$-schemes and Artin
$\K$-stacks will be written $W,X,Y,Z,\ldots,$ and derived
$\K$-schemes and derived Artin $\K$-stacks in bold
as~$\bW,\bX,\bY,\bZ,\ldots.$

Basic references for $\K$-schemes are Hartshorne \cite{Hart}, for
Artin $\K$-stacks Laumon and Moret-Bailly \cite{LaMo}, and for
derived $\K$-schemes and derived Artin $\K$-stacks To\"en and
Vezzosi~\cite{Toen,ToVe}.

All (classical) $\K$-schemes and Artin $\K$-stacks $X$ are assumed
locally of finite type, except in \S\ref{sa5} when we assume they
are of finite type. All derived $\K$-schemes and derived $\K$-stacks
$\bX$ are assumed to be locally finitely presented. We write
$\Sch_\K$ for the category of $\K$-schemes, $\Art_\K$ for the
2-category of Artin $\K$-stacks, $\dSch_\K$ for the $\iy$-category
of derived $\K$-schemes, and $\dArt_\K$ for the $\iy$-category of
derived Artin $\K$-stacks, and $t_0:\dSch_\K\ra\Sch_\K$,
$t_0:\dArt_\K\ra\Art_\K$ for the classical truncation functors.
Other notation generally follows the prequels
\cite{BBDJS,BBJ,BJM,Joyc2} to this paper.
\medskip

\noindent{\bf Acknowledgements.} We would like to thank Tom
Bridgeland, Sven Meinhardt, Bal\'azs Szendr\H oi, and Bertrand
To\"en for helpful conversations, and a referee for careful proofreading and useful comments. This research was supported by
EPSRC Programme Grant EP/I033343/1. The first author acknowledges
the support of the European Commission under the Marie Curie
Programme which awarded him an IEF grant. The contents of this
article reflect the views of the authors and not the views of the
European Commission.

\section{Local models for atlases of shifted symplectic derived
stacks}
\label{sa2}\vskip -5pt\relax

Sections \ref{sa21} and \ref{sa22} give background on derived
algebraic geometry \cite{Toen,ToVe} and
Pantev--To\"en--Vaqui\'e--Vezzosi's shifted symplectic structures
\cite{PTVV}, and \S\ref{sa23}--\S\ref{sa24} recall the main
definitions of \cite[\S 4--\S 5]{BBJ}. Then
\S\ref{sa25}--\S\ref{sa27}, the new material in this section,
generalize \S\ref{sa23}--\S\ref{sa24} to derived Artin stacks.\vskip -5pt\relax

\enlargethispage{10pt}

\subsection{Derived algebraic geometry}
\label{sa21}

We work in the context of To\"en and Vezzosi's derived algebraic
geometry \cite{Toen,ToVe}, and Pantev, To\"en, Vaqui\'e and
Vezzosi's theory of $k$-shifted symplectic structures on derived
schemes and stacks \cite{PTVV}. This is a complex subject, and we
give only a brief sketch to fix notation. A longer explanation
suited to our needs can be found in~\cite[\S 2--\S 3]{BBJ}.

Fix an algebraically closed base field $\K$, of characteristic zero.
To\"en and Vezzosi define the $\iy$-category $\dSt_\K$ of {\it
derived\/ $\K$-stacks\/} (or $D^-$-{\it stacks\/})
\cite[Def.~2.2.2.14]{ToVe}, \cite[Def.~4.2]{Toen}. All derived
$\K$-stacks $\bX$ in this paper are assumed to be {\it locally
finitely presented}. There is a {\it spectrum functor\/}
\begin{equation*}
\smash{\bSpec:\{\text{commutative differential graded $\K$-algebras, degrees $\le 0$}\}\longra\dSt_\K.}
\end{equation*}
All cdgas in this paper will be in degrees $\le 0$. A derived $\K$-stack $\bX$ is called an {\it affine derived\/ $\K$-scheme\/} if $\bX$ is equivalent in $\dSt_\K$ to $\bSpec A$ for some cdga $A$ over $\K$. As in \cite[\S 4.2]{Toen}, a derived $\K$-stack $\bX$ is called a {\it derived\/ $\K$-scheme\/} if it may be covered by Zariski open $\bY\subseteq\bX$ with $\bY$ an affine derived $\K$-scheme. Write $\dSch_\K$ for the full $\iy$-subcategory of derived $\K$-schemes in $\dSt_\K$.

We call a derived $\K$-stack $\bX$ a {\it derived Artin\/
$\K$-stack\/} if it is {\it $m$-geometric\/} for some $m$ \cite[Def.~1.3.3.1]{ToVe} and the underlying classical stack is $1$-truncated (that is, just a stack, not a higher stack). Any such $\bX$ admits a smooth surjective morphism $\bs\vp:\bU\ra\bX$, an {\it atlas}, with $\bU$ a derived $\K$-scheme. Write $\dArt_\K$ for the full $\iy$-subcategory of derived Artin $\K$-stacks in $\dSt_\K$. Then $\dSch_\K\!\subset\!\dArt_\K\!\subset\!\dSt_\K$.

Write $\Sch_\K$ for the category of $\K$-schemes $X$, and $\Art_\K$
for the 2-category of Artin $\K$-stacks $X$. By an abuse of notation we regard $\Sch_\K$ as a discrete
2-subcategory of $\Art_\K$, so that $\Sch_\K\subset\Art_\K$. As in
\cite[Prop.~2.1.2.1]{ToVe}, there is an {\it inclusion functor\/}
$i:\Art_\K\ra\dArt_\K$ mapping $\Sch_\K\ra\dSch_\K$, and a {\it
classical truncation functor\/} $t_0:\dArt_\K\ra\Art_\K$ mapping
$\dSch_\K\ra\Sch_\K$.

A derived Artin $\K$-stack $\bX$ has a {\it cotangent complex\/}
$\bL_\bX$ of finite cohomological amplitude $[-m,1]$ and a dual {\it tangent complex\/} $\bT_\bX$ \cite[\S
1.4]{ToVe}, \cite[\S 4.2.4--\S 4.2.5]{Toen} in a stable
$\infty$-category $L_\qcoh(\bX)$ defined in \cite[\S 3.1.7, \S
4.2.4]{Toen}. When $X$ is a classical scheme or stack, then the
homotopy category of $L_\qcoh(\bX)$ is nothing but the triangulated
category $D_{\qcoh}(X)$. These have the usual properties of
(co)tangent complexes. For instance, if $\bs f:\bX\ra\bY$ is a
morphism in $\dArt_\K$ there is a distinguished triangle
\e
\smash{\xymatrix@C=30pt{ \bs f^*(\bL_\bY) \ar[r]^(0.55){\bL_{\bs f}} &
\bL_\bX \ar[r] & \bL_{\bX/\bY} \ar[r] & \bs f^*(\bL_\bY)[1], }}
\label{sa2eq1}
\e
where $\bL_{\bX/\bY}$ is the {\it relative cotangent complex\/} of
$\bs f$. Here $\bs f$ is smooth of relative dimension $n$ if and only if $\bL_{\bX/\bY}$ is locally free of rank $n$, and $\bs f$ is \'etale if and only if $\bL_{\bX/\bY}=0$.

\subsection{Shifted symplectic derived schemes and derived stacks}
\label{sa22}

Let $\bX$ be a derived stack. Pantev, To\"en, Vaqui\'e and Vezzosi
\cite{PTVV} defined $k$-{\it shifted\/ $p$-forms}, $k$-{\it shifted
closed\/ $p$-forms}, and $k$-{\it shifted symplectic structures\/}
on $\bX$, for $k\in\Z$ and $p\ge 0$. One first defines these notions
on derived affine schemes and then defines the general notions by
smooth descent. Since our main theorems are statements about the
local structure of derived stacks endowed with shifted symplectic
forms, it suffices for us to describe the affine case. The basic
idea is this:
\begin{itemize}
\setlength{\itemsep}{0pt}
\setlength{\parsep}{0pt}
\item[(a)] Define the exterior powers $\La^p\bL_\bX$ in $L_\qcoh(\bX)$ for
$p=0,1,\ldots.$ Regard $\La^p\bL_\bX$ as a complex, with
differential $\d$:
\begin{equation*}
\smash{\xymatrix@C=25pt{ \cdots \ar[r]^(0.35)\d & (\La^p\bL_\bX)^{k-1}
\ar[r]^\d & (\La^p\bL_\bX)^k \ar[r]^\d & (\La^p\bL_\bX)^{k+1}
\ar[r]^(0.65)\d & \cdots.}}
\end{equation*}
Then a $k$-{\it shifted\/ $p$-form}, or $p$-{\it form of
degree\/} $k$, is an element $\om^0$ of $(\La^p\bL_\bX)^k$ with
$\d\om^0=0$. Mostly we are interested in the cohomology class
$[\om^0]\in H^k(\La^p\bL_\bX)$.
\item[(b)] There are {\it de Rham differentials\/}
$\dd:\La^p\bL_\bX\ra \La^{p+1}\bL_\bX$ with $\dd\ci\dd=\d\ci\dd+\dd\ci\d=0$. Then a $k$-{\it shifted closed\/ $p$-form}, or {\it closed\/
$p$-form of degree\/} $k$, is a sequence
$\om=(\om^0,\om^1,\om^2,\ldots)$ with $\om^i$ in
$(\La^{p+i}\bL_\bX)^{k-i}$ for $i\ge 0$, satisfying $\d\om^0=0$ and $\dd\om^i+\d\om^{i+1}=0$ for~$i=0,1,\ldots.$

That is, $\om=(\om^0,\om^1,\om^2,\ldots)$ is a $k$-cycle in the
negative cyclic complex
\begin{equation*}
\smash{\bigl(\bigl(\ts\prod_{i=0}^\iy(\La^{p+i}\bL_\bX)^{k-i}\bigr)_{k\in\Z},\d+\dd\bigr).}
\end{equation*}
Mostly we are interested in the cohomology class
$[\om]=[\om^0,\om^1,\ldots]$ in the cohomology of this complex. We will
write $\om\sim\om'$ if $\om,\om'$ are $k$-shifted closed
$p$-forms with the same cohomology class $[\om]=[\om']$. There
is a map $(\om^0,\om^1,\om^2,\ldots)\mapsto \om^0$ from
$k$-shifted closed $p$-forms to $k$-shifted $p$-forms.
\item[(c)] A $k$-{\it shifted symplectic structure\/} on $\bX$ is a
$k$-shifted closed $2$-form $(\om^0,\ldots)$ on $\bX$ whose
induced morphism $\om^0\cdot:\bT_\bX\ra\bL_\bX[k]$ is an
equivalence.
\end{itemize}

If a derived $\K$-scheme $\bX$ has a 0-shifted symplectic structure
then $\bX$ is a smooth $\K$-scheme $X$ with a classical symplectic
structure. Pantev et al.\ \cite{PTVV} construct $k$-shifted
symplectic structures on several classes of derived moduli stacks.
If $Y$ is a Calabi--Yau $m$-fold and $\bs\cM$ a derived moduli stack
of coherent sheaves or perfect complexes on $Y$, then $\bs\cM$ has a
$(2-m)$-shifted symplectic structure. We are particularly interested
in the case $m=3$, so $k=-1$.

\subsection{`Standard form' affine derived schemes}
\label{sa23}

The next definition summarizes \cite[Ex.~2.8, Def.~2.9 \& Def.
2.13]{BBJ}.

\begin{dfn} We will explain how to inductively construct a
sequence of commutative differential graded algebras (cdgas)
$A(0),A(1),\ldots,A(n)=A$ over $\K$ with $A(0)$ a smooth
$\K$-algebra and $A(k)$ having underlying commutative
graded algebra free over $A(0)$ on generators of degrees
$-1,\ldots,-k$. We will call $A$ a {\it standard form\/} cdga. We
will write $\bU(i)=\bSpec A(i)$ for $i=0,\ldots,n$ and
$\bU=\bU(n)=\bSpec A$ for the corresponding affine derived
$\K$-schemes, where $\bU(0)=U(0)$ is a smooth classical $\K$-scheme,
which contains $\Spec H^0(A)$ as a closed $\K$-subscheme.

Begin with a commutative algebra $A(0)$ smooth over $\K$. Choose a
free $A(0)$-module $M^{-1}$ of finite rank together with a map
$\pi^{-1} :M^{-1} \ra A(0)$. Define a cdga $A(1)$ whose underlying
commutative graded algebra is free over $A(0)$ with generators given
by $M^{-1}$ in degree $-1$ and with differential $\d$ determined by
the map $\pi^{-1}: M^{-1} \ra A(0)$. By construction, we have
$H^0(A(1)) = A(0)/I$, where the ideal $I\subseteq A(0)$ is the image
of the map~$\pi^{-1}:M^{-1}\ra A(0)$.

Note that $A(1)$ fits in a homotopy pushout diagram of cdgas
\begin{equation*}
\xymatrix@R=11pt@C=90pt{*+[r]{\Sym_{A(0)}(M^{-1})} \ar[r]_(0.7){0_*}
\ar[d]^{\pi^{-1}_*} & *+[l]{A(0)}
\ar[d] \\ *+[r]{A(0)} \ar[r]^(0.7){f^{-1}} & *+[l]{A(1),\!\!{}} }
\end{equation*}
with morphisms $\pi^{-1}_*,0_*$ induced by $\pi^{-1},0:M^{-1}\ra
A(0)$. Write $f^{-1}:A(0)\ra A(1)$ for the resulting map of
algebras.

Next, choose a free $A(1)$-module $M^{-2}$ of finite rank together
with a map $\pi^{-2}: M^{-2}[1] \ra A(1)$. Define a cdga $A(2)$
whose underlying commutative graded algebra is free over $A(1)$ with
generators given by $M^{-2}$ in degree $-2$ and with differential
$\d$ determined by the map $\pi^{-2}: M^{-2}[1] \ra A(1)$. Write
$f^{-2}$ for the resulting map of algebras~$A(1)\ra A(2)$.

As the underlying commutative graded algebra of $A(1)$ was free over
$A(0)$ on generators of degree $-1$, the underlying commutative
graded algebra of $A(2)$ is free over $A(0)$ on generators of
degrees $-1,-2$. Since $A(2)$ is obtained from $A(1)$ by adding
generators in degree $-2$, we have $H^0(A(1))\cong H^0(A(2)) \cong
A(0)/I$.

Note that $A(2)$ fits in a homotopy pushout diagram of cdgas
\begin{equation*}
\xymatrix@R=11pt@C=90pt{*+[r]{\Sym_{A(1)}(M^{-2}[1])} \ar[r]_(0.7){0_*}
\ar[d]^{\pi^{-2}_*} & *+[l]{A(1)} \ar[d] \\
*+[r]{A(1)} \ar[r]^(0.7){f^{-2}} & *+[l]{A(2),\!\!{}} }
\end{equation*}
with morphisms $\pi^{-2}_*,0_*$ induced by $\pi^{-2},0:M^{-2}[1]\ra
A(1)$.

Continuing in this manner inductively, we define a cdga $A(n)=A$
with $A^0=A(0)$ and $H^0(A)=A(0)/I$, whose underlying commutative
graded algebra is free over $A(0)$ on generators of degrees
$-1,\ldots,-n$. We call any cdga $A$ constructed in this way a {\it
standard form\/} cdga.

If $A$ is of standard form, we will call a cdga $A'$ a {\it
localization\/} of $A$ if $A'=A\ot_{A^0}A^0[f^{-1}]$ for $f\in A^0$,
that is, $A'$ is obtained by inverting $f$ in $A$. Then $A'$ is also
of standard form, with $A^{\prime\, 0}\cong A^0[f^{-1}]$. If $p\in
\Spec H^0(A)$ with $f(p)\ne 0$, we call $A'$ a {\it localization
of\/ $A$ around \/}~$p$.

Let $A$ be a standard form cdga. We call $A$ {\it minimal\/} at $p
\in \Spec H^0(A)$ if for all $k=1,\ldots,n$ the compositions
\begin{equation*}
\smash{H^{-k}\bigl(\bL_{A(k)/A(k-1)}\bigr) \longra
H^{1-k}\bigl(\bL_{A(k-1)}\bigr)\longra
H^{1-k}\bigl(\bL_{A(k-1)/A(k-2)}\bigr)}
\end{equation*}
in the cotangent complexes restricted to $\Spec H^0(A)$ vanish
at~$p$. (For more on this point, see \cite[Prop. 2.12]{BBJ}.)
\label{sa2def1}
\end{dfn}

Here are \cite[Th.s 4.1 \& 4.2]{BBJ}. They say that any derived
scheme $\bX$ is locally modelled on $\bSpec A$ for a (minimal)
standard form cdga $A$, and give us a way to compare two such local
models $\bs f:\bSpec A\hookra\bX$, $\bs g:\bSpec B\hookra\bX$.

\begin{thm} Let\/ $\bX$ be a derived\/ $\K$-scheme, and\/
$x\in\bX$. Then there exist a standard form cdga $A$ over $\K$ which
is minimal at a point\/ $p\in\Spec H^0(A),$ in the sense of
Definition\/ {\rm\ref{sa2def1},} and a morphism $\bs f:\bU=\bSpec
A\ra\bX$ in $\dSch_\K$ which is a Zariski open inclusion with\/~$\bs
f(p)=x$.
\label{sa2thm1}
\end{thm}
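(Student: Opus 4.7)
The plan is to reduce to the affine case and then inductively construct a standard form cdga modelling $\bX$ near $x$, with the attaching data chosen so as to achieve minimality at $p$.

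First, since $\bX$ is a derived $\K$-scheme, by definition of $\dSch_\K$ the point $x$ admits a Zariski open neighbourhood $\bY\subseteq\bX$ which is an affine derived $\K$-scheme, so $\bY\simeq\bSpec R$ for some cdga $R$ in non-positive degrees, and it suffices to construct a Zariski open inclusion $\bs f:\bSpec A\hookra\bY$ with $A$ minimal standard form at $p$ and $\bs f(p)=x$. In fact the problem is homotopy-theoretic: it is enough to produce a cdga map $\al:A\ra R$ together with an element $f\in A^0$ with $f(p)\ne 0$ such that the induced map $A[f^{-1}]\ra R[\al(f)^{-1}]$ is a quasi-isomorphism of cdgas, since then $\bs f:=\bSpec\al$ will be Zariski open near $p$ by a standard criterion for derived schemes.

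Secondly, I would build $A(0),A(1),\ldots,A(n)=A$ by induction on $k$, matching the Postnikov-type truncations of $R$ at $x$. For $A(0)$, choose a smooth $\K$-algebra of dimension $m_0:=\dim H^0(\bL_R\vert_x)$, together with a map $A(0)\ra H^0(R)$ that is surjective on a Zariski neighbourhood of $x$ and induces an isomorphism on cotangent spaces at $p$; this is possible by lifting a minimal generating set of the maximal ideal of $H^0(R)$ at $x$. Having built $A(k-1)$ together with a morphism $\al_{k-1}:A(k-1)\ra R$ inducing isomorphisms on $H^0,H^{-1},\ldots,H^{-(k-1)}$ near the chosen point, I would set $M^{-k}$ to be a free $A(k-1)$-module of rank $m_k:=\dim H^{-k}(\bL_{R/A(k-1)}\vert_p)$, choose a classifying map $\pi^{-k}:M^{-k}[k-1]\ra A(k-1)$ representing generators of $H^{-k}(R)$ modulo the image from $A(k-1)$, and form the homotopy pushout defining $A(k)$, extending $\al_{k-1}$ to $\al_k:A(k)\ra R$ and killing one more cohomology group. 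After finitely many steps (since $\bL_R$ has bounded amplitude) one obtains $\al_n=\al:A\ra R$ which is a quasi-isomorphism on stalks at $p$, hence on a Zariski localization.

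Finally, I would verify minimality at $p$ by analysing the cofibre sequences $A(k-1)\otimes_{A(k-1)}^L\Sym(M^{-k}[k])\ra A(k-1)\ra A(k)$ arising from the pushouts in Definition \ref{sa2def1}. The relative cotangent complex $\bL_{A(k)/A(k-1)}$ is free on $M^{-k}[k]$, and the connecting map $H^{-k}(\bL_{A(k)/A(k-1)})\ra H^{1-k}(\bL_{A(k-1)/A(k-2)})$ at $p$ is exactly the obstruction-theoretic derivative of the attaching map $\pi^{-k}$ composed with $\pi^{-(k-1)}$. Because $M^{-k}$ was chosen of minimal rank, no generator of $M^{-k}$ can be redundant, i.e.\ can land in the image of the previous differential; this forces the composition in Definition \ref{sa2def1} to vanish at $p$, giving minimality. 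The main technical obstacle is precisely this simultaneous optimization: one must choose $M^{-k}$ and $\pi^{-k}$ both small enough to achieve minimality and large enough that $\al_k$ continues to induce isomorphisms on cohomology in degrees $\ge -k$ at $p$, which requires controlling how cohomology classes of $R/A(k-1)$ interact with the vanishing of the boundary in the cotangent complex. The paper \cite{BBJ} handles this by a careful analysis of the spectral sequence associated to the Postnikov tower, and the same argument applies here.
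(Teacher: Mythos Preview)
The paper does not prove this theorem: it is quoted as \cite[Th.~4.1]{BBJ} and no argument is given here. So there is no ``paper's own proof'' to compare against; your sketch is effectively a sketch of the argument in \cite{BBJ}, and it follows the same strategy there --- reduce to the affine case, then build $A(0),A(1),\ldots$ inductively by adding free modules of minimal rank in each negative degree so as to match the cotangent complex of $\bX$ at $x$.

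Two places where your sketch is looser than \cite{BBJ} and could mislead. First, your two descriptions of $M^{-k}$ are not the same: you set its rank equal to $\dim H^{-k}(\bL_{R/A(k-1)}\vert_p)$, but then say $\pi^{-k}$ ``represents generators of $H^{-k}(R)$ modulo the image from $A(k-1)$''. The cotangent-complex description is the correct one; the attaching map is chosen to lift a basis of the lowest nonzero cohomology of $\bL_{R/A(k-1)}$ at $p$, and it is this choice that both extends the quasi-isomorphism one degree further and makes the minimality condition automatic. Second, no spectral sequence is needed for minimality: once one knows that $m_k=\dim H^{-k}(\bL_A\vert_p)$ for all $k$ (which follows from the quasi-isomorphism $A\simeq R$ near $p$), the vanishing at $p$ of the composition in Definition~\ref{sa2def1} is forced by a dimension count, since a nonzero composition would give $m_k>\dim H^{-k}(\bL_A\vert_p)$. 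The ``simultaneous optimization'' you flag as the main obstacle is thus resolved by working with the relative cotangent complex throughout rather than with the cohomology of $R$ directly.
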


\begin{thm} Let\/ $\bX$ be a derived\/ $\K$-scheme, $A,B$ be
standard form cdgas over $\K,$ and\/ $\bs f:\bSpec A\ra\bX,$ $\bs
g:\bSpec B\ra\bX$ be Zariski open inclusions in $\dSch_\K$. Suppose
$p\in\Spec H^0(A)$ and\/ $q\in\Spec H^0(B)$ with\/ $\bs f(p)=\bs
g(q)$ in $\bX$. Then there exist a standard form cdga $C$ over $\K$
which is minimal at\/ $r$ in $\Spec H^0(C)$ and morphisms of cdgas
$\al:A\ra C,$ $\be:B\ra C$ which are Zariski open inclusions, such
that\/ $\bSpec\al:r\mapsto p,$ $\bSpec\be:r\mapsto q,$ and\/ $\bs
f\ci\bSpec\al\simeq\bs g\ci\bSpec\be$ as morphisms $\bSpec C\ra\bX$
in~$\dSch_\K$.

If instead\/ $\bs f,\bs g$ are \'etale rather than Zariski open
inclusions, the same holds with\/ $\al,\be$ \'etale rather than
Zariski open inclusions.
\label{sa2thm2}
\end{thm}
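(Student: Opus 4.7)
\medskip

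\noindent\textbf{Proof proposal.} The plan is to form the derived intersection of the two open subschemes $\bSpec A$ and $\bSpec B$ inside $\bX$ and then apply Theorem \ref{sa2thm1} to it. I would set $\bW:=\bSpec A\t_\bX\bSpec B,$ the homotopy fibre product in $\dSch_\K$. Since Zariski open inclusions are stable under homotopy pullback in $\dSt_\K,$ both projections $\pi_A:\bW\ra\bSpec A$ and $\pi_B:\bW\ra\bSpec B$ are Zariski open inclusions, and $\bW$ is itself a derived $\K$-scheme. The hypothesis $\bs f(p)=\bs g(q)$ supplies a $\K$-point $w\in\bW$ with $\pi_A(w)=p$ and $\pi_B(w)=q.$

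Next, I would apply Theorem \ref{sa2thm1} to the pair $(\bW,w)$ to obtain a standard form cdga $C$ over $\K,$ a point $r\in\Spec H^0(C)$ at which $C$ is minimal, and a Zariski open inclusion $\bs h:\bSpec C\ra\bW$ with $\bs h(r)=w.$ Composing with the projections yields Zariski open inclusions of affine derived schemes $\pi_A\ci\bs h:\bSpec C\ra\bSpec A$ and $\pi_B\ci\bs h:\bSpec C\ra\bSpec B.$ To present these by strict cdga morphisms $\al:A\ra C$ and $\be:B\ra C,$ I would use that standard form cdgas are semi-free over their degree zero parts and hence cofibrant in the model structure on non-positively graded cdgas; this implies that any $\iy$-categorical morphism $\bSpec C\ra\bSpec A$ is represented, up to cdga homotopy, by an honest cdga map $A\ra C,$ and similarly for $\be.$ The required conditions $\bSpec\al:r\mapsto p,$ $\bSpec\be:r\mapsto q$ would follow automatically from the behaviour of $\pi_A\ci\bs h,\pi_B\ci\bs h$ on $H^0,$ while the equivalence $\bs f\ci\bSpec\al\simeq\bs g\ci\bSpec\be$ is precisely the homotopy-commutative square defining the fibre product $\bW.$

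For the \'etale variant, the same strategy should apply with minor modifications: the derived fibre product $\bW=\bSpec A\t_\bX\bSpec B$ is still a derived scheme (since \'etale morphisms are stable under pullback in $\dSt_\K$ and their derived fibre products over derived stacks remain schematic), the projections $\pi_A,\pi_B$ become \'etale, Theorem \ref{sa2thm1} still produces a Zariski open inclusion $\bs h:\bSpec C\ra\bW$ from a standard form $C$ minimal at $r,$ and the compositions $\pi_A\ci\bs h,\pi_B\ci\bs h$ are then \'etale (being Zariski open followed by \'etale). The strictification step is unchanged.

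The main obstacle I expect is precisely this strictification: lifting the $\iy$-categorical morphisms $\bSpec C\ra\bSpec A$ and $\bSpec C\ra\bSpec B$ to literal cdga morphisms $\al,\be,$ and simultaneously arranging that $\bs f\ci\bSpec\al\simeq\bs g\ci\bSpec\be$ on the nose up to cdga homotopy. Handling this requires using the semi-free (hence cofibrant) nature of standard form cdgas so that $\pi_0$ of the derived mapping space $\mathrm{Map}(A,C)$ is computed by strict cdga maps modulo cdga homotopy, together with the universal property of the homotopy fibre product $\bW$ to ensure the two lifts fit into a diagram over $\bX$ that is coherently homotopy-commutative.
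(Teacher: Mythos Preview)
Your overall strategy is exactly the one used in \cite{BBJ} (from which this theorem is quoted without proof here) and in the paper's own proof of the stacky analogue, Theorem~\ref{sa2thm5}: form the fibre product $\bW=\bSpec A\t_\bX\bSpec B$, produce a minimal standard form model $C$ for it near the given point, and then strictify the two projections to cdga maps $\al:A\ra C$, $\be:B\ra C$. You have also correctly identified the strictification step as the crux.

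The gap is in your proposed resolution of that step. Standard form cdgas are \emph{not} cofibrant over $\K$ in general: $A$ is semi-free over $A^0$, but $A^0$ is a smooth $\K$-algebra, not a polynomial algebra (or a retract of one), so $A$ need not be cofibrant as a $\K$-cdga. Consequently, an $\iy$-categorical morphism $\bSpec C\ra\bSpec A$ gives on $H^0$ a map $A^0\ra H^0(A)\ra H^0(C)$, but there is no reason this lifts along the surjection $C^0\twoheadrightarrow H^0(C)$ to a strict map $A^0\ra C^0$; formal smoothness of $A^0$ does not help since that surjection is not a nilpotent extension. Without such a lift you cannot invoke freeness of $A$ over $A^0$ to build~$\al$.

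The fix, visible in the paper's proof of Theorem~\ref{sa2thm5} (which follows \cite[\S 4]{BBJ}), is \emph{not} to apply Theorem~\ref{sa2thm1} as a black box, but to re-run its construction with a specific choice of degree-zero part. One first embeds the classical truncation $W=t_0(\bW)$ into $U(0)\t V(0)\t\bA^N$ via the two projections and some auxiliary functions, then takes $W(0)=\Spec C^0$ to be a smooth locally closed subscheme of $U(0)\t V(0)\t\bA^N$ containing the image of $W$, of the correct dimension. The projections $W(0)\ra U(0)$ and $W(0)\ra V(0)$ then give \emph{strict} maps $A^0\ra C^0$ and $B^0\ra C^0$ by construction, and now freeness of $A,B$ in negative degrees lets you extend to cdga maps $\al,\be$ representing the projections $\bs\pi_\bU\ci\bs i$, $\bs\pi_\bV\ci\bs i$. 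So your plan is right; the missing ingredient is this careful choice of $C^0$ inside the product of the ambient smooth schemes.
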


One important advantage of working with derived schemes $\bU=\bSpec
A$ for $A$ a standard form cdga, is that the cotangent complex
$\bL_\bU$ and its exterior powers $\La^p\bL_\bU$ can be written
simply and explicitly in terms of $A$. As in \cite[\S 2, \S
3.3]{BBJ} the differential-graded module of {\it K\"ahler
differentials\/} $\Om^1_A$ is a model for $\bL_\bU$. If $U(0)=\Spec
A^0$ admits global \'etale coordinates $(x_1^0,\ldots,x_{m_0}^0)$,
then $\Om^1_A$ is a finitely-generated free $A$-module, generated by
$\dd x_1^{-i},\ldots,\dd x_{m_i}^{-i}$ in degree $-i$ for
$i=0,\ldots,n$, where $x_1^{-i},\ldots,x_{m_i}^{-i}$ are
$A(i-1)$-bases for the free finite rank $A(i-1)$-modules $M^{-i}$
for $i=1,\ldots,n$, in the notation of Definition~\ref{sa2def1}.

Because of this, on $\bU=\bSpec A$, the $k$-shifted (closed)
$p$-forms from \cite{PTVV} discussed in \S\ref{sa22} can be written
down explicitly in coordinates. Here is \cite[Prop.~5.7]{BBJ}. Part
(a) implies that for a $k$-shifted symplectic form
$\om=(\om^0,\ab\om^1,\ab\om^2,\ab\ldots)$ on a standard form
$\bU=\bSpec A$, up to equivalence we may take
$\om^1=\om^2=\cdots=0$, which simplifies calculations a lot. (Let us
note here that the proof of \cite[Prop.~5.7]{BBJ} uses the
interpretation of shifted symplectic forms as representing classes
in negative cyclic homology.)

\begin{prop}{\bf(a)} Let\/ $\om=(\om^0,\om^1, \om^2,\ldots)$ be a
closed\/ $2$-form of degree $k<0$ on $\bU=\bSpec A,$ for $A$ a
standard form cdga over $\K$. Then there exist\/ $\Phi\in A^{k+1}$
and\/ $\phi\in(\Om^1_A)^k$ such that\/ $\d\Phi=0$ in\/ $A^{k+2}$
and\/ $\dd \Phi+\d\phi=0$ in $(\Om^1_A)^{k+1}$ and\/ $\om\sim (\dd
\phi,0,0,\ldots)$.
\smallskip

\noindent{\bf(b)} In the case $k=-1$ in {\bf(a)} we have $\Phi\in
A^0=A(0),$ so we can consider the restriction\/ $\Phi\vert_{U^\red}$
of\/ $\Phi$ to the reduced\/ $\K$-subscheme $U^\red$ of\/
$U=t_0(\bU)=\Spec H^0(A)$. Then $\Phi\vert_{U^\red}$ is locally
constant on $U^\red,$ and we may choose $(\Phi,\phi)$ in {\bf(a)}
such that\/~$\Phi\vert_{U^\red}=0$.
\smallskip

\noindent{\bf(c)} Suppose $(\Phi,\phi)$ and\/ $(\Phi',\phi')$ are
alternative choices in part\/ {\bf(a)\rm} for fixed\/ $\om,k,\bU,A,$
where if\/ $k=-1$ we suppose $\Phi\vert_{U^\red}=0=
\Phi'\vert_{U^\red}$ as in {\bf(b)}. Then there exist\/ $\Psi\in
A^k$ and\/ $\psi\in(\Om^1_A)^{k-1}$ with\/ $\Phi-\Phi'=\d\Psi$
and\/~$\phi-\phi'=\dd\Psi+\d\psi$.

\label{sa2prop1}
\end{prop}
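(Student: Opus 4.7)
The setup is to view $\om=(\om^0,\om^1,\ldots)$ as a total-degree-$k$ cocycle in the negative cyclic bicomplex $C^{p,q}=(\Om^p_A)^q$ for $p\ge 2$, equipped with the internal cdga differential $\d$ and the de Rham differential $\dd$, so that $[\om]$ lives in $H^k(\text{Tot}(C),\d+\dd)$. The key input from the standard form structure of $A$ is a \emph{formal Poincar\'e lemma}: because $A$ is graded-free over the smooth $\K$-algebra $A^0$ on the negative-degree generators $x_j^{-i}$, the de Rham differential admits a contracting homotopy $h:\Om^p_A\ra\Om^{p-1}_A$ valid up to $\d$-exact error terms, constructed from the ordinary Poincar\'e lemma on \'etale charts of $A^0$ together with a Koszul-type contraction along the negative-degree generators.

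For part {\bf(a)}, my plan is to use $h$ to iteratively kill the higher components $\om^1,\om^2,\ldots$ modulo $(\d+\dd)$-exact terms. Using the closedness equations $\dd\om^i+\d\om^{i+1}=0$ one builds primitives $\al^i\in(\Om^{2+i}_A)^{k-1-i}$ recursively so that $\om-(\d+\dd)(\al^0,\al^1,\ldots)$ reduces to $(\ti\om^0,0,0,\ldots)$ with both $\d\ti\om^0=0$ and $\dd\ti\om^0=0$. A second pass of $h$ yields $\phi\in(\Om^1_A)^k$ with $\dd\phi=\ti\om^0$, and then $\dd(\d\phi)=-\d(\dd\phi)=-\d\ti\om^0=0$ permits a third application producing $\Phi\in A^{k+1}$ with $\dd\Phi=-\d\phi$. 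The remaining condition $\d\Phi=0$ is automatic for $k=-1$ (since $A^{k+2}=A^1=0$), and for $k\le -2$ follows from the observation that $\d\Phi$ is $\dd$-closed (by $\d$-differentiating $\dd\Phi+\d\phi=0$) together with the key computation that in a standard form cdga any $\dd$-closed element of $A^{-j}$ with $j\ge 1$ vanishes: its partial derivatives with respect to every generator must be zero, forcing it to live in $A^0$, contradicting degree. The boundary case $k=-2$, where $\d\Phi\in A^0$ is a priori only locally constant, requires an extra adjustment of $\phi$ by a $\dd$-closed 1-form to normalize $\d\Phi$ to zero.

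For part {\bf(b)}, with $k=-1$, $\Phi\in A^0$ satisfies $\dd\Phi=-\d\phi$ in $(\Om^1_A)^0$. The image of $\d:(\Om^1_A)^{-1}\ra(\Om^1_A)^0$, projected to the $A^0$-submodule generated by the $\dd x_j^0$, lies in $J\cdot\Om^1_{A^0}$, where $J=\d(A^{-1})\subseteq A^0$ is the ideal defining $U=t_0(\bU)\subseteq U(0)=\Spec A^0$. Thus $\dd\Phi|_U=0$, and on $U^\red$ the function $\Phi|_{U^\red}$ has vanishing differential, hence is locally constant; subtracting this locally constant value preserves $\d\Phi=0$ and $\dd\Phi+\d\phi=0$ and achieves $\Phi|_{U^\red}=0$.

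For part {\bf(c)}, two admissible pairs $(\Phi,\phi),(\Phi',\phi')$ give $(\dd(\phi-\phi'),0,\ldots)$ that is $(\d+\dd)$-exact; tracing the witnessing primitive back through the bicomplex using the same contracting homotopy as in {\bf(a)} produces $\Psi\in A^k$ and $\psi\in(\Om^1_A)^{k-1}$ with $\Phi-\Phi'=\d\Psi$ and $\phi-\phi'=\dd\Psi+\d\psi$, the $k=-1$ normalization $\Phi|_{U^\red}=0=\Phi'|_{U^\red}$ ensuring consistency. The main technical obstacle throughout is the explicit construction and verification of the contracting homotopy $h$: one must ensure $h$ interacts well with the internal differential $\d$ while contracting $\dd$, which relies essentially on the graded freeness of $A$ over $A^0$ and a delicate bookkeeping of how $\d$ acts on the negative-degree generators. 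This is precisely the content that demands interpreting shifted closed forms via negative cyclic homology, as flagged by the authors.
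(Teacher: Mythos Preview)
The paper does not prove this proposition; it is quoted verbatim from \cite[Prop.~5.7]{BBJ}, with only the parenthetical remark that the proof there ``uses the interpretation of shifted symplectic forms as representing classes in negative cyclic homology.'' Your sketch is in the correct spirit and aligns with that hint: the Poincar\'e-type contracting homotopy for $(\Om^\bu_A,\dd)$ that you invoke is exactly what the HKR identification of the cyclic complex with the de Rham complex provides for a quasi-free (standard form) cdga over a smooth base, and your inductive reduction of $(\om^0,\om^1,\ldots)$ to $(\dd\phi,0,\ldots)$ is the right shape.

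One concrete gap: your handling of the boundary case $k=-2$ is not correct as stated. You observe that $\d\Phi\in A^0$ is $\dd$-closed, hence locally constant, and propose to ``adjust $\phi$ by a $\dd$-closed $1$-form'' to force $\d\Phi=0$. But replacing $\phi$ by $\phi+\al$ with $\dd\al=0$ leaves $\dd\phi$ unchanged, and any compensating change to $\Phi$ must satisfy $\dd$-constraints that bring you back to the same problem. The actual resolution is simpler: since $\d:A^{-1}\ra A^0$ is $A^0$-linear (as $\d\vert_{A^0}=0$), its image is precisely the ideal $I\subseteq A^0$ cutting out $U=\Spec H^0(A)$ in $U(0)=\Spec A^0$. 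Thus $\d\Phi$ is a locally constant function lying in $I$; on any connected component of $U(0)$ where $U\ne\es$ this forces $\d\Phi=0$, and components with $U=\es$ may be discarded. A similar care is needed in part {\bf(c)}: the claim $\Phi-\Phi'=\d\Psi$ is not just a matter of ``tracing back'' the homotopy, since it asserts $\d$-exactness of a $\d$-closed element of $A^{k+1}$, which for $k=-1$ genuinely requires the normalization $\Phi\vert_{U^\red}=\Phi'\vert_{U^\red}=0$ together with the same ideal-theoretic argument.
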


\subsection{`Darboux form' shifted symplectic derived schemes}
\label{sa24}

The next definition summarizes \cite[Ex.s 5.8--5.10]{BBJ}.

\begin{dfn} Fix $d=0,1,\ldots.$ We will explain how to define a class
of explicit standard form cdgas $(A,\d)=A(n)$ for $n=2d+1$ with a
very simple, explicit $k$-shifted symplectic form
$\om=(\om^0,0,0,\ldots)$ on $\bU=\bSpec A$ for $k=-2d-1$. We will
say that $A,\om$ are in {\it Darboux form}.

First choose a smooth $\K$-algebra $A(0)$ of dimension $m_0$.
Localizing $A(0)$ if necessary, we may assume that there exist
$x^0_1,\ldots,x^0_{m_0}\in A(0)$ such that $\dd x^0_1,\ldots,\dd
x^0_{m_0}$ form a basis of $\Om^1_{A(0)}$ over $A(0)$.
Geometrically, $U(0)=\Spec A(0)$ is a smooth $\K$-scheme of
dimension $m_0$, and $(x^0_1,\ldots,x^0_{m_0}):U(0)\ra\bA^{m_0}$ are
global \'etale coordinates on~$U(0)$.

Next, choose $m_1,\ldots,m_d\in\N=\{0,1,\ldots\}$. Define $A$ as a
commutative graded algebra to be the free algebra over $A(0)$
generated by variables
\e
\begin{aligned}
&x_1^{-i},\ldots,x^{-i}_{m_i} &&\text{in degree $-i$ for
$i=1,\ldots,d$, and} \\
&y_1^{i-2d-1},\ldots,y^{i-2d-1}_{m_i} &&\text{in degree $i-2d-1$ for
$i=0,1,\ldots,d$.}
\end{aligned}
\label{sa2eq2}
\e
So the upper index $i$ in $x^i_j,y^i_j$ always indicates the degree.
We will define the differential $\d$ in the cdga $(A,\d)$ later.

The spaces $(\La^p\Om^1_A)^k$ and the de Rham differential $\dd$
upon them depend only on the commutative graded algebra $A$, not on
the (not yet defined) differential $\d$. Note that $\Om^1_A$ is the
free $A$-module with basis $\dd x^{-i}_j,\dd y^{i-2d-1}_j$ for
$i=0,\ldots,d$ and $j=1,\ldots,m_i$. Define
\e
\om^0=\ts\sum_{i=0}^d\sum_{j=1}^{m_i}\dd y^{i-2d-1}_j\,\dd x^{-i}_j
\qquad \text{in $(\La^2\Om^1_A)^{-2d-1}$.}
\label{sa2eq3}
\e
Then $\dd\om^0=0$ in $(\La^3\Om^1_A)^{-2d-1}$.

Now choose $H$ in $A^{-2d}$, which we will call the
{\it Hamiltonian}, and which we require to satisfy the {\it
classical master equation\/}
\e
\sum_{i=1}^d\sum_{j=1}^{m_i}\frac{\pd H}{\pd x^{-i}_j}\,
\frac{\pd H}{\pd y^{i-2d-1}_j}=0\qquad\text{in $A^{1-2d}$.}
\label{sa2eq4}
\e
The classical master equation can be expressed invariantly as
$\{H,H\}=0$, where $\{,\}$ is a certain shifted Poisson bracket. For
more on this, consult~\cite[\S 5.7]{BBJ}.

Note that \eq{sa2eq4} is trivial when $d=0$, so that $k=-1$, as
$A^1=0$. Define the differential $\d$ on $A$ by $\d=0$ on $A(0)$, and
\e
\d x^{-i}_j =\frac{\pd H}{\pd y^{i-2d-1}_j}, \quad \d
y^{i-2d-1}_j=\frac{\pd H}{\pd x^{-i}_j},\quad\begin{subarray}{l}\ts
i=0,\ldots,d,\\[6pt] \ts j=1,\ldots,m_i.\end{subarray}
\label{sa2eq5}
\e
Then $\d\ci\d=0$, and $(A,\d)$ is a standard form cdga $A=A(n)$ as
in Definition \ref{sa2def1} for $n=2d+1$, defined using free modules
$M^{-i}=\langle x^{-i}_1,\ldots,x^{-i}_{m_i} \rangle_{A(i-1)}$ for
$i=1,\ldots,d$ and $M^{i-2d-1}=\langle y^{i-2d-1}_1,
\ldots,y^{i-2d-1}_{m_i}\rangle_{A(2d-i)}$ for $i=0,\ldots,d$.

Then $\om=(\om^0,0,0,\ldots)$ is a $k$-shifted symplectic structure
on $\bU=\bSpec A$ for $k=-2d-1$. Define $\Phi\in A^{-2d}$ and
$\phi\in(\Om^1_A)^{-2d-1}$ by $\Phi=-\frac{1}{2d+1}\,H$ and
\e
\phi=\ts\frac{1}{2d+1}\sum_{i=0}^d\sum_{j=1}^{m_i}\bigl[(2d+1-i)
y^{i-2d-1}_j\,\dd x^{-i}_j+i\,x^{-i}_j\,\dd y^{i-2d-1}_j\bigr].
\label{sa2eq6}
\e
Then $\d\Phi=0$, $\dd\Phi+\d\phi=0$, and $\om^0=\dd\phi$, as in
Proposition \ref{sa2prop1}(a). We say that $A,\om$ are in {\it
Darboux form\/} for~$k=-2d-1$.
\medskip

In \cite[Ex.s 5.9 \& 5.10]{BBJ} we give similar Darboux forms for
$k=-4d$ and $k=-4d-2$ with $d=0,1,2,\ldots.$ We will not give all
the details. In brief, when $k=-4d$, rather than \eq{sa2eq2}, $A$ is
freely generated over $A(0)$ by the variables
\begin{align*}
&x_1^{-i},\ldots,x^{-i}_{m_i} &&\text{in degree $-i$ for
$i=1,\ldots,2d-1$,} \\
&x_1^{-2d},\ldots,x^{-2d}_{m_{2d}},y_1^{-2d},\ldots,y^{-2d}_{m_{2d}}
&&\text{in degree $-2d$, and} \\
&y_1^{i-4d},\ldots,y^{i-4d}_{m_i} &&\text{in degree $i-4d$ for
$i=0,1,\ldots,2d-1$,}
\end{align*}
and $\om^0\in(\La^2\Om^1_A)^{-4d}$ with $\dd\om^0=0$ is given by
\begin{equation*}
\om^0=\ts\sum_{i=0}^{2d}\sum_{j=1}^{m_i}\dd y^{i-4d}_j\,\dd x^{-i}_j
\qquad \text{in $(\La^2\Om^1_A)^{-4d}$,}
\end{equation*}
and $\d$ on $A$ is defined as in \eq{sa2eq5} using $H\in A^{1-4d}$
satisfying the analogue of \eq{sa2eq4}. We then say that
$A,\bU=\bSpec A,\om$ are in {\it Darboux form\/} for~$k=-4d$.

Similarly, when $k=-4d-2$, $A$ is freely generated over $A(0)$ by
the variables
\begin{align*}
&x_1^{-i},\ldots,x^{-i}_{m_i} &&\text{in degree $-i$ for
$i=1,\ldots,2d$,} \\
&z_1^{-2d-1},\ldots,z^{-2d-1}_{m_{2d+1}}
&&\text{in degree $-2d-1$, and} \\
&y_1^{i-4d-2},\ldots,y^{i-4d-2}_{m_i} &&\text{in degree $i-4d-2$ for
$i=0,1,\ldots,2d$,}
\end{align*}
and $\om^0\in(\La^2\Om^1_A)^{-4d-2}$ with $\dd\om^0=0$ is given by
\begin{equation*}
\om^0=\ts\sum_{i=0}^{2d}\sum_{j=1}^{m_i}\dd y^{i-4d-2}_j\,\dd x^{-i}_j
+\sum_{j=1}^{m_{2d+1}}\dd z^{-2d-1}_j\,\dd z^{-2d-1}_j,
\end{equation*}
and $\d$ is defined as in \eq{sa2eq5} using $H\in A^{-4d-1}$
satisfying
\begin{equation*}
\sum_{i=1}^{2d}\sum_{j=1}^{m_i}\frac{\pd H}{\pd x^{-i}_j}\,
\frac{\pd H}{\pd y^{i-4d-2}_j}+\frac{1}{4}\sum_{j=1}^{m_{2d+1}}
\biggl(\frac{\pd H}{\pd z^{-2d-1}_j}\biggr)^2=0\qquad\text{in
$A^{-4d}$.}
\end{equation*}
We then say that $A,\om$ are in {\it strong Darboux form\/} for
$k=-4d-2$. There is also a {\it weak Darboux form\/}
\cite[Ex.~5.12]{BBJ} in this case, which we will not discuss.
\label{sa2def2}
\end{dfn}

Here is \cite[Th.~5.18]{BBJ}, the main result of \cite{BBJ}. We
consider it to be a shifted symplectic analogue of Darboux' Theorem,
as it shows that we can choose `coordinate systems' on a $k$-shifted
symplectic derived scheme $(\bX,\om)$ in which $\om$ assumes a
standard form.

\begin{thm} Let\/ $\bX$ be a derived\/ $\K$-scheme with\/
$k$-shifted symplectic form $\ti\om$ for $k<0,$ and\/ $x\in\bX$.
Then there exists a standard form cdga $A$ over $\K$ which is
minimal at\/ $p\in\Spec H^0(A),$ a $k$-shifted symplectic form\/
$\om$ on $\bSpec A,$ and a morphism $\bs f:\bU=\bSpec A\ra\bX$
with\/ $\bs f(p)=x$ and\/ $\bs f^*(\ti\om)\sim\om,$ such that:
\begin{itemize}
\setlength{\itemsep}{0pt}
\setlength{\parsep}{0pt}
\item[{\bf(i)}] If\/ $k$ is odd or divisible by $4,$ then $\bs f$ is
a Zariski open inclusion, and\/ $A,\om$ are in Darboux form, as
in Definition\/~{\rm\ref{sa2def2}}.
\item[{\bf(ii)}] If\/ $k\equiv 2\mod 4,$ then $\bs f$ is
\'etale, and\/ $A,\om$ are in strong Darboux form, as in
Definition\/~{\rm\ref{sa2def2}}.
\end{itemize}
\label{sa2thm3}
\end{thm}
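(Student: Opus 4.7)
The plan is to proceed in two stages: first reduce to a standard form affine local model, then construct new generators so that the symplectic form assumes its canonical Darboux shape.

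First I would apply Theorem \ref{sa2thm1} to obtain a Zariski open inclusion $\bs g:\bU_0=\bSpec A_0 \hookra \bX$ with $A_0$ a standard form cdga minimal at some $p\in\Spec H^0(A_0)$ and $\bs g(p)=x$. The pullback $\bs g^*(\ti\om)$ is a $k$-shifted closed 2-form on $\bU_0$ whose underlying 2-form is nondegenerate, since $\bs g$ is in particular \'etale. Apply Proposition \ref{sa2prop1} to replace $\bs g^*(\ti\om)$ within its equivalence class by $(\dd\phi_0,0,0,\ldots)$ for some $\phi_0\in(\Om^1_{A_0})^k$ with Hamiltonian $\Phi_0\in A_0^{k+1}$ satisfying $\d\Phi_0=0$ and $\dd\Phi_0+\d\phi_0=0$ (normalized as in part (b) when $k=-1$).

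Second, using the nondegeneracy of $\om^0=\dd\phi_0$ together with the minimality of $A_0$ at $p$, I would inductively construct a new set of generators degree by degree. The crucial point is that minimality guarantees that at the fiber over $p$ the pairing induced by $\om^0$ descends to a perfect pairing $H^{-i}(\bL_{\bU_0}\vert_p)\ot H^{k+i}(\bL_{\bU_0}\vert_p)\ra\K$ for each $i$. Thus the free generators of $A_0$ in degrees $-i$ and $k+i$ can, after possibly shrinking $A_0$ Zariski-locally around $p$, be organized into dual pairs whose pairing matrix is invertible on a neighborhood of $p$; a linear change of generators then makes the pairing take the canonical form $\sum\dd y^{k+i}_j\,\dd x^{-i}_j$. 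Running this procedure from the extreme degrees inward yields a standard form cdga $A$ whose generators are Darboux generators as in Definition \ref{sa2def2} and whose $\om^0$ is exactly \eqref{sa2eq3} (or its analogue). The corresponding $\phi$ is then given by \eqref{sa2eq6}, and the condition $\dd\Phi+\d\phi=0$, combined with nondegeneracy of $\om^0$, forces $\d$ to be the Hamiltonian derivation $\{H,-\}$ for $H$ proportional to $\Phi$, with $\d\ci\d=0$ translating into the classical master equation $\{H,H\}=0$.

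The main obstacle is the middle degree $k/2$ when $k\equiv 2\pmod 4$: here the generators in degree $k/2$ pair with themselves, so $\om^0$ restricts to a symmetric bilinear form on a free module, and putting it in standard diagonal form $\sum\dd z^{k/2}_j\,\dd z^{k/2}_j$ requires extracting square roots of its diagonal entries. Since $\K$ is algebraically closed these square roots exist pointwise at $p$, but they need not exist Zariski-locally, so we are forced to pass to an \'etale cover of $A_0$ around $p$ in order to complete the change of generators. In every other case (odd $k$, or $k$ divisible by $4$), the generators in the middle degree pair against generators of complementary degree via a bilinear pairing of free modules, which can be put in canonical form by ordinary linear algebra after a Zariski shrinking only. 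This dichotomy accounts for conclusions (i) and (ii).
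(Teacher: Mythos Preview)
The paper does not actually prove this theorem; it is quoted verbatim as \cite[Th.~5.18]{BBJ}, and the reader is referred there for the argument. That said, your outline follows the approach of \cite[\S 5.6]{BBJ} quite closely: start from a minimal standard form model via Theorem~\ref{sa2thm1}, simplify the closed form to $(\dd\phi,0,\ldots)$ via Proposition~\ref{sa2prop1}, and then inductively choose Darboux generators using nondegeneracy of $\om^0$ at $p$. Your identification of the case $k\equiv 2\pmod 4$ as the one requiring square roots, hence an \'etale cover, is exactly right.

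There is one inaccuracy in your final paragraph. When $k\equiv 0\pmod 4$, say $k=-4d$, there \emph{is} a middle degree $-2d$ in which generators pair with themselves; it is not that they pair with generators of a complementary degree. The point is rather that the Koszul sign rule makes the restriction of $\om^0$ to degree $-2d$ an \emph{antisymmetric} bilinear form (since $(-2d)^2$ is even), so it can be put in the standard block form $\sum\dd y^{-2d}_j\,\dd x^{-2d}_j$ by ordinary linear algebra over $A^0$ after a Zariski localization. By contrast, when $k=-4d-2$ the middle degree is $-2d-1$, the sign is $+1$, and the pairing is symmetric, so diagonalizing it as $\sum\dd z^{-2d-1}_j\,\dd z^{-2d-1}_j$ genuinely requires adjoining square roots of units in $A^0$. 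Your conclusion is correct, but the reason for the $k\equiv 0\pmod 4$ case is the antisymmetry of the self-pairing, not its absence.
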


Bouaziz and Grojnowski \cite{BoGr} also independently prove a
similar theorem.

\subsection{`Standard form' atlases for derived stacks}
\label{sa25}

We first generalize Definition \ref{sa2def1} and Theorems
\ref{sa2thm1}--\ref{sa2thm2} to derived Artin stacks:

\begin{dfn} Let $\bX$ be a derived Artin $\K$-stack, and $p$ a point
of $\bX$. By this we mean a morphism $p:\Spec \K\ra\bX$; we may also
call $p$ a $\K$-point of $\bX$. A {\it standard form open
neighbourhood\/} $(A,\bs\vp,\ti p)$ of $p$, in the smooth topology,
means a standard form cdga $A$ over $\K$ in the sense of Definition
\ref{sa2def1}, so that $\bU=\bSpec A$ is an affine derived
$\K$-scheme, and a morphism $\bs\vp:\bU\ra\bX$ which is smooth of
some relative dimension $n\ge 0$, and a $\K$-point $\ti p$ in $\bU$
with $p=\bs\vp(\ti p)$, that is, there is an equivalence of
morphisms $p\simeq\bs\vp\ci\ti p:\Spec \K\ra\bX$. If we do not
specify $p,\ti p$, we just call $(A,\bs\vp)$ a {\it standard form
open neighbourhood\/} in~$\bX$.

For such $\bX,p,(A,\bs\vp,\ti p),n$, as for \eq{sa2eq1} we have the
standard fibre sequence
\e
\smash{\xymatrix@C=30pt{ \bs\vp^*(\bL_{\bX}) \ar[r]^(0.55){\bL_{\bs\vp}} & \bL_\bU \ar[r] & \bL_{\bU/\bX} \ar[r] & \bs\vp^*(\bL_{\bX})[1], }}
\label{sa2eq7}
\e
where $\bL_{\bU/\bX}$ is locally free of rank $n$. Restricting
\eq{sa2eq7} to $\ti p$ and taking cohomology, we have the following:
\begin{itemize}
\setlength{\itemsep}{0pt}
\setlength{\parsep}{0pt}
\item[(a)] There are isomorphisms
$H^i\bigl(\bL_\bX\vert_p\bigr) \cong H^i\bigl(\bL_\bU\vert_{\ti
p}\bigr)$ for $i<0$.
\item[(b)] Since $\bU$ is not stacky,
$H^1\bigl(\bL_\bU\vert_{\ti p}\bigr)=0$ and so there is an exact
sequence of $\K$-vector spaces
\begin{equation*}
\smash{\xymatrix@C=15pt{ 0 \ar[r] & H^0\bigl(\bL_\bX\vert_p\bigr) \ar[r] & H^0\bigl(\bL_\bU\vert_{\ti p}\bigr) \ar[r] & H^0\bigl(\bL_{\bU/\bX}
\vert_{\ti p}\bigr) \ar[r] & H^1\bigl(\bL_\bX\vert_p\bigr) \ar[r] & 0, }}
\end{equation*}
where $H^0\bigl(\bL_{\bU/\bX} \vert_{\ti p}\bigr)\cong \K^n$.
Therefore $n\ge\dim H^1\bigl(\bL_\bX\vert_p\bigr)$.

Note that $H^1\bigl(\bL_\bX\vert_p\bigr)\cong\fIso_\bX(p)^*$,
where $\fIso_\bX(p)$ is the Lie algebra of the isotropy group
$\Iso_\bX(p)$ of $\bX$ at $p$, which is an algebraic $\K$-group.

In particular, the minimal possible relative dimension
$n=\rank(\bL_{\bU/\bX})$ of a neighbourhood $\bs\vp:\bU\ra\bX$
of $p$ is $n=\dim H^1\bigl(\bL_\bX\vert_p\bigr)$.
\item[(c)] If $\bs\vp$ is smooth of minimal relative dimension
$n=\dim H^1\bigl(\bL_\bX\vert_p\bigr)$, then
\e
\smash{H^0\bigl(\bL_\bX\vert_p\bigr)\cong H^0\bigl(\bL_\bU \vert_{\ti
p}\bigr)\quad\text{and}\quad H^0\bigl(\bL_{\bU/\bX} \vert_{\ti
p}\bigr) \cong H^1\bigl(\bL_\bX\vert_p\bigr).}
\label{sa2eq8}
\e
\end{itemize}

We call a standard form open neighbourhood $(A,\bs\vp,\ti p)$ {\it
minimal at\/} $p$ if  $A$ is minimal at $\ti p$ in the sense of
Definition \ref{sa2def1} and $n=\dim H^1\bigl(\bL_\bX\vert_p\bigr)$.
Then parts (a),(c) imply that $A(0)$ is smooth of dimension
$m_0=\dim H^0\bigl(\bL_\bX\vert_p\bigr)$, and $A$ has $m_i=\dim
H^{-i}\bigl(\bL_\bX\vert_p\bigr)$ generators in degree $-i$ for
$i=1,2,\ldots.$
\label{sa2def3}
\end{dfn}

\begin{thm} Let\/ $\bX$ be a derived Artin $\K$-stack, and\/ $p$ a
point of\/ $\bX$. Then there exists a minimal standard form open
neighbourhood\/ $(A,\bs\vp,\ti p)$ of\/ $p,$ in the sense of
Definition\/~{\rm\ref{sa2def3}}.
\label{sa2thm4}
\end{thm}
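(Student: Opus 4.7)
The plan is to produce any smooth atlas over $\bX$ by an affine derived scheme, reduce its relative dimension at $p$ to the minimal value $n:=\dim H^1(\bL_\bX|_p)$ by a derived cutting-down, and then reapply the derived-scheme case Theorem~\ref{sa2thm1} to minimise the cdga. For the first step, since $\bX$ is a derived Artin $\K$-stack I would pick any smooth surjection $\bs\psi:\bY\to\bX$ from a derived $\K$-scheme $\bY$ together with a point $q\in\bY(\K)$ satisfying $\bs\psi(q)=p$, and then apply Theorem~\ref{sa2thm1} to $\bY$ at $q$ to obtain a minimal standard-form cdga $A''$ and a Zariski open inclusion $\bSpec A''\hookra\bY$ with some $\tilde p''\mapsto q$. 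The composite $\bs\psi'':\bSpec A''\to\bX$ is then smooth of some relative dimension $n'\ge n$.

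The core step is to cut $n'$ down to $n$ while remaining affine. Restricting the fibre sequence \eq{sa2eq7} for $\bs\psi''$ to $\tilde p''$ and taking cohomology exhibits a surjection $\K^{n'}\cong H^0(\bL_{\bSpec A''/\bX}|_{\tilde p''})\twoheadrightarrow H^1(\bL_\bX|_p)$ with $(n'-n)$-dimensional kernel $K$, and the map $H^0(\bL_{\bSpec A''}|_{\tilde p''})\to K$ is surjective by the exact sequence in Definition~\ref{sa2def3}(b). I would choose $f_1,\ldots,f_{n'-n}\in A''(0)$ vanishing at $\tilde p''$ whose differentials $\dd f_i|_{\tilde p''}$ lift a basis of $K$, and form the Koszul-type standard-form cdga $A''':=A''\langle w_1,\ldots,w_{n'-n}\rangle$ with $\deg w_i=-1$ and $\d w_i=f_i$, so that $\bY''':=\bSpec A'''$ is the derived zero locus of the $f_i$ inside $\bSpec A''$. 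From the fibre sequence $\bL_{\bSpec A''/\bX}|_{\bY'''}\to\bL_{\bY'''/\bX}\to\bL_{\bY'''/\bSpec A''}\simeq \O_{\bY'''}^{n'-n}[1]$, together with the linear independence of the $\dd f_i|_{\tilde p''}$ inside $\K^{n'}$, the relative cotangent complex $\bL_{\bY'''/\bX}$ is locally free of rank $n$ concentrated in degree $0$ near $\tilde p''$, so after a Zariski shrinking the induced map $\bs\psi''':\bY'''\to\bX$ is smooth of relative dimension exactly~$n$.

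Finally, I would apply Theorem~\ref{sa2thm1} a second time, now to the derived scheme $\bY'''=\bSpec A'''$ at the point $\tilde p''$, producing a minimal standard-form cdga $A$ and a Zariski open inclusion $\bs f:\bSpec A\hookra\bY'''$ with $\tilde p\mapsto\tilde p''$. The composite $\bs\vp:=\bs\psi'''\circ\bs f:\bSpec A\to\bX$ is smooth of relative dimension $n=\dim H^1(\bL_\bX|_p)$ and $A$ is minimal per Definition~\ref{sa2def1} at $\tilde p$, verifying both clauses of Definition~\ref{sa2def3}; the identities $H^{-i}(\bL_{\bY'''}|_{\tilde p''})\cong H^{-i}(\bL_\bX|_p)$ for $i\ge 0$ needed to recover the generator counts advertised at the end of Definition~\ref{sa2def3} follow from a routine long exact sequence on the triangle $\bL_{\bSpec A''}|_{\bY'''}\to\bL_{\bY'''}\to\O_{\bY'''}^{n'-n}[1]$. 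The main technical obstacle I anticipate is the cutting-down step: one must verify that adjoining the Koszul generators $w_i$ preserves smoothness of $\bY'''\to\bX$ of the sharp relative dimension, which rests on the $\dd f_i|_{\tilde p''}$ being chosen to span precisely a complement of the image of $H^0(\bL_\bX|_p)$ inside the degree-zero relative cotangent space at~$\tilde p''$.
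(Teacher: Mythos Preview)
Your proposal is correct and follows essentially the same route as the paper: start from any smooth affine atlas, minimise its cdga via Theorem~\ref{sa2thm1}, cut the relative dimension down to $n=\dim H^1(\bL_\bX|_p)$ by taking the derived zero locus of degree-zero functions whose differentials span the kernel of $H^0(\bL_{\bU/\bX}|_{\tilde p})\to H^1(\bL_\bX|_p)$, and then reapply Theorem~\ref{sa2thm1}. Your Koszul presentation $A'''=A''\langle w_1,\ldots,w_{n'-n}\rangle$ with $\d w_i=f_i$ is precisely the explicit cdga model for the paper's homotopy fibre $\bs f^{-1}(0)$ of the map $\bs f:\bs{\hat U}\to\bA^{n'-n}$, and your transitivity triangle is the rotation of theirs, so the smoothness verifications coincide.
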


\begin{proof} Since $\bX$ has a smooth atlas, for any $p\in\bX$
there exists an affine neighbourhood $\bs{\hat\vp}:\bs{\hat
U}\ra\bX$ of $p$, where $\bs{\hat U}$ is an affine derived
$\K$-scheme, $\hat p\in\bs{\hat U}$ with $\bs{\hat\vp}(\hat p)=p$,
and $\bs{\hat\vp}$ is smooth of some relative dimension $\hat n$,
with $\hat n\ge\dim H^1\bigl(\bL_\bX\vert_p\bigr)$ by Definition
\ref{sa2def3}(b). Let $r=\hat n-\dim H^1\bigl(\bL_\bX\vert_p\bigr)$,
so that $r$ is the dimension of the kernel of
$H^0\bigl(\bL_{\bs{\hat U}/\bX}\vert_{\hat p}\bigr)\ra
H^1\bigl(\bL_{\bX}\vert_p\bigr)\ra 0$. We shall use this kernel to
cut down $\bs{\hat\vp}:\bs{\hat U}\ra \bX$ to the minimal
dimension~$n=\dim H^1\bigl(\bL_\bX\vert_p\bigr)$.

Localizing $\bs{\hat U}$ around $\hat p$, by Theorem \ref{sa2thm1}
we may take $\bs{\hat U}=\bSpec\hat A$, where $\hat A$ is a standard
form cdga minimal at $\hat p\in\bs{\hat U}$. Then the natural map
$H^0\bigl(\bL_{\smash{\hat U(0)}}\vert_{\hat p}\bigr)\ra
H^0\bigl(\bL_{\smash{\bs{\hat U}}}\vert_{\hat p}\bigr)$ is an
isomorphism. Since $H^0\bigl(\bL_{\smash{\bs{\hat U}}}\vert_{\hat
p}\bigr)\ra H^0\bigl(\bL_{\smash{\bs{\hat U}/\bX}}\vert_{\hat
p}\bigr) \ra H^1\bigl(\bL_\bX\vert_p\bigr)$ is exact, we may choose
(after localization) functions $x_1,\ldots,x_r$ on $\hat U(0)$
vanishing at $\hat p$ so that $\dd x_{1},\ldots,\dd x_{r}$ at $\hat
p$ map to a basis of the kernel of $H^0\bigl(\bL_{\smash{\bs{\hat
U}/\bX}}\vert_{\hat p}\bigr)\ra H^1\bigl(\bL_\bX\vert_p\bigr)$ under
the composition $H^0\bigl(\bL_{\smash{\hat U(0)}}\vert_{\hat
p}\bigr)\ra H^0\bigl(\bL_{\smash{\bs{\hat U}}}\vert_{\hat p}\bigr)
\ra H^0\bigl(\bL_{\smash{\bs{\hat U}/\bX}}\vert_{\hat p}\bigr)$.

The functions $x_1,\ldots,x_r$ define a map $g:\hat U(0)\ra\bA^r$
and hence a map $\bs f:\bs{\hat U}\ra\bA^{r}$ with $\bs f(\hat
p)=0$. We let $\bU$ denote the (homotopy) fibre $\bs f^{-1}(0)$, so
that we have the following diagram in which the square is a
pullback:
\begin{equation*}
\xymatrix@C=60pt@R=11pt{\bU \ar[d]^{\bs{\ti f}}
\ar[r]_{\bs{\ti\jmath}} \ar@<3pt>@/^.5pc/[rr]^{\bs\vp} & \bs{\hat U}
\ar[d]^{\bs f} \ar[r]_{\bs{\hat\vp}} & \bX \\
{*} \ar[r]^{j=0} & \bA^r. }
\end{equation*}

Let $\ti p$ be the preimage of $\hat p$ in $\bU$. We will show that
after localizing $\bU$ around $\ti p$, the composition
$\bs\vp=\bs{\hat\vp}\ci\bs{\ti\jmath}:\bU\ra\bX$ is smooth of
relative dimension $n=\hat n-r=\dim H^1\bigl(\bL_\bX \vert_p\bigr)$.
Consider the fibre sequence $\bL_{\bU/\bs{\hat U}}[-1] \ra
\bs{\ti\jmath}^*(\bL_{\bs{\hat U}/\bX})\ra\bL_{\bU/\bX}$. We claim
$\bL_{\smash{\bU/\bs{\hat U}}}[-1]$ is free of rank $r$ and that the
map $\bL_{\smash{\bU/\bs{\hat U}}}[-1] \ra\bs{\ti\jmath}^*
(\bL_{\bs{\hat U}/\bX})$ is injective at $\ti p$ and hence, by
Nakayama's Lemma, in a neighbourhood of $\ti p$. Localizing $\bU$
around $\ti p$, it will follow immediately that $\bL_{\bU/\bX}$ is
locally free of rank $n=\hat n-r$. Thus $\bs\vp:\bU\ra\bX$ is the
desired neighbourhood of $p$ of minimal relative dimension.

To sustain the claim, note that since the cotangent complex of
$*=\Spec \K$ is zero, we have an equivalence $\bL_{j}[-1] \simeq
j^*(\Om^{1}_{\bA^r})$. Thus $\bL_j[-1]$ is free of rank $r$ and
hence so is $\bs{\ti f}{}^*(\bL_j)[-1] \simeq
\bL_{\smash{\bU/\bs{\hat U}}}[-1]$. Furthermore, the map in question
$\bL_{\smash{\bU/\bs{\hat U}}}[-1] \ra
\bs{\ti\jmath}{}^*(\bL_{\smash{\bs{\hat U}/\bX}})$ factors as
$\bL_{\smash{\bU/\bs{\hat U}}}[-1]\simeq\bs{\ti f}{}^*\ci
j^*(\Om^1_{\smash{\bA^r}})\simeq \bs{\ti\jmath}{}^*\ci \bs
f^*(\Om^1_{\smash{\bA^r}})\ra \bs{\ti\jmath}{}^*(\bL_{\bs{\hat U}})
\ra \bs{\ti\jmath}{}^*(\bL_{\bs{\hat U}/\bX})$. But $\bs f$ was
constructed precisely so that the composition $\bs
f^*(\Om^{1}_{\smash{\bA^r}}) \ra\bL_{\bs{\hat U}}\ra\bL_{\bs{\hat
U}/\bX}$ should be injective at $\hat p$. Thus, we may choose an
affine neighbourhood $\bs\vp:\bU\ra\bX$, $\ti p$ of $p$ which is
smooth of the minimal relative dimension $n=\dim H^1\bigl(\bL_\bX
\vert_p\bigr)$. Applying Theorem \ref{sa2thm1} to $\bU$ at $\ti p$,
we may take $\bU=\bSpec A$, where $A$ is a standard form cdga
minimal at~$\ti p\in\bU$.
\end{proof}

\begin{thm} Let\/ $\bX$ be a derived Artin $\K$-stack and\/
$(A,\bs\vp),(B,\bs\psi)$ standard form open neighbourhoods in\/
$\bX,$ and write $\bU=\bSpec A,$ $\bV=\bSpec B$. Then for
each\/ $p\in\bU\t_\bX\bV$ there exist a standard form cdga $C$ over
$\K$ minimal at\/ $q\in\bW=\bSpec C,$ an \'etale morphism $\bs
i:\bW\ra\bU\t_\bX\bV$ with\/ $\bs i(q)=p,$ and cdga morphisms
$\al:A\ra C,$ $\be:B\ra C$ with\/ $\bs\pi_\bU\ci\bs
i\simeq\bSpec\al:\bW\ra\bU$ and\/ $\bs\pi_\bV\ci\bs
i\simeq\bSpec\be:\bW\ra\bV$.
\label{sa2thm5}
\end{thm}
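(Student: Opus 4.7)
The plan is to reduce the statement to Theorem \ref{sa2thm1} applied to the fibre product $\bY=\bU\t_\bX\bV.$ First I would verify that $\bY$ is a derived $\K$-scheme. Since $\bs\psi:\bV\ra\bX$ is smooth and $\bV$ is a derived scheme, $\bs\psi$ is a representable morphism in $\dArt_\K,$ so its base change $\bs\pi_\bU:\bY\ra\bU$ is a representable smooth morphism between derived schemes, whence $\bY\in\dSch_\K.$

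Given the $\K$-point $p\in\bY,$ I would then apply Theorem \ref{sa2thm1} to $(\bY,p)$ to obtain a standard form cdga $C$ over $\K$ minimal at some $q\in\Spec H^0(C)$ together with a Zariski open inclusion $\bs i:\bW=\bSpec C\ra\bY$ satisfying $\bs i(q)=p.$ Every Zariski open inclusion is \'etale, so $\bs i$ already satisfies the \'etaleness requirement of the theorem, and the minimality of $C$ at $q$ in the sense of Definition \ref{sa2def1} is built directly into the conclusion of Theorem \ref{sa2thm1}.

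It then remains to realize the two compositions $\bs\pi_\bU\ci\bs i:\bW\ra\bU$ and $\bs\pi_\bV\ci\bs i:\bW\ra\bV$ as coming from cdga morphisms $\al:A\ra C$ and $\be:B\ra C.$ These are morphisms of affine derived $\K$-schemes, and under the equivalence between the $\iy$-category of cdgas in degrees $\le 0$ and the opposite of the $\iy$-category of affine derived $\K$-schemes they correspond to $\iy$-categorical morphisms out of $A$ and $B$ respectively. A standard form cdga is by construction semi-free as a graded commutative algebra over its degree-zero piece, hence cofibrant in the usual model structure on cdgas, so each such $\iy$-categorical morphism can be rectified to an honest cdga morphism; choosing rectifications gives $\al,\be$ with $\bSpec\al\simeq\bs\pi_\bU\ci\bs i$ and $\bSpec\be\simeq\bs\pi_\bV\ci\bs i.$ I do not expect a serious obstacle: the only genuinely new input beyond the earlier results is the representability argument for $\bY,$ and the rest is a routine strictification step.
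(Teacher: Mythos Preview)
There are two genuine gaps in your argument.

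First, the fibre product $\bY=\bU\t_\bX\bV$ need not be a derived $\K$-scheme. Smoothness of $\bs\psi:\bV\ra\bX$ does not make $\bs\psi$ representable in derived schemes: for a general derived Artin stack $\bX$ the diagonal $\De_\bX$ is only representable by derived algebraic spaces, so $\bU\t_\bX\bV$ is a priori only a derived algebraic $\K$-space. The paper's proof acknowledges this explicitly and begins by choosing an affine derived $\K$-scheme $\bs{\hat W}$, a point $\hat q\in\bs{\hat W}$, and an \'etale map $\bs{\hat\imath}:\bs{\hat W}\ra\bU\t_\bX\bV$ with $\bs{\hat\imath}(\hat q)=p$; only after this \'etale reduction can one invoke Theorem~\ref{sa2thm1}. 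This is also why the theorem only asks that $\bs i$ be \'etale rather than a Zariski open inclusion: the \'etale step is genuinely needed to pass from the algebraic space to an affine scheme.

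Second, your rectification step does not work as written. A standard form cdga $A$ is semi-free over $A^0$, which makes $A$ cofibrant as an $A^0$-algebra, not as a $\K$-algebra; since $A^0$ is only smooth and not polynomial over $\K$, $A$ is not obviously cofibrant over $\K$, and you cannot simply rectify an $\iy$-morphism $A\ra C$ to a strict one. What one actually needs is a strict $\K$-algebra map $A^0\ra C^0$ in degree~$0$, after which freeness of $A$ over $A^0$ in negative degrees lets one extend. The paper arranges this by building $C$ with care: it embeds the classical truncation $\hat W=t_0(\bs{\hat W})$ as a closed subscheme of a smooth locally closed $W(0)\subset U(0)\t V(0)\t\bA^N$, and then follows the construction in the proof of Theorem~\ref{sa2thm1} to produce a standard form $C$ minimal at $q$ with $\Spec C^0=W(0)$. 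The projections $W(0)\ra U(0)=\Spec A^0$ and $W(0)\ra V(0)=\Spec B^0$ then furnish the strict degree-$0$ maps $A^0\ra C^0$, $B^0\ra C^0$, and the extensions $\al:A\ra C$, $\be:B\ra C$ follow from freeness in negative degrees.
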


\begin{proof} Since $\bU,\bV$ are derived $\K$-schemes, $\bU\t_\bX\bV$ is a derived algebraic $\K$-space, and \'etale locally equivalent to a derived $\K$-scheme. Thus given $p\in\bU\t_\bX\bV$ we may choose an affine derived $\K$-scheme $\bs{\hat W}$, a point $\hat q\in\bs{\hat W}$ and an \'etale map $\bs{\hat\imath}:\bs{\hat W}\ra\bU\t_\bX\bV$ with~$\bs{\hat\imath}(\hat q)=p$.

Write $\hat W=t_0(\bs{\hat W})$, $U(0)=\Spec A^0$ and $V(0)=\Spec B^0$ for the classical schemes. The compositions $\hat W\hookra\bs{\hat W}\,\smash{\buildrel\bs{\hat\imath}\over\longra}\,\bU\t_\bX\bV\,{\buildrel\bs\pi_\bU\over\longra}\,\bU\hookra U(0)$ and $\hat W\hookra\bs{\hat
W}\,\smash{\buildrel\bs{\hat\imath}\over\longra}\,\bU\t_\bX\bV\,{\buildrel\bs\pi_\bV\over\longra}\,\bV\hookra V(0)$ give maps
$\hat W\ra U(0)$, $\hat W\ra V(0)$. Choose a map $\hat W\ra\bA^N$
such that the product map $\hat W\ra U(0)\t V(0)\t\bA^N$ is a
locally closed embedding. Localizing $\bs{\hat W},\hat W$ at $\hat q$ if
necessary, we can choose a locally closed $\K$-subscheme $W(0)$ of
$U(0)\t V(0)\t\bA^N$ containing the image of $\hat W$ as a closed
$\K$-subscheme, such that $W(0)$ is smooth of dimension $\dim
T_{\smash{\hat q}}\hat W$. For instance, $W(0)$ can be obtained as an intersection of an appropriate regular sequence of hypersurfaces.

Following the proof of Theorem \ref{sa2thm1} in \cite[\S 4.1]{BBJ},
we can construct a standard form cdga $C$ minimal at $q\in\bW=\bSpec
C$ with $\Spec C^0=W(0)$ and an equivalence $\bs j:\bW\ra\bs{\hat
W}$ with $\bs j(q)=\hat q$. Setting $\bs i=\bs{\hat\imath}\ci\bs j$, we now have morphisms of derived schemes $\bs\pi_\bU\ci\bs i:\bW\ra\bU,$ $\bs\pi_\bV\ci\bs i:\bW\ra\bV$ whose classical truncations $\pi_U\ci i:W\ra U$, $\pi_V\ci i:W\ra V$ extend to morphisms of the ambient smooth
schemes $W(0)=\Spec C^0\ra U(0)=\Spec A^0$, $W(0)=\Spec C^0\ra
V(0)=\Spec B^0$. As $A,B$ are freely generated in negative degrees,
it follows that we may write $\bs\pi_\bU\ci\bs i\simeq\bSpec\al$ and
$\bs\pi_\bV\ci\bs i\simeq\bSpec\be$ for morphisms of cdgas $\al:A\ra
C$, $\be:B\ra C$. This completes the proof.
\end{proof}

\subsection[`Darboux form' atlases for shifted symplectic derived
stacks]{`Darboux form' atlases for shifted symplectic stacks}
\label{sa26}

Here is the main result of this section, a stack analogue of Theorem
\ref{sa2thm3}. Note that (a)(i)--(v) are modelled closely on the
first part of Definition \ref{sa2def2}, and equations
\eq{sa2eq9}--\eq{sa2eq13} are analogues of or identical
to~\eq{sa2eq2}--\eq{sa2eq6}.

\begin{thm}{\bf(a)} Let\/ $(\bX,\om_\bX)$ be a $k$-shifted
symplectic derived Artin $\K$-stack, where $k=-2d-1$ for
$d=0,1,2,\ldots,$ and\/ $p\in\bX$. Then we can construct a minimal
standard form open neighbourhood\/ $(A,\bs\vp:\bU\ra\bX,\ti p)$ of\/ $p$ in
the sense of Definition\/ {\rm\ref{sa2def3},} and a $k$-shifted
closed\/ $2$-form $\om=(\om^0,0,\ldots)$ on $\bU=\bSpec A$ for
$\om^0\in(\La^2\Om^1_A)^k,$ such that $\bs\vp^*(\om_\bX)\sim\om$ in
$k$-shifted closed\/ $2$-forms on $\bU=\bSpec A$. Furthermore,
$A,\om$ are in a standard `Darboux form', a modified version of
Definition {\rm\ref{sa2def2},} as follows:
\begin{itemize}
\setlength{\itemsep}{0pt}
\setlength{\parsep}{0pt}
\item[{\bf(i)}] The degree $0$ part\/ $A^0$ of $A$ is a
smooth\/ $\K$-algebra of dimension $m_0,$ and we are given
$x^0_1,\ldots,x^0_{m_0}\in A^0$ such that\/ $\dd
x^0_1,\ldots,\dd x^0_{m_0}$ form a basis of\/ $\Om^1_{A^0}$
over\/~$A^0$.
\item[{\bf(ii)}] As a graded commutative algebra, $A$ is freely
generated over $A^0$ by variables
\e
\begin{aligned}
&x_1^{-i},\ldots,x^{-i}_{m_i} &&\text{in degree $-i$ for
$i=1,\ldots,d,$} \\
&y_1^{i-2d-1},\ldots,y^{i-2d-1}_{m_i} &&\text{in degree $i-2d-1$
for $i=0,1,\ldots,d,$} \\
&w_1^{-2d-2},\ldots,w^{-2d-2}_n &&\text{in degree $-2d-2,$}
\end{aligned}
\label{sa2eq9}
\e
for $m_0,\ldots,m_d\ge 0$ with\/ $m_0$ as in {\bf(i)\rm,} and\/
$n=\dim H^1\bigl(\bL_\bX\vert_p\bigr)$ the relative dimension
of\/ $\bs\vp$. The upper index\/ $i$ in\/ $w_j^i,x^i_j,y^i_j$ is
the degree. Then
\e
\smash{\om^0=\ts\sum_{i=0}^d\sum_{j=1}^{m_i}\dd y^{i-2d-1}_j\,\dd x^{-i}_j \qquad \text{in $(\La^2\Om^1_A)^{-2d-1}$.}}
\label{sa2eq10}
\e
\item[{\bf(iii)}] We are given \/ $H$ in
$A^{-2d},$ called the \begin{bfseries}Hamiltonian\end{bfseries},
which satisfies the \begin{bfseries}classical master
equation\end{bfseries}\vskip -13pt
\e
\sum_{i=1}^d\sum_{j=1}^{m_i}\frac{\pd H}{\pd x^{-i}_j}\,
\frac{\pd H}{\pd y^{i-2d-1}_j}=0\qquad\text{in $A^{1-2d}$.}
\label{sa2eq11}
\e
The differential\/ $\d$ on $A$ satisfies $\d=0$ on $A^0,$ and\vskip -10pt
\e
\d x^{-i}_j =\frac{\pd H}{\pd y^{i-2d-1}_j}, \quad \d
y^{i-2d-1}_j=\frac{\pd H}{\pd
x^{-i}_j},\quad\begin{subarray}{l}\ts
i=0,\ldots,d,\\[6pt] \ts j=1,\ldots,m_i.\end{subarray}
\label{sa2eq12}
\e\vskip -6pt

Note that\/ \eq{sa2eq12} \begin{bfseries}does not
specify\end{bfseries} $\d w_j^{-2d-2}$ for $j=1,\ldots,n,$ and
so \begin{bfseries}does not completely determine\end{bfseries}
$\d$ on\/~$A$.
\item[{\bf(iv)}] Define $\Phi\in A^{-2d}$ and
$\phi\in(\Om^1_A)^{-2d-1}$ by $\Phi=-\frac{1}{2d+1}\,H$ and\vskip -20pt
\e
\phi=\frac{1}{2d+1}\sum_{i=0}^d\sum_{j=1}^{m_i}\bigl[(2d+1-i)
y^{i-2d-1}_j\,\dd x^{-i}_j+i\,x^{-i}_j\,\dd y^{i-2d-1}_j\bigr].
\label{sa2eq13}
\e\vskip -9pt
\noindent Then $\d\Phi=0,$ $\dd\Phi+\d\phi=0,$ and\/ $\om^0=\dd\phi$.
\item[{\bf(v)}] Minimality of\/ $(A,\bs\vp,\ti p)$ means
that\/ $\d w_j^{-2d-2}\vert_{\ti p}=0$ for $j=1,\ldots,n$ and\vskip -15pt
\begin{equation*}
\d x^{-i}_j\big\vert_{\ti p}=\frac{\pd H}{\pd
y^{i-2d-1}_j}\bigg\vert_{\ti p}=0=\d y^{i-2d-1}_j\big\vert_{\ti
p}=\frac{\pd H}{\pd x^{-i}_j}\bigg\vert_{\ti
p},\;\>\begin{subarray}{l}\ts
i=0,\ldots,d,\\[6pt] \ts j=1,\ldots,m_i.\end{subarray}
\end{equation*}
\end{itemize}

\noindent{\bf(b)} In part\/ {\bf(a)\rm,} let\/ $B$ be the
graded subalgebra of\/ $A$ generated by $A^0$ and the variables
$x^i_j,y^i_j$ in {\bf(ii)} for all\/ $i,j,$ with inclusion
$\io:B\hookra A$. Then $B$ is closed under\/ $\d,$ and so is a dg-subalgebra of\/ $A$. For degree reasons $H,\Phi$ above cannot depend on the $w_j^{-2d-2},$ so $H,\Phi\in B$. Also the data $\om,\om^0,\phi$ in $\Om^1_A,\La^2\Om^1_A$ above are the images under $\io$ of\/ $\om_B,\om^0_B,\phi_B$ in $\Om^1_B,\La^2\Om^1_B$. Then $\om_B$ is a $k$-shifted symplectic structure on $\bV=\bSpec B,$ and\/ $B,\om_B$ is in Darboux form as in Definition\/ {\rm\ref{sa2def2},} and\/ $B$ is minimal at\/ $\ti p$ as in Definition\/~{\rm\ref{sa2def1}}.

Geometrically, we have a diagram of morphisms in $\dArt_\K\!:$
\begin{equation*}
\smash{\xymatrix@C=75pt@R=12pt{ \bV=\bSpec B & \bU=\bSpec A \ar[r]^{\bs\vp} \ar[l]_{\bs i=\bSpec\io} & \bX, }}
\end{equation*}
where $(\bX,\om_\bX),$ $(\bV,\om_B)$ are $k$-shifted symplectic,
with\/ $\bs\vp^*(\om_\bX)\sim \bs i^*(\om_B)$ in $k$-shifted
closed\/ $2$-forms on $\bU$. We can think of\/ $\bs\vp:\bU\ra\bX$ as
a `submersion', and\/ $\bs i:\bU\hookra\bV$ as an embedding of\/
$\bU$ as a derived subscheme of\/ $\bV$. On classical schemes,
$i=t_0(\bs i):U=t_0(\bU)\ra V=t_0(\bV)$ is an isomorphism. There is
a natural equivalence of relative (co)tangent complexes
\e
\smash{\bL_{\bU/\bV} \simeq \bT_{\bU/\bX}[1-k].}
\label{sa2eq14}
\e

\noindent{\bf(c)} The obvious analogues of\/ {\bf(a)\rm,\bf(b)} also
hold if\/ $(\bX,\om_\bX)$ is a $k$-shifted symplectic derived Artin
$\K$-stack for $k<0$ with\/ $k\equiv 0\mod 4$ or $k\equiv 2\mod 4$.
In each case, the algebra $A$ is the corresponding algebra from
Definition {\rm\ref{sa2def2},} modified by adding generators
$w_1^{k-1},\ldots,w^{k-1}_n$ in degree~$k-1$.
\label{sa2thm6}
\end{thm}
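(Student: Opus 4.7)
The plan is to combine Theorem \ref{sa2thm4}, which produces a minimal standard-form smooth atlas $\bs\vp:\bU\ra\bX$, with the scheme-level Darboux Theorem \ref{sa2thm3} of \cite[\S 5]{BBJ}, adapting the latter to the now-degenerate closed 2-form $\bs\vp^*(\om_\bX)$ on the atlas.

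First I would apply Theorem \ref{sa2thm4} to produce a minimal standard-form open neighbourhood $(A,\bs\vp:\bU\ra\bX,\ti p)$ of $p$, so that $\bs\vp$ is smooth of minimal relative dimension $n=\dim H^1(\bL_\bX\vert_p)$ and $A$ has $m_i=\dim H^{-i}(\bL_\bX\vert_p)$ generators in each degree $-i$, $i\ge 0$. Since $\om_\bX$ is $k$-shifted symplectic with $k=-2d-1$, the equivalence $\bT_\bX\simeq\bL_\bX[k]$ restricts at $p$ to a duality $H^{-i}(\bL_\bX\vert_p)\cong H^{i-2d-1}(\bL_\bX\vert_p)^\vee$, forcing $m_i=m_{2d+1-i}$ for $i=0,\ldots,d$, together with $m_{2d+2}=\dim H^1(\bL_\bX\vert_p)^\vee=n$ generators in degree $k-1=-2d-2$. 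I relabel these last as the $w_j^{-2d-2}$ of \eq{sa2eq9}, the others becoming the paired $x_j^{-i},y_j^{i-2d-1}$. By Proposition \ref{sa2prop1} I also choose $\Phi\in A^{k+1}$, $\phi\in(\Om^1_A)^k$ such that $\bs\vp^*(\om_\bX)\sim(\dd\phi,0,\ldots)$, with $\d\Phi=0$ and $\d\phi+\dd\Phi=0$.

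Next I would run the inductive coordinate-change procedure of \cite[\S 5.3--\S 5.4]{BBJ} that proves Theorem \ref{sa2thm3}: at each stage, localize and modify the chosen generators of $A$ in a given degree so that $\om^0=\dd\phi$ takes the canonical pairing form \eq{sa2eq10} and $\Phi$ becomes the Hamiltonian $H=-(2d+1)\Phi$ satisfying \eq{sa2eq11}--\eq{sa2eq12}. What makes this go through in the stacky setting is a purely degree-theoretic observation: $\om^0$ has degree $-2d-1$ and $A$ lives in degrees $\le 0$, so a would-be partner of $\dd w_j^{-2d-2}$ under $\om^0$ would have to sit in $(\Om^1_A)^{1}$, which is zero. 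Hence $\om^0$ automatically involves no $\dd w_j^{-2d-2}$, placing the $w$'s in the kernel of $\om^0$; and $H\in A^{-2d}$ cannot depend on any $w$-variable since $-2d-2<-2d$. Thus \eq{sa2eq10}--\eq{sa2eq13} live entirely in the subalgebra generated by $A^0$ and the $x,y$'s, and the minimality condition (v) at $\ti p$ is inherited from that of $A$.

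For part (b), the subalgebra $B\subset A$ generated by $A^0$ and the $x,y$ variables is closed under $\d$ by the above, and $(\bV,\om_B)=(\bSpec B,\om_B)$ is a Darboux-form $k$-shifted symplectic derived scheme in the sense of Definition \ref{sa2def2}; since the $w$'s live in degree $k-1<-1$, $t_0(\bs i):U\ra V$ is an isomorphism. The equivalence \eq{sa2eq14} follows by comparing two expressions for the fibre of the induced map $\bT_\bU\ra\bL_\bU[k]$: one via $\bs\vp$ gives an extension of $\bT_{\bU/\bX}$ (in degree $0$) by $\bL_{\bU/\bX}[k-1]$ (in degree $1-k$), the other via $\bs i$ gives one of $\bT_{\bU/\bV}$ by $\bL_{\bU/\bV}[k-1]$; since the degrees $0$ and $1-k>0$ are disjoint both extensions split, and matching summands by degree yields $\bL_{\bU/\bV}[k-1]\simeq\bT_{\bU/\bX}$. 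Part (c) is proved identically using the Darboux normal forms of Definition \ref{sa2def2} for $k\equiv 0,2\mod 4$. The main obstacle I expect is executing the inductive procedure of the third paragraph while simultaneously preserving minimality of the cdga at $\ti p$, keeping the $w$'s in the kernel of $\om^0$, and preventing $H$ from acquiring $w$-dependence; Proposition \ref{sa2prop1}(c), which controls the ambiguity in $(\Phi,\phi)$, will be essential for making corrections at each step.
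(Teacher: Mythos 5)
Your treatment of part (a) follows the paper's actual argument closely: apply Theorem \ref{sa2thm4} to get a minimal smooth atlas, use the nondegeneracy of $\om_\bX$ at $p$ to match generator counts in dual degrees, apply Proposition \ref{sa2prop1} to write $\bs\vp^*(\om_\bX)\sim(\om^0,0,\ldots)$, and run the coordinate-change procedure of \cite[\S 5]{BBJ}. The observation that $\om^0$ and $H$ cannot involve the $w$-variables for degree reasons is exactly the point made in the paper, and the paper also checks (via the factorisation $\bT_A\ra\bs\vp^*(\bT_\bX)\simeq\bs\vp^*(\bL_\bX)[k]\ra\Om^1_A[k]$ and its cohomology at $\ti p$) that $\om^0$ is nondegenerate on the $x,y$-variables, which is what lets the BBJ procedure go through. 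This is essentially the route you describe.

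For the equivalence \eqref{sa2eq14}, your proposal is a genuinely different argument from the paper's and it has a real gap. You claim that \emph{both} extensions of $\mathrm{fib}(\om^0\cdot)$ split automatically for degree reasons. This is true for the $\bs\vp$-side sequence $\bT_{\bU/\bX}\ra\mathrm{fib}(\om^0\cdot)\ra\bL_{\bU/\bX}[k-1]$, whose obstruction lies in $\Hom\bigl(\bL_{\bU/\bX}[k-1],\bT_{\bU/\bX}[1]\bigr)\cong\mathrm{Mat}_n\bigl(H^{2-k}(A)\bigr)=0$ since $2-k>0$. But for the $\bs i$-side sequence $\bT_{\bU/\bV}\ra\mathrm{fib}(\om^0\cdot)\ra\bL_{\bU/\bV}[k-1]$, the obstruction lies in $\Hom\bigl(\bL_{\bU/\bV}[k-1],\bT_{\bU/\bV}[1]\bigr)\cong\mathrm{Mat}_n\bigl(H^{k}(A)\bigr)$, which need not vanish: $k<0$ and $H^k(A)$ is generically nonzero for standard form cdgas. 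So "both extensions split, and matching summands by degree" is not a valid inference. What the degree argument does give you is that the composite $\bL_{\bU/\bX}[k-1]\hookrightarrow\mathrm{fib}(\om^0\cdot)\ra\bL_{\bU/\bV}[k-1]$ lands in $\mathrm{Mat}_n(H^{1-k}(A))=0$, so the projection factors through a map $\bT_{\bU/\bX}\ra\bL_{\bU/\bV}[k-1]$ of free modules generated in degree $0$. But that this map is an \emph{equivalence} still requires checking on the fibre at $\ti p$ and applying Nakayama's Lemma. The paper does exactly this, in dualised form: it constructs a map $\bL_{\bU/\bV}\ra\bT_{\bU/\bX}[1-k]$ by showing $\io^*(\bL_\bV)\ra\bL_\bU\ra\bT_{\bU/\bX}[1-k]$ is nullhomotopic (using amplitude of $\io^*(\bL_\bV)$), verifies it is an isomorphism on $H^{k-1}(-\vert_{\ti p})$ using minimality of $A$ and \eqref{sa2eq8}, and then invokes Nakayama. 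Your route is salvageable, but the Nakayama step is not optional and the splitting of the $\bs i$-side sequence is a consequence, not a premise.
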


\begin{proof} For (a), let $(\bX,\om_\bX)$ be a $k$-shifted
symplectic derived Artin $\K$-stack with $k=-2d-1$ for $d\ge 0$, and
$p\in\bX$. By Theorem \ref{sa2thm4} we may choose a minimal standard
form open neighbourhood $(A,\bs\vp,\ti p)$ of $p$, which we may
localize further during the proof. Then by Definition \ref{sa2def3},
$\bs\vp$ is smooth of relative dimension $n=\dim H^1\bigl(\bL_\bX
\vert_p\bigr)$, and $A(0)$ is smooth of dimension $m_0=\dim
H^0\bigl(\bL_\bX\vert_p\bigr)$, and $A$ has $m_i=\dim
H^{-i}\bigl(\bL_\bX\vert_p\bigr)$ generators in degree $-i$ for
$i=1,2,\ldots.$

Since $(\bX,\om_\bX)$ is $k$-shifted symplectic for $k=-2d-1$ we
have $H^{-i}\bigl(\bL_\bX\vert_p\bigr)\cong
H^{k+i}\bigl(\bL_\bX\vert_p\bigr){}^*$, so $\dim
H^{-i}\bigl(\bL_\bX\vert_p\bigr)=\dim
H^{k+i}\bigl(\bL_\bX\vert_p\bigr)$. Thus, $A$ is freely generated
over $A^0$ by $m_i$ generators in degree $-i$ for $i=1,\ldots,d,$
and $m_i$ generators in degree $i-2d-1$ for $i=0,1,\ldots,d,$ and
$n$ generators in degree $-2d-2$, which is the same number of
variables as in \eq{sa2eq9}.

The pullback $\bs\vp^*(\om_\bX)$ is a $k$-shifted closed 2-form on
$\bU=\bSpec A$, so Proposition \ref{sa2prop1} gives
$\om^0\in(\Om^2_A)^k$ with $\d\om^0=\dd\om^0=0$ and
$\bs\vp^*(\om_\bX)\sim(\om^0,0,0,\ldots)$. Consider the morphism
$\om^0\cdot:\bT_A\ra\Om^1_A[k]$ given by contraction with $\om^0$, and
its restriction to $\ti p$ on cohomology, which gives morphisms
\e
\smash{H^i\bigl(\om^0\cdot\vert_{\ti p}\bigr):H^{i}\bigl(\bT_A\vert_{\ti
p}\bigr)\cong H^{-i}\bigl(\Om^1_A\vert_{\ti p}\bigr)^* \longra
H^{k+i}\bigl(\Om^1_A\vert_{\ti p}\bigr).}
\label{sa2eq15}
\e
On cohomology $\om^0\cdot$ factorizes as $\bT_A \ra
\bs\vp^*(\bT_{\bX}) \ra \bs\vp^*(\bL_{\bX})[k] \ra \Om^1_A[k]$. Here
$\bs\vp^*(\bT_\bX)\ra \bs\vp^*(\bL_\bX)[k]$ is the pullback of
$\om_\bX\cdot:\bT_\bX\ra\bL_{\bX}[k]$, which is an equivalence as
$\om_\bX$ is nondegenerate. Also
$\bs\vp^*(\bL_{\bX})[k]\ra\Om^1_A[k]$ is $\bL_{\bs\vp}[k]$ as in
\eq{sa2eq7}, and so as in Definition \ref{sa2def3}, on cohomology
$H^i$ at $\ti p$ is an isomorphism for $i\le -k$, and zero for
$i=1-k$. The map $\bT_A \ra \bs\vp^*(\bT_{\bX})$ is the dual of
$\bL_{\bs\vp}$, and so on cohomology $H^i$ at $\ti p$ is an
isomorphism for $i\ge 0$, and zero for $i=-1$. Combining these,
\eq{sa2eq15} is an isomorphism for $0\le i\le -k$ and zero
otherwise.

We can now prove (a)(i)--(iv) by following the proof of the $k$ odd
case of Theorem \ref{sa2thm3} in \cite[\S 5.6]{BBJ}. Localizing $A$
at $\ti p$ if necessary, this chooses \'etale coordinates
$x^0_1,\ldots,x^0_{m_0}$ on $U^0=\Spec A^0$, and generators
$x_1^{-i},\ldots,x^{-i}_{m_i}$ in degree $-i$ for $i=1,\ldots,d$ and
$y_1^{i-2d-1},\ldots,y^{i-2d-1}_{m_i}$ in degree $i-2d-1$ for
$i=0,1,\ldots,d$ for $A$, such that $\om^0$ is given by
\eq{sa2eq10}, and also constructs $H,\Phi,\phi$ satisfying
\eq{sa2eq11}--\eq{sa2eq13}. The proof in \cite[\S 5.6]{BBJ} does not
choose the generators $w_1^{-2d-2},\ldots,w^{-2d-2}_n$ for $A$ in
degree $-2d-2$, but as these are not required to satisfy any
conditions, they can be chosen arbitrarily. Note that
$\om^0,H,\Phi,\phi$ do not involve $w_1^{-2d-2},\ldots,w^{-2d-2}_n$
for degree reasons. Part (a)(v) follows from Definition
\ref{sa2def3} and \eq{sa2eq12}. This completes~(a).

The first parts of (b) are immediate, comparing (a) with Definition
\ref{sa2def2}. To construct the equivalence \eq{sa2eq14}, consider
the following diagram, in which the rows are the standard fibre
sequences and the vertical arrow is induced by an inverse of
$\bs\vp^*(\om_{\bX})$:
\e
\begin{gathered}
\xymatrix@C=50pt@R=11pt{\bL_{\bU/\bX}[-1] \ar[r] &
\bs\vp^*(\bL_{\bX}) \ar[r]
\ar[d]^\simeq & \bL_{\bU} \\
\bT_{\bU}[-k] \ar[r] & \bs\vp^*(\bT_{\bX})[-k] \ar[r] &
\bT_{\bU/\bX}[1-k].}
\end{gathered}
\label{sa2eq16}
\e
Since $\bL_{\bU/\bX}$ and $\bT_{\bU/\bX}$ can be assumed to be free,
we have
\begin{align*}
\Ext^{-1}(\bL_{\bU/\bX}[-1],\bT_{\bU/\bX}[1-k])&\cong
\Ext^{1-k}(\bL_{\bU/\bX},\bT_{\bU/\bX})=0,\\
\Hom(\bL_{\bU/\bX}[-1],\bT_{\bU/\bX}[1-k])&\cong
\Ext^{-k+2}(\bL_{\bU/\bX},\bT_{\bU/\bX})=0.
\end{align*}
Applying ${\mathbb R}{\cal H}om(\bL_{\bU/\bX}[-1],-)$ to the bottom
row of \eq{sa2eq16} and taking cohomology, we find that
$\Hom(\bL_{\bU/\bX}[-1],\bT_{\bU}[-k]) \cong
\Hom(\bL_{\bU/\bX}[-1],\bs\vp^*(\bT_{\bX})[-k])$. Thus \eq{sa2eq16}
can be filled in to a commutative diagram
\e
\begin{gathered}
\xymatrix@C=50pt@R=11pt{\bL_{\bU/\bX}[-1] \ar[r] \ar@{.>}[d] &
\bs\vp^*(\bL_{\bX}) \ar[r] \ar[d]^\simeq & \bL_{\bU} \ar@{.>}[d] \\
\bT_{\bU}[-k] \ar[r] & \bs\vp^*(\bT_{\bX})[-k] \ar[r] &
\bT_{\bU/\bX}[1-k],}
\end{gathered}
\label{sa2eq17}
\e
and such a filling is unique up to homotopy.

Restricting \eq{sa2eq17} to $\ti p$ and taking cohomology gives a
commutative diagram:
\e
\begin{gathered}
\xymatrix@C=120pt@R=11pt{ *+[r]{H^{k-1}\bigl(\bL_\bX\vert_p\bigr)}
\ar[r]_\cong \ar[d]^\cong & *+[l]{H^{k-1}\bigl(\bL_\bU\vert_{\ti p}\bigr)}
\ar[d] \\ *+[r]{H^{k-1}\bigl(\bT_\bX\vert_p[-k]\bigr)} \ar[r] &
*+[l]{H^{k-1}\bigl(\bT_{\bU/\bX}\vert_{\ti p}[1-k]\bigr).}}
\end{gathered}
\label{sa2eq18}
\e
Since the morphism
\begin{equation*}
\smash{H^{-1}\bigl(\bT_\bX\vert_p\bigr)\cong H^{k-1}\bigl(\bT_\bX\vert_p[1-k]\bigr)
\ra H^{k-1}\bigl(\bT_{\bU/\bX}\vert_{\ti p}[-k]\bigr) \cong
H^0\bigl(\bT_{\bU/\bX}\vert_{\ti p}\bigr)}
\end{equation*}
is dual to $H^0\bigl(\bL_{\bU/\bX}\vert_{\ti p}\bigr)\ra
H^1\bigl(\bL_\bX\vert_p\bigr)$, which is an isomorphism by
\eq{sa2eq8}, we see from \eq{sa2eq18} that
$H^{k-1}\bigl(\bL_\bU\vert_{\ti p}\bigr)\ra
H^{k-1}\bigl(\bT_{\bU/\bX}\vert_{\ti p}[1-k]\bigr)$ is also an
isomorphism.

Next, consider the fibre sequence $\io^*(\bL_{\bV}) \ra \bL_{\bU}
\ra \bL_{\bU/\bV}$. Note that $\bL_{\bU/\bV}[k-1]$ is free of rank
$\dim H^{k-1}\bigl(\bL_\bX\vert_p\bigr)=\dim
H^1\bigl(\bL_\bX\vert_p\bigr)=n$ and that the natural map
$H^{k-1}\bigl(\bL_\bU\vert_{\ti p}\bigr)\ra
H^{k-1}\bigl(\bL_{\bU/\bV}\vert_{\ti p}\bigr)$ is an isomorphism by
the minimality of the inductive construction of $\bU=\bSpec A$ in
Definition~\ref{sa2def1}.

Since $\io^*(\bL_\bV)$ has amplitude in $[k,0]$ and $\bT_{\bU/\bX}$
is locally free, the composition $\io^*(\bL_\bV)\ra\bL_\bU\ra
\bT_{\bU/\bX}[1-k]$ is homotopic to zero, and we can therefore
choose a factorization $\bL_\bU\ra\bL_{\bU/\bV}\ra
\bT_{\bU/\bX}[1-k]$ of $\bL_\bU\ra\bT_{\bU/\bX}[1-k]$. Restricting
this factorization to $\ti p$ and taking cohomology, we see that the
induced map $H^{k-1}\bigl(\bL_{\bU/\bV}\vert_{\ti p}\bigr)\ra
H^{k-1}(\bT_{\bU/\bX}\vert_{\ti p}[1-k]\bigr)$ is an isomorphism. By
Nakayama's Lemma, the map $\bL_{\bU/\bV}\ra\bT_{\bU/\bX}[1-k]$ is an
equivalence in a neighbourhood of $\ti p$. So localizing $\bU,\bV$
if necessary, equation \eq{sa2eq14} holds, proving part~(b).

For (c), we follow the same method, using the `Darboux form' in
\cite[Ex.~5.9]{BBJ} for $k\equiv 0\mod 4$, and the `strong Darboux
form' in \cite[Ex.~5.10]{BBJ} for $k\equiv 2\mod 4$. As in the proof
of \cite[Th.~5.18(iii)]{BBJ}, in the case $k\equiv 2\mod 4$, as well
as modifying $A$ by localizing at $\ti p$ (i.e. restricting to a
Zariski open neighbourhood of $\ti p$ in $\bU=\bSpec A$), we 
also need to modify $A$ by adjoining square roots of some nonzero
functions in $A^0$ (i.e. taking a finite \'etale cover of
$\bU=\bSpec A$). As the result is still a minimal standard form open
neighbourhood $(A,\bs\vp,\ti p)$ of $p$, this does not affect the
statement of the theorem.
\end{proof}

In the case $k=-1$, as in \cite[Ex.~5.15]{BBJ} the classical
$\K$-schemes $U\cong V$ in Theorem \ref{sa2thm6}(a),(b) are
isomorphic to $\Crit\bigl(H:U(0)\ra\bA^1\bigr)$. Also $\bs\vp:\bs T\ra\bX$ smooth implies $\vp=t_0(\bs\vp):T=t_0(\bs T)\ra X=t_0(\bX)$ is smooth. So changing notation from $U(0),H,\ti p$ to $U,f,u$, using
$H^i\bigl(\bL_X\vert_p\bigr)\cong H^i\bigl(\bL_\bX\vert_p\bigr)$ for
$X=t_0(\bX)$ and $i=0,1$, and applying Proposition \ref{sa2prop1}(b)
to get $f\vert_{T^\red}=0$, we deduce:

\begin{cor} Let\/ $(\bX,\om_\bX)$ be a $-1$-shifted
symplectic derived Artin $\K$-stack, and\/ $X=t_0(\bX)$ the
corresponding classical Artin $\K$-stack. Then for each\/ $p\in X$
there exist a smooth\/ $\K$-scheme $U$ with dimension $\dim
H^0\bigl(\bL_X\vert_p\bigr),$ a point\/ $t\in U,$ a regular function
$f:U\ra\bA^1$ with\/ $\dd f\vert_t=0,$ so that\/
$T:=\Crit(f)\subseteq U$ is a closed\/ $\K$-subscheme with\/ $t\in
T,$ and a morphism $\vp:T\ra X$ which is smooth of relative
dimension $\dim H^1\bigl(\bL_X\vert_p\bigr),$ with\/ $\vp(t)=p$. We
may take\/~$f\vert_{T^\red}=0$.

Here the derived critical locus $\bs\Crit(f:U\ra\bA^1),$ as a
$-1$-shifted symplectic derived scheme, agrees with $(\bV,\om_B)$ in
Theorem\/ {\rm\ref{sa2thm6},} and\/ $\vp:T\ra X$ corresponds to
$t_0(\bs\vp)\ci t_0(\bs i)^{-1}$ in Theorem\/~{\rm\ref{sa2thm6}}.
\label{sa2cor1}
\end{cor}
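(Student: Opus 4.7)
The plan is to specialize Theorem \ref{sa2thm6} to $k=-1$ (so $d=0$) and translate the output into the geometric data requested. Applied to $(\bX,\om_\bX)$ at $p$, Theorem \ref{sa2thm6}(a) yields a minimal standard form neighbourhood $(A,\bs\vp:\bU=\bSpec A\ra\bX,\ti p)$ in Darboux form. In the $d=0$ Darboux form the generators \eq{sa2eq9} reduce to $m_0$ \'etale coordinates $x_j^0$ on $\Spec A^0$, $m_0$ generators $y_j^{-1}$ in degree $-1$, and $n$ generators $w_j^{-2}$ in degree $-2$; the master equation \eq{sa2eq11} is vacuous and the Hamiltonian $H$ lies in $A^0$.

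I then identify the pieces. Set $U:=\Spec A^0$ and $f:=H:U\ra\bA^1$. The subalgebra $B\subset A$ from Theorem \ref{sa2thm6}(b) is generated over $A^0$ by the $y_j^{-1}$ with $\d y_j^{-1}=\pd H/\pd x_j^0$, which is precisely the Koszul cdga for the derived critical locus $\bs\Crit(f:U\ra\bA^1)$; this is the content of \cite[Ex.~5.15]{BBJ}. Hence $T:=t_0(\bV)=\Crit(f)\subseteq U$. Theorem \ref{sa2thm6}(b) gives an isomorphism $i=t_0(\bs i):t_0(\bU)\ra T$, so I define $\vp:=t_0(\bs\vp)\ci i^{-1}:T\ra X$; since $\bs\vp$ is smooth of relative dimension $n=\dim H^1(\bL_\bX\vert_p)$ and classical truncation preserves smoothness and relative dimension, $\vp$ is smooth of the same dimension, which equals $\dim H^1(\bL_X\vert_p)$. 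The point $t\in U$ is the image of $\ti p$, so $\vp(t)=p$.

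The remaining numerical and vanishing assertions are immediate. Minimality (Definition \ref{sa2def3}) forces $\dim U=\dim A^0=m_0=\dim H^0(\bL_\bX\vert_p)=\dim H^0(\bL_X\vert_p)$, and condition (v) of Theorem \ref{sa2thm6}(a) gives $\pd H/\pd x_j^0\vert_{\ti p}=0$ for all $j$, i.e.\ $\dd f\vert_t=0$. For the final clause $f\vert_{T^\red}=0$, I apply Proposition \ref{sa2prop1}(b) to $\om_B$ on $\bV$: it asserts that $\Phi=-H$ (the $d=0$ instance of \eq{sa2eq13}) can be adjusted to vanish on $V^\red$, and under the isomorphism $i$ this reduced subscheme is identified with $T^\red$. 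I do not foresee any substantive obstacle; the one bookkeeping point deserving care is the identification of the Koszul cdga $B$ with $\bs\Crit(f)$, but this is precisely \cite[Ex.~5.15]{BBJ}.
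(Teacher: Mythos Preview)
Your proposal is correct and follows essentially the same approach as the paper: specialize Theorem~\ref{sa2thm6} to $k=-1$, identify $B$ with the Koszul cdga of $\bs\Crit(H:U(0)\ra\bA^1)$ via \cite[Ex.~5.15]{BBJ}, use that classical truncation preserves smoothness, identify the dimensions via $H^i(\bL_X\vert_p)\cong H^i(\bL_\bX\vert_p)$ for $i=0,1$, and invoke Proposition~\ref{sa2prop1}(b) for $f\vert_{T^\red}=0$. Your write-up is somewhat more explicit than the paper's one-paragraph deduction, but the logic is identical.
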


Thus, the underlying classical stack $X$ of a $-1$-shifted symplectic derived
stack $(\bX,\om_\bX)$ admits an atlas consisting of critical loci of
regular functions on smooth schemes.

Now let $Y$ be a Calabi--Yau 3-fold over $\K$, and $\cM$ a classical
moduli stack of coherent sheaves $F$ on $Y$, or complexes $F^\bu$ in
$D^b\coh(Y)$ with $\Ext^{<0}(F^\bu,F^\bu)=0$. Then
$\cM=t_0(\bs\cM)$, for $\bs\cM$ the corresponding derived moduli
stack. The (open) condition $\Ext^{<0}(F^\bu,F^\bu)=0$ is needed to
make $\bs\cM$ $1$-truncated (that is, a derived Artin stack, in our terminology), and so make $\cM=t_0(\bs\cM)$ an ordinary, and not higher, stack. Pantev et al.\ \cite[\S 2.1]{PTVV}
prove $\bs\cM$ has a $-1$-shifted symplectic structure
$\om_{\bs\cM}$. Applying Corollary \ref{sa2cor1} and using
$H^i\bigl(\bL_{\bs\cM}\vert_{[F]}\bigr)\cong \Ext^{1-i}(F,F)^*$
yields a new result on classical 3-Calabi--Yau moduli stacks,
the statement of which involves no derived geometry:

\begin{cor} Suppose $Y$ is a Calabi--Yau\/ $3$-fold over\/
$\K,$ and\/ $\cM$ a classical moduli $\K$-stack of coherent sheaves
$F,$ or more generally of complexes $F^\bu$ in $D^b\coh(Y)$ with
$\Ext^{<0}(F^\bu,F^\bu)=0$. Then for each\/ $[F]\in\cM,$ there exist
a smooth\/ $\K$-scheme $U$ with\/ $\dim U=\dim\Ext^1(F,F),$ a
point\/ $u\in U,$ a regular function $f:U\ra\bA^1$ with\/ $\dd
f\vert_u=0,$ and a morphism $\vp:\Crit(f)\ra\cM$ which is smooth of
relative dimension $\dim\Hom(F,F),$ with\/~$\vp(u)=[F]$.
\label{sa2cor2}
\end{cor}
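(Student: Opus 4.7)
The plan is to deduce this directly from Corollary \ref{sa2cor1} applied to a suitable derived enhancement of $\cM$. First I would invoke the derived moduli stack $\bs\cM$ of coherent sheaves (or of complexes in $D^b\coh(Y)$) on $Y$ in the sense of To\"en--Vaqui\'e, which satisfies $t_0(\bs\cM)\simeq\cM$. The hypothesis $\Ext^{<0}(F^\bu,F^\bu)=0$ is exactly what ensures that $\bs\cM$ is $1$-truncated, so that it is a derived Artin $\K$-stack in the sense used in \S\ref{sa21}, and $\cM=t_0(\bs\cM)$ is an ordinary (rather than higher) Artin $\K$-stack. Without this assumption the constructions of sections \ref{sa23}--\ref{sa26} would not directly apply.

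Next I would apply Pantev--To\"en--Vaqui\'e--Vezzosi \cite{PTVV}, which equips $\bs\cM$ with a natural $(2-\dim Y)$-shifted symplectic structure; since $Y$ is Calabi--Yau of dimension $3$, the shift is $k=-1$. Thus $(\bs\cM,\om_{\bs\cM})$ falls into the scope of Corollary \ref{sa2cor1}, which produces at any $p=[F^\bu]\in\cM$ a smooth $\K$-scheme $U$ of dimension $\dim H^0(\bL_{\bs\cM}\vert_{[F^\bu]})$, a point $u\in U$, a function $f:U\ra\bA^1$ with $\dd f\vert_u=0$, and a morphism $\vp:\Crit(f)\ra\cM$ smooth of relative dimension $\dim H^1(\bL_{\bs\cM}\vert_{[F^\bu]})$ with $\vp(u)=[F^\bu]$.

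Finally I would translate the dimensions into Ext-theoretic language using the standard identification of the tangent complex of the derived moduli stack at $[F^\bu]$ with $R\Hom(F^\bu,F^\bu)[1]$; dualizing yields $H^i(\bL_{\bs\cM}\vert_{[F^\bu]})\cong\Ext^{1-i}(F^\bu,F^\bu)^*$ for all $i$. In particular $\dim U=\dim\Ext^1(F^\bu,F^\bu)$ and the relative dimension of $\vp$ equals $\dim\Hom(F^\bu,F^\bu)$, matching the statement.

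There is no serious obstacle here: the result is essentially a repackaging of Corollary \ref{sa2cor1} together with the two inputs from \cite{PTVV} (existence of the $-1$-shifted symplectic structure) and from standard derived moduli theory (identification of the tangent complex with $R\Hom(F^\bu,F^\bu)[1]$). The only bookkeeping point that deserves a sentence is why the $\Ext^{<0}$-vanishing hypothesis is equivalent to $\bs\cM$ being a derived Artin stack, so that $\cM$ is genuinely a $1$-stack and not a higher stack.
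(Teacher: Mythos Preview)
Your proposal is correct and follows essentially the same route as the paper: both arguments pass to the derived moduli stack $\bs\cM$ with $t_0(\bs\cM)\simeq\cM$, use the $\Ext^{<0}$-vanishing to ensure $\bs\cM$ is $1$-truncated and hence a derived Artin stack, invoke \cite{PTVV} for the $-1$-shifted symplectic structure, apply Corollary~\ref{sa2cor1}, and then identify $H^i(\bL_{\bs\cM}\vert_{[F]})\cong\Ext^{1-i}(F,F)^*$ to read off the dimensions.
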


This is an analogue of \cite[Cor.~5.19]{BBJ}. When $\K=\C$, a
related result for coherent sheaves only, with $U$ a complex
manifold and $f$ a holomorphic function, was proved by Joyce and
Song \cite[Th.~5.5]{JoSo} using gauge theory and transcendental
complex methods.

\subsection{Comparing `Darboux form' atlases on overlaps}
\label{sa27}

Let $(\bX,\om_\bX)$ be a $k$-shifted symplectic derived Artin
$\K$-stack for $k<0$. Theorem \ref{sa2thm6} gives a minimal standard
form open neighbourhood $(A,\bs\vp,\ti p)$ of each $p$ in $\bX$ with
$\bs\vp^*(\om_\bX)\sim\om$, where the $k$-shifted closed 2-form
$\om=(\om^0,0,\ldots)$ on $\bU=\bSpec A$ is in a standard `Darboux
form', and $\Phi\in A^{k+1}$, $\phi\in (\Om^1_A)^k$ with $\d\Phi=0$,
$\dd\Phi+\d\phi=0$, $\dd\phi=\om^0$, satisfying
$\Phi\vert_{U^\red}=0$ if $k=-1$, as in Proposition
\ref{sa2prop1}(a),(b). We think of $A,\bs\vp,\om,\Phi,\phi$ as like
coordinates on $\bX$ near $p$ in the smooth topology, which write
$\bX,\om_\bX$ in a nice way.

It is often important in geometric problems to compare different
choices of coordinates on the overlap of their domains. So suppose
$A,\bU,\bs\vp,\om,\Phi,\phi$ and $A',\bU',\bs\vp',\om', \Phi',\phi'$
are two choices as above, and $q\in\bU\t_{\bs\vp,\bX,\bs\vp'}\bU'$.
We would like to compare the presentations
$A,\bU,\bs\vp,\om,\Phi,\phi$ and $A',\bU',\bs\vp',\om',\Phi',\phi'$
for $\bX$ near $q$. Here is a method for doing this, following
\cite[\S 5.8]{BBJ} in the scheme case:
\begin{itemize}
\setlength{\itemsep}{0pt}
\setlength{\parsep}{0pt}
\item[(i)] Apply Theorem \ref{sa2thm5} to $(A,\bs\vp),
(A',\bs\vp'),q$. This gives a standard form cdga $B$ minimal at
$r\in\bV=\bSpec B$, an etale map $\bs i:\bV\hookra\bU\t_\bX\bU'$ with $\bs i(r)=q$, and morphisms of cdgas $\al:A\ra B,$ $\al':A'\ra B$ with $\bs\pi_\bU\ci\bs i\simeq\bSpec\al:\bV\ra\bU$ and $\bs\pi_{\bU'}\ci\bs
i\simeq\bSpec\al':\bV\ra\bU'$.
\item[(ii)] The pullbacks $\al_*(\om)=(\al_*(\om^0),0,
\ldots)$, $\al'_*(\om')=(\al'_*(\om^{\prime 0}),0,\ldots)$ are
$k$-shifted closed 2-forms on $\bV=\bSpec B$, which are
equivalent as
\begin{align*}
\al_*(\om)&\sim(\bSpec\al)^*\ci\bs\vp^*(\om_\bX)\sim
\bs i^*\ci\bs\pi_\bU^*\ci\bs\vp^*(\om_\bX)\\
&\sim\bs i^*\ci\bs\pi_{\bU'}^*\ci\bs\vp^{\prime*}(\om_\bX)\sim
(\bSpec\al')^*\ci\bs\vp^{\prime *} (\om_\bX)\sim\al'_*(\om').
\end{align*}
Since $B$ is minimal at $r$, $\al_*(\om),\al'_*(\om')$ satisfy
nondegeneracy properties near $r$. Also $\d\al(\Phi)=0$,
$\dd\al(\Phi)+\d\al_*(\phi)=0$, $\d\al'(\Phi')=0$,
$\dd\al_*(\phi)=\al_*(\om^0)$,
$\dd\al'(\Phi')+\d\al_*'(\phi')=0$,
$\dd\al_*(\phi)=\al_*(\om^0)$, and if $k=-1$ then
$\al(\Phi)\vert_{V^\red}=0=\al'(\Phi')\vert_{V^\red}$. Therefore
Proposition \ref{sa2prop1}(c) applies, yielding $\Psi\in B^k$
and $\psi\in(\Om^1_B)^{k-1}$ with
\begin{align*}
\al(\Phi)-\al'(\Phi')&=\d\Psi &&\text{in $B^{k+1}$, and}\\
\al_*(\phi)-\al_*'(\phi')&=\dd\Psi+\d\psi &&\text{in
$(\Om^1_B)^k.$}
\end{align*}
The data $B,\bV,\bs i,\al,\al',r,\Psi,\psi$ compare the
Darboux presentations $A,\ab\bU,\ab\bs\vp,\ab\om,\ab\Phi,
\ab\phi$ and $A',\bU',\bs\vp',\om',\Phi',\phi'$ for $\bX$
near~$q$.
\end{itemize}

Using this method in the case $k=-1$ yields the following comparison
result for the critical atlases of Corollary \ref{sa2cor1}. We have
replaced $\ab t_0(\bU),U(0),\ab t_0(\bU'),\ab U'(0),\ab t_0(\bV),\ab
V(0),\ab\Spec\al^0,\ab\Spec\al^{\prime 0}$ above by $T,U,T',U',\ab
R,\ab V,\ab\th,\th'$. The conclusion $f\ci\th-f'\ci\th'\in
I_{R,V}^2$ is proved as in~\cite[Ex.~5.35]{BBJ}.

\begin{prop} Let\/ $(\bX,\om_\bX)$ be a $-1$-shifted symplectic
derived Artin $\K$-stack, and\/ $X=t_0(\bX)$ the corresponding
classical Artin $\K$-stack. Suppose $U,f:U\ra\bA^1,$
$\vp:T=\Crit(f)\ra X$ and\/ $U',f':U'\ra\bA^1,$
$\vp':T'=\Crit(f')\ra X$ are two choices of the data constructed in
Corollary\/ {\rm\ref{sa2cor1}} for points $p,p'\in\bX,$ with\/
$f\vert_{T^\red}=0=f'\vert_{T^{\prime\red}}$. Let\/ $q\in
T\t_{\vp,X,\vp'}T'$. Then there exist a smooth\/ $\K$-scheme $V,$ a
closed $\K$-subscheme $R\subseteq V,$ a point $r\in R,$ and
morphisms $\th:V\ra U,$ $\th':V\ra U'$ with\/ $\th(R)\subseteq T,$
$\th'(R)\subseteq T'$ such that the following diagram $2$-commutes
(homotopy commutes) in $\Art_\K\!:$
\begin{equation*}
\xymatrix@!0@C=80pt@R=24pt{ V \ar[rr]_{\th'} \ar[dd]^\th && U'
\ar[r]_(0.7){f'} & \bA^1 \\
& \,\,R \ar@{_(->}[ul]^{\rm inc} \ar[rr]_{\th'\vert_R}
\ar[dd]^{\th\vert_R} \ddrrtwocell_{}\omit^{}\omit{^{\eta\,\,}} && \,\,T'
\ar[dd]_{\vp'} \ar@{_(->}[ul]^(0.7){\rm inc} \\ U \ar[d]^f \\
\bA^1 & \,\,T \ar@{_(->}[ul]^{\rm inc} \ar[rr]^\vp && {X,\!\!} }
\end{equation*}
and the induced morphism $R\ra T\t_XT'$ is \'etale and maps $r\mapsto q$. Furthermore $f\ci\th-f'\ci\th'\in I_{R,V}^2,$ where $I_{R,V}\subseteq\O_V$ is the ideal of functions vanishing on\/~$R\subseteq V$.
\label{sa2prop2}
\end{prop}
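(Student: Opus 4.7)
The plan is to lift the hypotheses to the derived setting, apply the atlas-comparison Theorem \ref{sa2thm5} and Proposition \ref{sa2prop1}(c) as outlined in \S\ref{sa27}, and read off the required classical data $V,R,r,\th,\th'$ from degree-zero parts and classical truncations of the resulting cdga. The first three conditions of the proposition will follow formally; the quadratic vanishing $f\ci\th-f'\ci\th'\in I_{R,V}^2$ will be the sole subtle point, handled essentially as in the scheme case \cite[Ex.~5.35]{BBJ}.

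\textbf{Step 1 (restore the derived picture).} By Theorem \ref{sa2thm6}(a) and Corollary \ref{sa2cor1}, the data $(U,f,\vp)$ and $(U',f',\vp')$ come from minimal Darboux-form open neighbourhoods $(A,\bs\vp,\ti p)$, $(A',\bs\vp',\ti p')$ of $p,p'\in\bX$, with $U=\Spec A^0$, $f=H\in A^0$ the Hamiltonian (so $\Phi=-f$, $\Phi|_{U^\red}=0$), and analogously for the primed data. Since classical truncation commutes with derived fibre products, the point $q\in T\t_XT'$ corresponds canonically to a $\K$-point of $\bU\t_\bX\bU'$.

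\textbf{Step 2 (apply \ref{sa2thm5} and extract classical data).} Theorem \ref{sa2thm5} applied to $(A,\bs\vp),(A',\bs\vp')$ at $q$ produces a standard form cdga $C$ minimal at a point $r\in\bW=\bSpec C$, an \'etale morphism $\bs i:\bW\ra\bU\t_\bX\bU'$ with $\bs i(r)=q$, and cdga morphisms $\al:A\ra C,\al':A'\ra C$ satisfying $\bs\pi_\bU\ci\bs i\simeq\bSpec\al$ and $\bs\pi_{\bU'}\ci\bs i\simeq\bSpec\al'$. Set $V:=\Spec C^0$ (smooth since $C$ is standard form), $R:=t_0(\bW)=\Spec H^0(C)$ as a closed $\K$-subscheme of $V$, $\th:=\Spec(\al^0):V\ra U$, $\th':=\Spec(\al'^0):V\ra U'$, and keep the chosen point $r\in R\subseteq V$. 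Then $\th(R)\subseteq T$ and $\th'(R)\subseteq T'$ follow by taking $t_0$ of $\bSpec\al,\bSpec\al'$; the homotopies $\bs\pi_\bU\ci\bs i\simeq\bSpec\al$ and $\bs\pi_{\bU'}\ci\bs i\simeq\bSpec\al'$ project through $\bs\vp,\bs\vp'$ to yield on classical truncations the desired 2-morphism $\eta:\vp\ci\th|_R\Ra\vp'\ci\th'|_R$; and $R\ra T\t_XT'$ is \'etale because $\bs i$ is \'etale and classical truncation preserves \'etale morphisms and commutes with derived fibre products.

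\textbf{Step 3 (quadratic vanishing).} Following \S\ref{sa27}(ii), the pulled-back closed 2-forms $\al_*(\om)$ and $\al'_*(\om')$ on $\bW$ are equivalent (both representing $\bs i^*(\om_\bX)$) and admit Darboux-form representatives $(\al(\Phi),\al_*(\phi))$, $(\al'(\Phi'),\al'_*(\phi'))$ with $\al(\Phi)|_{R^\red}=0=\al'(\Phi')|_{R^\red}$ by Proposition \ref{sa2prop1}(b) and $f|_{T^\red}=0=f'|_{T'^\red}$. Proposition \ref{sa2prop1}(c) then supplies $\Psi\in C^{-1}$ and $\psi\in(\Om^1_C)^{-2}$ with $\al(\Phi)-\al'(\Phi')=\d\Psi$ in $C^0$ and $\al_*(\phi)-\al'_*(\phi')=\dd\Psi+\d\psi$ in $(\Om^1_C)^{-1}$; since $\Phi=-f,\Phi'=-f'$, this gives $f\ci\th-f'\ci\th'=-\d\Psi$. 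The task is then to improve the automatic $\d\Psi\in I_{R,V}$ (since $I_{R,V}=\d(C^{-1})\cdot C^0$) to $\d\Psi\in I_{R,V}^2$. Writing $\Psi=\sum_i a_i\xi_i$ in the $C^0$-basis of $C^{-1}$ provided by the degree-$(-1)$ generators $\xi_i$ of $C$, so that $\d\Psi=\sum_i a_i g_i$ with $g_i:=\d\xi_i$ generating $I_{R,V}$, I would exploit the freedom to replace $\Psi$ by $\Psi+\d\Xi$ for $\Xi\in C^{-2}$ (which preserves $\d\Psi$), together with the minimality of $C$ at $r$ and the companion relation $\al_*(\phi)-\al'_*(\phi')=\dd\Psi+\d\psi$, to rechoose $\Psi$ with each $a_i\in I_{R,V}$. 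This last algebraic manipulation, which essentially repeats the scheme-case argument of \cite[Ex.~5.35]{BBJ} with minor modifications to accommodate the extra degree-$(-2)$ generators of $C$ arising from the stack direction, is the main technical obstacle.
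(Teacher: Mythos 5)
Your proposal follows the paper's method exactly: apply Theorem \ref{sa2thm5} and the recipe of \S\ref{sa27}(i)--(ii) to obtain the comparison data $C,\bs i,\al,\al'$ and $\Psi,\psi$ from Proposition \ref{sa2prop1}(c), read off the classical data $V,R,\th,\th',r$ from the degree-zero parts and classical truncations, deduce $f\ci\th-f'\ci\th'=-\d\Psi\in I_{R,V}$, and then cite \cite[Ex.~5.35]{BBJ} for the promotion to $I_{R,V}^2$. The paper's own proof is no more than a change-of-notation dictionary for \S\ref{sa27} plus the same citation, so your account matches it in both method and level of detail; whether your speculative Step-3 mechanism (rechoosing $\Psi$ by $\d\Xi\in\d(C^{-2})$ together with minimality) is precisely how the cited example argues is immaterial, since both you and the paper defer the full $I_{R,V}^2$-argument to that reference.
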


\section{A truncation functor to d-critical stacks}
\label{sa3}

Section \ref{sa31} summarizes the theory of algebraic d-critical
loci (on classical $\K$-schemes) from \cite{Joyc2}, and the
truncation functor from $-1$-shifted symplectic derived $\K$-schemes
to algebraic d-critical loci from \cite[\S 6]{BBJ}. Section
\ref{sa32} explains the generalization of d-critical loci to Artin
stacks from \cite{Joyc2}, called d-critical stacks. Our main result
Theorem \ref{sa3thm6}, extending the truncation functor of \cite[\S
6]{BBJ} to (derived) Artin stacks, is stated in \S\ref{sa33} and
proved in~\S\ref{sa34}.

\subsection{Algebraic d-critical loci, the $\K$-scheme case}
\label{sa31}

We now review the main ideas and results in the last author's theory
\cite{Joyc2} of ({\it algebraic\/}) {\it d-critical loci}. Readers
are referred to \cite{Joyc2} for more details. Throughout $\K$ is an
algebraically closed field with $\mathop{\rm char}\K\ne 2$, though
we will take $\mathop{\rm char}\K=0$ in Theorem \ref{sa3thm6} and
its corollaries.

Let $X$ be a $\K$-scheme. Then \cite[Th.~2.1 \& Prop.~2.3]{Joyc2}
define a natural sheaf of $\K$-algebras $\cS_X$ on $X$ in either the
Zariski or \'etale topologies (we will use the \'etale version for
the extension to Artin stacks), with the following properties:
\begin{itemize}
\setlength{\itemsep}{0pt}
\setlength{\parsep}{0pt}
\item[(a)] Suppose $R\subseteq X$ is Zariski open, $U$ is a
smooth $\K$-scheme, and $i:R\hookra U$ a closed embedding.
Define an ideal $I_{R,U}\subseteq i^{-1}(\O_U)$ by the exact
sequence
\begin{equation*}
\smash{\xymatrix@C=30pt{ 0 \ar[r] & I_{R,U} \ar[r] &
i^{-1}(\O_U) \ar[r]^{i^\sharp} & \O_X\vert_R \ar[r] & 0, }}
\end{equation*}
where $\O_X,\O_U$ are the sheaves of regular functions on $X,U$.
Then there is an exact sequence on $R$, where
$\d:f+I_{R,U}^2\mapsto \d f+I_{R,U}\cdot i^{-1}(T^*U)$
\begin{equation*}
\xymatrix@C=20pt{ 0 \ar[r] & \cS_X\vert_R
\ar[rr]^(0.4){\io_{R,U}} &&
\displaystyle\frac{i^{-1}(\O_U)}{I_{R,U}^2} \ar[rr]^(0.4)\d &&
\displaystyle\frac{i^{-1}(T^*U)}{I_{R,U}\cdot i^{-1}(T^*U)}\,. }
\end{equation*}
\item[(b)] Let $R\subseteq S\subseteq X$ be Zariski open, $U,V$
be smooth $\K$-schemes, $i:R\hookra U,$ $j:S\hookra V$ closed
embeddings, and $\Phi:U\ra V$ a morphism with $\Phi\ci
i=j\vert_R:R\ra V$. Then the following diagram of sheaves on $R$
commutes:
\e
\begin{gathered}
\xymatrix@C=12pt@R=13pt{ 0 \ar[r] & \cS_X\vert_R \ar[d]^\id
\ar[rrr]^(0.4){\io_{S,V}\vert_R} &&&
\displaystyle\frac{j^{-1}(\O_V)}{I_{S,V}^2}\Big\vert_R
\ar@<-2ex>[d]^{i^{-1}(\Phi^\sharp)} \ar[rr]^(0.4)\d &&
\displaystyle\frac{j^{-1}(T^*V)}{I_{S,V}\cdot
j^{-1}(T^*V)}\Big\vert_R \ar@<-2ex>[d]^{i^{-1}(\d\Phi)} \\
0 \ar[r] & \cS_X\vert_R \ar[rrr]^(0.4){\io_{R,U}} &&&
\displaystyle\frac{i^{-1}(\O_U)}{I_{R,U}^2} \ar[rr]^(0.4)\d &&
\displaystyle\frac{i^{-1}(T^*U)}{I_{R,U}\cdot i^{-1}(T^*U)}\,.
}\!\!\!\!\!\!\!{}
\end{gathered}
\label{sa3eq1}
\e
\item[(c)] There is a natural decomposition
$\cS_X=\cSz_X\op\K_X,$ where\/ $\K_X$ is the constant sheaf on
$X$ with fibre $\K,$ and\/ $\cSz_X\subset\cS_X$ is the kernel of
the composition
\begin{equation*}
\smash{\xymatrix@C=40pt{ \cS_X \ar[r] & \O_X
\ar[r]^(0.47){i_X^\sharp} & \O_{X^\red}, }}
\end{equation*}
with $i_X:X^\red\hookra X$ the reduced $\K$-subscheme of~$X$.
\item[(d)] Let $\phi:X\ra Y$ be a morphism of $\K$-schemes. Then
there is a unique morphism $\phi^\star:\phi^{-1}(\cS_Y)\ra
\cS_X$ of sheaves of $\K$-algebras on $X,$ which maps
$\phi^{-1}(\cSz_Y)\ra \cSz_X,$ such that if $R\subseteq X,$
$S\subseteq Y$ are Zariski open with $\phi(R)\subseteq S,$ $U,V$
are smooth schemes, $i:R\hookra U,$ $j:S\hookra V$ are closed
embeddings, and $\Phi:U\ra V$ is a morphism with $\Phi\ci
i=j\ci\phi\vert_R:R\ra V,$ then as for \eq{sa3eq1} the following
diagram of sheaves on $R$ commutes:
\end{itemize}
\e
\begin{gathered}
\xymatrix@R=20pt@C=9pt{ 0 \ar[r] & \phi^{-1}(\cS_Y)\vert_R
\ar[rrr]_(0.45){\phi^{-1}(\io_{S,V})\vert_R}
\ar[d]^{\phi^\star\vert_R} &&& {\frac{\ts\phi^{-1}\ci
j^{-1}(\O_V)\vert_R}{\ts\phi^{-1}(I_{S,V}^2)\vert_R}}
\ar[d]^{i^{-1}(\Phi^\sharp)} \ar[rr]_(0.42){\phi^{-1}(\d)} &&
\frac{\ts \phi^{-1}(j^{-1}(T^*V))\vert_R}{\ts
\phi^{-1}(I_{S,V}\cdot j^{-1}(T^*V))\vert_R}
\ar[d]_{i^{-1}(\d\Phi)} \\
0 \ar[r] & \cS_X\vert_R \ar[rrr]^(0.45){\io_{R,U}} &&&
{\frac{\ts i^{-1}(\O_U)}{\ts I_{R,U}^2}} \ar[rr]^(0.42)\d &&
\frac{\ts i^{-1}(T^*U)}{\ts I_{R,U}\cdot i^{-1}(T^*U)}\,.
}\!\!\!\!\!\!\!\!\!\!\!\!\!\!\!{}
\end{gathered}
\label{sa3eq2}
\e
\begin{itemize}
\setlength{\itemsep}{0pt}
\setlength{\parsep}{0pt}
\item[(e)] If $X\,{\buildrel\phi\over\longra}\,Y\,
{\buildrel\psi\over\longra}\,Z$ are smooth morphisms of
$\K$-schemes, then
\begin{equation*}
\smash{(\psi\ci\phi)^\star=\phi^\star\ci\phi^{-1}(\psi^\star):
(\psi\ci\phi)^{-1}(\cS_Z)=\phi^{-1}\ci\psi^{-1}(\cS_Z)\longra
\cS_X.}
\end{equation*}
If\/ $\phi:X\ra Y$ is $\id_X:X\ra X$ then
$\id_X^\star=\id_{\cS_X}:\id_X^{-1}(\cS_X)=\cS_X\ra\cS_X$.
\end{itemize}

Following \cite[Def.~2.5]{Joyc2} we define algebraic d-critical
loci:

\begin{dfn} An ({\it algebraic\/}) {\it d-critical locus\/} over a
field $\K$ is a pair $(X,s)$, where $X$ is a $\K$-scheme and $s\in
H^0(\cSz_X)$, such that for each $x\in X$, there exists a Zariski
open neighbourhood $R$ of $x$ in $X$, a smooth $\K$-scheme $U$, a
regular function $f:U\ra\bA^1=\K$, and a closed embedding
$i:R\hookra U$, such that $i(R)=\Crit(f)$ as $\K$-subschemes of $U$,
and $\io_{R,U}(s\vert_R)=i^{-1}(f)+I_{R,U}^2$. We call the quadruple
$(R,U,f,i)$ a {\it critical chart\/} on~$(X,s)$.

Let $(X,s)$ be an algebraic d-critical locus, and $(R,U,f,i)$ a
critical chart on $(X,s)$. Let $U'\subseteq U$ be Zariski open, and
set $R'=i^{-1}(U')\subseteq R$, $i'=i\vert_{R'}:R'\hookra U'$, and
$f'=f\vert_{U'}$. Then $(R',U',f',i')$ is a critical chart on
$(X,s)$, and we call it a {\it subchart\/} of $(R,U,f,i)$. As a
shorthand we write~$(R',U',f',i')\subseteq (R,U,f,i)$.

Let $(R,U,f,i),(S,V,g,j)$ be critical charts on $(X,s)$, with
$R\subseteq S\subseteq X$. An {\it embedding\/} of $(R,U,f,i)$ in
$(S,V,g,j)$ is a locally closed embedding $\Phi:U\hookra V$ such
that $\Phi\ci i=j\vert_R$ and $f=g\ci\Phi$. As a shorthand we write
$\Phi: (R,U,f,i)\hookra(S,V,g,j)$. If $\Phi:(R,U,f,i)\hookra
(S,V,g,j)$ and $\Psi:(S,V,g,j)\hookra(T,W,h,k)$ are embeddings, then
$\Psi\ci\Phi:(R,U,i,e)\hookra(T,W,h,k)$ is also an embedding.

A {\it morphism\/} $\phi:(X,s)\ra (Y,t)$ of d-critical loci
$(X,s),(Y,t)$ is a $\K$-scheme morphism $\phi:X\ra Y$ with
$\phi^\star(t)=s$. This makes d-critical loci into a category.

\label{sa3def1}
\end{dfn}

There is also a complex analytic version, but we will
not discuss it. Here are~\cite[Prop.s 2.8, 2.30, Th.s 2.20, 2.28,
Def.~2.31,  Rem 2.32 \& Cor.~2.33]{Joyc2}:

\begin{prop} Let\/ $\phi:X\ra Y$ be a smooth morphism of\/
$\K$-schemes. Suppose $t\in H^0(\cSz_Y),$ and set\/
$s:=\phi^\star(t)\in H^0(\cSz_X)$. If\/ $(Y,t)$ is a d-critical
locus, then\/ $(X,s)$ is a d-critical locus, and\/
$\phi:(X,s)\ra(Y,t)$ is a morphism of d-critical loci. Conversely,
if also $\phi:X\ra Y$ is surjective, then $(X,s)$ a d-critical locus
implies $(Y,t)$ is a d-critical locus.
\label{sa3prop1}
\end{prop}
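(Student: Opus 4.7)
For the forward direction, given $x\in X$ with $y=\phi(x)\in Y$, choose a critical chart $(S,V,g,j)$ of $(Y,t)$ around $y$ and let $n$ denote the relative dimension of $\phi$ at $x$. The plan is to lift this chart to a critical chart on $(X,s)$ by a slice-and-\'etale-extension argument. On a Zariski open $R'\subseteq\phi^{-1}(S)$ containing $x$, I would choose $u_1,\ldots,u_n\in\O_{R'}$ whose differentials trivialize the relative cotangent of $\phi$ at $x$, making $(\phi\vert_{R'},u_1,\ldots,u_n):R'\ra S\t\bA^n$ \'etale. Using the standard lifting of \'etale algebras along the closed embedding $j\t\id_{\bA^n}:S\t\bA^n\hookra V\t\bA^n$ into the smooth target, one constructs a Cartesian square with smooth $U$, \'etale $\Phi:U\ra V\t\bA^n$, and closed embedding $i:R'\hookra U$ such that $\Phi\ci i=(j\t\id_{\bA^n})\ci(\phi\vert_{R'},u)$.

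Setting $f:=g\ci\pi_V\ci\Phi:U\ra\bA^1$, Cartesianness of the square gives $\Crit(f)=\Phi^{-1}(\Crit(g\ci\pi_V))=\Phi^{-1}((j\t\id)(S\t\bA^n))=i(R')$. Property (d) of $\cS$ applied to $\phi\vert_{R'}:R'\ra S$ with the embeddings $i,j$ and the compatible morphism $\pi_V\ci\Phi:U\ra V$ then yields $\io_{R',U}(s\vert_{R'})=\io_{R',U}(\phi^\star(t)\vert_{R'})=f+I_{R',U}^2$, so $(R',U,f,i)$ is a critical chart on $(X,s)$ around $x$. That $\phi:(X,s)\ra(Y,t)$ is a morphism of d-critical loci is immediate from the hypothesis $s=\phi^\star(t)$.

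For the converse, assume $\phi$ surjective smooth. Given $y\in Y$, pick $x\in\phi^{-1}(y)$ by surjectivity and a critical chart $(R,U,f,i)$ of $(X,s)$ around $x$. The plan is to quotient out the smooth fibers of $\phi$: choose fiber parameters $u_1,\ldots,u_n\in\O_R$ vanishing at $x$ (with $n$ the relative dimension of $\phi\vert_R$ at $x$) whose differentials form a basis of the relative cotangent at $x$, lift them to $\tilde u_\alpha\in\O_U$ via the surjection $\O_U\twoheadrightarrow\O_R$, and set $U^0:=Z(\tilde u_1,\ldots,\tilde u_n)\subseteq U$ and $R^0:=R\cap U^0$. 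By transversality $U^0$ is smooth of codimension $n$ at $i(x)$, and $\phi\vert_{R^0}:R^0\ra Y$ is \'etale at $x$, hence after Zariski shrinking (using that $\K$ is algebraically closed to ensure injectivity) an open embedding onto some open $P\ni y$. The candidate chart on $(Y,t)$ is $(P,U^0,f\vert_{U^0},j)$ with $j:P\cong R^0\hookra U^0$.

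The main obstacle is verifying the scheme-theoretic identity $\Crit(f\vert_{U^0})=j(P)$, since only $j(P)\subseteq\Crit(f\vert_{U^0})$ is automatic: extra critical points could appear at $p\in U^0\setminus i(R^0)$ where $\d f\vert_p$ happens to lie in $\langle\d\tilde u_\alpha\vert_p\rangle$. To eliminate them I would choose the lifts $\tilde u_\alpha$ so that $U^0$ is transverse to the Hessian bundle of $f$ along $i(R)$ near $i(x)$, and then shrink $U^0$ to a neighborhood of $i(x)$ on which this transversality rules out spurious critical points; alternatively, one can replace $(R,U,f,i)$ by an equivalent critical chart (modifying $f$ within its class modulo $I_{R,U}^2$) tailored to the slice $U^0$, or run the whole descent \'etale-locally on $Y$, exploiting that $\cSz_Y$ behaves well in the \'etale topology. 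Once $\Crit(f\vert_{U^0})=j(P)$ is arranged, property (d) applied to the \'etale morphism $\phi\vert_{R^0}:R^0\ra P$ with the embeddings $j,i\vert_{R^0}$ gives $\io_{P,U^0}(t\vert_P)=f\vert_{U^0}+I_{P,U^0}^2$, completing the construction of the required critical chart on $(Y,t)$.
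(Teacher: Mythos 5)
Your forward direction is correct and is essentially the standard argument: \'etale-locally factor the smooth morphism $\phi$ as an \'etale map $R'\to S\t\bA^n$ followed by the projection, use the lifting of \'etale algebras along the closed embedding $j\t\id_{\bA^n}:S\t\bA^n\hookra V\t\bA^n$ to get a smooth $U$ with an \'etale $\Phi:U\ra V\t\bA^n$ and a Cartesian square, and set $f=g\ci\pi_V\ci\Phi$; \'etaleness of $\Phi$ gives $\Crit(f)=\Phi^{-1}(\Crit(g\ci\pi_V))=\Phi^{-1}(j(S)\t\bA^n)=i(R')$, and property (d) of $\cS$ applied to $\pi_V\ci\Phi$ identifies $\io_{R',U}(s\vert_{R'})$ with $i^{-1}(f)+I_{R',U}^2$. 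That $\phi$ is a morphism of d-critical loci is, as you say, just the definition.

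For the converse, however, the obstacle you flag is a genuine gap, and none of your three proposed fixes resolves it. You have $R^0\subseteq\Crit(f\vert_{U^0})$ as closed subschemes, coming from the ideal inclusion $\bigl(\partial_{w_j}(f\vert_{U^0})\bigr)\subseteq I_{R,U}\vert_{U^0}=I_{R^0,U^0}$, but the reverse inclusion is precisely the substance of the converse and must be proved. The difficulty is scheme-theoretic, not merely about stray points, so \emph{shrinking $U^0$ cannot help}: $\Crit(f\vert_{U^0})$ could have the same support as $R^0$ near $i(x)$ but a strictly fatter scheme structure. The ``transversality to the Hessian bundle'' suggestion is vacuous at the point that matters, because for a minimal chart one has $\Hess_{i(x)}f=0$, so there is no nonzero Hessian at $i(x)$ to be transverse to. Modifying $f$ within $I_{R,U}^2$ changes $f\vert_{U^0}$ by an arbitrary element of $I_{R^0,U^0}^2$, so the claim that some such modification makes $\Crit(f\vert_{U^0})=R^0$ is exactly the conclusion you are trying to reach, not a lemma you may invoke. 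And passing to an \'etale cover of $Y$ on which $\phi$ has a section reproduces the identical ideal-theoretic question for the section rather than eliminating it. What is actually needed is to use the smoothness of $\phi:R\to Y$ in an essential way to show that the ``extra'' generators $(\partial_{\tilde u_\alpha}f)\vert_{U^0}$ of $I_{R^0,U^0}$ already lie in the ideal generated by $(\partial_{w_j}f)\vert_{U^0}$ near $i(x)$ (equivalently, $\partial_{\tilde u_\alpha}f\in(\partial_{w_j}f)+(\tilde u_\beta)$ in $\O_{U,i(x)}$); this does not follow from the bare set-up and is the real content of the converse. The paper itself does not reprove this statement but cites \cite[Prop.~2.8]{Joyc2}, where the argument supplying exactly this missing step can be found.
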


\begin{thm} Suppose\/ $(X,s)$ is an algebraic d-critical locus, and\/
$(R,U,f,i),\ab(S,V,g,j)$ are critical charts on $(X,s)$. Then for each\/
$x\in R\cap S\subseteq X$ there exist subcharts
$(R',U',f',i')\subseteq(R,U,f,i),$ $(S',V',g',j')\subseteq
(S,V,g,j)$ with\/ $x\in R'\cap S'\subseteq X,$ a critical chart\/
$(T,W,h,k)$ on $(X,s),$ and embeddings $\Phi:(R',U',f',i')\hookra
(T,W,h,k),$ $\Psi:(S',V',g',j')\hookra(T,W,h,k)$.
\label{sa3thm2}
\end{thm}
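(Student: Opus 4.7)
The plan is to construct a common refining critical chart $(T,W,h,k)$ by taking $W$ to be (a localization of) the product $U\t V$, possibly stabilized by a linear space carrying a nondegenerate quadratic form, and building $h$ from $f,g$ together with certain smooth extensions between the two given charts.

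First I would localize. After subtracting constants to arrange $f(i(x))=0=g(j(x))$, I would shrink $U,V$ to Zariski open neighbourhoods $U',V'$ of $i(x),j(x)$ so that the induced subcharts have $R'=i^{-1}(U')$ and $S'=j^{-1}(V')$ both equal to a common open neighbourhood $T$ of $x$ in $R\cap S\subseteq X$. Using smoothness of $V$ and the standard local lifting property for maps from closed subschemes of affine $\K$-schemes into smooth $\K$-schemes, I would then extend the closed embedding $j\ci(i|_T)^{-1}\colon i(T)\hookra V'$ to a regular morphism $\al\colon U'\ra V'$ with $\al\ci i|_T=j|_T$; symmetrically, I would produce $\be\colon V'\ra U'$ with $\be\ci j|_T=i|_T$.

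Next I would set $W=U'\t V'$ and define $k\colon T\hookra W$ by $z\mapsto(i(z),j(z))$, which is a closed embedding (the graph of $j\ci(i|_T)^{-1}$ along $i(T)\hookra U'$). The technical heart is then to construct $h\colon W\ra\bA^1$ together with morphisms $\Phi(u)=(u,\al(u))$ and $\Psi(v)=(\be(v),v)$ satisfying
\begin{equation*}
h\ci\Phi=f'|_{U'},\qquad h\ci\Psi=g'|_{V'},\qquad \Crit(h)=k(T)
\end{equation*}
as subschemes of $W$. A natural candidate is $h(u,v)=f(u)+g(v)-g(\al(u))-f(\be(v))+C(u,v)$, where $C$ is a quadratic form in ``mismatch coordinates'' (components of $u-\be(v)$ and $v-\al(u)$ in suitable \'etale coordinates) chosen so that the two graph identities hold exactly and the Hessian of $h$ transverse to $k(T)$ is nondegenerate. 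To guarantee this transverse nondegeneracy it may be necessary to stabilize $W$ by an extra factor $\bA^{2N}$ with a hyperbolic quadratic form, in the spirit of the stabilization of critical charts in \cite[Prop.~2.30]{Joyc2}.

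Finally, I would verify the d-critical locus axiom $\io_{T,W}(s|_T)=k^{-1}(h)+I_{T,W}^2$ by a sheaf-theoretic computation: applying the compatibility diagram \eq{sa3eq1} for $\cS_X$ to the embeddings $\Phi,\Psi$ (which are embeddings of critical charts by construction) pulls $k^{-1}(h)+I_{T,W}^2$ back to $\io_{R',U'}(s|_{R'})$ and $\io_{S',V'}(s|_{S'})$ respectively; the hypotheses on the given charts and injectivity of $\io_{T,W}$ then force the required equality. The main obstacle in this plan is thus the construction of $h$ above --- specifically the scheme-theoretic equality $\Crit(h)=k(T)$, a relative Morse-type lemma --- since ensuring transverse nondegeneracy without introducing spurious critical points likely requires a careful stabilization to absorb the mismatch between $\al$ and $\be$ into a nondegenerate quadratic form.
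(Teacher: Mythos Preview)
The paper does not itself prove this statement: it is quoted as background from \cite[Th.~2.20]{Joyc2}, so there is no proof in the present paper to compare against directly.

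That said, your outline has the right architecture --- localize, pass to a product-type ambient smooth scheme, stabilize by a nondegenerate quadratic form --- and this is the shape of the argument in \cite{Joyc2}. The genuine gap is exactly the one you flag yourself: your candidate $h(u,v)=f(u)+g(v)-g(\al(u))-f(\be(v))+C(u,v)$ will not in general satisfy both graph identities $h\ci\Phi=f'$ and $h\ci\Psi=g'$ simultaneously, because $\be\ci\al$ and $\al\ci\be$ are not the identity even to second order at $i(x),j(x)$, so the two constraints on $C$ conflict. Trying to absorb this mismatch into a single quadratic $C$ on $U'\t V'$ is essentially as hard as the theorem.

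In \cite{Joyc2} this symmetric difficulty is sidestepped. The key intermediate lemma there is asymmetric: if one critical chart is \emph{minimal} at $x$ (that is, $\dim U=\dim T_xX$), then after shrinking it can be embedded into \emph{any} other critical chart containing $x$, via an \'etale-local Morse-lemma-with-parameters identifying the larger chart with a quadratic stabilization of the smaller. The comparison theorem then follows by routing through an auxiliary minimal chart at $x$ and stabilizing, rather than by writing down a single explicit $h$ on $U'\t V'$. So your plan is on the right track conceptually, but the proof in \cite{Joyc2} replaces your hard step (building $h$ with both graph identities at once) by an easier one-sided embedding lemma applied twice.
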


\begin{thm} Let\/ $(X,s)$ be an algebraic d-critical locus, and\/
$X^\red\subseteq X$ the associated reduced\/ $\K$-subscheme. Then
there exists a line bundle $K_{X,s}$ on $X^\red$ which we call the
\begin{bfseries}canonical bundle\end{bfseries} of\/ $(X,s),$ which
is natural up to canonical isomorphism, and is characterized by the
following properties:
\begin{itemize}
\setlength{\itemsep}{0pt}
\setlength{\parsep}{0pt}
\item[{\bf(a)}] For each $x\in X^\red,$ there is a canonical
isomorphism
\e
\smash{\ka_x:K_{X,s}\vert_x\,{\buildrel\cong\over\longra}\,
\bigl(\La^{\rm top}T_x^*X\bigr){}^{\ot^2},}
\label{sa3eq3}
\e
where $T_xX$ is the Zariski tangent space of\/ $X$ at\/~$x$.
\item[{\bf(b)}] If\/ $(R,U,f,i)$ is a critical chart on
$(X,s),$ there is a natural isomorphism
\e
\smash{\io_{R,U,f,i}:K_{X,s}\vert_{R^\red}\longra
i^*\bigl(K_U^{\ot^2}\bigr)\vert_{R^\red},}
\label{sa3eq4}
\e
where $K_U=\La^{\dim U}T^*U$ is the canonical bundle of\/ $U$ in
the usual sense.

\item[{\bf(c)}] In the situation of\/ {\bf(b)\rm,} let\/ $x\in R$.
Then we have an exact sequence
\e
\begin{gathered}
\smash{{}\!\!\!\!\xymatrix@C=22pt@R=15pt{ 0 \ar[r] & T_xX
\ar[r]^(0.4){\d i\vert_x} & T_{i(x)}U
\ar[rr]^(0.53){\Hess_{i(x)}f} && T_{i(x)}^*U \ar[r]^{\d
i\vert_x^*}  & T_x^*X  \ar[r] & 0, }}
\end{gathered}
\label{sa3eq5}
\e
and the following diagram commutes:
\begin{equation*}
\xymatrix@C=150pt@R=11pt{ *+[r]{K_{X,s}\vert_x}
\ar[dr]_{\io_{R,U,f,i}\vert_x} \ar[r]_(0.55){\ka_x} &
*+[l]{\bigl(\La^{\rm top}T_x^*X\bigr){}^{\ot^2}}
\ar[d]_(0.45){\al_{x,R,U,f,i}} \\
& *+[l]{K_U\vert_{i(x)}^{\ot^2},\!\!\!} }
\end{equation*}
where $\al_{x,R,U,f,i}$ is induced by taking top exterior powers
in\/~\eq{sa3eq5}.
\end{itemize}
\label{sa3thm3}
\end{thm}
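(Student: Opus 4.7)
The strategy is to build $K_{X,s}$ by gluing local models defined on critical charts. For each critical chart $(R,U,f,i)$, take the candidate line bundle on $R^\red$ to be $\Lambda_{R,U,f,i}:=i^*(K_U^{\ot 2})\vert_{R^\red}$; part (b) is then tautological. The core of the proof is to produce canonical gluing isomorphisms between the $\Lambda_{R,U,f,i}$ for any two critical charts, satisfying a cocycle condition on triple overlaps, and then to verify that the resulting sheaf $K_{X,s}$ has the pointwise description in (a) and the compatibility in (c).

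The key local construction is the comparison isomorphism attached to an embedding $\Phi:(R,U,f,i)\hookra(S,V,g,j)$ of critical charts with $R\subseteq S$. Let $N=N_{U/V}$ be the normal bundle. Since $f=g\ci\Phi$, the Hessian $\Hess_{j(x)}g$ vanishes on $d\Phi(T_xX)$ for $x\in R$, and by comparing the exact sequences \eq{sa3eq5} for $(f,U)$ and $(g,V)$ one sees that $T_xX=\ker\Hess_{j(x)}g$ inside $T_{j(x)}V$. Hence $\Hess g$ descends to a non-degenerate symmetric form on $T_{j(x)}V/T_xX$, and the orthogonal complement of $T_{i(x)}U/T_xX$ yields a non-degenerate pairing $q$ on $N\vert_R$. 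The top exterior power of $q$ produces a canonical trivialization $\det q:(\det N)^{\ot 2}\vert_R\,{\buildrel\cong\over\longra}\,\O_R$, and combined with the adjunction identity $K_V\vert_U\cong K_U\ot \det N^*$, squaring and restricting to $R^\red$ yields a canonical isomorphism
\begin{equation*}
\Psi_\Phi:\Lambda_{R,U,f,i}\,{\buildrel\cong\over\longra}\,\Lambda_{S,V,g,j}\vert_{R^\red}.
\end{equation*}
For two arbitrary critical charts and $x\in R\cap S$, Theorem \ref{sa3thm2} supplies a common over-chart with subchart embeddings of both original charts, and applying $\Psi$ to each embedding transports $\Lambda_{R,U,f,i}$ to $\Lambda_{S,V,g,j}$ in a neighbourhood of~$x$.

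To assemble these local identifications into a single line bundle $K_{X,s}$, one verifies two compatibilities. First, for a tower $(R,U,f,i)\hookra(S,V,g,j)\hookra(T,W,h,k)$ the short exact sequence $0\to N_{U/V}\to N_{U/W}\to N_{V/W}\vert_U\to 0$ gives $\det N_{U/W}\cong \det N_{U/V}\ot\det N_{V/W}\vert_U$, and the normal Hessian of $h$ splits orthogonally along this filtration, so the determinantal trivializations multiply and $\Psi_{\Psi\ci\Phi}=\Psi_\Psi\vert_R\ci\Psi_\Phi$. Second, independence of the auxiliary common over-chart in the comparison step reduces, by another application of Theorem \ref{sa3thm2} to pairs of over-charts, back to the tower case. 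With $K_{X,s}$ now defined, property (c) holds because the map $\alpha_{x,R,U,f,i}$ obtained from taking top exterior powers in \eq{sa3eq5} is exactly the fibrewise $N=0$ specialization of the construction of $\Psi_\Phi$ applied to the inclusion of $X$ into $U$, and property (a) follows at any $x\in X^\red$ by choosing a chart through $x$ and combining (b) with the pointwise version of~(c).

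The principal obstacle is the verification of the cocycle identity and the resulting independence from all auxiliary choices (critical chart, subchart, over-chart). The delicate point is that $\det q$ is a square of a sign-sensitive datum, so one must check that squaring genuinely cancels the ambiguities arising from the choice of orthogonal complements when the normal Hessian is split along a filtration. This squaring is precisely the reason $K_{X,s}$ is a line bundle rather than merely a $\Z/2$-gerbe, and it is also why the construction is guaranteed to descend cleanly to $X^\red$ rather than all of~$X$.
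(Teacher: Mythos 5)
The paper under review does not prove Theorem~\ref{sa3thm3}; it is recalled, together with Theorem~\ref{sa3thm2} and Propositions~\ref{sa3prop1}, \ref{sa3prop2}, from Joyce's earlier paper \cite{Joyc2}, where it appears as Th.~2.28. Your plan does match the shape of Joyce's argument there: take $i^*(K_U^{\ot 2})\vert_{R^\red}$ as local model on each critical chart, attach to each embedding $\Phi:(R,U,f,i)\hookra(S,V,g,j)$ a comparison isomorphism by trivializing $(\det N_{U/V})^{\ot 2}\vert_{R^\red}$ via the determinant of a non-degenerate ``normal Hessian'' of $g$, combine this with $K_V\vert_U\cong K_U\ot\det N_{U/V}^*$, and reduce overlaps and the cocycle identity to the embedding and tower cases by Theorem~\ref{sa3thm2}.

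The crucial step, however, is only given pointwise and as stated leaves a genuine gap. You define $q$ on $N_{U/V}\vert_x$ as the form induced on the $\Hess_{j(x)}g$-orthogonal complement of $T_{i(x)}U/T_xX$ inside $T_{j(x)}V/T_xX$. But $T_xX$ jumps in rank as $x$ varies over $R$ --- that jumping is the whole point of d-critical loci --- so this orthogonal complement is not the fibre of an algebraic subbundle, and the fibrewise recipe does not by itself produce an algebraic section of $\Sym^2 N_{U/V}^*\vert_{R^\red}$. Without that, taking the top exterior power of $q$ is not a legal operation on line bundles, and the trivialization $(\det N_{U/V})^{\ot 2}\vert_{R^\red}\cong\O_{R^\red}$ does not yet exist. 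The missing input is a relative splitting lemma, which Joyce actually proves: Zariski-locally near $R$ there is a retraction $\pi:V\ra U$ of $\Phi$ with $g-f\ci\pi\in I_U^2$ (where $I_U\subset\O_V$ is the ideal of $\Phi(U)$), so the class of $g-f\ci\pi$ in $I_U^2/I_U^3\cong\Sym^2 N_{U/V}^*$ is an honest algebraic quadratic form, non-degenerate along $R^\red$ by \eq{sa3eq5}; the existence of such a $\pi$ rests on the scheme-theoretic equality $\Phi^{-1}(\Crit g)=\Crit f$. Your remark that squaring cancels the sign ambiguity in $\det q$ is correct, but is independent of this algebraicity issue; and the ``splits orthogonally along this filtration'' claim in the tower verification is itself a small computation rather than a formality, since one must check that the $q_{U/W}$-orthogonal complement of $N_{U/V}$ in $N_{U/W}$, transported to $N_{V/W}\vert_U$, carries the form $q_{V/W}\vert_U$.
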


\begin{prop} Suppose $\phi:(X,s)\ra(Y,t)$ is a morphism of
d-critical loci with\/ $\phi:X\ra Y$ smooth, as in Proposition\/
{\rm\ref{sa3prop1}}. The \begin{bfseries}relative cotangent
bundle\end{bfseries} $T^*_{X/Y}$ is a vector bundle of mixed rank on
$X$ in the exact sequence of coherent sheaves on $X\!:$
\e
\smash{\xymatrix@C=35pt{0 \ar[r] & \phi^*(T^*Y) \ar[r]^(0.55){\d\phi^*} & T^*X \ar[r] & T^*_{X/Y} \ar[r] & 0. }}
\label{sa3eq6}
\e
There is a natural isomorphism of line bundles on $X^\red\!:$
\e
\smash{\Up_\phi:\phi\vert_{X^\red}^* (K_{Y,t})\ot\bigl(\La^{\rm
top}T^*_{X/Y}\bigr)\big\vert_{X^\red}^{\ot^2}
\,{\buildrel\cong\over\longra}\,K_{X,s},}
\label{sa3eq7}
\e
such that for each\/ $x\in X^\red$ the following diagram of
isomorphisms commutes:
\e
\begin{gathered}
\xymatrix@C=160pt@R=13pt{ *+[r]{K_{Y,t}
\vert_{\phi(x)}\ot\bigl(\La^{\rm top}T^*_{X/Y}\vert_x\bigr)^{\ot^2}}
\ar[r]_(0.7){\Up_\phi\vert_x} \ar[d]^{\ka_{\phi(x)}\ot\id} &
*+[l]{K_{X,s}\vert_x}
\ar[d]_{\ka_x} \\
*+[r]{\bigl(\La^{\rm top}T_{\phi(x)}^*Y\bigr)^{\ot^2}\ot
\bigl(\La^{\rm top}T^*_{X/Y}\vert_x\bigr)^{\ot^2}}
\ar[r]^(0.7){\up_x^{\ot^2}} & *+[l]{\bigl(\La^{\rm
top}T_x^*X\bigr)^{\ot^2},\!\!{}} }
\end{gathered}
\label{sa3eq8}
\e
where $\ka_x,\ka_{\phi(x)}$ are as in {\rm\eq{sa3eq3},} and\/
$\up_x:\La^{\rm top}T_{\phi(x)}^*Y\ot \La^{\rm top}T^*_{X/Y}
\vert_x\ra\La^{\rm top}T_x^*X$ is obtained by restricting
\eq{sa3eq6} to $x$ and taking top exterior powers.
\label{sa3prop2}
\end{prop}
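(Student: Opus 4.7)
The plan is to construct $\Up_\phi$ locally from compatible critical charts on $(X,s)$ and $(Y,t)$, verify independence of choices, glue to a global isomorphism on $X^\red,$ and finally verify the pointwise diagram \eq{sa3eq8}. Given $x\in X^\red,$ I would first choose a critical chart $(S,V,g,j)$ on $(Y,t)$ with $\phi(x)\in S,$ and then use smoothness of $\phi:X\ra Y$ to extend this to a compatible critical chart $(R,U,f,i)$ on $(X,s)$ equipped with a smooth morphism $\Phi:U\ra V$ satisfying $R\subseteq\phi^{-1}(S),$ $\Phi\ci i=j\ci\phi\vert_R,$ and $g\ci\Phi=f.$ Such charts exist because the smooth morphism $\phi\vert_R:R\ra S$ can be realized as the restriction of a smooth morphism $\Phi$ between smooth ambient schemes, and then $\Crit(g\ci\Phi)\supseteq R$ automatically near~$x.$

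Given such compatible charts, smoothness of $\Phi$ gives the short exact sequence $0\ra\Phi^*T^*V\ra T^*U\ra T^*_{U/V}\ra 0$ and hence a canonical isomorphism $K_U\cong\Phi^*K_V\ot\La^{\rm top}T^*_{U/V}$ of line bundles on $U.$ Squaring, pulling back along $i,$ and combining with the isomorphisms $\io_{R,U,f,i}$ and $\io_{S,V,g,j}$ of Theorem \ref{sa3thm3}(b) gives
\begin{equation*}
K_{X,s}\big\vert_{R^\red}\,\cong\,\phi\vert_{R^\red}^*(K_{Y,t})\ot i^*\bigl(\La^{\rm top}T^*_{U/V}\bigr)^{\ot^2}\big\vert_{R^\red}.
\end{equation*}
It remains to identify $i^*T^*_{U/V}$ with $T^*_{X/Y}\vert_R.$ Pointwise at $x\in R^\red,$ since $\phi(x)\in S=\Crit(g),$ the chain rule gives $\Hess_{i(x)}f=(\d\Phi\vert_{i(x)})^*\ci\Hess_{j(\phi(x))}g\ci\d\Phi\vert_{i(x)},$ so the image of $\Hess_{i(x)}f$ is contained in $\Phi^*T^*_{j(\phi(x))}V.$ Combining with \eq{sa3eq5} for both charts then yields
\begin{equation*}
T^*_{X/Y}\vert_x=T^*_xX/\phi^*T^*_{\phi(x)}Y\,\cong\, T^*_{i(x)}U/\Phi^*T^*_{j(\phi(x))}V\,\cong\,i^*T^*_{U/V}\vert_x,
\end{equation*}
so the natural surjection $i^*T^*_{U/V}\twoheadrightarrow T^*_{X/Y}\vert_R$ coming from pullback of the relative cotangent sequence is fibrewise, hence by Nakayama globally, an isomorphism on $R.$ This simultaneously proves that $T^*_{X/Y}$ is a vector bundle of rank $\dim U-\dim V$ on $R,$ justifying the ``mixed rank'' claim in \eq{sa3eq6}. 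Taking top exterior powers produces the desired local~$\Up_\phi\vert_{R^\red}.$

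The main obstacle is independence of $\Up_\phi\vert_{R^\red}$ from the choice of compatible charts, which is needed for gluing. To handle this I would take two compatible chart systems overlapping at a point, apply Theorem \ref{sa3thm2} on $(Y,t)$ to produce a common refinement critical chart together with embeddings of ambient smooth schemes, apply Theorem \ref{sa3thm2} on $(X,s)$ similarly, and arrange by a standard Zariski-local smoothing argument a smooth $\Phi$ between the refined charts compatible with both original ones; then naturality of $\io_{R,U,f,i}$ in \eq{sa3eq4} under embeddings, together with functoriality of the relative cotangent sequence under smooth morphisms, forces the two candidate $\Up_\phi$'s to agree on the overlap. Finally, the commutativity of \eq{sa3eq8} at $x$ follows by unwinding the construction: $\ka_x$ is by Theorem \ref{sa3thm3}(c) the composition of $\io_{R,U,f,i}\vert_x$ with the top-exterior-power map of \eq{sa3eq5}, $\ka_{\phi(x)}$ is the analogous composition for $(S,V,g,j),$ and $\up_x$ is induced from \eq{sa3eq6}; these intertwine with the pointwise isomorphism $T^*_{X/Y}\vert_x\cong i^*T^*_{U/V}\vert_x$ established above, so the diagram commutes by construction.
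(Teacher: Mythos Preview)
The paper does not actually prove this statement; it is quoted verbatim as background from \cite[Prop.~2.30]{Joyc2} (see the sentence ``Here are \cite[Prop.s 2.8, 2.30, \ldots]{Joyc2}'' preceding Proposition~\ref{sa3prop1}), so there is no proof here to compare against. Your proposal is the natural and essentially correct strategy, and it is almost certainly what the proof in \cite{Joyc2} does: the existence of compatible critical charts $(R,U,f,i),(S,V,g,j)$ together with a smooth $\Phi:U\ra V$ satisfying $f=g\ci\Phi$ and $\Phi\ci i=j\ci\phi\vert_R$ is exactly what is invoked elsewhere in the paper (e.g.\ in the proofs of Propositions~\ref{sa4prop3} and~\ref{sa5prop1}, which cite ``the proof of \cite[Prop.~2.8]{Joyc2}'' for precisely this construction), and once such charts are in hand the rest of your argument---the identification $i^*T^*_{U/V}\cong T^*_{X/Y}\vert_R$ via the Hessian chain rule, the determinant-line isomorphism from $K_U\cong\Phi^*K_V\ot\La^{\rm top}T^*_{U/V}$, and the gluing via Theorem~\ref{sa3thm2}---is straightforward.

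One small comment: your Nakayama step for $i^*T^*_{U/V}\ra T^*_{X/Y}\vert_R$ is fine, but note you only need the isomorphism on $R^\red$ for the purposes of \eq{sa3eq7}, and there the pointwise computation already suffices since both sides are line bundles on a reduced scheme. Also, for the independence-of-charts step you may find it cleaner to bypass the direct chart comparison entirely and instead observe that both candidate local isomorphisms satisfy the pointwise characterisation \eq{sa3eq8}; since $X^\red$ is reduced and both sides are line bundles, two isomorphisms agreeing fibrewise must coincide. This is the same trick used at the end of \S\ref{sa34} for the canonical bundle isomorphism on stacks.
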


\begin{dfn} Let $(X,s)$ be an algebraic d-critical locus, and
$K_{X,s}$ its canonical bundle from Theorem \ref{sa3thm3}. An {\it
orientation\/} on $(X,s)$ is a choice of square root line bundle
$K_{X,s}^{1/2}$ for $K_{X,s}$ on $X^\red$. That is, an orientation
is a line bundle $L$ on $X^\red$, together with an isomorphism
$L^{\ot^2}=L\ot L\cong K_{X,s}$. A d-critical locus with an
orientation will be called an {\it oriented d-critical locus}.
\label{sa3def2}
\end{dfn}

\begin{rem} In view of equation \eq{sa3eq3}, one might hope to
define a canonical orientation $K_{X,s}^{1/2}$ for a d-critical
locus $(X,s)$ by $K_{X,s}^{1/2}\big\vert_x=\La^{\rm top}T_x^*X$ for
$x\in X^\red$. However, {\it this does not work}, as the spaces
$\La^{\rm top}T_x^*X$ do not vary continuously with $x\in X^\red$ if
$X$ is not smooth. An example in \cite[Ex.~2.39]{Joyc2} shows that
d-critical loci need not admit orientations.
\label{sa3rem1}
\end{rem}

In the situation of Proposition \ref{sa3prop2}, the factor
$(\La^{\rm top}T^*_{X/Y})\vert_{X^\red}^{\ot^2}$ in \eq{sa3eq7} has
a natural square root $(\La^{\rm top}T^*_{X/Y})\vert_{X^\red}$. Thus
we deduce:

\begin{cor} Let\/ $\phi:(X,s)\ra(Y,t)$ be a morphism of
d-critical loci with\/ $\phi:X\ra Y$ smooth. Then each orientation
$K_{Y,t}^{1/2}$ for\/ $(Y,t)$ lifts to a natural orientation
$K_{X,s}^{1/2}=\phi\vert_{X^\red}^*(K_{Y,t}^{1/2})\ot(\La^{\rm
top}T^*_{X/Y}) \vert_{X^\red}$ for~$(X,s)$.
\label{sa3cor1}
\end{cor}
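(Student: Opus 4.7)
The plan is a direct application of the canonical isomorphism $\Up_\phi$ from Proposition \ref{sa3prop2}. Recall that an orientation on $(Y,t)$ consists of a line bundle $L_Y$ on $Y^\red$ together with an isomorphism $\si_Y:L_Y^{\ot 2}\,{\buildrel\cong\over\longra}\, K_{Y,t}$. I would like to manufacture a square root of $K_{X,s}$ by pulling back $L_Y$ along $\phi\vert_{X^\red}$ and correcting by the (already square) relative factor.

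First, observe that since $\phi:X\ra Y$ is smooth, $T^*_{X/Y}$ is a vector bundle on $X$, so $\La^{\rm top}T^*_{X/Y}$ is a line bundle on $X$, and its restriction $N:=(\La^{\rm top}T^*_{X/Y})\vert_{X^\red}$ is a line bundle on $X^\red$. Define
\begin{equation*}
L_X:=\phi\vert_{X^\red}^*(L_Y)\ot N,
\end{equation*}
a line bundle on $X^\red$. I then exhibit an isomorphism $\si_X:L_X^{\ot 2}\,{\buildrel\cong\over\longra}\, K_{X,s}$ as the composition
\begin{equation*}
L_X^{\ot 2}=\phi\vert_{X^\red}^*(L_Y^{\ot 2})\ot N^{\ot 2}
\xrightarrow{\phi\vert_{X^\red}^*(\si_Y)\ot\id}
\phi\vert_{X^\red}^*(K_{Y,t})\ot N^{\ot 2}
\xrightarrow{\ \Up_\phi\ } K_{X,s},
\end{equation*}
where $\Up_\phi$ is the isomorphism of line bundles on $X^\red$ supplied by \eq{sa3eq7}. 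The pair $(L_X,\si_X)$ is by definition an orientation of the d-critical locus $(X,s)$, of exactly the advertised form.

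For naturality, the assignment $(L_Y,\si_Y)\mapsto(L_X,\si_X)$ is functorial in $L_Y$ because both pullback and tensoring with $N$ are, and it is compatible with composition of smooth morphisms $(X,s)\ra(Y,t)\ra(Z,u)$ because $\Up_\phi$ is compatible with composition of smooth morphisms of d-critical loci (which in turn follows from the exact sequence \eq{sa3eq6} for compositions, together with the characterization of $\Up_\phi$ by the commutative diagram \eq{sa3eq8} at each point). There is no genuine obstacle here, as everything reduces to the existence and naturality of $\Up_\phi$ from Proposition \ref{sa3prop2}, and the only thing to check is that the square root is manifest once one isolates the factor $N^{\ot 2}$ in \eq{sa3eq7}; the mild subtlety is that the correcting line bundle $N$ is an \emph{honest} square root (not just a square), which is exactly why the construction goes through without choices beyond the chosen orientation of $(Y,t)$.
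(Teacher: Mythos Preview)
Your proof is correct and follows essentially the same approach as the paper: the paper simply observes that in the isomorphism $\Up_\phi$ of \eq{sa3eq7}, the factor $(\La^{\rm top}T^*_{X/Y})\vert_{X^\red}^{\ot^2}$ has the obvious square root $(\La^{\rm top}T^*_{X/Y})\vert_{X^\red}$, and then states the corollary. Your write-up makes this explicit and adds a naturality discussion, but the core argument is identical.
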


The following result from \cite{BBJ} will be generalized to stacks
in Theorem \ref{sa3thm6}.

\begin{thm}[Bussi, Brav and Joyce {\cite[Th.~6.6]{BBJ}}] Suppose\/
$(\bs X,\om)$ is a $-1$-shifted symplectic derived scheme in the
sense of Pantev et al.\ {\rm\cite{PTVV}} over an algebraically
closed field\/ $\K$ of characteristic zero, and let\/ $X=t_0(\bs X)$
be the associated classical\/ $\K$-scheme of\/ ${\bs X}$. Then $X$
extends naturally to an algebraic d-critical locus\/ $(X,s)$. The
canonical bundle $K_{X,s}$ from Theorem\/ {\rm\ref{sa3thm3}} is
naturally isomorphic to the determinant line bundle $\det(\bL_{\bs
X})\vert_{X^\red}$ of the cotangent complex\/ $\bL_{\bs X}$
of\/~$\bs X$.
\label{sa3thm4}
\end{thm}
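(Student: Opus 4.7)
The plan is to combine the $k=-1$ case of the Darboux Theorem (Theorem \ref{sa2thm3}) with the comparison results on overlaps (Theorem \ref{sa2thm2} and Proposition \ref{sa2prop1}(c)) to construct the d-critical structure $s$ and then identify the canonical bundle. The key feature in the $k=-1$ case is that the Hamiltonian $H$ lies in $A^0$, so it is an honest regular function on the smooth scheme $U(0)=\Spec A^0$, and the classical truncation $t_0(\bSpec A)=\Spec H^0(A)$ is canonically $\Crit(H)$.

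First, given $x\in X=t_0(\bs X)$, apply Theorem \ref{sa2thm3}(i) to produce a Darboux-form cdga $A$, minimal at some $p$, with a Zariski open inclusion $\bs f:\bU=\bSpec A\hookra \bs X$ sending $p\mapsto x$ and $\bs f^*(\om)\sim \om_A$, where $\om_A^0=\sum_j \dd y^{-1}_j\,\dd x^0_j$. Setting $U(0)=\Spec A^0$ and letting $R\subseteq X$ be the image of $f=t_0(\bs f)$, the truncation gives a closed embedding $i:R\hookra U(0)$ with $i(R)=\Crit(H)$, so $(R,U(0),H,i)$ is a critical chart. By Proposition \ref{sa2prop1}(b) we may assume $H|_{R^\red}=0$, so the class $H+I_{R,U(0)}^2$ defines a local section $s_R\in H^0(\cSz_X|_R)$ via~$\io_{R,U(0)}$.

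The main step is to show that the local sections $s_R$ agree on overlaps and hence glue to a global $s\in H^0(\cSz_X)$. Given two charts $(A,\bs f,H)$ and $(A',\bs f',H')$ with overlapping images near a point $q\in\bs X$, Theorem \ref{sa2thm2} produces a third Darboux chart $(C,\bs g)$ and Zariski open inclusions $\al:A\ra C$, $\al':A'\ra C$ with $\bs f\ci\bSpec\al\simeq\bs f'\ci\bSpec\al'$. The pullbacks satisfy $\al_*(\om_A)\sim\al'_*(\om_{A'})$ as $-1$-shifted closed 2-forms on $\bSpec C$, so Proposition \ref{sa2prop1}(c), applied to $\Phi=-H$ and $\Phi'=-H'$ (both vanishing on the reduced subscheme), yields $\Psi\in C^{-1}$ with $\al(H)-\al'(H')=-\d\Psi$ in $C^0$. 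The differential $\d:C^{-1}\ra C^0$ sends each generator $y_j^{-1}$ to $\pd H/\pd x^0_j$, which lies in the ideal $I_{V,V(0)}\subset C^0$ of $V=t_0(\bSpec C)$ inside $V(0)=\Spec C^0$; a second-order refinement using $\Psi|_{V^\red}=0$ shows $\d\Psi\in I_{V,V(0)}^2$. By the compatibility diagram \eq{sa3eq1}, $H$ and $H'$ then represent the same class in $\cS_X|_V$, so the $s_R$ glue to a well-defined $s$, and $(X,s)$ is an algebraic d-critical locus since each chart is by construction a critical chart.

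For the identification $K_{X,s}\cong\det(\bL_{\bs X})|_{X^\red}$, note that in each Darboux chart the cotangent complex $\bL_{\bs X}|_U$ is quasi-isomorphic to the two-term complex of free $A$-modules $[\bigoplus_j A\,\dd y_j^{-1}\ra \bigoplus_j A\,\dd x_j^0]$ with connecting map the Hessian $\Hess H$; in particular $\det(\bL_{\bs X})|_{U^\red}\cong i^*(K_{U(0)}^{\ot 2})|_{U^\red}$, matching the local description of $K_{X,s}$ in Theorem \ref{sa3thm3}(b). Checking that these chartwise isomorphisms glue reduces, via the exact sequence \eq{sa3eq5} and naturality of top exterior powers, to the coherence of the change-of-coordinate maps $\al,\al'$ on overlaps, which follows from the same data $(C,\al,\al',\Psi)$ constructed above. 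The principal obstacle throughout is the second-order vanishing $\d\Psi\in I_{V,V(0)}^2$ on overlaps; everything else is a packaging of the Darboux theorem and the explicit local form of $\bL_{\bs X}$.
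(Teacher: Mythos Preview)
The paper does not prove this theorem; it is quoted from \cite[Th.~6.6]{BBJ} as input for the stack generalization in Theorem~\ref{sa3thm6}. Your outline is the strategy of \cite[\S 6]{BBJ}, and the paper's \S\ref{sa27} and Proposition~\ref{sa2prop2} record exactly the overlap-comparison machinery you invoke.

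There is, however, a gap at the step you yourself flag as the principal obstacle. You need $\d\Psi\in I_{V,V(0)}^2$, but the justification ``a second-order refinement using $\Psi|_{V^\red}=0$'' does not parse: $\Psi$ lies in $C^{-1}$, not $C^0$, so it has no restriction to $V^\red$. Writing $\Psi=\sum_j a_j z_j$ with $z_j$ the degree $-1$ generators of $C$ and $a_j\in C^0$, the fact that each $\d z_j\in I_{V,V(0)}$ gives only $\d\Psi\in I_{V,V(0)}$, one order short; and knowing $\d\Psi|_{V^\red}=0$ (which does follow, since $\al(H)|_{V^\red}=\al'(H')|_{V^\red}=0$) still does not force membership in $I_{V,V(0)}^2$. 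The paper, just before Proposition~\ref{sa2prop2}, defers this computation to \cite[Ex.~5.35]{BBJ}; the argument there is not a one-liner and uses the full content of Proposition~\ref{sa2prop1}(c), in particular the $1$-form relation $\al_*(\phi)-\al'_*(\phi')=\dd\Psi+\d\psi$, to extract the extra order of vanishing. Without a genuine argument at this point your local sections $s_R$ are not shown to agree on overlaps, and the construction of $s$ does not go through.
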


\subsection{Extension to Artin stacks, and d-critical stacks}
\label{sa32}

In \cite[\S 2.7--\S 2.8]{Joyc2} we extend the material of
\S\ref{sa31} from $\K$-schemes to Artin $\K$-stacks. We work in the
context of the theory of {\it sheaves on Artin stacks} by Laumon and
Moret-Bailly \cite[\S\S 12, 13, 15, 18]{LaMo}, including {\it
quasi-coherent}, {\it coherent}, and {\it constructible\/} sheaves,
and their derived categories. Unfortunately, Laumon and Moret-Bailly
wrongly assume that 1-morphisms of algebraic stacks induce morphisms
of lisse-\'etale topoi, so parts of their theory concerning
pullbacks, etc., are unsatisfactory. Olsson \cite{Olss} rewrites the
theory, correcting this mistake. Laszlo and Olsson
\cite{LaOl1,LaOl2,LaOl3} study derived categories of constructible
sheaves, and perverse sheaves, on Artin stacks, in more detail.

All of \cite{LaMo,LaOl1,LaOl2,LaOl3,Olss} work with sheaves on Artin
stacks in the {\it lisse-\'etale topology}. We will not define these
directly, but instead quote an alternative description from Laumon
and Moret-Bailly \cite{LaMo} that we find more convenient.

\begin{prop}[Laumon and Moret-Bailly \cite{LaMo}] Let\/ $X$ be an
Artin\/ $\K$-stack. The category of sheaves of sets on $X$ in the
lisse-\'etale topology is equivalent to the category $\Sh(X)$
defined as follows:
\smallskip

\noindent{\bf(A)} Objects $\cA$ of\/ $\Sh(X)$ comprise the following
data:
\begin{itemize}
\setlength{\itemsep}{0pt}
\setlength{\parsep}{0pt}
\item[{\bf(a)}] For each\/ $\K$-scheme $T$ and smooth\/ $1$-morphism
$t:T\ra X$ in $\Art_\K,$ we are given a sheaf of sets $\cA(T,t)$
on $T,$ in the \'etale topology.
\item[{\bf(b)}] For each\/ $2$-commutative diagram in
$\Art_\K\!:$
\e
\begin{gathered}
\xymatrix@C=50pt@R=1pt{ & U \ar[ddr]^u \\
\rrtwocell_{}\omit^{}\omit{^{\eta}} && \\
T  \ar[uur]^{\phi} \ar[rr]_t && X, }
\end{gathered}
\label{sa3eq9}
\e
where $T,U$ are schemes and\/ $t: T\ra X,$ $u:U\ra X$ are
smooth\/ $1$-morphisms in $\Art_\K,$ we are given a morphism
$\cA(\phi,\eta):\phi^{-1} (\cA(U,u)) \ra\cA(T,t)$ of \'etale
sheaves of sets on $T$.
\end{itemize}
This data must satisfy the following conditions:
\begin{itemize}
\setlength{\itemsep}{0pt}
\setlength{\parsep}{0pt}
\item[{\bf(i)}] If\/ $\phi:T\ra U$ in {\bf(b)} is \'etale, then
$\cA(\phi,\eta)$ is an isomorphism.
\item[{\bf(ii)}] For each\/ $2$-commutative diagram in $\Art_\K\!:$
\begin{equation*}
\xymatrix@C=70pt@R=1pt{ & V \ar[ddr]^v \\
\rrtwocell_{}\omit^{}\omit{^{\ze}} && \\
U  \ar[uur]^{\psi} \ar[rr]_(0.3)u && X, \\
\urrtwocell_{}\omit^{}\omit{^{\eta}} && \\
T \ar[uu]_{\phi} \ar@/_/[uurr]_t }
\end{equation*}
with $T,U,V$ schemes and\/ $t,u,v$ smooth, we must have
\begin{align*}
\cA\bigl(\psi\ci\phi,(\ze*\id_{\phi})\od\eta\bigr)
&=\cA(\phi,\eta)\ci\phi^{-1}(\cA(\psi,\ze))\quad\text{as
morphisms}\\
(\psi\ci\phi)^{-1}(\cA(V,v))&=\phi^{-1}\ci
\psi^{-1}(\cA(V,v))\longra\cA(T,t).
\end{align*}
\end{itemize}

\noindent{\bf(B)} Morphisms $\al:\cA\ra\cB$ of\/ $\Sh(X)$ comprise a
morphism $\al(T,t):\cA(T,t)\ra\cB(T,t)$ of \'etale sheaves of sets
on a scheme $T$ for all smooth\/ $1$-morphisms $t:T\ra X,$ such that
for each diagram \eq{sa3eq9} in {\bf(b)} the following commutes:
\begin{equation*}
\xymatrix@C=120pt@R=15pt{*+[r]{\phi^{-1}(\cA(U,u))}
\ar[d]^{\phi^{-1}(\al(U,u))} \ar[r]_(0.55){\cA(\phi,\eta)} &
*+[l]{\cA(T,t)} \ar[d]_{\al(T,t)} \\
*+[r]{\phi^{-1}(\cB(U,u))}
\ar[r]^(0.55){\cB(\phi,\eta)} & *+[l]{\cB(T,t).\!{}} }
\end{equation*}

\noindent{\bf(C)} Composition of morphisms $\cA\,{\buildrel\al
\over\longra}\,\cB\,{\buildrel\be\over\longra}\,\cC$ in $\Sh(X)$ is
$(\be\ci\al)(T,t)=\ab\be(T,t)\ab\ci\ab\al(T,t)$. Identity morphisms
$\id_\cA:\cA\ra\cA$ are $\id_\cA(T,t)=\id_{\cA(T,t)}$.
\smallskip

The analogue of all the above also holds for (\'etale) sheaves of\/
$\K$-vector spaces, sheaves of\/ $\K$-algebras, and so on, in place
of (\'etale) sheaves of sets.

Furthermore, the analogue of all the above holds for quasi-coherent
sheaves, (or coherent sheaves, or vector bundles, or line bundles)
on $X,$ where in {\bf(a)} $\cA(T,t)$ becomes a quasi-coherent sheaf
(or coherent sheaf, or vector bundle, or line bundle) on $T,$ in
{\bf(b)} we replace $\phi^{-1}(\cA(U,u))$ by the pullback\/
$\phi^*(\cA(U,u))$ of quasi-coherent sheaves (etc.), and\/
$\cA(\phi,\eta),\ab\al(T,t)$ become morphisms of quasi-coherent
sheaves (etc.) on\/~$T$.

We can also describe \begin{bfseries}global sections\end{bfseries}
of sheaves on Artin $\K$-stacks in the above framework: a global
section $s\in H^0(\cA)$ of\/ $\cA$ in part\/ {\bf(A)} assigns a
global section $s(T,t)\in H^0(\cA(T,t))$ of\/ $\cA(T,t)$ on\/ $T$
for all smooth\/ $t:T\ra X$ from a scheme $T,$ such that\/
$\cA(\phi,\eta)^*(s(U,u))=s(T,t)$ in $H^0(\cA(T,t))$ for all\/
$2$-commutative diagrams \eq{sa3eq9} with\/ $t,u$ smooth.
\label{sa3prop3}
\end{prop}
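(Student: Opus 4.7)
The plan is to exhibit quasi-inverse functors between lisse-\'etale sheaves of sets on $X$ and objects of $\Sh(X)$, by unwinding the definition of the lisse-\'etale site. Recall that the lisse-\'etale site $X_\Lm$ has objects the pairs $(T,t)$ with $T$ a $\K$-scheme and $t:T\ra X$ smooth, morphisms $(T,t)\ra(U,u)$ the $2$-commutative triangles of \eq{sa3eq9}, and covers the jointly surjective families of \'etale morphisms in the $T$-component; a lisse-\'etale sheaf is by definition a sheaf on this site. For each smooth $t:T\ra X$, the small \'etale site $T_{\text{\'et}}$ embeds in $X_\Lm$ by sending an \'etale $\phi:T'\ra T$ to $(T',t\ci\phi)$ with structural $2$-morphism the identity, and this embedding preserves covers.

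First I would define a functor $F$ from lisse-\'etale sheaves to $\Sh(X)$. Given $\mathcal{F}$, for each smooth $t:T\ra X$ set $\mathcal{A}(T,t)$ to be the restriction of $\mathcal{F}$ along $T_{\text{\'et}}\hookra X_\Lm$; this is automatically an \'etale sheaf on $T$. For a $2$-commutative triangle \eq{sa3eq9}, the restriction maps of $\mathcal{F}$ along the auxiliary morphisms $(T',t\ci\phi')\ra(U,u)$ with $\phi':T'\ra T$ \'etale assemble into a morphism of \'etale sheaves $\mathcal{A}(\phi,\eta):\phi^{-1}(\mathcal{A}(U,u))\ra\mathcal{A}(T,t)$. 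Properties (A)(i) and (A)(ii) follow directly from functoriality of $\mathcal{F}$: when $\phi$ is itself \'etale the triangle already lies in $T_{\text{\'et}}$, so $\mathcal{A}(\phi,\eta)$ is just the canonical identification between the two \'etale restrictions of $\mathcal{F}$ and hence an isomorphism.

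I would then define the inverse functor $G$ by $G(\mathcal{A})(T,t)=\mathcal{A}(T,t)(T)$, with restriction along $(\phi,\eta):(T,t)\ra(U,u)$ the composition of the canonical pullback map $\mathcal{A}(U,u)(U)\ra\phi^{-1}(\mathcal{A}(U,u))(T)$ with $\mathcal{A}(\phi,\eta)(T)$. Condition (A)(i) ensures $G(\mathcal{A})$ satisfies the sheaf condition on \'etale covers, since for $\phi:T'\ra T$ \'etale it forces $\mathcal{A}(T',t\ci\phi)\cong\phi^{-1}(\mathcal{A}(T,t))$ and the required equalizer reduces to the \'etale sheaf axiom for $\mathcal{A}(T,t)$ on $T$; condition (A)(ii) yields functoriality. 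Checking $F\ci G\simeq\id$ and $G\ci F\simeq\id$ is then routine. The extensions to sheaves of $\K$-algebras, quasi-coherent or coherent sheaves, vector bundles, line bundles, and the claim about global sections all proceed by carrying out these constructions internally in the appropriate category, with $\phi^{-1}$ replaced by $\phi^*$ in the quasi-coherent setting; here smoothness of $\phi$ (hence flatness) is used to ensure $\phi^*$ is exact and preserves the relevant classes of sheaves.

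The main difficulty will be the careful bookkeeping of the $2$-morphisms $\eta$ and verifying that (A)(i) alone is the correct \'etale descent hypothesis. A priori one might worry about additional coherence conditions on diagrams of \'etale morphisms with nontrivial $2$-cells, but these follow automatically from (A)(i) together with the cocycle condition (A)(ii), once one uses that the Grothendieck topology on $X_\Lm$ is generated by \'etale covers in the $T$-component, so that these are the only covers against which one must test the sheaf axiom.
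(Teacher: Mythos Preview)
The paper does not contain a proof of this proposition. It is quoted as a result of Laumon and Moret-Bailly \cite{LaMo} and stated without argument; the paper immediately passes to using the description of $\Sh(X)$ as a working definition. So there is nothing to compare your attempt against.

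Your outline is the standard way one would establish such an equivalence and is essentially correct. One minor slip: in the last paragraph you say ``smoothness of $\phi$ (hence flatness) is used to ensure $\phi^*$ is exact''. But in part (b) of the proposition only $t,u$ are assumed smooth; the morphism $\phi:T\ra U$ between schemes is arbitrary. The pullback $\phi^*$ of quasi-coherent sheaves is defined for any scheme morphism, and no exactness is needed to formulate the transition maps $\cA(\phi,\eta)$ or to run the descent argument (which only uses \'etale $\phi$, where (A)(i) applies). So drop the appeal to smoothness of $\phi$; the argument goes through without it. You might also note, as the paper itself warns just before the proposition, that the lisse-\'etale site is not functorial under arbitrary $1$-morphisms of stacks, so some care is needed in the quasi-coherent case; but for a fixed stack $X$ the equivalence you sketch is unaffected by this subtlety.
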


In the rest of the paper we will use the notation of Proposition
\ref{sa3prop3} for sheaves of all kinds on Artin $\K$-stacks. In
\cite[Cor.~2.52]{Joyc2} we generalize the sheaves $\cS_X,\cSz_X$ in
\S\ref{sa31} to Artin $\K$-stacks:

\begin{prop} Let\/ $X$ be an Artin $\K$-stack, and write\/
$\Sh(X)_\Kalg$ and\/ $\Sh(X)_\Kvect$ for the categories of sheaves
of\/ $\K$-algebras and\/ $\K$-vector spaces on $X$ defined in
Proposition\/ {\rm\ref{sa3prop3}}. Then:
\begin{itemize}
\setlength{\itemsep}{0pt}
\setlength{\parsep}{0pt}
\item[{\bf(a)}] We may define canonical objects\/ $\cS_X$ in
both\/ $\Sh(X)_\Kalg$ and\/ $\Sh(X)_\Kvect$ by
$\cS_X(T,t):=\cS_T$ for all smooth morphisms $t:T\ra X$ for
$T\in\Sch_\K,$ for $\cS_T$ as in\/ {\rm\S\ref{sa31}} taken to be
a sheaf of\/ $\K$-algebras (or $\K$-vector spaces) on $T$ in the
\'etale topology, and\/ $\cS_X(\phi,\eta)
:=\phi^\star:\phi^{-1}(\cS_X(U,u))=\phi^{-1}(\cS_U)\ra\cS_T
=\cS_X(T,t)$ for all\/ $2$-commutative diagrams \eq{sa3eq9} in
$\Art_\K$ with\/ $t,u$ smooth, where $\phi^\star$ is as in
{\rm\S\ref{sa31}}.
\item[{\bf(b)}] There is a natural decomposition
$\cS_X\!=\!\K_X\!\op\!\cSz_X$ in $\Sh(X)_\Kvect$ induced by the
splitting $\cS_X(T,t)\!=\!\cS_T\!=\!\K_T\op\cSz_T$ in
{\rm\S\ref{sa31},} where $\K_X$ is a sheaf of\/ $\K$-subalgebras
of\/ $\cS_X$ in $\Sh(X)_\Kalg,$ and\/ $\cSz_X$ a sheaf of
ideals~in\/~$\cS_X$.
\end{itemize}
\label{sa3prop4}
\end{prop}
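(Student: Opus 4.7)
The plan is to verify directly that the prescribed data $\cS_X(T,t):=\cS_T,\,\cS_X(\phi,\eta):=\phi^\star$ satisfies axioms (i) and (ii) of Proposition \ref{sa3prop3}, then deduce the decomposition in part (b) from properties (c) and (d) of $\cS$ in \S\ref{sa31}.

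For part (a), the assignment is well-defined: each $\cS_T$ is an \'etale sheaf of $\K$-algebras by the construction recalled in \S\ref{sa31}, and for a morphism of $\K$-schemes $\phi:T\ra U$ fitting into a $2$-commutative square \eqref{sa3eq9}, property (d) of \S\ref{sa31} produces $\phi^\star:\phi^{-1}(\cS_U)\ra\cS_T$, which depends only on $\phi$ (and in particular not on the $2$-morphism $\eta$, since each $\cS_T$ is intrinsic to the underlying scheme~$T$). To verify axiom (i), suppose $\phi:T\ra U$ is \'etale; then $\phi^\star$ is an isomorphism because any closed embedding $j:S\hookra V$ of a Zariski open $S\subseteq U$ into a smooth scheme $V$ \'etale-locally pulls back to a closed embedding $i:R\hookra V$ for the corresponding open $R\subseteq T$, and the defining exact sequence of \S\ref{sa31}(a) is preserved by \'etale base change, so the uniqueness characterization of $\phi^\star$ from \S\ref{sa31}(d) forces it to coincide with this canonical \'etale identification. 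Axiom (ii) is then immediate: since our data ignores the $2$-morphisms $\eta,\zeta$, the required compatibility reduces to the identity $(\psi\ci\phi)^\star=\phi^\star\ci\phi^{-1}(\psi^\star)$ of \S\ref{sa31}(e), and the unit $\id_T^\star=\id_{\cS_T}$ is also part of \S\ref{sa31}(e). Restricting scalars gives $\cS_X\in\Sh(X)_\Kvect$ as well.

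For part (b), the decomposition $\cS_T=\K_T\op\cSz_T$ from \S\ref{sa31}(c) is functorial under the transition maps: property (d) of \S\ref{sa31} explicitly states that $\phi^\star$ sends $\phi^{-1}(\cSz_U)$ into $\cSz_T$, while $\phi^\star$ is a morphism of $\K$-algebras, so it carries the constant subsheaf $\phi^{-1}(\K_U)$ identically to $\K_T\subset\cS_T$. Consequently $\K_X$ and $\cSz_X$ assemble into subobjects of $\cS_X$ in $\Sh(X)_\Kvect$ in the sense of Proposition \ref{sa3prop3}, with $\K_X$ a sub-$\K$-algebra (by multiplicativity of each $\phi^\star$ and of the scheme-level splitting) and $\cSz_X$ an ideal (as each $\cSz_T\subset\cS_T$ is an ideal), and the direct sum decomposition holds objectwise.

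The one nontrivial ingredient is axiom (i): it rests on the fact that the sheaf $\cS$ in \S\ref{sa31} is defined by an \'etale-local universal property, so \'etale pullback along $\phi:T\ra U$ canonically identifies $\phi^{-1}(\cS_U)$ with $\cS_T$. This is implicit in \cite[\S 2]{Joyc2} but should be recorded explicitly; once in hand, the remainder of the verification is a straightforward translation of \S\ref{sa31}(c)--(e) through the framework of Proposition~\ref{sa3prop3}.
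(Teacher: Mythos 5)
Your overall strategy---checking the axioms of Proposition \ref{sa3prop3} directly against the properties of $\cS$ listed in \S\ref{sa31}---is the right one, and parts (b) and axiom (ii) are handled correctly (modulo the cosmetic point that \S\ref{sa31}(e) is stated here only for smooth $\phi,\psi$, whereas axiom (ii) needs the composition law for arbitrary scheme morphisms $\phi,\psi$ over smooth $t,u,v$; the composition law does hold in that generality in \cite[Prop.~2.3]{Joyc2}, but you should quote it rather than \S\ref{sa31}(e) as printed). The paper itself gives no proof for this proposition --- it simply cites \cite[Cor.~2.52]{Joyc2} --- so your attempt to reconstruct the argument is reasonable in spirit.

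The gap is in axiom (i). You assert that a closed embedding $j:S\hookra V$ ``\'etale-locally pulls back to a closed embedding $i:R\hookra V$ for the corresponding open $R\subseteq T$.'' This is false as stated: if $\phi:T\ra U$ is \'etale but not an open immersion, the composite $R=\phi^{-1}(S)\ra S\hookra V$ is unramified but not a closed immersion, so it does not give an embedding of $R$ into the \emph{same} smooth scheme $V$. What is true --- and what the argument actually needs --- is that an unramified morphism $R\ra V$ factors, after shrinking $R$ Zariski-locally, as a closed immersion $i:R\hookra V'$ followed by an \'etale morphism $\Phi:V'\ra V$ (hence $V'$ is smooth). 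One then applies the commuting diagram \eqref{sa3eq2} with this $\Phi$, together with the facts that $\Phi^*(T^*V)\cong T^*V'$ and $I_{R,V'}=\Phi^{-1}(I_{S,V})\cdot\O_{V'}$ since $\Phi$ is flat and $R=\Phi^{-1}(S)$, to identify the two defining exact sequences and conclude that $\phi^\star$ is an isomorphism on stalks. You correctly flag this as ``the one nontrivial ingredient,'' but the sketch you give would not close it; the cleanest fix is to invoke the \'etale-invariance of $\cS$ proved in \cite[Th.~2.1 \& Prop.~2.3]{Joyc2} directly, or else insert the unramified-factorization lemma explicitly before running the comparison of exact sequences.
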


Here \cite[Def. 2.53]{Joyc2} is the generalization of Definition
\ref{sa3def1} to Artin stacks.

\begin{dfn} A {\it d-critical stack\/} $(X,s)$ is an Artin
$\K$-stack $X$ and a global section $s\in H^0(\cSz_X)$, where
$\cSz_X$ is as in Proposition \ref{sa3prop4}, such that
$\bigl(T,s(T,t)\bigr)$ is an algebraic d-critical locus in the sense
of Definition \ref{sa3def1} for all smooth morphisms $t:T\ra X$
with~$T\in\Sch_\K$.
\label{sa3def3}
\end{dfn}

In \cite[Prop.~2.54]{Joyc2} we give a convenient way to understand
d-critical stacks $(X,s)$ in terms of d-critical structures on an
atlas $t:T\ra X$ for~$X$.

\begin{prop} Suppose we are given a $2$-commutative diagram in $\Art_\K\!:$
\e
\begin{gathered}
\xymatrix@C=90pt@R=11pt{ *+[r]{U} \ar[d]^{\pi_1} \ar[r]_(0.3){\pi_2}
\drtwocell_{}\omit^{}\omit{^{\eta}} &
*+[l]{
T} \ar[d]_t \\
*+[r]{T} \ar[r]^(0.7)t & *+[l]{X,\!} }
\end{gathered}
\label{sa3eq10}
\e
where $X$ is an Artin $\K$-stack, $T,U$ are $\K$-schemes, $t,\pi_1,\pi_2$ are smooth\/ $1$-morphisms, $t:T\ra X$ is surjective, and the\/ $1$-morphism\/ $U\ra T\t_{t,X,t}T$ induced by \eq{sa3eq10} is surjective. For instance, this happens if\/ $U\rra T$ is a groupoid in $\K$-schemes, and $X=[U\rra T]$ the associated groupoid stack. Then:
\begin{itemize}
\setlength{\itemsep}{0pt}
\setlength{\parsep}{0pt}
\item[{\bf(i)}] Let\/ $\cS_X$ be as in Proposition\/
{\rm\ref{sa3prop4},} and\/ $\cS_T,\cS_U$ be as in
{\rm\S\ref{sa31},} regarded as sheaves on $T,U$ in the \'etale
topology, and define $\pi_i^\star:\pi_i^{-1}(\cS_T)\ra \cS_U$ as
in {\rm\S\ref{sa31}} for $i=1,2$. Consider the map
$t^*:H^0(\cS_X)\ra H^0(\cS_T)$ mapping $t^*:s\mapsto s(T,t)$.
This is injective, and induces a bijection
\e
\smash{t^*:H^0(\cS_X)\,{\buildrel\cong\over\longra}\,\bigl\{s'\in
H^0(\cS_T):\text{$\pi_1^\star(s')= \pi_2^\star(s')$ in
$H^0(\cS_U)$}\bigr\}.}
\label{sa3eq11}
\e
The analogue holds for $\cSz_X,\cSz_T,\cSz_U$.
\item[{\bf(ii)}] Suppose $s\in H^0(\cSz_X),$ so that\/ $t^*(s)\in
H^0(\cSz_T)$ with\/ $\pi_1^\star\ci t^*(s)=\pi_2^\star\ci
t^*(s)$. Then $(X,s)$ is a d-critical stack if and only if\/
$\bigl(T,t^*(s)\bigr)$ is an algebraic d-critical locus, and
then\/ $\bigl(U,\pi_1^\star\ci t^*(s)\bigr)$ is also an
algebraic d-critical locus.
\end{itemize}
\label{sa3prop5}
\end{prop}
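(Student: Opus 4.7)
My plan is to derive (i) from general descent for sheaves on the lisse-\'etale site of the Artin stack $X$, handling $\cS_X$ and $\cSz_X$ in parallel (the splitting in Proposition \ref{sa3prop4}(b) is respected by all pullbacks $\phi^\star$), and then to reduce (ii) to Proposition \ref{sa3prop1} by working on the 2-cartesian base change of $t:T\ra X$ along an arbitrary smooth test morphism.

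For (i), given any smooth morphism $r:R\ra X$ from a scheme $R$, I would form the 2-pullback square
\begin{equation*}
\xymatrix@C=40pt@R=11pt{ R\t_X T \ar[r]^(0.6){q_2} \ar[d]_{q_1} & T \ar[d]^t \\ R \ar[r]^r & X, }
\end{equation*}
in which $q_1$ is smooth and surjective (base change of the smooth surjection $t$) and $q_2$ is smooth. The compatibility axiom of Proposition \ref{sa3prop3} forces $q_1^\star(s(R,r))=q_2^\star(s(T,t))$, so $s(R,r)$ is uniquely determined by $s(T,t)$ via descent along the smooth surjection $q_1$; this immediately gives injectivity of $t^*$. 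For the image in \eq{sa3eq11}, given $s'\in H^0(\cS_T)$ satisfying $\pi_1^\star(s')=\pi_2^\star(s')$ in $H^0(\cS_U)$, the first step is to promote this to the lisse-\'etale descent condition $p_1^\star(s')=p_2^\star(s')$ on $T\t_X T$, where $p_1,p_2$ are the two projections: since $U\ra T\t_X T$ is surjective and $\cS$ is functorial under smooth pullbacks, this equality can be tested after pullback along $U\ra T\t_X T$ and hence follows from the hypothesis. I would then define $s(R,r)\in H^0(\cS_R)$ to be the unique descent of $q_2^\star(s')$ along $q_1$, and verify that the resulting family is compatible in all 2-commutative diagrams \eq{sa3eq9} by uniqueness of descent.

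For (ii), the forward direction is immediate from Definition \ref{sa3def3}: both $t:T\ra X$ and $t\ci\pi_1:U\ra X$ are smooth, so if $(X,s)$ is a d-critical stack, then $(T,t^*(s))$ and $(U,\pi_1^\star\ci t^*(s))$ are algebraic d-critical loci. Conversely, suppose $(T,t^*(s))$ is an algebraic d-critical locus, and let $r:R\ra X$ be an arbitrary smooth morphism from a scheme. In the pullback square above, $q_2$ is smooth, so Proposition \ref{sa3prop1} makes $(R\t_X T,q_2^\star(t^*(s)))$ into an algebraic d-critical locus; sheaf compatibility gives $q_2^\star(t^*(s))=q_1^\star(s(R,r))$. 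Since $q_1$ is smooth and surjective, the converse half of Proposition \ref{sa3prop1} then shows that $(R,s(R,r))$ is an algebraic d-critical locus. This verifies Definition \ref{sa3def3} for $(X,s)$, and the last sentence of (ii) is then the forward direction applied to $t\ci\pi_1$.

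The main technical point is the descent step in (i) that translates the hypothesis on $U$ into the genuine cocycle condition on $T\t_X T$. In the groupoid case $U=T\t_X T$ this is automatic, and in general I would justify it by combining the explicit local description of $\cS$ recalled in \S\ref{sa31}(a)--(b) with the functoriality of $\cS$ under smooth pullbacks in Proposition \ref{sa3prop4}(a), so that surjectivity of $U\ra T\t_X T$ together with smoothness of $\pi_1,\pi_2,t$ yields the required faithfulness of pullback along $U\ra T\t_X T$ on sections of $\cS$.
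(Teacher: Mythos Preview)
This proposition is not proved in the present paper: it is quoted verbatim from \cite[Prop.~2.54]{Joyc2}, so there is no proof here to compare against. That said, your strategy --- smooth descent for part (i) and reduction to Proposition~\ref{sa3prop1} via base change for part (ii) --- is the natural one and is essentially what the proof in \cite{Joyc2} does.

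There is one genuine technical point you skate over in both parts. The fibre products $R\t_X T$ and $T\t_X T$ over the Artin stack $X$ are only algebraic spaces, not $\K$-schemes in general, so the sheaves $\cS_{(-)}$ and pullbacks $\phi^\star$ from \S\ref{sa31}, as well as Proposition~\ref{sa3prop1} itself, are not directly available on them. You should either note that \cite{Joyc2} extends this machinery to algebraic spaces, or (more elementarily) replace each such fibre product by an \'etale cover $W\ra R\t_X T$ with $W$ a scheme: then $W\ra R$ is smooth surjective, $W\ra T$ is smooth, and all your arguments go through with $W$ in place of $R\t_X T$.

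Your ``main technical point'' also deserves more care than the final paragraph gives it. The induced morphism $U\ra T\t_X T$ is surjective but need \emph{not} be smooth (for instance the diagonal factor of $U$ can map by a closed immersion), so you cannot invoke ``functoriality under smooth pullbacks'' to test the equality $p_1^\star(s')=p_2^\star(s')$ after pullback to $U$. What actually makes this step work is that $\phi^\star$ is defined in \S\ref{sa31}(d) for \emph{arbitrary} scheme morphisms, the composition law \S\ref{sa31}(e) extends to this generality, and one can check injectivity of $f^\star$ on global sections by passing to an \'etale cover of $T\t_X T$ by schemes and using that $\cS$ is an \'etale sheaf together with the fact that both $p_i$ and $\pi_i$ are smooth. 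You have correctly identified this as the crux; the fix is just to argue via \'etale-local schemes rather than appealing to smooth functoriality.
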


In \cite[Ex.~2.55]{Joyc2} we consider quotient stacks $X=[T/G]$.

\begin{ex} Suppose an algebraic $\K$-group $G$ acts on a $\K$-scheme
$T$ with action $\mu:G\t T\ra T$, and write $X$ for the quotient
Artin $\K$-stack $[T/G]$. Then as in \eq{sa3eq10} there is a natural
2-Cartesian diagram
\begin{equation*}
\xymatrix@C=110pt@R=11pt{ *+[r]{G\t T} \ar[d]^{\pi_T}
\ar[r]_(0.3){\mu} \drtwocell_{}\omit^{}\omit{^{\eta}} &
*+[l]{T} \ar[d]_t \\
*+[r]{T} \ar[r]^(0.6)t & *+[l]{X=[T/G],\!{}} }
\end{equation*}
where $t:T\ra X$ is a smooth atlas for $X$. If $s'\in H^0(\cSz_T)$
then $\pi_1^\star(s')=\pi_2^\star(s')$ in \eq{sa3eq11} becomes
$\pi_T^\star(s')=\mu^\star(s')$ on $G\t T$, that is, $s'$ is
$G$-invariant. Hence, Proposition \ref{sa3prop5} shows that
d-critical structures $s$ on $X=[T/G]$ are in 1-1 correspondence
with $G$-invariant d-critical structures $s'$ on~$T$.
\label{sa3ex}
\end{ex}

Here \cite[Th.~2.56]{Joyc2} is an analogue of Theorem~\ref{sa3thm3}.

\begin{thm} Let\/ $(X,s)$ be a d-critical stack. Using the
description of quasi-coherent sheaves on $X^\red$ in Proposition
{\rm\ref{sa3prop3}} there is a line bundle $K_{X,s}$ on the
reduced\/ $\K$-substack\/ $X^\red$ of\/ $X$ called the
\begin{bfseries}canonical bundle\end{bfseries} of\/ $(X,s),$
unique up to canonical isomorphism, such that:
\begin{itemize}
\setlength{\itemsep}{0pt}
\setlength{\parsep}{0pt}
\item[{\bf(a)}] For each point\/ $x\in X^\red\subseteq X$ we
have a canonical isomorphism
\e
\smash{\ka_x:K_{X,s}\vert_x\,{\buildrel\cong\over\longra}\,
\bigl(\La^{\rm top}T_x^*X\bigr)^{\ot^2} \ot\bigl(\La^{\rm
top}\fIso_x(X)\bigr)^{\ot^2},}
\label{sa3eq12}
\e
where $T_x^*X$ is the Zariski cotangent space of\/ $X$ at\/ $x,$
and\/ $\fIso_x(X)$ the Lie algebra of the isotropy group
(stabilizer group) $\Iso_x(X)$ of\/ $X$ at\/~$x$.
\item[{\bf(b)}] If\/ $T$ is a $\K$-scheme and\/ $t:
T\ra X$ a smooth $1$-morphism, so that\/ $t^\red:T^\red\ra
X^\red$ is also smooth, then there is a natural isomorphism of
line bundles on $T^\red\!:$
\e
\Ga_{T,t}:K_{X,s}(T^\red,t^\red)\,{\buildrel\cong\over\longra}\,
K_{T,s(T,t)}\ot \bigl(\La^{\rm top}T^*_{
T/X}\bigr)\big\vert_{T^\red}^{\ot^{-2}}.
\label{sa3eq13}
\e
Here $\bigl(T,s(T,t)\bigr)$ is an algebraic d-critical locus by
Definition\/ {\rm\ref{sa3def3},} and\/ $K_{T,s(T,t)}\ra T^\red$
is its canonical bundle from Theorem\/~{\rm\ref{sa3thm3}}.
\item[{\bf(c)}] If\/ $t:T\ra X$ is a smooth
$1$-morphism, we have a distinguished triangle
in~$D_{\qcoh}(T)\!:$
\e
\smash{\xymatrix@C=40pt{ t^*(\bL_X) \ar[r]^(0.55){\bL_t} & \bL_T \ar[r]
& T^*_{T/X} \ar[r] & t^*(\bL_X)[1], }}
\label{sa3eq14}
\e
where $\bL_T,\bL_X$ are the cotangent complexes of\/ $T,X,$
and\/ $T^*_{T/X}$ the relative cotangent bundle of\/ $t: T\ra
X,$ a vector bundle of mixed rank on\/ $T$. Let\/ $p\in
T^\red\subseteq T,$ so that\/ $t(p):=t\ci p\in X$. Taking the
long exact cohomology sequence of\/ \eq{sa3eq14} and restricting
to $p\in T$ gives an exact sequence
\e
\smash{0 \longra T^*_{t(p)}X \longra T^*_pT \longra T^*_{ T/X}\vert_p
\longra \fIso_{t(p)}(X)^* \longra 0.}
\label{sa3eq15}
\e
Then the following diagram commutes:
\begin{equation*}
\xymatrix@!0@C=98pt@R=35pt{*+[r]{K_{X,s}\vert_{t(p)}}
\ar[d]^{\ka_{t(p)}} \ar@{=}[r] & K_{X,s}(T^\red,t^\red)\vert_p
\ar[rr]_(0.3){\Ga_{T,t}\vert_p} && *+[l]{K_{T,s(T,t)}\vert_p\ot
\bigl(\La^{\rm top}T^*_{\smash{
T/X}}\bigr)\big\vert_p^{\ot^{-2}}} \ar[d]_{\ka_p\ot\id} \\
*+[r]{\bigl(\La^{\rm top}T_{t(p)}^*X\bigr)^{\ot^2}\!\!\ot\!\bigl(\La^{\rm
top}\fIso_{t(p)}(X)\bigr)^{\ot^2}} \ar[rrr]^(0.54){\al_p^2} &&&
*+[l]{\bigl(\La^{\rm top}T^*_pT\bigr)^{\ot^2}\!\!\ot\!
\bigl(\La^{\rm top}T^*_{T/X}\bigr) \big\vert_p^{\ot^{-2}},} }
\end{equation*}
where $\ka_p,\ka_{t(p)},\Ga_{T,t}$ are as in {\rm\eq{sa3eq3},
\eq{sa3eq12}} and\/ {\rm\eq{sa3eq13},} respectively, and\/
$\al_p:\La^{\rm top}T_{t(p)}^*X\ot\La^{\rm
top}\fIso_{t(p)}(X)\,{\buildrel\cong\over\longra}\,\La^{\rm
top}T^*_pT\ot\La^{\rm top}T^*_{T/X}\vert^{-1}_p$ is induced by
taking top exterior powers in\/~\eq{sa3eq15}.
\end{itemize}
\label{sa3thm5}
\end{thm}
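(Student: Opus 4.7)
The plan is to construct $K_{X,s}$ directly in the model for quasi-coherent sheaves on $X^\red$ provided by Proposition \ref{sa3prop3}. For each smooth $1$-morphism $t:T\ra X$ from a $\K$-scheme $T,$ the pair $(T,s(T,t))$ is an algebraic d-critical locus by Definition \ref{sa3def3}, so I set
$$L_T:=K_{X,s}(T^\red,t^\red):=K_{T,s(T,t)}\ot \bigl(\La^{\rm top}T^*_{T/X}\bigr)\big\vert_{T^\red}^{\ot^{-2}},$$
a line bundle on $T^\red.$ This formula is precisely what property (b) demands, so $\Ga_{T,t}$ of \eq{sa3eq13} will be the identity by construction.

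For the transition data required by Proposition \ref{sa3prop3}(A)(b), I handle a general 2-commutative diagram \eq{sa3eq9} with $\phi:T\ra U$ and 2-cell $\eta$ by forming the fibre product $W:=T\t_{t,X,u}U,$ an algebraic space (for which the d-critical locus theory of \S\ref{sa31} extends without essential change). The projections $\pi_T,\pi_U$ are smooth base-changes of $u,t,$ the composition $r:=t\ci\pi_T\simeq u\ci\pi_U:W\ra X$ is smooth, and the datum $(\id_T,\phi,\eta)$ defines a canonical section $\si:T\ra W$ of $\pi_T$ with $\pi_U\ci\si\simeq\phi.$ Applying Proposition \ref{sa3prop2} separately to the smooth morphisms of d-critical loci $\pi_T:(W,s(W,r))\ra(T,s(T,t))$ and $\pi_U:(W,s(W,r))\ra(U,s(U,u)),$ together with the fibre-product identifications $\pi_U^*(T^*_{U/X})\cong T^*_{W/T}$ and $\pi_T^*(T^*_{T/X})\cong T^*_{W/U},$ yields an isomorphism $\pi_T^*(L_T)\cong \pi_U^*(L_U)$ on $W^\red.$ Pulling back along $\si^\red$ produces the desired transition $K_{X,s}(\phi,\eta):\phi^*(L_U)\ra L_T.$

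It remains to verify Proposition \ref{sa3prop3}(A)(i),(ii) and to derive (a) and (c). Condition (i) is automatic, since each transition just built is an isomorphism. The cocycle compatibility (ii) is the main technical obstacle: for a composable pair of diagrams \eq{sa3eq9}, it will be proved by comparing the two resulting transitions inside the threefold fibre product $T\t_XU\t_XV$ and invoking the naturality of $\Up_\phi$ under composition of smooth morphisms of d-critical loci, which is encoded in the pointwise diagram \eq{sa3eq8}. Uniqueness up to canonical isomorphism then follows, because any line bundle satisfying (b) agrees with $K_{X,s}$ on every smooth atlas with matching descent data, and descends uniquely. For (a), choose a smooth atlas $t:T\ra X$ with $t(p)=x;$ the exact sequence \eq{sa3eq15} at $p$ gives an isomorphism $(\La^{\rm top}T_p^*T)^{\ot^2}\ot(\La^{\rm top}T^*_{T/X}\vert_p)^{\ot^{-2}}\cong(\La^{\rm top}T_x^*X)^{\ot^2}\ot(\La^{\rm top}\fIso_x(X))^{\ot^2},$ and combining this with \eq{sa3eq3} for $K_{T,s(T,t)}\vert_p$ and (b) furnishes $\ka_x$ in \eq{sa3eq12}. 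Part (c) is then tautological from the construction of $\ka_x$ together with Proposition \ref{sa3prop2}'s pointwise compatibility \eq{sa3eq8}.
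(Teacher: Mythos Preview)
The paper does not prove this theorem here: it is quoted from \cite[Th.~2.56]{Joyc2} (see the sentence introducing it, ``Here \cite[Th.~2.56]{Joyc2} is an analogue of Theorem~\ref{sa3thm3}''), and no argument is supplied in this paper. So there is no in-paper proof to compare your proposal against.

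That said, your strategy is the natural one and is presumably close to what \cite{Joyc2} does: build $K_{X,s}$ directly as descent data in the model of Proposition~\ref{sa3prop3}, defining $L_T$ so that \eq{sa3eq13} holds by fiat, and manufacturing the transitions from Proposition~\ref{sa3prop2} applied to the two projections of $W=T\t_XU$. Two points deserve more care than you give them. First, the cocycle condition (ii) genuinely requires a composition law for the isomorphisms $\Up_\phi$ of Proposition~\ref{sa3prop2}: for composable smooth morphisms $\phi,\psi$ of d-critical loci one needs $\Up_{\psi\ci\phi}$ to agree with $\Up_\phi\ci\phi^*(\Up_\psi)$ after the canonical identification $\La^{\rm top}T^*_{X/Z}\cong\phi^*(\La^{\rm top}T^*_{Y/Z})\ot\La^{\rm top}T^*_{X/Y}$. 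This statement is not recorded in the paper --- only the pointwise compatibility \eq{sa3eq8} is. Your plan to check the cocycle fibre by fibre via \eq{sa3eq8} and then conclude globally by reducedness does work, but the intermediate step (that the two candidate isomorphisms on the triple fibre product agree at every point) is exactly this composition law on fibres, and unwinding the section $\si$ through it is where the real content lies. Second, your construction of $\ka_x$ in (a) depends a priori on the choice of atlas point $p$ over $x$; independence follows from the transition isomorphisms you built together with \eq{sa3eq8}, but this should be stated explicitly, since (c) is only ``tautological'' once that independence is established.
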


Here \cite[Def.~2.57]{Joyc2} is the analogue of
Definition~\ref{sa3def2}:

\begin{dfn} Let $(X,s)$ be a d-critical stack, and $K_{X,s}$
its canonical bundle from Theorem \ref{sa3thm5}. An {\it
orientation\/} on $(X,s)$ is a choice of square root line bundle
$K_{X,s}^{1/2}$ for $K_{X,s}$ on $X^\red$. That is, an orientation
is a line bundle $L$ on $X^\red$, together with an isomorphism
$L^{\ot^2}=L\ot L\cong K_{X,s}$. A d-critical stack with an
orientation will be called an {\it oriented d-critical stack}.
\label{sa3def4}
\end{dfn}

Let $(X,s)$ be an oriented d-critical stack. Then for each smooth
$t:T\ra X$ we have a square root $K_{X,s}^{1/2} (T^\red,t^\red)$.
Thus by \eq{sa3eq13}, $K_{X,s}^{1/2}(T^\red,t^\red)\ot (\La^{\rm
top}\bL_{\smash{T/X}})\vert_{T^\red}$ is a square root for
$K_{T,s(T,t)}$. This proves~\cite[Lem.~2.58]{Joyc2}:

\begin{lem} Let\/ $(X,s)$ be a d-critical stack. Then an
orientation $K_{X,s}^{1/2}$ for $(X,s)$ determines a canonical
orientation\/ $K_{T,s(T,t)}^{1/2}$ for the algebraic
d-critical locus $\bigl(T,s(T,t)\bigr),$ for all smooth\/ $t: T\ra
X$ with\/ $T$ a $\K$-scheme.
\label{sa3lem1}
\end{lem}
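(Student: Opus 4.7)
The plan is to adapt the scheme-level Corollary \ref{sa3cor1} to the stack setting, essentially by rearranging \eqref{sa3eq13} of Theorem \ref{sa3thm5}(b) and separating off a tautological square.

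First I would unpack what the orientation $K_{X,s}^{1/2}$ supplies in the description of Proposition \ref{sa3prop3}: it is a line bundle on the Artin $\K$-stack $X^\red$ together with an isomorphism $(K_{X,s}^{1/2})^{\ot 2}\cong K_{X,s}$, and therefore assigns to each smooth $1$-morphism $u^\red:S^\red\ra X^\red$ from a $\K$-scheme $S^\red$ a line bundle $K_{X,s}^{1/2}(S^\red,u^\red)$ on $S^\red$ with $K_{X,s}^{1/2}(S^\red,u^\red)^{\ot 2}\cong K_{X,s}(S^\red,u^\red)$. For $t:T\ra X$ smooth with $T\in\Sch_\K$, the induced $t^\red:T^\red\ra X^\red$ is smooth (reduced substacks commute with smooth pullback), so this applies.

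Next I would invoke \eqref{sa3eq13}, which rearranges to an isomorphism
\begin{equation*}
K_{T,s(T,t)}\cong K_{X,s}(T^\red,t^\red)\ot \bigl(\La^{\rm top}T^*_{T/X}\bigr)\big\vert_{T^\red}^{\ot 2}.
\end{equation*}
The second factor has a tautological square root $(\La^{\rm top}T^*_{T/X})\vert_{T^\red}$. Accordingly I would define
\begin{equation*}
K_{T,s(T,t)}^{1/2}:=K_{X,s}^{1/2}(T^\red,t^\red)\ot \bigl(\La^{\rm top}T^*_{T/X}\bigr)\big\vert_{T^\red},
\end{equation*}
and combine the isomorphism $K_{X,s}^{1/2}(T^\red,t^\red)^{\ot 2}\cong K_{X,s}(T^\red,t^\red)$ with the displayed formula to obtain $(K_{T,s(T,t)}^{1/2})^{\ot 2}\cong K_{T,s(T,t)}$. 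This exhibits an orientation for the algebraic d-critical locus $(T,s(T,t))$ of Definition \ref{sa3def2}.

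There is essentially no obstacle beyond unwinding definitions: the isomorphism $\Ga_{T,t}$ of \eqref{sa3eq13} is canonical by Theorem \ref{sa3thm5}(b), and the only stack-specific point is checking that orientations, being line bundles on $X^\red$ (not $X$), restrict naturally along $t^\red$. Compatibility with further smooth pullback $\phi:(T',t')\ra(T,t)$ is automatic from the functoriality built into Proposition \ref{sa3prop3} and the naturality of \eqref{sa3eq13} across composable smooth morphisms, so the square root $K_{T,s(T,t)}^{1/2}$ produced above is canonical in $(T,t)$, as required.
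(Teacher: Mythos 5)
Your argument is exactly the paper's: rearrange the isomorphism $\Ga_{T,t}$ of \eqref{sa3eq13}, observe that $(\La^{\rm top}T^*_{T/X})\vert_{T^\red}$ tautologically squares to the remaining factor, and set $K_{T,s(T,t)}^{1/2}:=K_{X,s}^{1/2}(T^\red,t^\red)\ot (\La^{\rm top}T^*_{T/X})\vert_{T^\red}$. This matches the one-paragraph derivation given just before Lemma~\ref{sa3lem1}, so the proposal is correct and takes the same route.
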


\subsection[From $-1$-shifted symplectic derived stacks to
d-critical stacks]{From $-1$-shifted symplectic stacks to d-critical
stacks}
\label{sa33}

Here is the main result of this section, the analogue of Theorem
\ref{sa3thm4} from~\cite{BBJ}.

\begin{thm} Let\/ $\K$ be an algebraically closed field of
characteristic zero, $(\bX,\om_\bX)$ a $-1$-shifted symplectic
derived Artin $\K$-stack, and\/ $X=t_0(\bX)$ the corresponding
classical Artin $\K$-stack. Then there exists a unique d-critical
structure $s\in H^0(\cSz_X)$ on $X,$ making $(X,s)$ into a
d-critical stack, with the following properties:
\begin{itemize}
\setlength{\itemsep}{0pt}
\setlength{\parsep}{0pt}
\item[{\bf(a)}] Let\/ $U,$ $f:U\ra\bA^1,$ $T=\Crit(f)$ and\/
$\vp:T\ra X$ be as in Corollary\/ {\rm\ref{sa2cor1},} with\/
$f\vert_{T^\red}=0$. As in {\rm\S\ref{sa31},} there is a unique
$s_T\in H^0(\cSz_T)$ on $T$ with\/
$\io_{T,U}(s_T)=i^{-1}(f)+I_{T,U}^2,$ and\/ $(T,s_T)$ is an
algebraic d-critical locus. Then $s(T,\vp)=s_T$ in
$H^0(\cSz_T)$.
\item[{\bf(b)}] The canonical bundle $K_{X,s}$ of\/ $(X,s)$ from
Theorem\/ {\rm\ref{sa3thm5}} is naturally isomorphic to the
restriction $\det(\bL_\bX)\vert_{X^\red}$ to $X^\red\subseteq
X\subseteq\bX$ of the determinant line bundle $\det(\bL_\bX)$ of
the cotangent complex\/ $\bL_\bX$ of\/~$\bX$.
\end{itemize}
\label{sa3thm6}
\end{thm}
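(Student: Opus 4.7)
The plan is to construct $s$ by descent from an atlas of critical charts, verifying the required cocycle condition via the overlap comparison Proposition~\ref{sa2prop2}, and then to identify the canonical bundle locally using the derived-scheme version Theorem~\ref{sa3thm4}.

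For existence and property (a), Corollary~\ref{sa2cor1} allows me to cover $X$ by smooth atlas maps $\vp_\al:T_\al=\Crit(f_\al)\to X$ with $f_\al:U_\al\to\bA^1$ regular on a smooth scheme and $f_\al\vert_{T_\al^\red}=0$. Writing $T=\bigsqcup_\al T_\al,$ $t=\bigsqcup_\al\vp_\al:T\to X,$ and taking $s_T\in H^0(\cSz_T)$ to be the section whose restriction to each $T_\al$ is the critical-locus section $\io_{T_\al,U_\al}(s_{T_\al})=f_\al+I_{T_\al,U_\al}^2,$ I would descend $s_T$ to the desired $s\in H^0(\cSz_X)$ via Proposition~\ref{sa3prop5}. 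The required cocycle condition on $T\t_XT$ is \'etale-local, so near any point of $T_\al\t_XT_\be$ I apply Proposition~\ref{sa2prop2} to obtain a smooth scheme $V,$ a closed subscheme $R\subseteq V$ \'etale over the fibre product near that point, and morphisms $\th:V\to U_\al,\th':V\to U_\be$ with $f_\al\ci\th-f_\be\ci\th'\in I_{R,V}^2$. Because $\cS_R$ is defined as a quotient involving $I_{R,V}^2$ and pullbacks in $\cS$ satisfy the functoriality \eq{sa3eq2}, this ideal containment is precisely the equality of the pullbacks of $s_{T_\al}$ and $s_{T_\be}$ in $H^0(\cSz_R)$. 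Proposition~\ref{sa3prop5}(ii) then makes $(X,s)$ into a d-critical stack; property (a) holds by construction, and uniqueness follows from the injectivity in Proposition~\ref{sa3prop5}(i).

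For (b), the characterization of $K_{X,s}$ in Theorem~\ref{sa3thm5} via its values on smooth atlases and the isomorphisms $\Ga_{T,t}$ allows me to prove the claim by showing that $\det(\bL_\bX)\vert_{X^\red}$ satisfies the same characterization. In a critical chart $\vp:T=\Crit(f)\to X,$ Theorem~\ref{sa2thm6}(b) supplies a derived enhancement consisting of a smooth $\bs\vp:\bs U\to\bX$ together with $\bs i:\bs U\to\bs V=\bs\Crit(f),$ where $t_0(\bs i):U\cong T$ is an isomorphism and $\bL_{\bs U/\bs V}\simeq\bT_{\bs U/\bX}[2]$. Taking determinants in the two fibre sequences
\begin{equation*}
\bs\vp^*\bL_\bX\to\bL_{\bs U}\to\bL_{\bs U/\bX},\qquad\bs i^*\bL_{\bs V}\to\bL_{\bs U}\to\bL_{\bs U/\bs V},
\end{equation*}
and noting that $\det$ of the rank-$n$ bundle $\bL_{\bs U/\bX}$ in degree $0$ equals $\La^{\rm top}T^*_{U/X}\vert_{U^\red}$ while $\det$ of $\bL_{\bs U/\bs V}\simeq\bT_{\bs U/\bX}[2]$ in degree $-2$ equals its inverse, I would obtain
\begin{equation*}
\bs i^*\det(\bL_{\bs V})\vert_{U^\red}\simeq\bs\vp^*\det(\bL_\bX)\vert_{U^\red}\ot\bigl(\La^{\rm top}T^*_{U/X}\vert_{U^\red}\bigr)^{\ot 2}.
\end{equation*}
Combined with $K_{T,s_T}\simeq\det(\bL_{\bs V})\vert_{T^\red}$ obtained by applying the derived-scheme Theorem~\ref{sa3thm4} to $\bs V,$ this is exactly the formula of Theorem~\ref{sa3thm5}(b) with $K_{X,s}(T^\red,\vp^\red)$ replaced by $(\vp^\red)^*\det(\bL_\bX)\vert_{X^\red},$ identifying the two line bundles chart by chart.

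The main obstacle is the descent step for (b): verifying that these chart-wise isomorphisms are compatible on overlaps of two critical charts so as to glue to a global isomorphism $\det(\bL_\bX)\vert_{X^\red}\cong K_{X,s}$. Via Proposition~\ref{sa2prop2} applied to the overlap, this reduces to the naturality of $\det$ of cotangent complexes under the smooth and \'etale morphisms produced by the refinement, the functoriality of the identification in Theorem~\ref{sa3thm4} along \'etale morphisms of derived $-1$-shifted symplectic schemes, and the coherence property of $\Ga_{T,t}$ recorded in Theorem~\ref{sa3thm5}(c). These compatibilities are natural but require careful bookkeeping of the contributions from the relative cotangent bundles $T^*_{U/X}$ along the triangle of derived schemes built from the refinement.
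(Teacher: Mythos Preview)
Your argument for part~(a) is correct and essentially identical to the paper's: cover $X$ by critical charts from Corollary~\ref{sa2cor1}, use Proposition~\ref{sa2prop2} on overlaps to get $f_\al\ci\th-f_\be\ci\th'\in I_{R,V}^2$, translate this via \eq{sa3eq2} into equality of pulled-back sections, and descend by Proposition~\ref{sa3prop5}. Your local construction for~(b) also matches the paper's: the two fibre sequences for $\bs\vp$ and $\bs i$, combined with $\bL_{\bs U/\bs V}\simeq\bT_{\bs U/\bX}[2]$, give exactly the chartwise isomorphism $(\vp^\red)^*\det(\bL_\bX)\vert_{X^\red}\cong K_{X,s}(T^\red,\vp^\red)$.

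The one genuine divergence is in the gluing step for~(b), which you correctly flag as the main obstacle. You propose to check overlap compatibility directly via Proposition~\ref{sa2prop2}, naturality of determinants, and functoriality of Theorem~\ref{sa3thm4} along the \'etale refinements. The paper explicitly mentions this route but chooses not to take it. Instead it exploits that both $K_{X,s}$ and $\det(\bL_\bX)\vert_{X^\red}$ are line bundles on the \emph{reduced} stack $X^\red$: a morphism of line bundles on a reduced scheme or stack is determined by its values at points. So it suffices to check that for each $x\in X^\red$ the induced isomorphism $\det\bL_\bX\vert_x\cong K_{X,s}\vert_x$ is independent of the choice of chart $(U,f,T,\vp,t)$ with $\vp(t)=x$. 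This is immediate, because both sides are computed canonically from the graded vector space $H^\bullet(\bL_\bX\vert_x)$: one has $K_{X,s}\vert_x\cong(\La^{\rm top}T_x^*X)^{\ot 2}\ot(\La^{\rm top}\fIso_x(X))^{\ot 2}$ by \eq{sa3eq12}, and $\det\bL_\bX\vert_x\cong\bigot_{i=-2}^1(\La^{\rm top}H^i(\bL_\bX\vert_x))^{(-1)^i}$ with $H^0\cong T_x^*X$, $H^1\cong\fIso_x(X)^*$, and $H^{-1},H^{-2}$ their duals by the $-1$-shifted symplectic structure. Tracing the chartwise isomorphism through to a point recovers exactly this canonical identification. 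This pointwise trick completely sidesteps the bookkeeping you anticipated; your approach would work but is unnecessarily laborious.
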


We can think of Theorem \ref{sa3thm6} as defining a {\it truncation
functor}
\begin{align*}
F:\bigl\{&\text{$\iy$-category of $-1$-shifted symplectic derived
Artin $\K$-stacks $(\bX,\om_\bX)$}\bigr\}\\
&\longra\bigl\{\text{2-category of d-critical stacks $(X,s)$ over
$\K$}\bigr\}.
\end{align*}

Let $Y$ be a Calabi--Yau 3-fold over $\K$, and $\cM$ a classical
moduli $\K$-stack of coherent sheaves in $\coh(Y)$, or complexes of
coherent sheaves in $D^b\coh(Y)$. There is a natural obstruction
theory $\phi:\cE^\bu\ra\bL_\cM$ on $\cM$, where $\cE^\bu\in
D_{\qcoh}(\cM)$ is perfect in the interval $[-2,1]$, and
$h^i(\cE^\bu)\vert_F\cong\Ext^{1-i}(F,F)^*$ for each $\K$-point
$F\in\cM$, regarding $F$ as an object in $\coh(Y)$ or $D^b\coh(Y)$.
Now in derived algebraic geometry $\cM=t_0(\bcM)$ for $\bcM$ the
corresponding derived moduli $\K$-stack, and
$\phi:\cE^\bu\ra\bL_\cM$ is $\bL_{t_0}:\bL_{\bcM}
\vert_\cM\ra\bL_\cM$. Pantev et al.\ \cite[\S 2.1]{PTVV} prove
$\bcM$ has a $-1$-shifted symplectic structure $\om$. Thus Theorem
\ref{sa3thm6} implies:

\begin{cor} Suppose $Y$ is a Calabi--Yau\/ $3$-fold over\/ $\K$ of
characteristic zero, and\/ $\cM$ a classical moduli\/ $\K$-stack of
coherent sheaves $F$ in $\coh(Y),$ or complexes of coherent sheaves
$F^\bu$ in $D^b\coh(Y)$ with $\Ext^{<0}(F^\bu,F^\bu)=0,$ with
obstruction theory\/ $\phi:\cE^\bu\ra\bL_\cM$. Then $\cM$ extends
naturally to an algebraic d-critical locus $(\cM,s)$. The canonical
bundle $K_{\cM,s}$ from Theorem\/ {\rm\ref{sa3thm5}} is naturally
isomorphic to $\det(\cE^\bu)\vert_{\cM^\red}$.
\label{sa3cor2}
\end{cor}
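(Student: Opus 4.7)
The plan is to deduce this corollary as a direct specialization of Theorem \ref{sa3thm6} applied to the derived enhancement $\bcM$ of $\cM$, using the $-1$-shifted symplectic structure supplied by Pantev--To\"en--Vaqui\'e--Vezzosi \cite{PTVV}. First I would invoke the derived algebraic geometry of moduli stacks: there is a derived Artin $\K$-stack $\bcM$, locally finitely presented, with $t_0(\bcM)\simeq\cM$ as classical Artin $\K$-stacks. The hypothesis $\Ext^{<0}(F^\bu,F^\bu)=0$ is precisely what guarantees that $\bcM$ is $1$-truncated (a derived Artin stack, not a higher stack), so the framework of \S\ref{sa21} applies. By \cite[\S 2.1]{PTVV}, the Calabi--Yau structure on $Y$ induces a $(2-3)=-1$-shifted symplectic structure $\om$ on $\bcM$.

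Next I would apply Theorem \ref{sa3thm6} to the pair $(\bcM,\om)$. This immediately yields a canonical d-critical structure $s\in H^0(\cSz_\cM)$ on the underlying classical stack $\cM=t_0(\bcM)$, making $(\cM,s)$ into a d-critical stack in the sense of Definition \ref{sa3def3}. (Strictly speaking the wording ``algebraic d-critical locus'' in the statement should read ``d-critical stack'', since $\cM$ is an Artin stack; the algebraic d-critical locus structure is what one obtains after pulling $s$ back along a smooth atlas $T\to\cM$.)

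For the canonical bundle statement, the key is to identify $\cE^\bu$ with $\bL_\bcM\vert_\cM$. By construction of the standard obstruction theory on a derived moduli stack in \cite{Toen,ToVe,PTVV}, the map $\phi:\cE^\bu\to\bL_\cM$ coincides with $\bL_{t_0}:\bL_\bcM\vert_\cM\to\bL_\cM$; in particular $\cE^\bu\simeq\bL_\bcM\vert_\cM$ in $D_\qcoh(\cM)$. The determinant line bundle is functorial under such equivalences, so
\begin{equation*}
\det(\cE^\bu)\vert_{\cM^\red}\cong\det(\bL_\bcM\vert_\cM)\vert_{\cM^\red}\cong\det(\bL_\bcM)\vert_{\cM^\red}.
\end{equation*}
Combining this with the isomorphism $K_{\cM,s}\cong\det(\bL_\bcM)\vert_{\cM^\red}$ given by Theorem \ref{sa3thm6}(b) produces the desired isomorphism $K_{\cM,s}\cong\det(\cE^\bu)\vert_{\cM^\red}$.

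The only real obstacle is the bookkeeping of ensuring the derived enhancement $\bcM$ exists as a locally finitely presented derived Artin stack with the required shifted symplectic form, and that the obstruction theory $\cE^\bu$ used in the classical literature genuinely matches $\bL_\bcM\vert_\cM$; both of these are standard facts from \cite{ToVe,PTVV}. Everything else is a formal application of Theorem \ref{sa3thm6}.
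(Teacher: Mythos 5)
Your proposal is correct and follows essentially the same route as the paper: identify $\cE^\bu\simeq\bL_{\bcM}\vert_\cM$ via $\phi=\bL_{t_0}$, use the $-1$-shifted symplectic structure on $\bcM$ from \cite[\S 2.1]{PTVV}, and apply Theorem \ref{sa3thm6} to obtain both the d-critical structure on $\cM=t_0(\bcM)$ and the isomorphism $K_{\cM,s}\cong\det(\bL_\bcM)\vert_{\cM^\red}\cong\det(\cE^\bu)\vert_{\cM^\red}$. Your terminological remark is also apt: since $\cM$ is an Artin stack, the conclusion is that $(\cM,s)$ is a d-critical stack, with algebraic d-critical loci arising on smooth atlases $T\ra\cM$.
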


\subsection{Proof of Theorem \ref{sa3thm6}}
\label{sa34}

Let $(\bX,\om_\bX)$ be a $-1$-shifted symplectic derived Artin
$\K$-stack, with $\mathop{\rm char}\K=0$, and $X=t_0(\bX)$. For each
$p\in X$, Corollary \ref{sa2cor1} gives data $T=\Crit(f:U\ra\bA^1)$
with $f\vert_{T^\red}=0$, $t\in T$ and a smooth $\vp:T\ra X$ with
$\vp(t)=p$. Choose $U_j,f_j,T_j,\vp_j$ from Corollary \ref{sa2cor1}
for $j$ in an indexing set $J$, such that $\coprod_{j\in
J}\vp_j:\coprod_{j\in J}T_j\ra X$ is surjective. Then $\coprod_{j\in
J}\vp_j:\coprod_{j\in J}T_j\ra X$ is a smooth atlas for $X$. As in
\S\ref{sa31}, there is a unique $s_j\in H^0(\cSz_{T_j})$ with
$\io_{T_j,U_j}(s_j)= i_j^{-1}(f_j)+I_{T_j,U_j}^2,$ and $(T_j,s_j)$
is an algebraic d-critical locus for each~$j\in J$.

Let $j,k\in J$, and $q\in T_j\t_{\vp_j,X,\vp_k}T_k$. Applying
Proposition \ref{sa2prop2} gives a smooth $\K$-scheme $V_{jk}$, a closed
$\K$-subscheme $R_{jk}\subseteq V_{jk}$, a point $r\in R_{jk},$ and morphisms $\th_{jk}:V_{jk}\ra U_j,$ $\th_{jk}':V_{jk}\ra U_k$ with $\th_{jk}(R_{jk})\subseteq T_j$, $\th_{jk}'(R_{jk})\subseteq T_k$, such that the following diagram 2-commutes in $\Art_\K\!:$
\begin{equation*}
\xymatrix@!0@C=80pt@R=25pt{ V_{jk} \ar[rr]_{\th_{jk}'} \ar[dd]^{\th_{jk}} && U_k \ar[r]_(0.7){f_k} & \bA^1 \\
& \,\,R_{jk} \ar@{_(->}[ul]^{i_{jk}} \ar[rr]^{\th_{jk}'\vert_{R_{jk}}}
\ar[dd]_{\th_{jk}\vert_{R_{jk}}} \ddrrtwocell_{}\omit^{}\omit{^{\eta_{jk}\,\,\,\,\,\,\,}} && \,\,T_k \ar[dd]_{\vp_k}
\ar@{_(->}[ul]^(0.7){i_k} \\ U_j \ar[d]^{f_j} \\
\bA^1 & \,\,T_j \ar@{_(->}[ul]^{i_j} \ar[rr]^{\vp_j} && {X,\!\!} }
\end{equation*}
where $i_j,i_k,i_{jk}$ are the inclusions, and the induced morphism $R_{jk}\ra T_j\t_XT_k$ is \'etale and maps $r\mapsto q$, and $f_j\ci\th_{jk}-f_k\ci\th_{jk}'\in I_{R_{jk},V_{jk}}^2$. 

As we can do this for each $q\in T_j\t_XT_k$, we can choose a family of such $V_{jk}^l,R_{jk}^l,\th_{jk}^l,\th_{jk}^{\prime l},\eta_{jk}^l,i_{jk}^l$ for $l\in K_{jk}$, where $K_{jk}$ is an indexing set, such that the induced morphism $\coprod_{l\in K_{jk}}R_{jk}^l\ra T_j\t_XT_k$ is \'etale and surjective. We apply Proposition \ref{sa3prop5} to the 2-commutative diagram
\begin{equation*}
\xymatrix@C=180pt@R=17pt{ *+[r]{\coprod_{j,k\in J}\coprod_{l\in K_{jk}} R_{jk}^l}
\ar[d]^{\amalg_{j,k,l}\th_{jk}^l\vert_{R_{jk}^l}} \ar[r]_(0.7){\amalg_{j,k,l}\th_{jk}^{\prime l}\vert_{R_{jk}^l}}
\drtwocell_{}\omit^{}\omit{^{\amalg_{j,k,l}\eta_{jk}^l\,\,\,\,\,\,\,\,\,\,\,\,\,\,\,\,\,\,\,\,}} &
*+[l]{\coprod_{k\in J}T_k} \ar[d]_{\amalg_k\vp_k} \\
*+[r]{\coprod_{j\in J}T_j} \ar[r]^(0.7){\amalg_j\vp_j} & *+[l]{X.\!\!} }
\end{equation*}
Here $\coprod_j\vp_j:\coprod_jT_j\ra X$ is smooth and surjective, and $\coprod_{j,k,l}R_{jk}^l\ra \coprod_{j,k}T_j\t_XT_k$ \'etale and surjective, so the hypotheses of Proposition \ref{sa3prop5} hold.

Now for all $j,k\in J$ and $l\in K_{jk}$, in the notation of \S\ref{sa31}, we have\vskip -15pt
\begin{align*}
&\io_{R_{jk}^l,V_{jk}^l}\ci\th_{jk}^l\vert_{R_{jk}^l}^\star(s_j)=(i_{jk}^l)^{-1}(\th_{jk}^{l\sharp})\ci
\th_{jk}^l\vert_{R_{jk}^l}^{-1}(\io_{T_j,U_j}(s_j))\\
&=\th_{jk}^l\vert_{R_{jk}^l}^{-1}(i_j^{-1}(f_j)+I_{T_j,U_j}^2)
=(i_{jk}^l)^{-1}(f_j\ci\th_{jk}^l+I_{R_{jk}^l,V_{jk}^l}^2)\\
&=(i_{jk}^l)^{-1}(f_k\ci\th_{jk}^{l\prime}+I_{R_{jk}^l,V_{jk}^l}^2)
=\th_{jk}^{\prime l}\vert_{R_{jk}^l}^{-1}(i_k^{-1}(f_k)+I_{T_k,U_k}^2)\\
&=(i_{jk}^l)^{-1}(\th_{jk}^{\prime l\sharp})\ci
\th_{jk}^{\prime l}\vert_{R_{jk}^l}^{-1}(\io_{T_k,U_k}(s_k))=\io_{R_{jk}^l,V_{jk}^l}\ci\th_{jk}^{\prime l}\vert_{R_{jk}^l}^\star(s_k),
\end{align*}
using \eq{sa3eq2} in the first and seventh steps, the definitions of
$s_j,s_k$ in the second and sixth, and $f_j\ci\th_{jk}^l-f_k\ci\th_{jk}^{\prime l}\in I_{R_{jk}^l,V_{jk}^l}^2$ in the fourth. As $\io_{R_{jk}^l,V_{jk}^l}$ is injective, this implies that $\th_{jk}^l\vert_{R_{jk}^l}^\star(s_j)=\th_{jk}^{\prime l}\vert_{R_{jk}^l}^\star(s_k)$ in $H^0(\cSz_{R_{jk}^l})$. Since this holds for all $j,k,l$, we see that 
$(\coprod_{j,k,l}\th_{jk}^l\vert_{R_{jk}^l})^\star(\coprod_js_j)=(\coprod_{j,k,l}\th_{jk}^{\prime l}\vert_{R_{jk}^l})^\star(\coprod_ks_k)$ in $H^0(\cSz_{\amalg_{j,k,l}R_{jk}^l})$. Therefore Proposition \ref{sa3prop5}(i) shows that there exists a unique $s\in H^0(\cS_X)$ with $s\bigl(\coprod_jT_j,\coprod_j\vp_j\bigr)=\coprod_js_j$, that is, with $s(T_j,\vp_j)=s_j$ for all $j\in J$. Also, as $\bigl(\coprod_jT_j,\coprod_js_j\bigr)$ is an algebraic d-critical locus, Proposition \ref{sa3prop5}(ii) shows that $(X,s)$ is a d-critical stack.

To show $s\in H^0(\cS_X)$ is independent of the choice of data $J,U_j,f_j,T_j,\ab\vp_j,\ab K_{jk},\ab V_{jk}^l,\ab R_{jk}^l,\ab\th_{jk}^l,\th_{jk}^{\prime l},\eta_{jk}^l,i_{jk}^l$, suppose $J',U'_{j'},f'_{j'},\ldots$ is another set of choices yielding $s'\in H^0(\cS_X)$ with $s'(T'_{j'},\vp'_{j'})=s'_{j'}$ for all $j'\in J'$. Applying the same argument with $J''=J\amalg J'$ and data $U_j,f_j,T_j,\vp_j,$ $j\in J$ and $U'_{j'},f'_{j'},T'_{j',}\vp'_{j'},$ $j'\in J'$, with $K_{jk}''=K_{jk}$, $V_{jk}^{\prime\prime l}=V_{jk}^l,\ldots$ for $j,k\in J\subset J''$, and $K_{j'k'}''=K'_{j'k'}$, $V_{j'k'}^{\prime\prime l}=V_{j'k'}^{\prime l},\ldots$ for $j',k'\in J'\subset J''$, and the remaining $K_{jk}'',V_{jk}^{\prime\prime l},\ldots$ arbitrary, yields a third section $s''\in H^0(\cS_X)$ satisfying $s''(T_j,\vp_j)=s_j$ for all $j\in J$ and $s''(T'_{j'},\vp'_{j'})=s'_{j'}$ for all $j'\in J'$. So the uniqueness property of $s,s'$ gives $s=s''=s'$, and $s$ is independent of the choice of data~$J,U_j,f_j,\ldots.$

Let $U,$ $f:U\ra\bA^1,$ $T=\Crit(f)$ and $\vp:T\ra X$ be as in
Corollary\ \ref{sa2cor1}, with $f\vert_{T^\red}=0$. By defining
$s\in H^0(\cS_X)$ above using data $J,U_j,f_j,\ldots$ chosen
such that $U_j=U$, $f_j=f$, $T_j=T$, $\vp_j=\vp$ for some $j\in J$,
which is allowed as $s$ is independent of this choice, we see that
$s(T,\vp)=s_T$ in $H^0(\cSz_T)$. This proves
Theorem~\ref{sa3thm6}(a).

For part (b), let $U$, $f:U\ra\bA^1$, $T=\Crit(f)$ and $\vp:T\ra X$
be as in Corollary \ref{sa2cor1}, with $i:T\hookra U$ the inclusion,
so that $s(T,\vp)=s_T$ in $H^0(\cSz_T)$ with
$\io_{T,U}(s_T)=i^{-1}(f)+I_{T,U}^2$ by (a). Then $(T,U,f,i)$ is a
critical chart on the algebraic d-critical locus $(T,s_T)$, so
Theorem \ref{sa3thm3}(b) gives an isomorphism
\e
\smash{\io_{T,U,f,i}:K_{T,s_T}\longra
i^*\bigl(K_U^{\ot^2}\bigr)\vert_{T^\red}.}
\label{sa3eq16}
\e

The data in Corollary \ref{sa2cor1} come from Theorem
\ref{sa2thm6}(a),(b) with $k=-1$, but with different notation. To
distinguish the two, we write `$\,\,\check{}\,\,$' over notation
from Theorem \ref{sa2thm6}. Then Theorem \ref{sa2thm6}(a),(b) give
affine derived $\K$-schemes $\bs{\check U},\bs{\check V}$, a
$-1$-shifted symplectic structure $\check\om_B$ on $\bs{\check V}$,
and morphisms $\bs{\check\imath}:\bs{\check U}\ra\bs{\check V},$
$\bs{\check\vp}:\bs{\check U}\ra\bX$ such that $\bs{\check\vp}{}^*
(\om_\bX)\sim\bs{\check\imath}^*(\check\om_B)$, and
$\check\imath=t_0(\bs{\check\imath}):\check U=t_0(\bs{\check U})\ra
\check V=t_0(\bs{\check V})$ is an isomorphism on the classical
schemes. These are related to the data of Corollary \ref{sa2cor1} by
$\bs{\check V}$ is the derived critical locus
$\bs\Crit(f:U\ra\bA^1)$, and $\check V$ the classical critical locus
$T=\Crit(f),$ and $\vp=\check\vp\ci\check\imath^{-1}:T=\check V\ra
X$.

We have standard fibre sequences on $\bs{\check U}:$\vskip -15pt
\begin{align*}
\xymatrix@C=30pt@R=8pt{ \bs{\check\vp}^*(\bL_{\bX})
\ar[r]_(0.55){\bL_{\bs{\check\vp}}} & \bL_{\smash{\bs{\check U}}} \ar[r] &
\bL_{\smash{\bs{\check U}/\bX}} \ar[r] & \bs{\check\vp}^*(\bL_{\bX})[1], } \\
\xymatrix@C=32pt{ \bs{\check\imath}^*(\bL_{\smash{\bs{\check V}}})
\ar[r]^(0.55){\bL_{\bs{\check\imath}}} & \bL_{\smash{\bs{\check U}}} \ar[r]
& \bL_{\smash{\bs{\check U}/\bs{\check V}}} \ar[r] &
\bs{\check\imath}^*(\bL_{\smash{\bs{\check V}}})[1]. }
\end{align*}
Taking determinants gives natural isomorphisms of line bundles on
$\bs{\check U}$:
\e
\begin{split}
\det\bL_{\smash{\bs{\check U}}}&\cong
\bs{\check\vp}^*(\det\bL_{\bX})\ot
\det\bL_{\smash{\bs{\check U}/\bX}},\\
\det\bL_{\smash{\bs{\check U}}}&\cong \bs{\check\imath}^*
(\det\bL_{\smash{\bs{\check V}}})\ot \det\bL_{\smash{\bs{\check
U}/\bs{\check V}}}.
\end{split}
\label{sa3eq17}
\e
Equation \eq{sa2eq14} gives $\bL_{\smash{\bs{\check U}/\bs{\check
V}}} \simeq \bT_{\smash{\bs{\check U}/\bX}}[2]$. So taking
determinants we have
\e
\smash{\det\bL_{\smash{\bs{\check U}/\bs{\check V}}}\cong
\det\bT_{\smash{\bs{\check U}/\bX}}\cong (\det\bL_{\smash{\bs{\check
U}/\bX}})^*.}
\label{sa3eq18}
\e

Combining \eq{sa3eq17}--\eq{sa3eq18} and restricting to $\check
U=t_0(\bs{\check U})\subseteq\bs{\check U}$ yields
\e
\smash{\check\vp^*\bigl(\det\bL_{\bX}\vert_X\bigr)\cong
{\check\imath}{}^*\bigl(\det\bL_{\smash{\bs{\check V}}}\vert_{\check
V}\bigr) \ot\bigl(\det\bL_{\smash{\bs{\check U}/\bX}}\vert_{\check
U}\bigr){}^{\ot^{-2}}.}
\label{sa3eq19}
\e
Since $\bs{\check\vp}:\bs{\check U}\ra\bX$ is smooth, so is
$\check\vp:\check U\ra X$, and
\e
\smash{\bL_{\bs{\check U}/\bX}\vert_{\check U}\cong\bL_{\check U/X}\cong
T^*_{\check U/X}.}
\label{sa3eq20}
\e
As $\check\imath:\check U\ra\check V=T$ is an isomorphism, we may
apply $(\check\imath^{-1})^*$ to \eq{sa3eq19}. Using \eq{sa3eq20}
and $(\check\imath^{-1})^*\ci {\check\imath}{}^*=\id$,
$(\check\imath^{-1})^*\ci\check\vp^*=\vp^*$ as
$\vp=\check\vp\ci\check\imath^{-1}$ gives
\e
\smash{\vp^*\bigl(\det\bL_{\bX}\vert_X\bigr)\cong
\bigl(\det\bL_{\smash{\bs{\check V}}}\vert_T\bigr)\ot
(\check\imath^{-1})^*\bigl(\La^{\rm top}T^*_{\check
U/X}\bigr){}^{\ot^{-2}}.}
\label{sa3eq21}
\e

Since $\bs{\check V}=\bs\Crit(f:U\ra\bA^1)$, we have
\begin{equation*}
\smash{\bL_{\smash{\bs{\check V}}}\vert_T\simeq\bigl[
\xymatrix@C=30pt{TU\vert_T \ar[r]^{\pd^2f\vert_T} & T^*U\vert_T}\bigr],}
\end{equation*}
with $TU\vert_T$ in degree $-1$ and $T^*U\vert_T$ in degree 0.
Therefore
\e
\smash{\det\bL_{\smash{\bs{\check V}}}\vert_T\cong
i^*\bigl(K_U^{\ot^2}\bigr).}
\label{sa3eq22}
\e
Also, as $\check\imath^{-1}:T\ra\hat U$ is an isomorphism, we have
\e
\smash{(\check\imath^{-1})^*(T^*_{\check U/X})\cong T^*_{T/X}.}
\label{sa3eq23}
\e
Combining \eq{sa3eq21}--\eq{sa3eq23}, restricting to $T^\red$ and
using \eq{sa3eq16} gives
\e
\smash{(\vp^\red)^*\bigl(\det\bL_{\bX}\vert_{X^\red}\bigr)\cong K_{T,s_T}\ot \bigl(\La^{\rm
top}T^*_{T/X}\bigr)\big\vert{}_{T^\red}^{\ot^{-2}}.}
\label{sa3eq24}
\e
Substituting in the isomorphism $\Ga_{T,\vp}$ in Theorem
\ref{sa3thm5}(b) from the smooth morphism $\vp:T\ra X$ gives a
canonical isomorphism of line bundles on $T^\red$:
\e
\smash{(\vp^\red)^*\bigl(\det\bL_{\bX}\vert_{X^\red}\bigr) \cong
K_{X,s}(T^\red,\vp^\red).}
\label{sa3eq25}
\e

This establishes the isomorphism $K_{X,s}\cong\det(\bL_\bX)
\vert_{X^\red}$ in Theorem \ref{sa3thm6}(b) evaluated on
$(T^\red,\vp^\red)$ for any $U,f,T,\vp$ coming from Corollary
\ref{sa2cor1}. Such $\vp^\red:T^\red\ra X^\red$ form an open cover
of $X^\red$ in the smooth topology. To prove the isomorphism
$K_{X,s}\cong\det(\bL_\bX)\vert_{X^\red}$ globally and complete the
proof, there are two possible methods. Firstly, we could prove that
given two choices $U,f,T,\vp$ and $U',f',T',\vp'$ in Corollary
\ref{sa2cor1}, the corresponding isomorphisms \eq{sa3eq25} agree on
the overlap $T^\red\t_{\vp^\red,X,\vp^{\prime\red}}T^{\prime\red}$.

But as we are dealing with line bundles on a reduced stack $X^\red$,
there is a second, easier way: we can show that for each $t\in
T^\red$ with $\vp^\red(t)=x\in X^\red$, the isomorphism
$\det\bL_{\bX}\vert_x\cong K_{X,s}\vert_x$ from restricting
\eq{sa3eq25} to $t$ depends only on $x\in X^\red$, and not on the choice
of $U,f,T,\vp,t$. This holds as by Theorem \ref{sa3thm5}(a) we have
an isomorphism
\e
\smash{K_{X,s}\vert_x\cong\bigl(\La^{\rm top}T_x^*X\bigr)^{\ot^2}
\ot\bigl(\La^{\rm top}\fIso_x(X)\bigr)^{\ot^2}.}
\label{sa3eq26}
\e
Since $\bL_\bX$ is perfect in the interval $[-2,1]$, we have
\e
\smash{\det\bL_{\bX}\vert_x\cong \ts\bigot_{i=-2}^1 \bigl(\La^{\rm top} H^i(\bL_{\bX}\vert_x)\bigr)^{(-1)^i},}
\label{sa3eq27}
\e
where we have canonical isomorphisms
\e
\begin{aligned}
H^0(\bL_{\bX}\vert_x)&\cong T_x^*X, & H^1(\bL_{\bX}\vert_x)&\cong
\fIso_x(X)^*,\\
H^{-1}(\bL_{\bX}\vert_x)&\cong T_xX, &
H^{-2}(\bL_{\bX}\vert_x)&\cong \fIso_x(X),
\end{aligned}
\label{sa3eq28}
\e
the first line holding for any derived Artin stack $\bX$, and the
second line from $H^i(\bL_{\bX}\vert_x)\cong
H^{-1-i}(\bL_{\bX}\vert_x)^*$ as $(\bX,\om_\bX)$ is $-1$-shifted
symplectic.

Combining \eq{sa3eq26}--\eq{sa3eq28} gives a canonical isomorphism
$\det\bL_{\bX}\vert_x\cong K_{X,s}\vert_x$ depending only on $x\in
X^\red$. Following through \eq{sa3eq16}--\eq{sa3eq25} restricted to
$t\in T^\red$ with $\vp^\red(t)=x$, we find that the restriction of
\eq{sa3eq25} to $t$ gives the same isomorphism. This completes the
proof of Theorem~\ref{sa3thm6}(b).

\section{Perverse sheaves on d-critical stacks}
\label{sa4}

In \cite[Th.~6.9]{BBDJS}, given in Theorem \ref{sa4thm2} below, we
constructed a natural perverse sheaf $P^\bu_{X,s}$ on an oriented
algebraic d-critical locus $(X,s)$. The main result of this section,
Theorem \ref{sa4thm3}, generalizes this to oriented d-critical
stacks.

We begin in \S\ref{sa41} with some background on perverse sheaves on
schemes. Section \ref{sa42} recalls results from \cite{BBDJS}, and
proves in Proposition \ref{sa4prop3} a smooth pullback property of
the $P^\bu_{X,s}$ in Theorem \ref{sa4thm2}. Section \ref{sa43} discusses perverse sheaves on Artin stacks. Once we have set up all the notation, Theorem \ref{sa4thm3} in \S\ref{sa44} follows almost immediately from Theorem \ref{sa4thm2} and Proposition \ref{sa4prop3}. In this section the base field $\K$ may be algebraically closed with
$\mathop{\rm char}\K\ne 2$, except in Corollaries \ref{sa4cor1} and
\ref{sa4cor2} when we require $\mathop{\rm char}\K=0$ to apply the
results of~\S\ref{sa3}.

\subsection{Perverse sheaves on schemes}
\label{sa41}

We will assume the reader is familiar with the theory of perverse sheaves on $\C$-schemes and $\K$-schemes. An introduction to perverse sheaves on schemes suited to our purposes can be found in \cite[\S 2]{BBDJS}, and our definitions and notation follow that paper. Here is a brief survival guide:
\begin{itemize}
\setlength{\itemsep}{0pt}
\setlength{\parsep}{0pt}
\item We work throughout this section over an algebraically closed field $\K$ with $\mathop{\rm char}\K\ne 2$, for instance $\K=\C$. All $\K$-schemes $X,Y,Z,\ldots$ are assumed separated and of finite type.
\item We work with constructible complexes and perverse sheaves over a commutative base ring $A$. The allowed rings $A$ depend on the field $\K$. For $\K=\C$ one can define perverse sheaves using the complex analytic topology as in Dimca \cite{Dimc}, and then $A$ can be essentially arbitrary, e.g. $A=\Q$~or~$\Z$.

If $\K\ne\C$ then one must define perverse sheaves using the \'etale topology, as in Beilinson, Bernstein and Deligne \cite{BBD}. Then the allowed possibilities are $A$ with $\mathop{\rm char}A>0$ coprime to $\mathop{\rm char}\K,$ or the $l$-adic integers $\Z_l$, or the $l$-adic rationals $\Q_l$, or its algebraic closure $\bar\Q_l$, for $l$ a prime coprime to $\mathop{\rm char}\K$. We will refer to all these possibilities as $l$-{\it adic perverse sheaves}.
\item For a $\K$-scheme $X$, one defines the derived category $D^b_c(X)$ of {\it constructible complexes\/} of $A$-modules on $X$. There is a natural t-structure on $D^b_c(X)$, with heart the abelian category $\Perv(X)$ of {\it perverse sheaves\/} on $X$.
\item An example of a constructible complex on $X$ is the constant
sheaf $A_X$ with fibre $A$ at each point. If $X$ is smooth then $A_X[\dim X]\in\Perv(X)$.
\item Grothendieck's ``six operations on sheaves'' $f^*,f^!,Rf_*,Rf_!,
{\cal RH}om,\smash{\otL}$ act on the categories $D^b_c(X)$. There is a functor $\bD_X:D^b_c(X)\ra D^b_c(X)^{\rm op}$
with $\bD_X\ci\bD_X\cong\id:D^b_c(X)\ra D^b_c(X)$, called {\it
Verdier duality}.
\item Let $U$ be a $\K$-scheme and $f:U\ra\bA^1$ a regular
function, and write $U_0$ for the subscheme $f^{-1}(0)\subseteq U$.
Then one can define the {\it nearby cycle functor\/} $\psi_f^p:D^b_c(U)\ra D^b_c(U_0)$ and the {\it vanishing cycle functor\/} $\phi_f^p:D^b_c(U)\ra
D^b_c(U_0)$. Both map $\Perv(U)\ra\Perv(U_0)$. 
\item Let $U$ be a smooth $\K$-scheme and $f:U\ra\bA^1$ a regular
function, and write $X=\Crit(f)$. Then we have a decomposition $X=\coprod_{c\in f(X)}X_c$, where $X_c\subseteq X$ is the open and closed subscheme of points $p\in X$
with $f(p)=c$. It turns out that $\phi_f^p(A_U[\dim U])$ is
supported on $X_0\subseteq X\subseteq U$. 

Following \cite[\S 2.4]{BBDJS}, define the {\it perverse sheaf of vanishing cycles\/ $\PV_{U,f}^\bu$ of\/} $U,f$ in $\Perv(X)$ or $\Perv(U)$ to be $\PV_{U,f}^\bu=\bigop_{c\in
f(X)}\phi_{f-c}^p(A_U[\dim U])\vert_{X_c}$. We also define a canonical {\it Verdier duality isomorphism\/}
\begin{equation*}
\smash{\si_{U,f}:\PV_{U,f}^\bu\,{\buildrel\cong\over\longra}\,
\bD_X\bigl(\PV_{U,f}^\bu\bigr)}
\end{equation*}
and {\it twisted monodromy
operator\/}
\begin{equation*}
\smash{\tau_{U,f}:\PV_{U,f}^\bu\,{\buildrel\cong\over\longra}\,\PV_{U,f}^\bu.}
\end{equation*}
\end{itemize}
Some references are \cite[\S 2]{BBDJS}, Dimca \cite{Dimc} for perverse sheaves on $\C$-schemes, and Beilinson, Bernstein and Deligne \cite{BBD}, Ekedahl \cite{Eked}, Freitag and Kiehl \cite{FrKi}, and Kiehl and Weissauer \cite{KiWe} for perverse sheaves on $\K$-schemes.

The theories of $\cD$-modules on $\K$-schemes, and Saito's mixed Hodge modules on $\C$-schemes, also share this whole package of properties, and our results also generalize to $\cD$-modules and mixed Hodge modules, as in~\cite{BBDJS}.

Here are some results connecting perverse sheaves and smooth
morphisms. Theorem \ref{sa4thm1} (proved in \cite[Th.~3.2.4]{BBD},
see also \cite[\S 2.3]{LaOl1}) is the reason why perverse sheaves
extend to Artin stacks, as we discuss in~\S\ref{sa43}.

\begin{prop} Let\/ $\Phi:X\ra Y$ be a scheme morphism smooth of
relative dimension $d$. Then the (exceptional) inverse image
functors $\Phi^*,\Phi^!:D^b_c(Y)\ra D^b_c(X)$ satisfy
$\Phi^*[d]\cong \Phi^![-d],$ where $\Phi^*[d],\Phi^![-d]$ are
$\Phi^*,\Phi^!$ shifted by $\pm d$. Furthermore
$\Phi^*[d],\Phi^![-d]$ map $\Perv(Y)\ra\Perv(X)$.
\label{sa4prop1}
\end{prop}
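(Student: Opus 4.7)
The plan is to establish the isomorphism $\Phi^*[d]\cong\Phi^![-d]$ first, and then verify perverse t-exactness separately, though in fact these two facts can be deduced together. Both are standard consequences of the six-functor formalism for constructible sheaves, established in Beilinson--Bernstein--Deligne \cite{BBD} and SGA 4.

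First, I would produce the isomorphism $\Phi^*[d]\cong\Phi^![-d]$ by invoking the standard description of the relative dualizing complex for a smooth morphism. For any $\Phi:X\to Y$ smooth of relative dimension $d$, the relative dualizing complex $\omega_{X/Y}:=\Phi^!A_Y$ is an invertible object concentrated in cohomological degree $-2d$ (with a Tate twist $(d)$ in the $\ell$-adic setting that I absorb into the conventions, as the paper treats $\bar\Q_l$-coefficients and implicitly chooses a trivialization of the Tate motive). Combined with the projection formula $\Phi^!F\simeq \Phi^*F\otL\omega_{X/Y}$, this gives $\Phi^!F\simeq\Phi^*F[2d]$ naturally in $F\in D^b_c(Y)$, which is exactly $\Phi^![-d]\simeq\Phi^*[d]$.

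Next, for the perverse t-exactness, I would check the support and cosupport conditions directly. Recall $F\in D^b_c(Y)$ lies in $\Perv(Y)$ iff $\dim\supp\cH^{-j}(F)\le j$ and $\dim\supp\cH^{-j}(\bD_YF)\le j$ for all $j\in\Z$. Since $\Phi$ is smooth, $\Phi^*$ is exact on ordinary constructible sheaves and $\cH^k(\Phi^*[d]F)=\Phi^*\cH^{k+d}(F)$; moreover $\dim\supp\Phi^*\cG=\dim\supp\cG+d$ for any constructible $\cG$ on $Y$ (using smoothness and nonemptiness of fibres of dimension $d$ over the support). Substituting $k=-j$ and writing $i=j-d$, the support condition for $\Phi^*[d]F$ translates into $\dim\supp\cH^{-i}(F)\le i$, which holds by the perversity of $F$. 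The cosupport condition for $\Phi^*[d]F$ follows from the isomorphism $\bD_X\ci\Phi^*[d]\simeq\Phi^![-d]\ci\bD_Y\simeq\Phi^*[d]\ci\bD_Y$ obtained by combining Verdier duality's intertwining $\bD_X\ci\Phi^!\simeq\Phi^*\ci\bD_Y$ with the first part of the proposition. Thus $\Phi^*[d]$ maps $\Perv(Y)\to\Perv(X)$, and the same for $\Phi^![-d]$ by the established isomorphism.

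The main obstacle is largely bookkeeping: tracking the relative dualizing complex, Tate twists, and sign conventions across the different geometric settings (complex-analytic for $\K=\C$ and $\ell$-adic for general $\K$). All of this is already handled in the literature, so in the actual paper I would simply cite \cite[Th.~3.2.4]{BBD} together with the Laszlo--Olsson treatment \cite[\S 2.3]{LaOl1}, and refer the reader to our sketch for the intuition behind the shift~$[d]$.
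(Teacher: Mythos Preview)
Your argument is correct: the identification $\Phi^!\simeq\Phi^*\otL\omega_{X/Y}$ with $\omega_{X/Y}$ invertible in degree $-2d$ gives $\Phi^*[d]\cong\Phi^![-d]$, and the support/cosupport check (or equivalently the fact that $\Phi^*[d]$ commutes with Verdier duality) yields perverse t-exactness. This is the standard proof.

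However, the paper does not actually prove this proposition. It is stated in \S\ref{sa41} as background from the literature, alongside Theorem~\ref{sa4thm1} and Proposition~\ref{sa4prop2}, with no proof given. One small correction to your final paragraph: the citation \cite[Th.~3.2.4]{BBD} and \cite[\S 2.3]{LaOl1} in the paper is attached to Theorem~\ref{sa4thm1} (smooth descent for perverse sheaves), not to the present proposition. For Proposition~\ref{sa4prop1} the relevant references would be \cite[\S 4.2.4--4.2.6]{BBD} (where the t-exactness of $\Phi^*[d]$ for smooth $\Phi$ is established and used) together with the relative Poincar\'e duality statements in SGA~4 or \cite[\S 4]{Dimc} in the analytic case.
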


\begin{thm} Let\/ $X$ be a scheme. Then perverse sheaves on $X$ form
a \begin{bfseries}stack\end{bfseries} (a kind of sheaf of
categories) on $X$ \begin{bfseries}in the smooth
topology\end{bfseries}.

Explicitly, this means the following. Let $\{u_i:U_i\ra X\}_{i\in
I}$ be a \begin{bfseries}smooth open cover\end{bfseries} for $X,$ so
that\/ $u_i:U_i\ra X$ is a scheme morphism smooth of relative
dimension $d_i$ for\/ $i\in I,$ with\/ $\coprod_iu_i$ surjective.
Write $U_{ij}=U_i\t_{u_i,X,u_j}U_j$ for $i,j\in I$ with projections
\begin{equation*}
\smash{\pi_{ij}^i:U_{ij}\longra U_i,\quad \pi_{ij}^j:U_{ij}\longra U_j,\quad u_{ij}\!=\!u_i\ci\pi_{ij}^i\!=\!u_j\ci\pi_{ij}^j:U_{ij}\!\longra\! X.}
\end{equation*}
Similarly, write $U_{ijk}=U_i\!\t_X\!U_j\!\t_X\!U_k$ for $i,j,k\in
I$ with projections
\begin{gather*}
\pi_{ijk}^{ij}:U_{ijk}\longra U_{ij},\quad
\pi_{ijk}^{ik}:U_{ijk}\longra
U_{ik},\quad \pi_{ijk}^{jk}:U_{ijk}\longra U_{jk},\\
\pi_{ijk}^i:U_{ijk}\ra U_i,\;\> \pi_{ijk}^j:U_{ijk}\ra U_j,\;\>
\pi_{ijk}^k:U_{ijk}\ra U_k, \;\> u_{ijk}:U_{ijk}\ra X,
\end{gather*}
so that\/ $\pi_{ijk}^i=\pi_{ij}^i\ci\pi_{ijk}^{ij},$
$u_{ijk}=u_{ij}\ci\pi_{ijk}^{ij}=u_i\ci\pi_{ijk}^i,$ and so on. All
these morphisms $u_i,\pi_{ij}^i,\ldots,u_{ijk}$ are smooth of known
relative dimensions, so $u_i^*[d_i]\cong u_i^![-d_i]$ maps
$\Perv(X)\ra\Perv(U_i)$ by Proposition\/ {\rm\ref{sa4prop1},} and
similarly for $\pi_{ij}^i,\ldots,u_{ijk}$. With this notation:
\smallskip

\noindent{\bf(i)} Suppose $\cP^\bu,\cQ^\bu\in\Perv(X),$ and we are
given $\al_i:u_i^*[d_i](\cP^\bu)\ra u_i^*[d_i](\cQ^\bu)$ in
$\Perv(U_i)$ for all\/ $i\in I$ such that for all\/ $i,j\in I$ we
have
\begin{equation*}
\smash{(\pi_{ij}^i)^*[d_j](\al_i)= (\pi_{ij}^j)^*[d_i](\al_j):
u_{ij}^*[d_i+d_j](\cP^\bu)\longra u_{ij}^*[d_i+d_j](\cQ^\bu).}
\end{equation*}
Then there is a unique $\al:\cP^\bu\ra\cQ^\bu$ with\/
$\al_i=u_i^*[d_i](\al)$ for all\/~$i\in I$.
\smallskip

\noindent{\bf(ii)} Suppose we are given\/ $\cP^\bu_i\in \Perv(U_i)$
for all\/ $i\in I$ and isomorphisms
$\al_{ij}:(\pi_{ij}^i)^*[d_j](\cP^\bu_i)\ra
(\pi_{ij}^j)^*[d_i](\cP^\bu_j)$ in $\Perv(U_{ij})$ for all\/ $i,j\in
I$ with\/ $\al_{ii}=\id$ and\/
\begin{align*}
(\pi_{ijk}^{jk})^*[d_i](\al_{jk})\ci (\pi_{ijk}^{ij})^*&[d_k](\al_{ij})=
(\pi_{ijk}^{ik})^*[d_j](\al_{ik}):(\pi_{ijk}^i)^*[d_j+d_k](\cP_i)\\
&\longra(\pi_{ijk}^k)^*[d_i+d_j](\cP_k)
\end{align*}
in $\Perv(U_{ijk})$ for all\/ $i,j,k\in I$. Then there exists\/
$\cP^\bu$ in $\Perv(X),$ unique up to canonical isomorphism, with
isomorphisms $\be_i:u_i^*(\cP^\bu)\ra\cP^\bu_i$ for each\/ $i\in I,$
satisfying $\al_{ij}\ci(\pi_{ij}^i)^*(\be_i)=(\pi_{ij}^j)^*(\be_j):
u_{ij}^*(\cP^\bu)\ra(\pi_{ij}^j)^*(\cP^\bu_j)$ for all\/~$i,j\in I$.
\label{sa4thm1}
\end{thm}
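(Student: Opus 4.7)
The plan is to reduce descent in the smooth topology to two elementary cases: étale descent, which is essentially built into the definition of perverse sheaves via the étale site, and descent along trivial affine-space projections $\pi:\bA^n\t X\ra X$. This reduction is possible because any smooth morphism of schemes factors Zariski-locally on the source as an étale morphism followed by such a projection, so after passing to a Zariski refinement of the cover $\{u_i:U_i\ra X\}$ one may assume each $u_i$ is of this combined form.

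For the étale direction, constructible complexes form a stack on the étale site of $X$ by construction, and the perverse t-structure is defined by support-dimension conditions preserved under étale pullback (with relative dimension zero, so no shifts enter the cocycle condition); descent for $\Perv$ follows directly. For the projection $\pi:\bA^n\t X\ra X$, the key input is that $\pi^*[n]$ is fully faithful on $\Perv(X)$, with essential image consisting precisely of those perverse sheaves on $\bA^n\t X$ admitting a trivialization of the descent datum along $\bA^{2n}\t X\rra\bA^n\t X$. This is a Künneth-type assertion that follows from the triviality of $A_{\bA^n}[n]\in\Perv(\bA^n)$ and adjunction between $\pi^*$ and $R\pi_*$.

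Part~(i) then follows by applying ${\cal RH}om(\cP^\bu,\cQ^\bu)$ and taking ${}^0{\cal H}$ in the ordinary t-structure to obtain a constructible sheaf on $X$; the claim reduces to descent of sections for constructible sheaves in the smooth topology, which follows from the two base cases. For part~(ii), given descent data $\al_{ij}$ on local perverse sheaves $\cP_i^\bu$ satisfying the shifted cocycle condition, one uses projection-descent on each component to push down to étale-over-$X$ schemes and then invokes étale descent to produce the global $\cP^\bu$. The main obstacle is bookkeeping the shifts by relative dimensions $d_i,d_j,d_k$ appearing in the cocycle condition: one must verify that the construction is independent of the Zariski refinement, and that the resulting $\cP^\bu$ satisfies $u_i^*[d_i](\cP^\bu)\cong\cP_i^\bu$ compatibly with all the $\al_{ij}$. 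This is ultimately a diagram chase, but care is required in tracking degrees across the different branches of the cover.
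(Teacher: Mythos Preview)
The paper does not prove this theorem; it is quoted as a known result, with the citation ``proved in \cite[Th.~3.2.4]{BBD}, see also \cite[\S 2.3]{LaOl1}'' appearing just before the statement. So there is no ``paper's own proof'' to compare against.

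Your sketch is broadly in line with the standard argument in \cite{BBD}. The reduction of smooth descent to \'etale descent plus descent along projections $\bA^n\t X\ra X$ via the local factorisation of smooth morphisms is exactly the right move, and the full faithfulness of $\pi^*[n]$ for such projections is the core computation. A couple of points deserve tightening. First, your characterisation of the essential image of $\pi^*[n]$ as ``those perverse sheaves admitting a trivialization of the descent datum'' is close to circular, since that is effectively the descent statement you want; the honest input is the acyclicity $R\pi_*(A_{\bA^n})\cong A$ concentrated in degree $0$, from which one deduces $R\pi_*\ci\pi^*\cong\id$ and hence full faithfulness, and then one checks by hand that any perverse sheaf on $\bA^n\t X$ equipped with a cocycle over $\bA^{2n}\t X$ is in the image. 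Second, in part (i) you reduce to descent of global sections of a constructible sheaf, but ${\cal RH}om(\cP^\bu,\cQ^\bu)$ is a complex, not a sheaf; you need that $\Hom_{\Perv}(\cP^\bu,\cQ^\bu)$ is computed by $H^0$ of this complex, and that smooth pullback of the complex commutes with taking $H^0$ after the appropriate shift --- this is fine but should be said. Otherwise the outline is sound.
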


\begin{prop} Let\/ $\Phi:U\ra V$ be a scheme morphism smooth of
relative dimension $d$ and\/ $g:V\ra\bA^1$ be regular, and set\/
$f=g\ci\Phi:U\ra\bA^1$. Then
\begin{itemize}
\setlength{\itemsep}{0pt}
\setlength{\parsep}{0pt}
\item[{\bf(a)}] There are natural isomorphisms of functors
$\Perv(V)\ra\Perv(U_0):$
\e
\smash{\Phi_0^*[d]\ci\psi^p_g\cong\psi^p_f\ci\Phi^*[d]\quad\text{and\/}\quad\Phi_0^*[d]\ci\phi^p_g\cong\phi^p_f\ci\Phi^*[d],}
\label{sa4eq1}
\e
where $U_0=f^{-1}(0)\subseteq U,$ $V_0=g^{-1}(0)\subseteq V$
and\/ $\Phi_0=\Phi\vert_{U_0}:U_0\ra V_0$.
\item[{\bf(b)}] Write $X=\Crit(f)$ and\/ $Y=\Crit(g),$ so that
$\Phi\vert_X:X\ra Y$ is smooth of dimension $d$. Then there is a
canonical isomorphism
\e
\smash{\Xi_\Phi:\Phi\vert_X^*[d](\PV_{V,g}^\bu)\,{\buildrel\cong\over
\longra}\,\PV_{U,f}^\bu \quad\text{in\/ $\Perv(X),$}}
\label{sa4eq2}
\e
which identifies $\Phi\vert_X^*[d](\si_{V,g}),\Phi\vert_X^*[d]
(\tau_{V,g})$ with\/ $\si_{U,f},\tau_{U,f}$.
\end{itemize}
\label{sa4prop2}
\end{prop}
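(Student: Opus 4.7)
The plan is to prove part (a) via classical smooth base change for the nearby and vanishing cycle functors, then deduce (b) directly from (a) together with a dimension count. For (a), I would invoke the known compatibility of $\psi^p,\phi^p$ with smooth morphisms: for any $\Phi:U\ra V$ over $\bA^1$ with $f=g\ci\Phi$, there are canonical base-change morphisms $\Phi_0^*\ci\psi^p_g\ra\psi^p_f\ci\Phi^*$ and $\Phi_0^*\ci\phi^p_g\ra\phi^p_f\ci\Phi^*$ which are isomorphisms when $\Phi$ is smooth; this is a standard result (Deligne in SGA~7 Exp.~XIII, or Illusie, or in the perverse-sheaf formulation Massey). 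By Proposition \ref{sa4prop1}, $\Phi^*[d]$ and $\Phi_0^*[d]$ are $t$-exact and preserve perverse sheaves, so shifting by $[d]$ on each side produces the two natural isomorphisms of functors $\Perv(V)\ra\Perv(U_0)$ asserted in~(a).

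For (b), I would first observe that $X=\Crit(f)=\Phi^{-1}(\Crit(g))=\Phi^{-1}(Y)$: the chain rule $\d f=\d g\ci\d\Phi$ with $\d\Phi$ surjective (by smoothness) shows $\d f\vert_x=0$ iff $\d g\vert_{\Phi(x)}=0$, and consequently $X_c=\Phi\vert_X^{-1}(Y_c)$ for every critical value $c$. Then I would expand $\PV_{V,g}^\bu=\bigop_{c\in g(Y)}\phi^p_{g-c}(A_V[\dim V])\vert_{Y_c}$, apply $\Phi\vert_X^*[d]$ termwise, and use the $\phi^p$-isomorphism of (a) applied to $g-c$ together with $\Phi^*[d](A_V[\dim V])\cong A_U[\dim V+d]=A_U[\dim U]$ (since $\Phi$ has relative dimension $d$). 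Restriction to the appropriate critical value loci commutes with smooth pullback in the evident way, so summing over $c\in g(Y)$ assembles these termwise isomorphisms into the required canonical~$\Xi_\Phi$.

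For the compatibility of $\Xi_\Phi$ with the twisted monodromy $\tau$: because $\tau_{V,g}$ is a natural self-transformation of $\phi^p_g$ induced by the monodromy of the associated Milnor fibration, and the base-change isomorphism of (a) is built from functorial operations that commute with this monodromy, naturality immediately yields $\Xi_\Phi\ci\Phi\vert_X^*[d](\tau_{V,g})=\tau_{U,f}\ci\Xi_\Phi$. For Verdier duality: I would combine the canonical isomorphism $\Phi\vert_X^!\cong\Phi\vert_X^*[2d]$ (smoothness of relative dimension $d$) with the induced identification $\bD_X\ci\Phi\vert_X^*[d]\cong\Phi\vert_X^*[d]\ci\bD_Y$, and then use that (a) intertwines Verdier duality of $\phi^p_g$ and $\phi^p_f$ (which in turn reduces to the smooth base change being compatible with the self-duality pairing on vanishing cycles on a smooth ambient scheme constructed in \cite[\S 2.4]{BBDJS}).

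The main technical obstacle, I expect, will be the sign and shift bookkeeping in the Verdier duality compatibility — verifying that $\Xi_\Phi\ci\Phi\vert_X^*[d](\si_{V,g})=\si_{U,f}\ci\Xi_\Phi$ holds on the nose, rather than up to an ambiguous sign or $\pm 1$ scalar. Tracking this rigorously requires pinning down orientation and trace-map choices in both $\Phi^!\cong\Phi^*[2d]$ and the definition of $\si_{V,g},\si_{U,f}$ via self-duality of $\phi^p$ on smooth ambients, and then checking that these choices are compatible with the base-change isomorphism of~(a); this is essentially the only place where a nontrivial calculation beyond functorial nonsense is needed.
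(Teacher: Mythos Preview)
The paper does not prove this proposition: it is stated in \S\ref{sa41} as part of the background on perverse sheaves, alongside Proposition~\ref{sa4prop1} and Theorem~\ref{sa4thm1}, with the implicit reference being \cite[\S 2]{BBDJS} and the classical literature (SGA~7, Dimca, etc.). Your proof sketch is correct and is exactly the standard argument one would give: part (a) is smooth base change for $\psi^p,\phi^p$, and part (b) follows by applying (a) to the constant sheaf $A_V[\dim V]$ termwise over critical values, using $\Phi^*[d](A_V[\dim V])\cong A_U[\dim U]$ and the scheme-theoretic identity $\Crit(f)=\Phi^{-1}(\Crit(g))$ (which holds because $\d f$ is the image of $\Phi^*(\d g)$ under the bundle injection $\Phi^*(T^*V)\hookra T^*U$). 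Your assessment of where the work lies --- the sign/shift bookkeeping in the Verdier duality compatibility --- is also accurate; this is precisely what is tracked carefully in \cite[\S 2.4]{BBDJS} when $\si_{U,f}$ is defined.
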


\subsection{Perverse sheaves on d-critical loci}
\label{sa42}

Here is \cite[Th.~6.9]{BBDJS}, which we will generalize to stacks in
Theorem \ref{sa4thm3} below. We use the notation of \S\ref{sa31} and
\S\ref{sa41} throughout.

\begin{thm} Let\/ $(X,s)$ be an oriented algebraic d-critical locus
over $\C,$ with orientation $K_{X,s}^{1/2}$. Then for any
well-behaved base ring $A,$ such as $\Z,\Q$ or $\C,$ there exists a
perverse sheaf\/ $P_{X,s}^\bu$ in $\Perv(X)$ over $A,$ which is
natural up to canonical isomorphism, and Verdier duality and
monodromy isomorphisms
\begin{equation*}
\smash{\Si_{X,s}:P_{X,s}^\bu\longra \bD_X\bigl(P_{X,s}^\bu\bigr),\qquad
\Tau_{X,s}:P_{X,s}^\bu\longra P_{X,s}^\bu,} 
\end{equation*}
which are characterized by the following properties:
\begin{itemize}
\setlength{\itemsep}{0pt}
\setlength{\parsep}{0pt}
\item[{\bf(i)}] If\/ $(R,U,f,i)$ is a critical chart on
$(X,s),$ there is a natural isomorphism
\begin{equation*}
\smash{\om_{R,U,f,i}:P_{X,s}^\bu\vert_R\longra
i^*\bigl(\PV_{U,f}^\bu\bigr)\ot_{\Z/2\Z} Q_{R,U,f,i},}
\end{equation*}
where $\pi_{R,U,f,i}:Q_{R,U,f,i}\ra R$ is the principal\/
$\Z/2\Z$-bundle parametrizing local isomorphisms
$\al:K_{X,s}^{1/2}\ra i^*(K_U)\vert_{R^\red}$ with\/ $\al\ot\al=
\io_{R,U,f,i},$ for $\io_{R,U,f,i}$ as in\/ \eq{sa3eq4}.
Furthermore the following commute in $\Perv(R)\!:$
\ea
&\begin{gathered} \xymatrix@!0@C=260pt@R=38pt{
*+[r]{P_{X,s}^\bu\vert_R} \ar[d]^(0.5){\Si_{X,s}\vert_R}
\ar[r]_(0.35){\om_{R,U,f,i}} &
*+[l]{i^*\bigl(\PV_{U,f}^\bu\bigr)\ot_{\Z/2\Z} Q_{R,U,f,i}}
\ar[d]_(0.4){i^*(\si_{U,f})\ot\id_{Q_{R,U,f,i}}} \\
*+[r]{\bD_R\bigl(P_{X,s}^\bu\vert_R\bigr)} &
*+[l]{\begin{aligned}[b]
\ts i^*\bigl(\bD_{\Crit(f)}(\PV_{U,f}^\bu)\bigr)\ot_{\Z/2\Z}
Q_{R,U,f,i}&\\
\ts \cong\bD_R\bigl(i^*(\PV_{U,f}^\bu)\ot_{\Z/2\Z}
Q_{R,U,f,i}\bigr)&,\end{aligned}}
\ar[l]_(0.65){\bD_R(\om_{R,U,f,i})}}
\end{gathered}
\label{sa4eq3}\\
&\begin{gathered} \xymatrix@!0@C=260pt@R=33pt{
*+[r]{P_{X,s}^\bu\vert_R} \ar[d]^{\Tau_{X,s}\vert_R}
\ar[r]_(0.35){\om_{R,U,f,i}} &
*+[l]{i^*\bigl(\PV_{U,f}^\bu\bigr)\ot_{\Z/2\Z} Q_{R,U,f,i}}
\ar[d]_{i^*(\tau_{U,f})\ot\id_{Q_{R,U,f,i}}} \\
*+[r]{P_{X,s}^\bu\vert_R} \ar[r]^(0.35){\om_{R,U,f,i}} &
*+[l]{i^*\bigl(\PV_{U,f}^\bu\bigr)\ot_{\Z/2\Z} Q_{R,U,f,i}.} }
\end{gathered}
\label{sa4eq4}
\ea
\item[{\bf(ii)}] If\/ $\Phi:(R,U,f,i)\hookra(S,V,g,j)$ is an
embedding of critical charts on $(X,s),$ there is a
compatibility condition {\rm\cite[Th.~6.9(ii)]{BBDJS}} between
$\om_{R,U,f,i},\om_{S,V,g,j}$ which we will not give.
\end{itemize}

Analogues hold for oriented algebraic d-critical loci $(X,s)$ over
general fields\/ $\K$ in the settings of\/ $l$-adic perverse sheaves
and of\/ $\cD$-modules, and for oriented algebraic d-critical loci
$(X,s)$ over $\C$ in the setting of mixed Hodge modules.
\label{sa4thm2}
\end{thm}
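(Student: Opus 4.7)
\medskip

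\noindent\textbf{Proof plan for Theorem \ref{sa4thm2}.}

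The strategy is to construct $P^\bu_{X,s}$ by gluing local models on critical charts, using the stack property of perverse sheaves in the Zariski topology (a special case of Theorem \ref{sa4thm1}). For each critical chart $(R,U,f,i)$ on $(X,s)$, define the local perverse sheaf
\begin{equation*}
P^\bu_{R,U,f,i}:=i^*\bigl(\PV^\bu_{U,f}\bigr)\ot_{\Z/2\Z}Q_{R,U,f,i},
\end{equation*}
equipped with the local Verdier duality $i^*(\si_{U,f})\ot\id$ and local monodromy $i^*(\tau_{U,f})\ot\id$, where $Q_{R,U,f,i}$ is the $\Z/2\Z$-bundle described in part~(i). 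The statement $\om_{R,U,f,i}$ and compatibility squares \eq{sa4eq3}--\eq{sa4eq4} then hold tautologically on this chart. The whole content of the theorem is that these local models glue canonically along overlaps.

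The key local step is to compare the local models under an embedding of critical charts. Suppose $\Phi:(R,U,f,i)\hookra(S,V,g,j)$ is an embedding, with $R\subseteq S$. I would invoke the \emph{stabilization lemma}: after Zariski localizing on $V$, there is a splitting $V\simeq U\t W$ near $\Phi(U)$ under which $g$ corresponds to $f\boxplus q$ for a nondegenerate quadratic form $q$ on a vector bundle $W\ra U$ with $q^{-1}(0)=U$. Thom--Sebastiani for vanishing cycles, together with the computation of $\PV^\bu_{W,q}$ on the zero section as a rank-one local system whose sign-ambiguity is exactly an orientation of $W$, produces a canonical isomorphism
\begin{equation*}
\Th_\Phi:P^\bu_{R,U,f,i}\,{\buildrel\cong\over\longra}\, P^\bu_{S,V,g,j}\vert_R
\end{equation*}
intertwining the local Verdier and monodromy data. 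The $\Z/2\Z$-twist $Q_{R,U,f,i}\ot Q_{S,V,g,j}^{-1}$ is canonically trivialized by the normal bundle orientation because of how $K_U^{\ot 2}$ and $j^*(K_V)^{\ot 2}\vert_R$ compare, and the chosen orientation $K_{X,s}^{1/2}$ is what kills the remaining sign ambiguity. I would record this as the core proposition; it is essentially \cite[Prop.~6.5--6.8]{BBDJS} in the scheme case.

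Given $\Th_\Phi$, I glue as follows. For any two critical charts $(R,U,f,i),(S,V,g,j)$ and any $x\in R\cap S$, Theorem \ref{sa3thm2} produces subcharts containing $x$ that both embed into a common third chart $(T,W,h,k)$. Two applications of $\Th_\Phi$ then give a canonical isomorphism between the restrictions of the two local models to a neighbourhood of $x$. The cocycle condition on triple overlaps is verified by passing to a fourth chart dominating all three, again by Theorem \ref{sa3thm2}, and using functoriality of $\Th_\Phi$ under composition of embeddings: $\Th_{\Psi\ci\Phi}=\Th_\Psi\vert_R\ci\Th_\Phi$. Together with the fact that $\Th_{\id}=\id$, this yields descent data on any open cover by critical charts. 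By the sheaf (or rather stack) property of perverse sheaves in the Zariski topology, there is a unique $P^\bu_{X,s}\in\Perv(X)$ with isomorphisms $\om_{R,U,f,i}:P^\bu_{X,s}\vert_R\ra P^\bu_{R,U,f,i}$ intertwining the gluing data. Verdier duality $\Si_{X,s}$ and monodromy $\Tau_{X,s}$ descend because the local maps $i^*(\si_{U,f})\ot\id$ and $i^*(\tau_{U,f})\ot\id$ are intertwined by $\Th_\Phi$, which makes \eq{sa4eq3}--\eq{sa4eq4} automatic.

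The hard part is the stabilization lemma underlying $\Th_\Phi$, which requires a careful choice of normal form near $\Phi(U)\subseteq V$ and a delicate sign analysis to pin down the $\Z/2\Z$-twist canonically; in particular, one must identify the sign ambiguity in the Thom--Sebastiani isomorphism with the square-root data encoded in $K_{X,s}^{1/2}$ via the relation $K_V\vert_U\cong K_U\ot\det W^*$ coming from the normal bundle sequence. Once this is done, the cocycle and descent steps are formal. Finally, the extensions to $l$-adic perverse sheaves, $\cD$-modules and mixed Hodge modules follow the same plan, since Thom--Sebastiani and the stack property of the relevant $t$-structure hold in all these settings; only the coefficient ring and the meaning of $\PV^\bu_{U,f},\si_{U,f},\tau_{U,f}$ change.
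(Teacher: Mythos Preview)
This theorem is not proved in the present paper: it is quoted verbatim as \cite[Th.~6.9]{BBDJS}, and the paper only uses it as input for the stack generalization in Theorem~\ref{sa4thm3}. Your outline is a faithful high-level sketch of the argument actually given in \cite{BBDJS}---local models on critical charts, the stabilization/Thom--Sebastiani comparison $\Th_\Phi$ for embeddings (with the $\Z/2\Z$-twist absorbed by the orientation), Theorem~\ref{sa3thm2} to produce common roofs on overlaps, and Zariski descent of perverse sheaves---so there is nothing to correct or compare against here.
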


We prove a proposition on the behaviour of the perverse sheaves
$P_{X,s}^\bu$ of Theorem \ref{sa4thm2} under smooth pullback, which
will be the main ingredient in the proof of our main result
Theorem~\ref{sa4thm3}.

\begin{prop}{\bf(a)} Let\/ $\phi:(X,s)\ra (Y,t)$ be a morphism of
algebraic d-critical loci over $\C,$ in the sense of\/
{\rm\S\ref{sa31},} and suppose $\phi:X\ra Y$ is smooth of relative
dimension $d$. Let\/ $K_{Y,t}^{1/2}$ be an orientation for $(Y,t),$
so that Corollary\/ {\rm\ref{sa3cor1}} defines an induced
orientation\/ $K_{X,s}^{1/2}$ for $(X,s)$. Theorem\/
{\rm\ref{sa4thm2}} defines perverse sheaves
$P_{X,s}^\bu,P_{Y,t}^\bu$ on $X,Y$. Then there is a natural
isomorphism
\e
\smash{\De_\phi:\phi^*[d](P_{Y,t}^\bu)
\,{\buildrel\cong\over\longra}\,P_{X,s}^\bu \quad\text{in
$\Perv(X)$}}
\label{sa4eq5}
\e
which is characterized by the property that if\/ $(R,U,f,i),
(S,V,g,j)$ are critical charts on $(X,s),(Y,t)$ with\/
$\phi(R)\subseteq S$ and\/ $\Phi:U\ra V$ is smooth of relative
dimension $d$ with\/ $f=g\ci\Phi$ and\/ $\Phi\ci i=j\ci\phi,$ then
the following commutes
\e
\begin{gathered} \xymatrix@!0@C=285pt@R=37pt{
*+[r]{\phi\vert_R^*[d](P_{Y,t}^\bu)} \ar[d]^{\De_\phi\vert_R}
\ar[r]_(0.28){\phi\vert_R^*[d](\om_{S,V,g,j})} &
*+[l]{\phi\vert_R^*[d]\bigl(
j^*\bigl(\PV_{V,g}^\bu\bigr)\!\ot_{\Z/2\Z}\!Q_{S,V,g,j}\bigr)}
\ar[d]_{i^*(\Xi_\Phi)\ot \al_\Phi}
\\
*+[r]{P_{X,s}^\bu\vert_R}
\ar[r]^(0.28){\om_{R,U,f,i}} &
*+[l]{i^*\bigl(\PV_{U,f}^\bu\bigr)\ot_{\Z/2\Z} Q_{R,U,f,i},} }
\end{gathered}
\label{sa4eq6}
\e
where $\Xi_\Phi$ is as in \eq{sa4eq2} and\/
$\al_\Phi:\phi\vert_R^*[d](Q_{S,V,g,j})\ra  Q_{R,U,f,i}$ is the
natural isomorphism. Also\/ $\De_\phi$ identifies\/
$\phi^*[d](\Si_{Y,t}),\phi^*[d](\Tau_{Y,t})$
with\/~$\Si_{X,s},\Tau_{X,s}$.
\medskip

\noindent{\bf(b)} If\/ $\psi:(Y,t)\ra (Z,u)$ is another morphism of
algebraic d-critical loci over $\C$ smooth of relative dimension
$e,$ then
\e
\smash{\De_{\psi\ci\phi}=\De_\phi\ci
\phi^*[d](\De_\psi):(\psi\ci\phi)^*[d+e](P_{Z,u}^\bu)
\,{\buildrel\cong\over\longra}\,P_{X,s}^\bu.}
\label{sa4eq7}
\e

\noindent{\bf(c)} Analogues of\/ {\bf(a)\rm,\bf(b)} hold for
algebraic d-critical loci $(X,s)$ over general fields\/ $\K$ in the
settings of\/ $l$-adic perverse sheaves and of\/ $\cD$-modules, and
for algebraic d-critical loci $(X,s)$ over $\C$ in the setting of
mixed Hodge modules.
\label{sa4prop3}
\end{prop}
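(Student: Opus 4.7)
The plan is to construct $\De_\phi$ locally via a forced definition from \eqref{sa4eq6}, verify that this definition is independent of the chart data, and glue the local pieces using the stack property Theorem \ref{sa4thm1}. Part (b) and the compatibility with $\Si,\Tau$ then reduce to local diagram chases, and part (c) is the same argument in each of the other settings.

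First I would produce compatible chart data near any $x\in X$. Starting with any critical chart $(S,V,g,j)$ on $(Y,t)$ near $\phi(x),$ since $\phi\vert_{\phi^{-1}(S)}:\phi^{-1}(S)\ra S$ is smooth of relative dimension $d,$ I would locally produce a critical chart $(R,U,f,i)$ on $(X,s)$ sitting compatibly over $(S,V,g,j)$ via a smooth morphism $\Phi:U\ra V$ of relative dimension $d$ with $f=g\ci\Phi$ and $\Phi\ci i=j\ci\phi\vert_R$. \'Etale-locally near $x,$ the smooth morphism $\phi$ looks like a projection, so the model $U=V\t\bA^d,$ $\Phi=\pi_V,$ $f=g\ci\pi_V$ works; one then checks $\Crit(f)\cong R$ and that $\io_{R,U,f,i}(s\vert_R)=i^{-1}(f)+I_{R,U}^2$ is the pullback of the analogous relation on $S$ via Proposition \ref{sa3prop1}. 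After shrinking, this yields Zariski-local compatible data near $x$.

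Given compatible chart data, define the local map
\[
\De_\phi^{\mathrm{loc}}\vert_R:=\om_{R,U,f,i}^{-1}\ci\bigl(i^*(\Xi_\Phi)\ot\al_\Phi\bigr)\ci\phi\vert_R^*[d](\om_{S,V,g,j}),
\]
where $\Xi_\Phi$ is the vanishing-cycle pullback isomorphism of Proposition \ref{sa4prop2}(b) and $\al_\Phi:\phi\vert_R^*[d](Q_{S,V,g,j})\ra Q_{R,U,f,i}$ is the tautological identification of $\Z/2$-bundles forced by the orientation compatibility $K_{X,s}^{1/2}=\phi\vert_{X^\red}^*(K_{Y,t}^{1/2})\ot(\La^{\rm top}T^*_{X/Y})\vert_{X^\red}$ of Corollary \ref{sa3cor1}. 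This definition makes \eqref{sa4eq6} commute by construction. To show independence, I would take two such choices near $x,$ apply Theorem \ref{sa3thm2} separately to $(X,s)$ and $(Y,t)$ to obtain a common refinement of the charts on both sides together with a compatible refinement of the $\Phi$'s, and invoke the embedding compatibility of Theorem \ref{sa4thm2}(ii) on each side to conclude both local maps agree after restriction. Theorem \ref{sa4thm1}(i) then glues these to a unique global isomorphism $\De_\phi:\phi^*[d](P_{Y,t}^\bu)\ra P_{X,s}^\bu$ in $\Perv(X)$.

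Compatibility with Verdier duality and monodromy is checked locally by applying $\phi\vert_R^*[d]$ to \eqref{sa4eq3}--\eqref{sa4eq4} for $(S,V,g,j),$ comparing with \eqref{sa4eq3}--\eqref{sa4eq4} for $(R,U,f,i),$ and using that $\Xi_\Phi$ intertwines $\si_{V,g},\tau_{V,g}$ with $\si_{U,f},\tau_{U,f}$ by Proposition \ref{sa4prop2}(b). Part (b) is verified by choosing triples of compatible chart data $(R,U,f,i),(S,V,g,j),(T,W,h,k)$ with smooth $\Phi:U\ra V,$ $\Psi:V\ra W,$ and using the composition $\Xi_{\Psi\ci\Phi}\cong \Xi_\Phi\ci\Phi\vert_X^*[d](\Xi_\Psi)$ (a cocycle inherited from smooth base change for the vanishing-cycle functor) together with the evident cocycle for the $\al_{(-)}$. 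Part (c) follows by identical arguments in the $l$-adic, $\cD$-module and mixed Hodge module settings, all of which share the necessary smooth-pullback/gluing formalism. The principal obstacle is the chart construction and its coherence: producing one compatible triple $(R,U,f,i),(S,V,g,j),\Phi$ is elementary \'etale-locally but requires care to arrange Zariski-locally, and the comparison of two such choices through a common refinement involves a somewhat intricate diagram chase matching the local $\De_\phi^{\mathrm{loc}}$'s on overlaps before the stack property of Theorem \ref{sa4thm1} can be applied.
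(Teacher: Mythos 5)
Your proposal follows essentially the same route as the paper's proof: construct compatible triples $(R,U,f,i),(S,V,g,j),\Phi$ with $\Phi$ smooth of relative dimension $d$ near each point via the proof of Proposition~\ref{sa3prop1} (the paper cites the proof of [Joyc2, Prop.~2.8], which amounts to your \'etale-local product model $U=V\times\bA^d$), define the local isomorphisms so as to force~\eqref{sa4eq6}, verify agreement on overlaps using Theorem~\ref{sa3thm2} and the embedding compatibility of Theorem~\ref{sa4thm2}(ii) on both sides together with a compatible $\Phi_{ab}$, and glue by Theorem~\ref{sa4thm1}(i). Your treatment of the $\Si,\Tau$ compatibility, the cocycle $\Xi_{\Psi\ci\Phi}\cong\Xi_\Phi\ci\Phi^*[d](\Xi_\Psi)$ in part~(b), and part~(c) all match the paper's argument.
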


\begin{proof} Let $\phi:(X,s)\ra (Y,t)$, $d,K_{Y,t}^{1/2},
K_{X,s}^{1/2},P_{X,s}^\bu,P_{Y,t}^\bu$ be as in (a). If $x\in X$
with $\phi(x)=y\in Y$ then the proof of \cite[Prop.~2.8]{Joyc2} shows
that we may choose critical charts $(R,U,f,i), (S,V,g,j)$ on
$(X,s),(Y,t)$ with $x\in R$, $y\in\phi(R)\subseteq S$ of minimal
dimensions $\dim U=\dim T_xX$, $\dim V=\dim T_yY$, and $\Phi:U\ra V$
smooth of relative dimension $d$ with $f=g\ci\Phi$ and~$\Phi\ci
i=j\ci\phi$.

Choose such data $(R_a,U_a,f_a,i_a),(S_a,V_a,g_a,j_a),\Phi_a$ for
$a\in A$, an indexing set, such that $\{R_a:a\in A\}$ is an open
cover for $X$. For each $a\in A$, define an isomorphism
$\De_a:\phi\vert_{R_a}^*[d](P_{Y,t}^\bu)\ra P_{X,s}^\bu\vert_{R_a}$
to make the following diagram of isomorphisms commute, the analogue
of~\eq{sa4eq6}:
\e
\begin{gathered} \xymatrix@!0@C=285pt@R=35pt{
*+[r]{\phi\vert_{R_a}^*[d](P_{Y,t}^\bu)} \ar[d]^{\De_a}
\ar[r]_(0.28){\raisebox{-11pt}{$\st\phi\vert_{R_a}^*[d]
(\om_{S_a,V_a,g_a,j_a})$}} &
*+[l]{\phi\vert_{R_a}^*[d]\bigl(
j_a^*\bigl(\PV_{V_a,g_a}^\bu\bigr)\!\ot_{\Z/2\Z}\!
Q_{S_a,V_a,g_a,j_a}\bigr)}
\ar[d]_{i_a^*(\Xi_{\Phi_a})\ot \al_{\Phi_a}} \\
*+[r]{P_{X,s}^\bu\vert_{R_a}} \ar[r]^(0.28){\om_{R_a,U_a,f_a,i_a}} &
*+[l]{i_a^*\bigl(\PV_{U_a,f_a}^\bu\bigr)\ot_{\Z/2\Z}
Q_{R_a,U_a,f_a,i_a}.} }
\end{gathered}
\label{sa4eq8}
\e
Combining the last part of Proposition \ref{sa4prop2}(b) with
\eq{sa4eq3}--\eq{sa4eq4} shows that this $\De_a$ identifies
$\phi^*[d](\Si_{Y,t})\vert_{R_a},\phi^*[d](\Tau_{Y,t})\vert_{R_a}$
with~$\Si_{X,s}\vert_{R_a},\Tau_{X,s}\vert_{R_a}$.

We claim that for all $a,b\in A$ we have $\De_a\vert_{R_a\cap R_b}=
\De_b\vert_{R_a\cap R_b}$. To prove this, let $x\in R_a\cap R_b$,
with $y=f(x)\in S_a\cap S_b$. By Theorem \ref{sa3thm2} we can
choose subcharts $(R_a',U_a',f_a',i_a')\subseteq(R_a,U_a,f_a,i_a)$,
$(R_b',U_b',f_b',i_b')\subseteq(R_b,U_b,f_b,i_b)$,
$(S_a',V_a',g_a',j_a')\subseteq(S_a,V_a,g_a,j_a)$,
$(S_b',V_b',g_b',j_b')\subseteq(S_b,V_b,g_b,j_b)$ with $x\in
R_a'\cap R_b'$, $y\in S_a'\cap S_b'$, critical charts
$(R_{ab},U_{ab},f_{ab},i_{ab}),(S_{ab},V_{ab},g_{ab},j_{ab})$ on
$(X,s)$, $(Y,t)$, and embeddings
$\Psi_a:(R_a',U_a',f_a',i_a')\hookra(R_{ab},U_{ab},f_{ab},i_{ab})$,
$\Psi_b:(R_b',\ab U_b',\ab f_b',\ab i_b')\hookra(R_{ab},\ab
U_{ab},\ab f_{ab},\ab i_{ab})$, $\Om_a:(S_a',V_a',g_a',j_a')\hookra
(S_{ab},V_{ab},g_{ab},j_{ab})$, and~$\Om_b:(S_b',V_b',g_b',j_b')
\hookra(S_{ab},V_{ab},g_{ab},j_{ab})$.

By combining the proofs of Proposition \ref{sa3prop1} and Theorem
\ref{sa3thm2} in \cite{Joyc2}, we can show that we can choose this
data such that $\Phi_a(U_a')\subseteq V_a'$, $\Phi_b(U_b')\subseteq
V_b'$, and with a morphism $\Phi_{ab}:U_{ab}\ra V_{ab}$ smooth of
relative dimension $d$ such that
\begin{align*}
f_{ab}&=g_{ab}\ci\Phi_{ab}, & \Phi_{ab}\ci i_{ab}&=j_{ab}\ci\phi_{ab}, \\
\Phi_{ab}\ci\Psi_a&=\Om_a\ci\Phi_a\vert_{U_a'}, &
\Phi_{ab}\ci\Psi_b&=\Om_b\ci\Phi_a\vert_{U_a'}.
\end{align*}
As for \eq{sa4eq8} we have a commutative diagram
\e
\begin{gathered} \xymatrix@!0@C=285pt@R=38pt{
*+[r]{\phi\vert_{R_{ab}}^*[d](P_{Y,t}^\bu)} \ar[d]^{\De_{ab}}
\ar[r]_(0.28){\raisebox{-11pt}{$\st\phi\vert_{R_{ab}}^*[d]
(\om_{S_{ab},V_{ab},g_{ab},j_{ab}})$}} &
*+[l]{\phi\vert_{R_{ab}}^*[d]\bigl(j_{ab}^*\bigl(\PV_{V_{ab},
g_{ab}}^\bu\bigr)\!\ot_{\Z/2\Z}\!
Q_{S_{ab},V_{ab},g_{ab},j_{ab}}\bigr)}
\ar[d]_{i_{ab}^*(\Xi_{\Phi_{ab}})\ot \al_{\Phi_{ab}}} \\
*+[r]{P_{X,s}^\bu\vert_{R_{ab}}}
\ar[r]^(0.28){\om_{R_{ab},U_{ab},f_{ab},i_{ab}}} &
*+[l]{i_{ab}^*\bigl(\PV_{U_{ab},f_{ab}}^\bu\bigr)\ot_{\Z/2\Z}
Q_{R_{ab},U_{ab},f_{ab},i_{ab}}.} }
\end{gathered}
\label{sa4eq9}
\e

Using \cite[Th.~6.9(ii)]{BBDJS} for the embeddings $\Psi_a,\Om_a$
gives commutative diagrams
\ea
\begin{gathered}
\xymatrix@C=140pt@R=18pt{ *+[r]{P_{X,s}^\bu\vert_{R_a'}}
\ar[r]_(0.34){\om_{R_a',U_a',f_a',i_a'}}
\ar[d]^(0.4){\om_{R_{ab},U_{ab},f_{ab},i_{ab}}\vert_{R_a'}} &
*+[l]{i_a^{\prime *}\bigl(\PV_{U_a',f_a'}^\bu\bigr)\ot_{\Z/2\Z}
Q_{R_a',U_a',f_a',i_a'}}
\ar[d]_(0.4){i_a^{\prime *}(\Th_{\Psi_a})\ot\id} \\
*+[r]{\begin{subarray}{l}\ts i_{ab}^*\bigl(\PV_{U_{ab},f_{ab}}^\bu\bigr)
\vert_{R_a'}\\ \ts \ot_{\Z/2\Z}Q_{R_{ab},U_{ab},f_{ab},i_{ab}}
\vert_{R_a'}\end{subarray}} \ar[r]^(0.38){\id\ot\La_{\Psi_a}} &
*+[l]{\begin{subarray}{l}\ts
i_a^{\prime *}\bigl(\Psi_a^*(\PV_{U_{ab},f_{ab}}^\bu)
\ot_{\Z/2\Z}P_{\Psi_a}\bigr)\\
\ts\ot_{\Z/2\Z} Q_{R_a',U_a',f_a',i_a'},\end{subarray}} }
\end{gathered}
\label{sa4eq10}\\
\begin{gathered}
\xymatrix@C=140pt@R=18pt{ *+[r]{P_{Y,t}^\bu\vert_{S_a'}}
\ar[r]_(0.34){\om_{S_a',V_a',g_a',i_a'}}
\ar[d]^(0.4){\om_{S_{ab},V_{ab},g_{ab},j_{ab}}\vert_{S_a'}} &
*+[l]{j_a^{\prime *}\bigl(\PV_{V_a',g_a'}^\bu\bigr)\ot_{\Z/2\Z}
Q_{S_a',V_a',g_a',j_a'}}
\ar[d]_(0.4){j_a^*(\Th_{\Om_a})\ot\id} \\
*+[r]{\begin{subarray}{l}\ts j_{ab}^*\bigl(\PV_{V_{ab},g_{ab}}^\bu\bigr)
\vert_{S_a'}\\ \ts \ot_{\Z/2\Z}Q_{S_{ab},V_{ab},g_{ab},j_{ab}}
\vert_{S_a'}\end{subarray}} \ar[r]^(0.38){\id\ot\La_{\Om_a}} &
*+[l]{\begin{subarray}{l}\ts
j_a^{\prime *}\bigl(\Om_a^*(\PV_{V_{ab},g_{ab}}^\bu)
\ot_{\Z/2\Z}P_{\Om_a}\bigr)\\
\ts\ot_{\Z/2\Z} Q_{S_a',V_a',g_a',j_a'}.\end{subarray}} }
\end{gathered}
\label{sa4eq11}
\ea
Here $P_{\Psi_a},P_{\Om_a}$ are principal $\Z/2\Z$-bundles on
$R_a',S_a'$ from \cite[Def.~5.2]{BBDJS}, and
$\Th_{\Psi_a},\Th_{\Om_a}$ are isomorphisms of perverse sheaves from
\cite[Th.~5.4(a)]{BBDJS}, and $\La_{\Psi_a},\ab\La_{\Om_a}$ are
isomorphisms of principal $\Z/2\Z$-bundles
from~\cite[Th.~6.9(ii)]{BBDJS}.

From the definitions of $P_{\Psi_a},P_{\Om_a},\Th_{\Psi_a},
\Th_{\Om_a},\La_{\Psi_a},\La_{\Om_a}$ one can show that there is a
natural isomorphism $\be_a:\Phi_a^*[d](P_{\Om_a})\ra P_{\Psi_a}$
such that the following commute:
\ea
\begin{gathered} \xymatrix@!0@C=280pt@R=38pt{
*+[r]{\Phi_a^*[d](\PV_{V_a',g_a'}^\bu)}
 \ar[d]^{\Xi_{\Phi_a}}
\ar[r]_(0.32){\Phi_a^*[d](\Th_{\Om_a})} &
*+[l]{\begin{subarray}{l}\ts \Phi_a^*[d]\bigl(\Om_a^*
(\PV_{V_{ab},g_{ab}}^\bu)\ot_{\Z/2\Z}P_{\Om_a}\bigr)=\\
\ts\Psi_a^*\!\ci\!\Phi_{ab}^*[d](\PV_{V_{ab},g_{ab}}^\bu)
\!\ot_{\Z/2\Z}\!\Phi_a^*[d](P_{\Om_a})\end{subarray}}
\ar[d]_(0.6){\Psi_a^*(\Xi_{\Phi_{ab}})\ot \be_a} \\
*+[r]{\PV_{U_a',f_a'}^\bu} \ar[r]^(0.28){\Th_{\Psi_a}} &
*+[l]{\Psi_a^*(\PV_{U_{ab},f_{ab}}^\bu)
\ot_{\Z/2\Z}P_{\Psi_a},} }
\end{gathered}
\label{sa4eq12}\\
\begin{gathered}
\xymatrix@!0@C=285pt@R=38pt{
*+[r]{\phi\vert_{R_{ab}}^*[d](Q_{S_{ab},V_{ab},g_{ab},j_{ab}})}
\ar[r]_(0.38){\raisebox{-11pt}{$\st\phi\vert_{R_{ab}}^*[d](\La_{\Om_a})$}}
\ar[d]^(0.4){\al_{\Phi_{ab}}} &
*+[l]{\phi\vert_{R_{ab}}^*[d](j_a^{\prime *}(P_{\Om_a})\ot_{\Z/2\Z}
Q_{S_a',V_a',g_a',j_a'})}
\ar[d]_(0.4){i_a^{\prime *}(\be_a)\ot\al_{\Phi_a}} \\
*+[r]{Q_{R_{ab},U_{ab},f_{ab},i_{ab}}
\vert_{R_a'}} \ar[r]^(0.38){\La_{\Psi_a}} &
*+[l]{i_a^{\prime *}(P_{\Psi_a})\ot_{\Z/2\Z} Q_{R_a',U_a',f_a',i_a'}.} }
\end{gathered}
\label{sa4eq13}
\ea
Combining \eq{sa4eq8}--\eq{sa4eq13} we see that
$\De_a\vert_{R_a'}=\De_{ab}\vert_{R_a'}$. Similarly
$\De_b\vert_{R_b'}=\De_{ab}\vert_{R_b'}$, so $\De_a\vert_{R_a'\cap
R_b'}=\De_b\vert_{R_a'\cap R_b'}$, where $R_a'\cap R_b'$ is an open
neighbourhood of $x$ in $R_a\cap R_b$. As we can cover $R_a\cap R_b$
by such open $R_a'\cap R_b'$, and (iso)morphisms of perverse sheaves
form a sheaf, it follows that~$\De_a\vert_{R_a\cap R_b}=
\De_b\vert_{R_a\cap R_b}$.

By the Zariski topology version of Theorem \ref{sa4thm1}(i), there
exists a unique isomorphism $\De_\phi$ in \eq{sa4eq5} such that
$\De_\phi\vert_{R_a}=\De_a$ for all $a\in A$. As each $\De_a$
identifies $\phi^*[d](\Si_{Y,t})\vert_{R_a},\phi^*[d](\Tau_{Y,t})
\vert_{R_a}$ with $\Si_{X,s}\vert_{R_a},\Tau_{X,s}\vert_{R_a}$ from
above, $\De_\phi$ identifies $\phi^*[d](\Si_{Y,t}),
\phi^*[d](\Tau_{Y,t})$ with $\Si_{X,s},\Tau_{X,s}$. By our usual
argument involving taking disjoint union of two open covers, we see
that $\De_\phi$ is independent of the choice of data $A$ and
$(R_a,U_a,f_a,i_a),(S_a,V_a,g_a,j_a),\Phi_a$ for $a\in A$. Let
$(R,U,f,i),(S,V,g,j),\Phi$ be as in (a). By defining $\De_\phi$
using data $A,(R_a,U_a,f_a,i_a), (S_a,V_a,g_a,j_a),\Phi_a$ with
$(R,U,f,i),(S,V,g,j),\Phi$ equal to $(R_a,\ab U_a,\ab f_a,\ab i_a),
(S_a,V_a,g_a,j_a),\Phi_a$ for some $a\in A$, we see that part (a)
holds.

For (b), let $x\in X$ with $y=\phi(x)\in Y$ and $z=\psi(z)\in Z$.
The proof of Proposition \ref{sa3prop1} in \cite{Joyc2} shows we may
choose critical charts $(R,U,f,i),(S,V,g,j),\ab(T,W,h,k)$ on
$(X,s),(Y,t),(Z,u)$ with $x\in R$, $y\in\phi(R)\subseteq S$,
$z\in\psi(S)\subseteq T$ of minimal dimensions $\dim U=\dim T_xX$,
$\dim V=\dim T_yY$, $\dim W=\dim T_zZ$, and $\Phi:U\ra V$,
$\Psi:V\ra W$ smooth of relative dimensions $d,e$ with $f=g\ci\Phi$,
$g=h\ci\Psi$ and $\Phi\ci i=j\ci\phi$, $\Psi\ci j=k\ci\psi$.
Consider the diagram of isomorphisms:
\begin{equation*}
\xymatrix@!0@C=251pt@R=40pt{
*+[r]{\begin{subarray}{l}\ts (\psi\ci\phi)\vert_R^*[d+e]\\
\ts (P_{Z,u}^\bu)\end{subarray}}
\ar[d]^(0.55){\phi\vert_R^*(\De_\psi)}
\ar@/_.5pc/@<-1ex>[dd]_{\De_{\psi\ci\phi}\vert_R}
\ar[r]_(0.35){\raisebox{-0pt}{$\st(\psi\ci\phi)
\vert_R^*[d+e](\om_{T,W,h,k})$}} &
*+[l]{\begin{subarray}{l}\ts (\psi\ci\phi)\vert_R^*[d+e]\\
\ts(k^*(\PV_{W,h}^\bu)\ot_{\Z/2\Z}Q_{T,W,h,k})
\end{subarray}}
\ar[d]_(0.55){\phi\vert_R^*(j^*(\Xi_\Psi)\ot \al_\Psi)}
\ar@/^.5pc/@<1ex>[dd]^{\begin{subarray}{l}\st
i^*(\Xi_{\Psi\ci\Phi})\\
\st \ot \al_{\Psi\ci\Phi}\end{subarray}}
\\
*+[r]{\phi\vert_R^*[d](P_{Y,t}^\bu)} \ar[d]^{\De_\phi\vert_R}
\ar[r]_(0.32){\phi\vert_R^*[d](\om_{S,V,g,j})} &
*+[l]{\phi\vert_R^*[d](
j^*(\PV_{V,g}^\bu)\!\ot_{\Z/2\Z}\!Q_{S,V,g,j})}
\ar[d]_{i^*(\Xi_\Phi)\ot \al_\Phi}
\\
*+[r]{P_{X,s}^\bu\vert_R}
\ar[r]^(0.35){\om_{R,U,f,i}} &
*+[l]{i^*(\PV_{U,f}^\bu)\ot_{\Z/2\Z} Q_{R,U,f,i}.} }
\end{equation*}
The two inner and the outer rectangles commute by \eq{sa4eq6}. Also
$\al_{\Psi\ci\Phi}=\al_\Phi\ci\phi\vert_R^*(\al_\Psi)$ is immediate
and $\Xi_{\Psi\ci\Phi}=\Xi_\Phi\ci\Phi\vert_{\Crit(f)}^*[d]
(\Xi_\Psi)$ follows from the definition of $\Xi_\Phi$ in Proposition
\ref{sa4prop2}(b), so the right hand semicircle commutes. Therefore
the left hand semicircle commutes. This proves the restriction of
\eq{sa4eq7} to $R\subseteq X$. As we can cover $X$ by such open $R$,
equation \eq{sa4eq7} follows.

For part (c), all the facts we have used about perverse sheaves on
$\C$-schemes above also hold in the other settings of $l$-adic
perverse sheaves on $\K$-schemes, $\cD$-modules, and mixed Hodge
modules. This completes the proof.
\end{proof}

\subsection{Perverse sheaves on Artin stacks}
\label{sa43}

We first note that because of Proposition \ref{sa4prop1} and Theorem
\ref{sa4thm1}, any of the theories of perverse sheaves on
$\C$-schemes or $\K$-schemes mentioned in \S\ref{sa41} can be
extended to Artin $\C$-stacks or Artin $\K$-stacks $X$ in a na\"\i ve
way, using the philosophy discussed in \S\ref{sa32} and \cite[\S
2.7]{Joyc2} of defining sheaves on $X$ in terms of sheaves on
schemes $T$ for smooth $t:T\ra X$, in particular
Proposition~\ref{sa3prop3}:

\begin{dfn} Fix one of the theories of perverse sheaves on
$\K$-schemes discussed in \S\ref{sa41}, over an allowed base ring
$A$, where we include the special case $\K=\C$ and $A$ is general as
in Dimca \cite{Dimc}. Let $X$ be an Artin $\K$-stack, always assumed
locally of finite type. We will explain how to define an abelian
category $\Perv_\nai(X)$ of {\it na\"\i ve perverse sheaves\/}
on~$X$:
\smallskip

\noindent(A) Define an object $\cP$ of $\Perv_\nai(X)$ to assign
\begin{itemize}
\setlength{\itemsep}{0pt}
\setlength{\parsep}{0pt}
\item[(a)] For each $\K$-scheme $T$ and smooth 1-morphism
$t:T\ra X$, a perverse sheaf $\cP(T,t)\in\Perv(T)$ on $T$ in our
chosen $\K$-scheme perverse sheaf theory.
\item[(b)] For each 2-commutative diagram in $\Art_\K$:
\e
\begin{gathered}
\xymatrix@C=50pt@R=1pt{ & U \ar[ddr]^u \\
\rrtwocell_{}\omit^{}\omit{^{\eta}} && \\
T  \ar[uur]^{\phi} \ar[rr]_t && X, }
\end{gathered}
\label{sa4eq14}
\e
where $T,U$ are $\K$-schemes and $\phi,t,u$ are smooth with
$\phi$ of dimension $d$, an isomorphism
$\cP(\phi,\eta):\phi^*[d](\cP(U,u))\ra\cP(T,t)$ in~$\Perv(T)$.
\end{itemize}
This data must satisfy the following condition:
\begin{itemize}
\setlength{\itemsep}{0pt}
\setlength{\parsep}{0pt}
\item[(i)] For each 2-commutative diagram in $\Art_\K\!:$
\begin{equation*}
\xymatrix@C=70pt@R=1pt{ & V \ar[ddr]^v \\
\rrtwocell_{}\omit^{}\omit{^{\ze}} && \\
U  \ar[uur]^{\psi} \ar[rr]_(0.3)u && X, \\
\urrtwocell_{}\omit^{}\omit{^{\eta}} && \\
T \ar[uu]_{\phi} \ar@/_/[uurr]_t }
\end{equation*}
with $T,U,V$ $\K$-schemes and $\phi,\psi,t,u,v$ smooth with
$\phi,\psi$ of dimensions $d,e$, we must have
\begin{align*}
\cP\bigl(\psi\ci\phi,(\ze*\id_{\phi})\od\eta\bigr)
&=\cP(\phi,\eta)\ci\phi^*[d](\cP(\psi,\ze))\quad\text{as
morphisms}\\
(\psi\ci\phi)^*[d+e](\cP(V,v))&=\phi*[d]\ci
\psi^*[e](\cP(V,v))\longra\cP(T,t).
\end{align*}
\end{itemize}

\noindent(B) Morphisms $\al:\cP\ra\cQ$ of $\Perv_\nai(X)$ comprise a
morphism $\al(T,t):\cP(T,t)\ra\cQ(T,t)$ in $\Perv(T)$ for all smooth
1-morphisms $t:T\ra X$ from a scheme $T$, such that for each diagram
\eq{sa4eq14} in (b) the following commutes:
\begin{equation*}
\xymatrix@C=120pt@R=15pt{*+[r]{\phi^*[d](\cP(U,u))}
\ar[d]^{\phi^*[d](\al(U,u))} \ar[r]_(0.55){\cP(\phi,\eta)} &
*+[l]{\cP(T,t)} \ar[d]_{\al(T,t)} \\
*+[r]{\phi^*[d](\cQ(U,u))}
\ar[r]^(0.55){\cQ(\phi,\eta)} & *+[l]{\cQ(T,t).\!{}} }
\end{equation*}

\noindent(C) Composition of morphisms $\cP\,{\buildrel\al
\over\longra}\,\cQ\,{\buildrel\be\over\longra}\,\cR$ in
$\Perv_\nai(X)$ is $(\be\ci\al)(T,t)=\ab\be(T,t)\ab\ci\ab\al(T,t)$.
Identity morphisms $\id_\cP:\cP\ra\cP$ are
$\id_\cP(T,t)=\id_{\cP(T,t)}$.
\smallskip

We can also define a category of {\it na\"\i ve\/ $\cD$-modules\/}
on $X$ in the same way.
\label{sa4def1}
\end{dfn}

\begin{rem} Definition \ref{sa4def1} for $\cP$ is modelled on
Proposition \ref{sa3prop3} for $\cA$, with the following
differences:
\begin{itemize}
\setlength{\itemsep}{0pt}
\setlength{\parsep}{0pt}
\item[(i)] $\cP(\phi,\eta)$ is an isomorphism always, but
$\cA(\phi,\eta)$ need only be an isomorphism if $\phi$ is
\'etale. Now $\cA$ in Proposition \ref{sa3prop3}(A) is called a
{\it Cartesian\/} sheaf on $X$ if $\cA(\phi,\eta)$ is an
isomorphism always. So $\cP$ is the perverse analogue of a
Cartesian sheaf $\cA$ on $X$.
\item[(ii)] $\cP(\phi,\eta)$ is defined only when $\phi$ is smooth,
but $\cA(\phi,\eta)$ is defined without requiring $\phi$ smooth.
For Cartesian sheaves $\cA$ on $X$, it is enough to give the
data $\cA(T,t),\cA(\phi,\eta)$ and check the conditions for
$\phi$ smooth; the remaining $\cA(\phi,\eta)$ for non-smooth
$\phi$ are then determined uniquely.
\item[(iii)] Definition \ref{sa4def1} uses shifted pullbacks
$\phi^*[d]$ where Proposition \ref{sa3prop3} uses sheaf
pullbacks $\phi^{-1}$. This is because of
Proposition~\ref{sa4prop1}.
\end{itemize}
\label{sa4rem1}
\end{rem}

Using Proposition \ref{sa4prop1}, Theorem \ref{sa4thm1} and formal
arguments, we can deduce:
\begin{itemize}
\setlength{\itemsep}{0pt}
\setlength{\parsep}{0pt}
\item[(a)] For any Artin stack $X$, $\Perv_\nai(X)$ is an abelian
category, and if $X$ is a scheme, the functor
$\Perv_\nai(X)\ra\Perv(X)$ mapping $\cP\mapsto\cP(X,\id_X)$ is
an equivalence of categories with the category $\Perv(X)$
discussed in \S\ref{sa41}.
\item[(b)] If $\Phi:X\ra Y$ is a 1-morphism of Artin stacks smooth of
relative dimension $d$ then as in Proposition \ref{sa4prop1}
there is a natural functor
$\Phi_\nai^*[d]:\Perv_\nai(Y)\ra\Perv_\nai(X)$.
\item[(c)] The analogue of Theorem \ref{sa4thm1} holds for the
categories $\Perv_\nai$ and pullbacks $\Phi_\nai^*[d]$, taking
the $U_i,U_{ij},U_{ijk}$ to be either schemes or stacks.
\end{itemize}
This `na\"\i ve' model of perverse sheaves on Artin stacks follows
from the scheme case in an essentially trivial way, and is
sufficient to prove the first part of the main result of this
section, Theorem \ref{sa4thm3} below.

{\it However}, for a satisfactory theory of perverse sheaves on
Artin stacks, we want more: we would like the category $\Perv(X)$ of
perverse sheaves on $X$ to be the heart of a t-structure on a
triangulated category $D^b_c(X)$ of `constructible complexes', which
may not be equivalent to $D^b\Perv(X)$, and we would like
Grothendieck's ``six operations on sheaves''
$f^*,f^!,Rf_*,Rf_!,{\cal RH}om,\otL$, and Verdier duality operators
$\bD_X$, to act on these ambient categories $D^b_c(X)$. Other than
pullbacks $f^*,f^!$ by smooth 1-morphisms $f:X\ra Y$ and operators
$\bD_X$, none of this is obvious using the definition of perverse
sheaves $\Perv_\nai(X)$ above.

Thus, the main issue in developing a good theories of perverse
sheaves on Artin stacks $X$ is not defining the categories
$\Perv(X)$ or $\Perv_\nai(X)$ themselves, but defining the
categories $D^b_c(X)$ and the six operations $f^*,\ldots,\otL$ upon them, and then defining a perverse t-structure on $D^b_c(X)$ with heart $\Perv(X)$. If (a)--(c) above hold for these $D^b_c(X),\Perv(X)$, it will then be automatic \cite[\S 7]{LaOl3} that $\Perv(X)\simeq\Perv_\nai(X)$ for $\Perv_\nai(X)$ as in Definition~\ref{sa4def1}.

Here are the foundational papers on perverse sheaves and
$\cD$-modules on Artin stacks known to the authors:
\begin{itemize}
\setlength{\itemsep}{0pt}
\setlength{\parsep}{0pt}
\item Laszlo and Olsson \cite{LaOl1,LaOl2,LaOl3} generalize the
Beilinson--Bernstein--Deligne theory of perverse sheaves on
$\K$-schemes with finite and $l$-adic coefficients \cite{BBD} to
Artin stacks. In \cite[\S 7]{LaOl3} they show that $\Perv(X)$ is equivalent to the category $\Perv_\nai(X)$ in Definition \ref{sa4def1}.
\item Liu and Zheng \cite{LiZh1,LiZh2} develop a theory of perverse
sheaves on higher Artin stacks using Lurie's $\iy$-categories,
and show it is equivalent to Laszlo and Olsson's version for
ordinary Artin stacks.
\item Gaitsgory and Rozenblyum \cite{GaRo} construct a theory of
{\it crystals\/} on (derived) schemes and stacks $\bX$. For
classical schemes $X$, the categories of crystals and
$\cD$-modules on $X$ are equivalent, so the authors argue that
$\cD$-modules on (derived) stacks should be defined to be crystals. The six functor formalism for crystals was not complete at the time of writing.
\item In a brief note, for an Artin $\C$-stack $X$, Paulin
\cite{Paul} proposes definitions of constructible complexes
$D^b_c(X)$ over $A=\C$, with its perverse t-structure, and (for
smooth $X$) of the derived category $D^b_{rh}(X)$ of
$\cD$-modules on $X$ with t-structure, claims the six
functor formalism holds, and proves a `Riemann--Hilbert' equivalence of these categories with
t-structures. 
\end{itemize}

\subsection{The main result}
\label{sa44}

Here is the main result of this section, the analogue of Theorem
\ref{sa4thm2} from \cite{BBDJS}. Apart from the material in our previous papers \cite{BBDJS,Joyc2} and general properties of perverse sheaves on Artin stacks, the only extra ingredient is Proposition~\ref{sa4prop3}.

We state Theorem \ref{sa4thm3} and Corollaries \ref{sa4cor1}, \ref{sa4cor2} using Laszlo and Olsson's $l$-adic perverse sheaves on Artin stacks \cite{LaOl1,LaOl2,LaOl3}, but they would also work for any other theory of perverse sheaves, or $\cD$-modules, or mixed Hodge modules, on Artin stacks, which has the expected package of properties discussed in~\S\ref{sa43}.

\begin{thm} Let\/ $(X,s)$ be an oriented d-critical stack over $\K$
(allowing $\K=\C$) with orientation $K_{X,s}^{1/2}$. Fix a theory of
perverse sheaves on $\K$-schemes from\/ {\rm\S\ref{sa41},} and let\/
$\Perv_\nai(X)$ be the corresponding category of na\"\i ve perverse
sheaves on $X$ from Definition\/ {\rm\ref{sa4def1}}. Then we may
define $\cP_{X,s}\in \Perv_\nai(X)$ and Verdier duality and
monodromy isomorphisms
\begin{equation*}
\smash{\Si_{X,s}:\cP_{X,s}\longra \bD_X(\cP_{X,s}),\qquad
\Tau_{X,s}:\cP_{X,s}\longra \cP_{X,s},}
\end{equation*}
as follows:
\begin{itemize}
\setlength{\itemsep}{0pt}
\setlength{\parsep}{0pt}
\item[{\bf(a)}] If\/ $t:T\ra X$ is smooth with\/ $T$ a
$\K$-scheme, so that\/ $(T,s(T,t))$ is an algebraic d-critical
locus with natural orientation $K_{T,s(T,t)}^{1/2}$ as in
Lemma\/ {\rm\ref{sa3lem1},} then
$\cP_{X,s}(T,t)=P_{T,s(T,t)}^\bu$ in $\Perv(T),$ where
$P_{T,s(T,t)}^\bu$ is the perverse sheaf on the oriented
algebraic d-critical locus $(T,s(T,t))$ over $\K$ given by
Theorem\/ {\rm\ref{sa4thm2}}. Also $\Si_{X,s}(T,t)=
\Si_{T,s(T,t)}$ and\/ $\Tau_{X,s}(T,t)=\Tau_{T,s(T,t)}$.
\item[{\bf(b)}] For each\/ $2$-commutative diagram in $\Art_\K$
\begin{equation*}
\xymatrix@C=50pt@R=1pt{ & U \ar[ddr]^u \\
\rrtwocell_{}\omit^{}\omit{^{\eta}} && \\
T \ar[uur]^{\phi} \ar[rr]_t && X }
\end{equation*}
with\/ $T,U$ $\K$-schemes and\/ $\phi,t,u$ smooth with\/ $\phi$
of dimension $d,$ we have
\begin{align*}
\cP_{X,s}(\phi,\eta)=\De_\phi:\phi^*[d]&(\cP_{X,s}(U,u))=
\phi^*[d](P_{U,s(U,u)}^\bu)\\
&\longra \cP_{X,s}(T,t)=P_{T,s(T,t)}^\bu,
\end{align*}
where $\De_\phi$ is as in Proposition\/ {\rm\ref{sa4prop3}}.
\end{itemize}

If we work with perverse sheaves on $\K$-schemes in the sense of\/
{\rm\cite{BBD}} over a base ring $A$ with either $\mathop{\rm
char}A>0$ coprime to $\mathop{\rm char}\K,$ or $A=\Z_l,\Q_l$ or
$\bar\Q_l$ with\/ $l$ coprime to $\mathop{\rm char}\K,$ then
$\Perv_\nai(X)\simeq\Perv(X)$ as in {\rm\S\ref{sa43},} where
$\Perv(X)\subset D^b_c(X)$ is the category of perverse sheaves on
$X$ over $A$ defined by Laszlo and Olsson
{\rm\cite{LaOl1,LaOl2,LaOl3}}. Thus $\cP_{X,s}$ corresponds to
$\check P_{X,s}^\bu\in\Perv(X)$ unique up to canonical isomorphism,
and\/ $\Si_{X,s},\Tau_{X,s}$ correspond to isomorphisms
\begin{equation*}
\smash{\check\Si_{X,s}:\check P_{X,s}^\bu\longra \bD_X(\check
P_{X,s}^\bu),\quad \check\Tau_{X,s}:\check P_{X,s}^\bu\longra \check
P_{X,s}^\bu\quad\text{in\/ $\Perv(X)$.}}
\end{equation*}

The analogue of the above will also hold in any other theory of
perverse sheaves or\/ $\cD$-modules on schemes and Artin stacks with
the package of properties discussed in\/
{\rm\S\ref{sa43},} including the six operations
$f^*,f^!,Rf_*,Rf_!,{\cal RH}om,\ab\otL,$ Verdier duality $\bD_X,$
and descent in the smooth topology as in Theorem\/
{\rm\ref{sa4thm1}}.
\label{sa4thm3}
\end{thm}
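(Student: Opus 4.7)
My plan is to observe that, given the machinery already built up, this theorem is essentially a formal consequence of Lemma \ref{sa3lem1}, Theorem \ref{sa4thm2}, Proposition \ref{sa4prop3}, and the descent theory of Laszlo--Olsson. The construction of $\cP_{X,s}$ as an object of $\Perv_\nai(X)$ proceeds by directly specifying the two pieces of data demanded by Definition \ref{sa4def1}(A). For part (a), given a smooth $1$-morphism $t:T\ra X$ with $T$ a $\K$-scheme, Proposition \ref{sa3prop5}(ii) combined with Definition \ref{sa3def3} gives an algebraic d-critical locus $(T,s(T,t))$, and Lemma \ref{sa3lem1} equips it with a canonical orientation $K_{T,s(T,t)}^{1/2}$ inherited from $K_{X,s}^{1/2}$. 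Thus Theorem \ref{sa4thm2} produces $P^\bu_{T,s(T,t)}\in\Perv(T)$ together with its Verdier duality and monodromy isomorphisms $\Si_{T,s(T,t)},\Tau_{T,s(T,t)}$, and I simply set $\cP_{X,s}(T,t):=P^\bu_{T,s(T,t)}$, $\Si_{X,s}(T,t):=\Si_{T,s(T,t)}$, $\Tau_{X,s}(T,t):=\Tau_{T,s(T,t)}$.

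For part (b) of the data, given a $2$-commutative diagram \eqref{sa4eq14} with $\phi$ smooth of relative dimension $d$, Proposition \ref{sa3prop1} shows $\phi:(T,s(T,t))\ra(U,s(U,u))$ is a morphism of algebraic d-critical loci in which the induced orientations agree with the ones built from $K_{X,s}^{1/2}$ via Corollary \ref{sa3cor1}. Hence Proposition \ref{sa4prop3}(a) furnishes a canonical isomorphism $\De_\phi:\phi^*[d](P^\bu_{U,s(U,u)})\ra P^\bu_{T,s(T,t)}$, which I declare to be $\cP_{X,s}(\phi,\eta)$. The cocycle condition Definition \ref{sa4def1}(A)(i) is then precisely the content of \eqref{sa4eq7} in Proposition \ref{sa4prop3}(b), so $\cP_{X,s}$ is a well-defined object of $\Perv_\nai(X)$. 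The naturality condition required to make $\Si_{X,s}$ and $\Tau_{X,s}$ morphisms in $\Perv_\nai(X)$ is the final assertion of Proposition \ref{sa4prop3}(a), namely that $\De_\phi$ identifies $\phi^*[d](\Si_{U,s(U,u)}),\phi^*[d](\Tau_{U,s(U,u)})$ with $\Si_{T,s(T,t)},\Tau_{T,s(T,t)}$. Verdier duality on $\Perv_\nai(X)$ is computed componentwise, so $\Si_{X,s}$ really lands in $\bD_X(\cP_{X,s})$.

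For the second part, upgrading $\cP_{X,s}$ to an honest $\check P^\bu_{X,s}\in\Perv(X)$, I would invoke the equivalence $\Perv_\nai(X)\simeq\Perv(X)$ established by Laszlo--Olsson \cite[\S 7]{LaOl3} (and its analogue in any theory satisfying the package of properties in \S\ref{sa43}, in particular smooth descent as in Theorem \ref{sa4thm1}). Concretely, choosing a smooth surjective atlas $t:T\ra X$ by a $\K$-scheme, the descent data on $t^*(\check P^\bu_{X,s})=P^\bu_{T,s(T,t)}$ over $T\t_XT$ provided by the $\De_\phi$ (applied to the two projections) satisfies the cocycle condition on $T\t_XT\t_XT$ by Proposition \ref{sa4prop3}(b), so smooth descent glues these to a unique $\check P^\bu_{X,s}\in\Perv(X)$ carrying isomorphisms $\check\Si_{X,s},\check\Tau_{X,s}$.

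The only step with any real content is verifying the cocycle identity, but that was already accomplished in the proof of Proposition \ref{sa4prop3}(b), so I expect no further obstacle here; the potential subtlety is just keeping track of the shifts $[d]$ versus $[d+e]$ and the fact that the pullback functors used in Definition \ref{sa4def1} differ from ordinary sheaf inverse image by a dimension shift (as flagged in Remark \ref{sa4rem1}(iii)), but this is entirely book-keeping. No further genuine work is needed beyond citing the scheme-level descent theory of the chosen perverse sheaf formalism.
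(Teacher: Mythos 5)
Your proposal follows exactly the same route as the paper's proof: define the data $\cP_{X,s}(T,t)=P^\bu_{T,s(T,t)}$ and $\cP_{X,s}(\phi,\eta)=\De_\phi$, check the cocycle condition Definition~\ref{sa4def1}(A)(i) via Proposition~\ref{sa4prop3}(b), check that $\Si_{X,s},\Tau_{X,s}$ are morphisms in $\Perv_\nai(X)$ via the last part of Proposition~\ref{sa4prop3}(a), and pass to $\Perv(X)$ by the Laszlo--Olsson equivalence $\Perv_\nai(X)\simeq\Perv(X)$. This is correct and matches the paper's argument.
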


\begin{proof} Proposition \ref{sa4prop3}(b) implies that the data
$\cP_{X,s}(T,t),\cP_{X,s}(\phi,\eta)$ in (a),(b) satisfy Definition
\ref{sa4def1}(A)(i). Thus $\cP_{X,s}$ is an object of
$\Perv_\nai(X)$. Similarly, the last part of Proposition
\ref{sa4prop3}(a) implies that $\Si_{X,s},\Tau_{X,s}$ are morphisms
in $\Perv_\nai(X)$. The last part is immediate from the discussion
of~\S\ref{sa43}.
\end{proof}

Combining Theorems \ref{sa2thm6}, \ref{sa3thm6} and \ref{sa4thm3} and
Corollary \ref{sa3cor2} yields:

\begin{cor} Let\/ $\K$ be an algebraically closed field of
characteristic zero, $(\bX,\om)$ a $-1$-shifted symplectic derived
Artin $\K$-stack, and\/ $X=t_0(\bX)$ the associated classical
Artin\/ $\K$-stack. Suppose we are given a square root\/
$\smash{\det(\bL_\bX)\vert_X^{1/2}}$.

Then working in $l$-adic perverse sheaves on stacks
{\rm\cite{LaOl1,LaOl2,LaOl3},} we may define a perverse sheaf\/
$\check P_{\bX,\om}^\bu$ on $X$ uniquely up to canonical
isomorphism, and Verdier duality and monodromy isomorphisms
$\check\Si_{\bX,\om}:\check P_{\bX,\om}^\bu\ra \bD_X(\check
P_{\bX,\om}^\bu)$ and\/ $\check\Tau_{\bX,\om}:\check
P_{\bX,\om}^\bu\ra\check P_{\bX,\om}^\bu$. These are characterized
by the fact that given a diagram
\begin{equation*}
\smash{\xymatrix@C=60pt{ \bU=\bs\Crit(f:U\ra\bA^1) & \bV \ar[l]_(0.3){\bs i} \ar[r]^{\bs\vp} & \bX }}
\end{equation*}
such that\/ $U$ is a smooth\/ $\K$-scheme, $\bs\vp$ smooth of
dimension $n,$ $\bL_{\bV/\bU} \simeq \bT_{\bV/\bX}[2],$
$\bs\vp^*(\om_\bX)\sim \bs i^*(\om_\bU)$ for $\om_\bU$ the natural\/
$-1$-shifted symplectic structure on $\bU=\bs\Crit(f:U\ra\bA^1),$
and\/ $\vp^*(\det(\bL_\bX)\vert_X^{1/2})\!\cong\!
i^*(K_U)\!\ot\!\La^n\bT_{\bV/\bX},$ then $\vp^*(\check
P_{\bX,\om}^\bu)[n],$ $\vp^*(\check\Si_{\bX,\om}^\bu)[n],$
$\vp^*(\check\Tau_{\bX,\om}^\bu)[n]$ are canonically isomorphic to
$i^*(\PV_{U,f}),$ $i^*(\si_{U,f}),$ $i^*(\tau_{U,f}),$ for\/
$\PV_{U,f},\si_{U,f},\tau_{U,f}$ as in {\rm\S\ref{sa41}}. 
\label{sa4cor1}
\end{cor}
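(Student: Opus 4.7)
The plan is to combine Theorem~\ref{sa3thm6} and Theorem~\ref{sa4thm3}. First, Theorem~\ref{sa3thm6} applied to $(\bX,\om)$ gives $X=t_0(\bX)$ the structure of a d-critical stack $(X,s)$, together with a natural isomorphism $K_{X,s}\cong\det(\bL_\bX)\vert_{X^\red}$. Restricting the given square root $\det(\bL_\bX)\vert_X^{1/2}$ to $X^\red$ and composing with this isomorphism yields an orientation $K_{X,s}^{1/2}$ on $(X,s)$. Theorem~\ref{sa4thm3} then applies to the oriented d-critical stack $(X,s)$ and produces a perverse sheaf $\check P^\bu_{X,s}\in\Perv(X)$ (via the equivalence $\Perv_\nai(X)\simeq\Perv(X)$ of \cite{LaOl3}) together with Verdier duality and monodromy isomorphisms $\check\Si_{X,s},\check\Tau_{X,s}$, uniquely determined up to canonical isomorphism. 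We set $\check P^\bu_{\bX,\om}:=\check P^\bu_{X,s}$, and similarly for the two isomorphisms.

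Next, I would verify the local characterization. Given a diagram with $\bs i:\bV\ra\bU=\bs\Crit(f)$ and $\bs\vp:\bV\ra\bX$ as in the statement, pass to classical truncations. Theorem~\ref{sa2thm6}(b) implies that $i=t_0(\bs i)$ from $V=t_0(\bV)$ to $T=t_0(\bU)=\Crit(f)$ is an isomorphism, and $\vp=t_0(\bs\vp):V\ra X$ is smooth of relative dimension $n$. Composing $i$ with the closed embedding $T\hookra U$ (still denoted $i$ by abuse), the quadruple $(V,U,f,i)$ is a critical chart on $(V,s(V,\vp))$, and by Theorem~\ref{sa3thm6}(a) the d-critical structure $s(V,\vp)$ coincides with the standard one on $\Crit(f)$. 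Then Theorem~\ref{sa4thm3}(a), combined with Definition~\ref{sa4def1}, yields a canonical isomorphism $\vp^*(\check P^\bu_{\bX,\om})[n]\cong P^\bu_{V,s(V,\vp)}$ in $\Perv(V)$, identifying $\vp^*(\check\Si_{\bX,\om})[n]$ and $\vp^*(\check\Tau_{\bX,\om})[n]$ with $\Si_{V,s(V,\vp)}$ and $\Tau_{V,s(V,\vp)}$. Finally, Theorem~\ref{sa4thm2}(i) writes $P^\bu_{V,s(V,\vp)}\cong i^*(\PV^\bu_{U,f})\ot_{\Z/2\Z}Q_{V,U,f,i}$, where $Q_{V,U,f,i}$ parametrizes local square roots $\al:K_{V,s(V,\vp)}^{1/2}\ra i^*(K_U)\vert_{V^\red}$ of $\io_{V,U,f,i}$.

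It remains to trivialize $Q_{V,U,f,i}$ using the orientation hypothesis. By the stacky analogue of Corollary~\ref{sa3cor1} (via Lemma~\ref{sa3lem1}), the induced orientation on $V$ satisfies
\e
K_{V,s(V,\vp)}^{1/2}\cong\vp^*(K_{X,s}^{1/2})\vert_{V^\red}\ot(\La^n T^*_{V/X})\vert_{V^\red}.
\label{sa4eqA}
\e
Substituting $K_{X,s}^{1/2}\cong\det(\bL_\bX)\vert_{X^\red}^{1/2}$ and the hypothesis $\vp^*(\det(\bL_\bX)\vert_X^{1/2})\cong i^*(K_U)\ot\La^n\bT_{\bV/\bX}$, and using the canonical pairing $(\La^n\bT_{\bV/\bX})\vert_V\ot(\La^n T^*_{V/X})\cong\O_V$ (both are top exterior powers of dual rank-$n$ bundles, as $\bL_{\bV/\bX}\vert_V\cong T^*_{V/X}$ since $\bs\vp$ is smooth of dimension $n$), one obtains a canonical section of $Q_{V,U,f,i}$, trivializing it. Hence $\vp^*(\check P^\bu_{\bX,\om})[n]\cong i^*(\PV^\bu_{U,f})$, and the identifications of $\vp^*(\check\Si_{\bX,\om})[n]$ and $\vp^*(\check\Tau_{\bX,\om})[n]$ with $i^*(\si_{U,f})$ and $i^*(\tau_{U,f})$ follow from the commutativity of diagrams \eqref{sa4eq3}--\eqref{sa4eq4} of Theorem~\ref{sa4thm2}(i).

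The main obstacle is verifying that the section of $Q_{V,U,f,i}$ constructed from the hypothesis is genuinely a square root of $\io_{V,U,f,i}$, i.e., that the resulting $\al$ satisfies $\al\ot\al=\io_{V,U,f,i}$. This reduces to unpacking the canonical isomorphism $K_{X,s}\cong\det(\bL_\bX)\vert_{X^\red}$ of Theorem~\ref{sa3thm6}(b) on a critical chart: one must confirm that the chain of isomorphisms \eqref{sa3eq16}--\eqref{sa3eq25} is compatible with the defining isomorphism $\io_{V,U,f,i}$ of Theorem~\ref{sa3thm3}(b) when twisted by the top exterior power of the relative tangent bundle of $\bs\vp$. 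This is a diagram chase tracking identifications of top exterior powers, and is the only non-formal step in the proof.
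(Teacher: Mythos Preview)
Your approach is correct and is precisely what the paper intends: the paper's entire proof is the single sentence ``Combining Theorems~\ref{sa2thm6}, \ref{sa3thm6} and \ref{sa4thm3} \ldots\ yields,'' and you have simply unpacked that combination carefully, including the trivialization of the principal $\Z/2\Z$-bundle $Q_{V,U,f,i}$ via the orientation hypothesis, which the paper leaves implicit. Your identification of the one non-formal step (compatibility of the chain \eqref{sa3eq16}--\eqref{sa3eq25} with $\io_{V,U,f,i}$) is accurate; this is exactly the content hidden in the isomorphism of Theorem~\ref{sa3thm6}(b).
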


\begin{cor} Let\/ $Y$ be a Calabi--Yau\/ $3$-fold over an
algebraically closed field\/ $\K$ of characteristic zero, and\/
$\cM$ a classical moduli\/ $\K$-stack of coherent sheaves $F$ in
$\coh(Y),$ or of complexes $F^\bu$ in $D^b\coh(Y)$ with\/
$\Ext^{<0}(F^\bu,F^\bu)=0,$ with obstruction theory\/
$\phi:\cE^\bu\ra\bL_\cM$. Suppose we are given a square root\/
$\det(\cE^\bu)^{1/2}$.

Then working in $l$-adic perverse sheaves on stacks
{\rm\cite{LaOl1,LaOl2,LaOl3},} we may define a natural perverse
sheaf\/ $\check P_\cM^\bu\in\Perv(\cM),$ and Verdier duality and
monodromy isomorphisms $\check\Si_\cM:\check
P_\cM^\bu\ra\bD_\cM(\check P_\cM^\bu)$ and\/ $\check\Tau_\cM:\check
P_\cM^\bu\ra\check P_\cM^\bu$. The pointwise Euler characteristic
of\/ $\check P_\cM^\bu$ is the Behrend function $\nu_\cM$ of\/ $\cM$
from Joyce and Song {\rm\cite[\S 4]{JoSo},} so that\/ $\check
P_\cM^\bu$ is in effect a categorification of the Donaldson--Thomas
theory of $\cM$.
\label{sa4cor2}
\end{cor}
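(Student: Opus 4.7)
\medskip

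\noindent\emph{Proof proposal.} The plan is to reduce this almost entirely to Corollary \ref{sa4cor1}, using Pantev--To\"en--Vaqui\'e--Vezzosi to supply the $-1$-shifted symplectic derived enhancement, Corollary \ref{sa3cor2} to identify the canonical bundle, and Behrend's microlocal interpretation of $\nu$ to handle the Euler-characteristic claim.

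First, I would pass to the derived moduli stack. As noted after Corollary \ref{sa2cor1}, the hypothesis $\Ext^{<0}(F^\bu,F^\bu)=0$ ensures that the derived moduli $\K$-stack $\bs\cM$ is $1$-truncated, hence a derived Artin $\K$-stack with $\cM=t_0(\bs\cM)$. By \cite{PTVV}, $\bs\cM$ carries a canonical $-1$-shifted symplectic structure $\om_{\bs\cM}$, and the natural obstruction theory $\phi:\cE^\bu\to\bL_\cM$ is identified with $\bL_{t_0}:\bL_{\bs\cM}|_\cM\to\bL_\cM$, so $\cE^\bu\simeq\bL_{\bs\cM}|_\cM$ and therefore $\det(\cE^\bu)|_{\cM^\red}\cong\det(\bL_{\bs\cM})|_{\cM^\red}$. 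Corollary \ref{sa3cor2} (equivalently, Theorem \ref{sa3thm6}(b) applied to $\bs\cM$) extends $\cM$ to an oriented-ready d-critical stack $(\cM,s)$ with $K_{\cM,s}\cong\det(\cE^\bu)|_{\cM^\red}$. A chosen square root $\det(\cE^\bu)^{1/2}$ therefore restricts to give an orientation $K_{\cM,s}^{1/2}$ on $(\cM,s)$ in the sense of Definition \ref{sa3def4}.

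With this orientation in hand, Corollary \ref{sa4cor1} (or equivalently Theorem \ref{sa4thm3}) produces a perverse sheaf $\check P_\cM^\bu\in\Perv(\cM)$, unique up to canonical isomorphism, together with Verdier duality and monodromy isomorphisms $\check\Si_\cM,\check\Tau_\cM$; the characterization is that for any smooth atlas $t:T\to\cM$ of relative dimension $n$ from a $\K$-scheme $T$, one has $t^*(\check P_\cM^\bu)[n]\cong P_{T,s(T,t)}^\bu$ on the oriented algebraic d-critical locus $(T,s(T,t))$, where locally $T\cong\Crit(f:U\to\bA^1)$ and $P_{T,s(T,t)}^\bu$ is modelled on $\PV_{U,f}^\bu$ twisted by a $\Z/2\Z$-bundle, by Theorem \ref{sa4thm2}.

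It remains to identify the pointwise Euler characteristic of $\check P_\cM^\bu$ with the Behrend function $\nu_\cM$. Since $\nu_\cM$ on an Artin stack is characterized by the smooth-pullback relation $\nu_T=(-1)^n\,t^*(\nu_\cM)$ for $t:T\to\cM$ smooth of relative dimension $n$ (see Joyce--Song \cite[\S 4]{JoSo}), and Euler characteristics of perverse sheaves transform the same way under $t^*[n]$, it suffices to verify the equality after pulling back to a smooth atlas by a $\K$-scheme. On such an atlas $(T,s(T,t))$ is locally $\Crit(f:U\to\bA^1)$ with $n=\dim H^1(\bL_\cM|_p)$, and Behrend's theorem \cite{JoSo} says precisely that $\chi\bigl(\PV_{U,f}^\bu|_x\bigr)=\nu_T(x)$; the $\Z/2\Z$-twist does not affect Euler characteristics since its local system has rank one. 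Hence pointwise $\chi(\check P_\cM^\bu|_{[F]})=\nu_\cM([F])$. The main obstacle in this plan is not conceptual but bookkeeping: verifying that the characterization and comparison morphisms of Theorem \ref{sa4thm3}, built out of the atlas data of \S\ref{sa2}--\S\ref{sa3}, are compatible with the chosen square root $\det(\cE^\bu)^{1/2}$ in the way Corollary \ref{sa4cor1} requires; once the natural isomorphism $K_{\cM,s}\cong\det(\cE^\bu)|_{\cM^\red}$ of Corollary \ref{sa3cor2} is used to transport the orientation, this compatibility is forced by naturality.
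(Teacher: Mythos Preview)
Your proposal is correct and follows essentially the same route as the paper: the paper simply states that Corollary~\ref{sa4cor2} is obtained by combining Theorems~\ref{sa2thm6}, \ref{sa3thm6}, \ref{sa4thm3} and Corollary~\ref{sa3cor2}, which is exactly what you unpack---use \cite{PTVV} to get the $-1$-shifted symplectic structure on $\bs\cM$, Corollary~\ref{sa3cor2} to identify $K_{\cM,s}\cong\det(\cE^\bu)\vert_{\cM^\red}$ and transport the orientation, and then Theorem~\ref{sa4thm3}/Corollary~\ref{sa4cor1} to produce $\check P_\cM^\bu$, $\check\Si_\cM$, $\check\Tau_\cM$. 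Your additional paragraph on the Behrend function claim (via smooth pullback behaviour of $\nu_\cM$ and the local identification $\chi(\PV_{U,f}^\bu\vert_x)=\nu_T(x)$) is a welcome elaboration that the paper leaves entirely implicit.
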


\begin{ex} Suppose an algebraic $\K$-group $G$ acts on a
$\K$-scheme $T$ with action $\mu:G\t T\ra T$, and write $X$ for the
quotient Artin $\K$-stack $[T/G]$, and $t:T\ra [T/G]$ for the
natural quotient 1-morphism.

As in Example \ref{sa3ex}, there is a 1-1 correspondence between
d-critical structures $s$ on $X=[T/G]$ and $G$-invariant d-critical
structures $s'$ on $T$, such that $s'=s(T,t)$. Also, from Lemma
\ref{sa3lem1} we see that there is a 1-1 correspondence between
orientations $K_{X,s}^{1/2}$ for $(X,s)$, and $G$-invariant
orientations $K_{T,s'}^{1/2}$ for $(T,s')$, given by
$K_{T,s'}^{1/2}=K_{X,s}^{1/2}(T^\red,t^\red)\ot (\La^{\rm
top}\bL_{\smash{T/X}})\vert_{T^\red}$.

Choose such $s,s',K_{X,s}^{1/2},K_{T,s'}^{1/2}$, so that Theorems
\ref{sa4thm2} and \ref{sa4thm3} give perverse sheaves $P^\bu_{T,s'},
\check P_{X,s}^\bu$ on $T,X$. We would like to relate the
hypercohomologies $\bH^*(T,P^\bu_{T,s'}),\bH^*(X,\check P_{X,s}^\bu)$.
We have $t^{*}(\check P_{X,s}^\bu)[\dim G]\cong P_{T,s'}^\bu$ and thus
\begin{equation*}
\smash{R^{q}t_{*}P_{T,s'}^\bu \cong R^{q}t_{*}t^{*}(\check P_{X,s}^\bu)[\dim G] \cong \check P_{X,s}^\bu \ot_{A_X} R^{q}t_{*}(A_T)[\dim G],}
\end{equation*}
where $A_T$ is the constant sheaf on $T$ with fibre the base ring $A$. Therefore, the Leray--Serre spectral sequence for the
fibration $t:T\ra X$ with fibre $G$, twisted by $\check
P_{X,s}^\bu$, can be interpreted as a spectral sequence
\begin{equation*}
\smash{E^{\bu,\bu} \Longra \bH^{\bu}(T,P_{T,s'}^\bu )\quad\text{with}\quad E^{p,q}_{2}=\bH^{p}\bigl(X, \check P_{X,s}^\bu \ot_{A_X} R^{q}t_{*}(A_T)[\dim G]\bigr),}
\end{equation*}
where $R^{q}t_{*}(A_T)[\dim G]$ is locally constant on $X$ with fibre~$H^{q-\dim G}(G,A)$.

We also have a projection $\pi:X=[T/G]\ra[*/G]$ for $*=\Spec\K$ with
fibre $T$. The Leray--Serre spectral sequence for $\pi$ gives a
spectral sequence
\begin{equation*}
\smash{E^{\bu,\bu} \Longra \bH^{\bu}(X,\check P_{X,s}^\bu )\quad\text{with}\quad
E^{p,q}_{2}=\bH^{p}\bigl([*/G],\bH^{q+\dim G}(T, P_{T,s'}^\bu)\bigr).}
\end{equation*}
If $G$ is finite we can consider the $\bH^*(T,P_{T,s'}^\bu)$ as
$G$-modules and $\bH^*([*/G],-)$ as group cohomology $H^*_{\rm
grp}\bigl(G,-)$, giving a spectral sequence
\begin{equation*}
\smash{H^p_{\rm grp}\bigl(G,\bH^q(T, P_{T,s'}^\bu)\bigr) \Longra
\bH^{p+q}(X,\check P_{X,s}^\bu ).}
\end{equation*}

\label{sa4ex1}
\end{ex}

\begin{ex} Suppose that $(\bX,\om_{\bX})$ is an oriented $-1$-shifted symplectic derived Artin $\K$-stack, and a finite group $G$ acts on $\bX$ preserving $\om_{\bX}$ and the orientation. Let $\bY$ be the derived Artin $\K$-stack $[\bX/G]$ equipped with the natural quotient $-1$-shifted symplectic structure $\om_{\bY}$ and orientation, and write $\bs f:\bX\ra \bY$ for the \'etale quotient morphism of derived Artin $\K$-stacks. Then we have $\bs f^*(\om_{\bY})\sim\om_{\bX}$ and $f^{*}(\check P_{\bY,\om_{\bY}}^\bu) \cong \check P_{\bX,\om_{\bX}}^\bu$, and therefore
\begin{equation*}
\smash{R^{q}f_{*}P_{\bX,\om_{\bX}}^\bu \cong R^{q}f_{*}f^{*}(\check P_{\bY,\om_{\bY}}^\bu) \cong \check P_{\bY,\om_{\bY}}^\bu \ot_{A_{\bY}} R^{q}f_{*}(A_\bX).}
\end{equation*}
Therefore, the Leray--Serre spectral sequence for the
fibration $\bs f:\bX\ra \bY$ with fibre $G$ can be interpreted as a spectral sequence
\begin{equation*}
\smash{E^{\bu,\bu} \Longra \bH^{\bu}(X,\check P_{\bX,\om_{\bX}}^\bu )\quad\text{with}\quad
E^{p,q}_{2}=\bH^{p}\bigl(Y,\check P_{\bY,\om_{\bY}}^\bu \ot_{A_{\bY}} R^{q}f_{*}(A_{\bX})\bigr).}
\end{equation*}
Since $G$ is finite, only $q=0$ contributes and we get isomorphisms
\begin{equation*}
\smash{\bH^{p}(X,\check P_{\bX,\om_{\bX}}^\bu ) \cong \bH^{p}(Y,\check P_{\bY,\om_{\bY}}^\bu \ot_{A_{\bY}} f_{*}(A_{\bX})).}
\end{equation*}

We also have a projection $\bs\pi:\bY=[\bX/G]\ra[*/G]$ for $*=\Spec\K$ with
fibre $\bX$. The Leray--Serre spectral sequence for $\pi$ gives a
spectral sequence
\begin{equation*}
\smash{E^{\bu,\bu} \Longra \bH^{\bu}(Y,\check P_{\bY,\om_{\bY}}^\bu)\quad\text{with}\quad
E^{p,q}_{2}=\bH^{p}\bigl([*/G],\bH^{q}(X,\check P_{\bX,\om_{\bX}}^\bu)\bigr).}
\end{equation*}
We consider the $\bH^{*}(X,\check P_{\bX,\om_{\bX}}^\bu)$ as
$G$-modules and observe $\bH^*([*/G],-)$ is the same as group cohomology $H^*_{\rm grp}\bigl(G,-)$, giving a spectral sequence
\begin{equation*}
\smash{H^p_{\rm grp}\bigl(G,\bH^q(X,\check P_{\bX,\om_{\bX}}^\bu)\bigr) \Longra \bH^{\bu}(Y,\check P_{\bY,\om_{\bY}}^\bu).}
\end{equation*}\vskip -10pt

\label{sa4ex2}
\end{ex}

\section{Motives on d-critical stacks}
\label{sa5}

We now extend the results of \cite{BJM} to d-critical stacks. Our
main result Theorem \ref{sa5thm2} in \S\ref{sa54}, proved in
\S\ref{sa55}, states that an oriented d-critical stack $(X,s)$ which is of
finite type and locally a global quotient carries a natural motive
in a certain ring of motives $\oM_X^\stm$, defined in~\S\ref{sa53}.

In this section, $\K$ is an algebraically closed field of
characteristic zero, and all $\K$-schemes and Artin $\K$-stacks will
be assumed to be of {\it finite type\/} unless we explicitly say
otherwise. From after Proposition \ref{sa5prop2}, all Artin $\K$-stacks
will also be assumed to have {\it affine geometric stabilizers}.

\subsection{Rings of motives on $\K$-schemes}
\label{sa51}

We begin by defining rings of motives $K_0(\Sch_X),\cM_X,
K_0^{\hat\mu}(\Sch_X),\cM^{\hat\mu}_X$ for a $\K$-scheme $X$. Some
references are Denef and Loeser \cite{DeLo}, Looijenga \cite{Looi},
and Joyce \cite{Joyc1}. Our notation follows Bussi, Joyce and
Meinhardt~\cite{BJM}.

\begin{dfn} Let $X$ be a $\K$-scheme (always assumed of finite
type). Consider pairs $(R,\rho)$, where $R$ is a $\K$-scheme and
$\rho:R\ra X$ is a morphism. Call two pairs $(R,\rho)$, $(R',\rho')$
{\it equivalent\/} if there is an isomorphism $\io:R\ra R'$ with
$\rho=\rho'\ci\io$. Write $[R,\rho]$ for the equivalence class of
$(R,\rho)$. If $(R,\rho)$ is a pair and $S$ is a closed
$\K$-subscheme of $R$ then $(S,\rho\vert_S)$, $(R\sm
S,\rho\vert_{R\sm S})$ are pairs of the same kind. Define the {\it
Grothendieck ring\/ $K_0(\Sch_X)$ of the category\/ $\Sch_X$ of\/
$\K$-schemes over\/} $X$ to be the abelian group generated by
equivalence classes $[R,\rho]$, with the relation that for each
closed $\K$-subscheme $S$ of $R$ we have
\e
\smash{[R,\rho]=[S,\rho\vert_S]+[R\sm S,\rho\vert_{R\sm S}].}
\label{sa5eq1}
\e

Define a product `$\,\cdot\,$' on $K_0(\Sch_X)$ by
\e
\smash{[R,\rho]\cdot[S,\si]=[R\t_{\rho,X,\si}S,\rho\ci\pi_R].}
\label{sa5eq2}
\e
This is compatible with \eq{sa5eq1}, and extends to a biadditive,
commutative, associative product $\cdot:K_0(\Sch_X)\t K_0(\Sch_X)\ra
K_0(\Sch_X)$. It makes $K_0(\Sch_X)$ into a commutative ring, with
identity $1_X=[X,\id_X]$.

Define $\bL=[\bA^1\t X,\pi_X]$ in $K_0(\Sch_X)$. We denote by
\e
\smash{\cM_X=K_0(\Sch_X)[\bL^{-1}]}
\label{sa5eq3}
\e
the ring obtained from $K_0(\Sch_X)$ by inverting $\bL$. When
$X=\Spec\K$ we write $K_0(\Sch_\K),\cM_\K$ instead of
$K_0(\Sch_X),\cM_X$.

The {\it external tensor products\/} $\boxt: K_0(\Sch_X)\t
K_0(\Sch_Y)\ra K_0(\Sch_{X\t Y})$ and $\boxt:
\cM_X\t\cM_Y\ra\cM_{X\t Y}$ are
\e
\smash{\ts\bigl(\sum_{i\in I}c_i[R_i,\rho_i]\bigr)\boxt
\bigl(\sum_{j\in J}d_j[S_j,\si_j]\bigr)=
\sum_{i\in I,\; j\in
J}c_id_j[R_i\t S_j,\rho_i\t\si_j],}
\label{sa5eq4}
\e
for finite $I,J$. They are biadditive, commutative, and associative.
Taking $Y=\Spec\K$, we see that
$\boxt$ makes $K_0(\Sch_X),\cM_X$ into modules over
$K_0(\Sch_\K),\cM_\K$.

Let $\phi:X\ra Y$ be a morphism of $\K$-schemes. Define the {\it
pushforwards\/} $\phi_*: K_0(\Sch_X)\!\ra\! K_0(\Sch_Y)$ and
$\phi_*:\cM_X\!\ra\! \cM_Y$ by
\e
\smash{\phi_*:\ts\sum_{i=1}^nc_i[R_i,\rho_i]\longmapsto
\sum_{i=1}^nc_i[R_i,\phi\ci\rho_i].}
\label{sa5eq5}
\e

Define {\it pullbacks\/} $\phi^*:K_0(\Sch_Y)\ra K_0(\Sch_X)$ and
$\phi^*:\cM_Y\ra\cM_X$ by
\e
\smash{\phi^*:\ts\sum_{i=1}^nc_i[R_i,\rho_i]\longmapsto
\sum_{i=1}^nc_i[R_i\t_{\rho_i,Y,\phi}X,\pi_X].}
\label{sa5eq6}
\e
Pushforwards and pullbacks have the obvious functoriality
properties. As in \cite[Th.~3.5]{Joyc1}, pushforwards and pullbacks
commute in Cartesian squares, that is, if
\e
\begin{gathered}
\xymatrix{
*+[r]{W} \ar[r]_\eta \ar[d]^\theta & *+[l]{Y} \ar[d]_\psi \\
*+[r]{X} \ar[r]^\phi & *+[l]{Z} }
\end{gathered}
\quad
\begin{gathered}
\text{is a Cartesian square in}\\
\text{the category $\Sch_\K$ then}\\
\text{the following commutes:}
\end{gathered}
\quad
\begin{gathered}
\xymatrix@C=35pt{
*+[r]{\cM_W} \ar[r]_{\eta_*} & *+[l]{\cM_Y} \\
*+[r]{\cM_X} \ar[r]^{\phi_*} \ar[u]_{\theta^*} & *+[l]{\cM_Z,}
\ar[u]^{\psi^*} }
\end{gathered}
\label{sa5eq7}
\e
and the analogue holds for $K_0(\Sch_W),\ldots,K_0(\Sch_Z)$.
\label{sa5def1}
\end{dfn}

\begin{dfn} For $n=1,2,\ldots,$ write $\mu_n$ for the group of all
$n^{\rm th}$ roots of unity in $\K$, which is assumed algebraically
closed of characteristic zero, so that $\mu_n\cong\Z_n$. The $\mu_n$ form a projective system, with respect to the maps $\mu_{nd}\ra\mu_n$
mapping $x\mapsto x^d$. Define the group $\hat\mu$ to be the projective limit of the~$\mu_n$. 

Let $R$ be a $\K$-scheme. A {\it good\/ $\mu_n$-action\/} on $R$ is
a group action $r_n:\mu_n\t R\ra R$ such that such that each orbit
is contained in an open affine subscheme of $R$ and $\rho\ci
r_n(\ga)\cong\rho$ for all $\ga \in \mu_n.$ A {\it good\/
$\hat\mu$-action on\/} $R$ is a group action $\hat r:\hat\mu\t R\ra
R$ which factors through a good $\mu_n$-action, for some $n$. We
will write $\hat\io:\hat\mu\t R\ra R$ for the trivial
$\hat\mu$-action on $R$, which is automatically good.

Consider triples $(R,\rho,\hat r)$, where $R$ is a $\K$-scheme,
$\rho:R\ra X$ a morphism, and $\hat r:\hat\mu\t R\ra R$ a good
$\hat\mu$-action on $R$. Call two such triples $(R,\rho,\hat
r),(R',\rho',\hat r')$ {\it equivalent\/} if there exists a
$\hat\mu$-equivariant isomorphism $\io:R\ra R'$ with
$\rho=\rho'\ci\io$. Write $[R,\rho,\hat r]$ for the equivalence
class of $(R,\rho,\hat r)$.

The {\it monodromic Grothendieck group} $K^{\hat\mu}_0(\Sch_X)$ is
the abelian group generated by such equivalence classes
$[R,\rho,\hat r]$, with the relations:
\begin{itemize}
\setlength{\itemsep}{0pt}
\setlength{\parsep}{0pt}
\item[(i)] for each closed $\hat\mu$-invariant $\K$-subscheme
$S$ of $R$, we have
\begin{equation*}
\smash{[R,\rho,\hat r]=[S,\rho\vert_S, \hat r\vert_{S}]+
[R\sm S,\rho\vert_{R\sm S},\hat r\vert_{R\sm S}];}
\end{equation*}
\item[(ii)] given $[R_1,\rho_1, \hat r_1],[R_2,\rho_2, \hat
r_2]$ with $\pi: R_2 \ra R_1$ a $\hat \mu$-equivariant vector
bundle of rank $d$ over $R_1$ and $\rho_2=\rho_1\ci \pi,$ then
\begin{equation*}
\smash{[R_2,\rho_2] = [R_1 \t \bA^d,\rho_1\ci \pi, \hat r_1 \t \hat
\iota].}
\end{equation*}
\end{itemize}
There is a natural biadditive product `$\,\cdot\,$' on
$K^{\hat\mu}_0(\Sch_X)$ given by
\e
\smash{[R,\rho,\hat r]\cdot[S,\si,\hat s]=[R\t_{\rho,X,\si}S,
\rho\ci\pi_R,\hat r \t \hat s],}
\label{sa5eq8}
\e
making $K^{\hat\mu}_0(\Sch_X)$ into a commutative ring, with
identity $1_X=[X,\id_X,\hat\io]$.

Define $\bL=[\bA^1\t X,\pi_X,\hat\io]$ in $K_0^{\hat\mu}(\Sch_X)$.
We denote by
\begin{equation*}
\smash{\cM_X^{\hat\mu}=K_0^{\hat\mu}(\Sch_X)[\bL^{-1}]}
\end{equation*}
the ring obtained from $K_0^{\hat\mu}(\Sch_X)$ by inverting $\bL$.
When $X=\Spec\K$ we write $K_0^{\hat\mu}(\Sch_\K),\cM^{\hat\mu}_\K$
instead of $K_0^{\hat\mu}(\Sch_X),\cM^{\hat\mu}_X$.

The {\it external tensor products\/} $\boxt: K^{\hat\mu}_0(\Sch_X)
\!\t\! K^{\hat\mu}_0(\Sch_Y) \!\ra\! K^{\hat\mu}_0(\Sch_{X\t Y})$
and $\boxt: \cM_X^{\hat\mu}\t\cM_Y^{\hat\mu}\ra\cM^{\hat\mu}_{X\t
Y}$ are
\e
\bigl(\mathop{\ts\sum}\limits_{i\in I}c_i[R_i,\rho_i,\hat
r_i]\bigr)\boxt \bigl(\mathop{\ts\sum}\limits_{j\in
J}d_j[S_j,\si_j,\hat s_j]\bigr)\!=\!\! \mathop{\ts\sum}\limits_{i\in
I,\; j\in J\!\!\!\!\!} c_id_j[R_i\!\t\! S_j,\rho_i\!\t\!\si_j,\hat
r_i\!\t\!\hat s_j],
\label{sa5eq9}
\e
for finite $I,J$. Pushforwards $\phi_*$ and pullbacks $\phi^*$ are
defined for $K_0^{\hat\mu}(\Sch_X),\cM_X^{\hat\mu}$ in the obvious
way, and the analogue of \eq{sa5eq7} holds.

There are natural morphisms of commutative rings
\e
\begin{aligned}
i_X:K_0(\Sch_X)&\longra K_0^{\hat\mu}(\Sch_X),& i_X:\cM_X
&\longra \cM^{\hat\mu}_X, \\
\Pi_X:K_0^{\hat\mu}(\Sch_X)&\longra K_0(\Sch_X),& \Pi_X:
\cM^{\hat\mu}_X &\longra \cM_X,
\end{aligned}
\label{sa5eq10}
\e
given by $i_X:[R,\rho]\mapsto[R,\rho,\hat\io]$ and
$\Pi_X:[R,\rho,\hat r]\mapsto[R,\rho]$.
\label{sa5def2}
\end{dfn}

Following Looijenga \cite[\S 7]{Looi} and Denef and Loeser \cite[\S
5]{DeLo}, we introduce a second multiplication `$\od$' on
$K_0^{\hat\mu}(\Sch_X),\cM_X^{\hat\mu}$ (written `$*$'
in~\cite{Looi,DeLo}).

\begin{dfn} Let $X$ be a $\K$-scheme and $[R,\rho,\hat
r],[S,\si,\hat s]$ be generators of $K_0^{\hat\mu}(\Sch_X)$. Then
there exists $n\ge 1$ such that the $\hat\mu$-actions $\hat r,\hat
s$ on $R,S$ factor through $\mu_n$-actions $r_n,s_n$. Define $J_n$
to be the Fermat curve
\begin{equation*}
\smash{J_n=\bigl\{(t,u)\in(\bA^1\sm\{0\})^2:t^n+u^n=1\bigr\}.}
\end{equation*}
Let $\mu_n\t\mu_n$ act on $J_n\t (R\t_X S)$ by
\begin{equation*}
\smash{(\al,\al')\cdot\bigl((t,u),(v,w)\bigr)=\bigl((\al\cdot t,\al'\cdot u),(r_n(\al)(v),s_n(\al')(w))\bigr).}
\end{equation*}
Write $J_n(R,S)=(J_n\t (R\t_X S))/(\mu_n\t\mu_n)$ for the quotient
$\K$-scheme, and define a $\mu_n$-action $\up_n$ on $J_n(R,S)$ by
\begin{equation*}
\smash{\up_n(\al)\bigl((t,u),v,w\bigr)(\mu_n\t\mu_n)= \bigl((\al\cdot
t,\al\cdot u),v,w\bigr)(\mu_n\t\mu_n).}
\end{equation*}
Let $\hat\up$ be the induced good $\hat\mu$-action on $J_n(R,S)$,
and set
\e
\smash{[R,\rho,\hat r]\od[S,\si,\hat s]=(\bL-1)\cdot\bigl[(R\t_X
S)/\mu_n,\hat\io\bigr]- \bigl[J_n(R,S),\hat\up\bigr]}
\label{sa5eq11}
\e
in $K_0^{\hat\mu}(\Sch_X)$ and $\cM_X^{\hat\mu}$. This turns out to
be independent of $n$, and defines commutative, associative products
$\od$ on $K_0^{\hat\mu}(\Sch_X)$ and $\cM_X^{\hat\mu}$.

Let $X,Y$ be $\K$-schemes. As for Definitions \ref{sa5def1} and
\ref{sa5def2}, we define products
\begin{equation*}
\smash{\bd:K^{\hat\mu}_0(\Sch_X) \!\t\! K^{\hat\mu}_0(\Sch_Y) \!\ra\!
K^{\hat\mu}_0(\Sch_{X\t Y}),\;\>
\bd:\cM^{\hat\mu}_X\t \cM^{\hat\mu}_X\!\ra\! \cM^{\hat\mu}_{X\t Y}}
\end{equation*}
by following the definition above for $\od$, but taking products $R\t S$ rather than fibre products $R\t_X S$. These $\bd$ are commutative and associative. Taking $Y=\Spec\K$, we see that $\bd$
makes $K^{\hat\mu}_0(\Sch_X),\cM_X^{\hat\mu}$ into modules over
$K^{\hat\mu}_0(\Sch_\K),\cM_\K^{\hat\mu}$.

For generators $[R,\rho,\hat r]$ and $[S,\si,\hat
\iota]=i_X([S,\si])$ in $K_0^{\hat\mu}(\Sch_X)$ or $\cM^{\hat\mu}_X$
where $[S,\si,\hat \iota]$ has trivial $\hat\mu$-action $\hat\io$,
one can show that $[R,\rho,\hat r]\od[S,\si,\hat \iota]=[R,\rho,\hat
r]\cdot[S,\si,\hat \iota]$. Thus $i_X$ is a ring morphism
$\bigl(K_0(\Sch_X),\cdot\bigr)\ra
\bigl(K_0^{\hat\mu}(\Sch_X),\od\bigr)$ and
$\bigl(\cM_X,\cdot\bigr)\ra \bigl(\cM^{\hat\mu}_X,\od\bigr)$.
However, $\Pi_X$ is not a ring morphism
$\bigl(K_0^{\hat\mu}(\Sch_X),\od\bigr)\ra
\bigl(K_0(\Sch_X),\cdot\bigr)$ or
$\bigl(\cM^{\hat\mu}_X,\od\bigr)\ra\bigl(\cM_X,\cdot\bigr)$. Since
$\bL=[\bA^1\t X,\pi_X,\hat\io]$ this implies that
$M\cdot\bL=M\od\bL$ for all $M$ in
$K_0^{\hat\mu}(\Sch_X),\cM^{\hat\mu}_X$.
\label{sa5def3}
\end{dfn}

\begin{dfn} Define the element $\bL^{1/2}$ in
$K^{\hat\mu}_0(\Sch_{X})$ and $\cM^{\hat\mu}_X$ by
\e
\smash{\bL^{1/2}=[X,\id_X,\hat\io]-[X\t\mu_2,\hat r],}
\label{sa5eq12}
\e
where $[X,\id_X,\hat\io]$ with trivial $\hat\mu$-action $\hat\io$ is
the identity $1_X$ in $K^{\hat\mu}_0(\Sch_{X}),\cM^{\hat\mu}_X$, and
$X\t\mu_2=X\t\{1,-1\}$ is two copies of $X$ with nontrivial
$\hat\mu$-action $\hat r$ induced by the left action of $\mu_2$ on
itself, exchanging the two copies of $X$. Applying \eq{sa5eq11} with
$n=2$, we can show that $\bL^{1/2}\od \bL^{1/2}=\bL$. Thus,
$\bL^{1/2}$ in \eq{sa5eq12} is a square root for $\bL$ in the rings
$\bigl(K_0^{\hat\mu}(\Sch_X),\od\bigr),\bigl(\cM^{\hat\mu}_X,\od\bigr)$.
Note that $\bL^{1/2}\cdot \bL^{1/2}\ne\bL$.

Equivalently, we could have defined
\e
\smash{\bL_X^{1/2}=[X,\id_X,\hat\io] \bd \bL^{1/2}_{\K}\in
K^{\hat\mu}_0(\Sch_{X}),}
\label{sa5eq13}
\e
where $\bL^{1/2}_{\K}\in K^{\hat\mu}_0(\Sch_{\K}).$ We can now
define $\bL^{n/2}\in K_0^{\hat\mu}(\Sch_X)$ for $n\ge 0$ and $\bL^{n/2}\in\cM^{\hat\mu}_X$ for $n\in \Z$ in the obvious way, such that
$\bL^{m/2}\od\bL^{n/2}=\bL^{(m+n)/2}$.
\label{sa5def4}
\end{dfn}

Next, following \cite[\S 2.5]{BJM}, which was motivated by ideas in
Kontsevich and Soibelman \cite[\S 4.5]{KoSo1}, we define principal
$\Z/2\Z$-bundles $P\ra X$, associated motives $\Up(P)$, and a quotient
ring of motives $\oM_X^{\hat\mu}$ in
which~$\Up(P\ot_{\Z/2\Z}Q)=\Up(P)\od\Up(Q)$ for all $P,Q$.

\begin{dfn} Let $X$ be a $\K$-scheme. A {\it principal\/
$\Z/2\Z$-bundle\/} $P\ra X$ is a proper, surjective, \'etale morphism
of $\K$-schemes $\pi:P\ra X$ together with a free involution
$\si:P\ra P$, such that the orbits of $\Z/2\Z=\{1,\si\}$ are the
fibres of $\pi$. The {\it trivial\/ $\Z/2\Z$-bundle\/} is
$\pi_X:X\t\Z/2\Z\ra X$. We will use the ideas of {\it isomorphism\/}
of principal bundles $\io:P\ra Q$, {\it section\/} $s:X\ra P$, {\it
tensor product\/} $P\ot_{\Z/2\Z}Q$, and {\it pullback\/} $f^*(P)\ra Y$
under a 1-morphism of stacks $f:Y\ra X$, all of which are defined in
the obvious ways.

Write $(\Z/2\Z)(X)$ for the abelian group of isomorphism classes $[P]$
of principal $\Z/2\Z$-bundles $P\ra X$, with multiplication
$[P]\cdot[Q]=[P\ot_{\Z/2\Z}Q]$ and identity $[X\t\Z/2\Z]$. Since
$P\ot_{\Z/2\Z}P\cong X\t\Z/2\Z$ for each $P\ra X$, each element of
$(\Z/2\Z)(X)$ is self-inverse, and has order 1 or 2.

If $\pi:P\ra X$ is a principal $\Z/2\Z$-bundle over $X$, define a motive
\begin{equation*}
\smash{\Up(P)=\bL^{-1/2}\od\bigl([X,\id,\hat\io]-[P,\pi,\hat r]\bigr)\in
\cM_X^{\hat\mu},}
\end{equation*}
where $\hat r$ is the $\hat\mu$-action on $P$ induced by the
$\mu_2$-action on $P$ from the principal $\Z/2\Z$-bundle structure, as
$\mu_2\cong\Z/2\Z$.  If $P=X\t\Z/2\Z$ is trivial then
\begin{align*}
\Up(X\t\Z/2\Z)&=\bL^{-1/2}\od\bigl([X,\id,\hat\io]
-[X\t\Z/2\Z,\pi,\hat r]\bigr)\\
&=\bL^{-1/2}\od\bL^{1/2}\od[X,\id,\hat\io]=[X,\id,\hat\io],
\end{align*}
using \eq{sa5eq12}. Note that $[X,\id,\hat\io]$ is the identity in the
ring~$\cM_X^{\hat\mu}$.

As $\Up(P)$ only depends on $P$ up to isomorphism, $\Up$ factors via
$(\Z/2\Z)(X)$, and we may consider $\Up$ as a map~$(\Z/2\Z)(X)\ra
\cM_X^{\hat\mu}$.

For our applications, we want $\Up:(\Z/2\Z)(X)\ra \cM_X^{\hat\mu}$ to be
a group morphism with respect to the multiplication $\od$ on
$\cM_X^{\hat\mu}$, but we cannot prove that it is. Our solution is
to pass to a quotient ring $\oM_X^{\hat\mu}$ of $\cM_X^{\hat\mu}$
such that the induced map $\Up:(\Z/2\Z)(X)\ra\oM_X^{\hat\mu}$ is a group
morphism. If we simply defined $\oM_X^{\hat\mu}$ to be the quotient
ring of $\cM_X^{\hat\mu}$ by the relations
$\Up(P\ot_{\Z/2\Z}Q)-\Up(P)\od\Up(Q)=0$ for all $[P],[Q]$ in $(\Z/2\Z)(X)$ then pushforwards $\phi_*:\oM_X^{\hat\mu}\ra\oM_Y^{\hat\mu}$ would
not be defined for general $\phi:X\ra Y$. So we impose a
more complicated relation.

For each $\K$-scheme $Y$, define $I_Y^{\hat\mu}$ to be the ideal in
the commutative ring $\bigl(\cM_Y^{\hat\mu},\od\bigr)$ generated by
elements $\phi_*\bigl(\Up(P\ot_{\Z/2\Z}Q)-\Up(P)\od\Up(Q)\bigr)$ for
all $\K$-scheme morphisms $\phi:X\ra Y$ and principal $\Z/2\Z$-bundles
$P,Q\ra X$, and define $\oM_Y^{\hat\mu}=\cM_Y^{\hat\mu}/
I_Y^{\hat\mu}$ to be the quotient, as a commutative ring with
multiplication `$\od$', with projection $\Pi_Y^{\hat\mu}:
\cM_Y^{\hat\mu}\ra\oM_Y^{\hat\mu}$. Kontsevich and Soibelman
\cite[\S 4.5]{KoSo1} introduce a relation in their motivic rings
which has a similar effect.

Note that in $\oM_Y^{\hat\mu}$ we do not have the second
multiplication `$\,\cdot\,$', since we do not require
$I_Y^{\hat\mu}$ to be an ideal in $\bigl(\cM_Y^{\hat\mu},
\cdot\bigr)$. Also $\boxt$ and $\Pi_Y:\cM_Y^{\hat\mu}\ra\cM_Y$ on
$\cM_Y^{\hat\mu}$ do not descend to $\oM_Y^{\hat\mu}$. Apart from
this, all the structures on $\cM_Y^{\hat\mu}$ above descend to
$\oM_Y^{\hat\mu}$: operations $\od,\bd$, pushforwards $\phi_*$ and
pullbacks $\phi^*$, and elements $\bL,\bL^{1/2},\ab\Up(P)$. By
definition, $\oM_X^{\hat\mu}$ has the property that
\begin{equation*}
\smash{\Up(P\ot_{\Z/2\Z}Q)=\Up(P)\od\Up(Q)\quad\text{in $\oM_X^{\hat\mu}$}}
\end{equation*}
for all principal $\Z/2\Z$-bundles~$P,Q\ra X$.
\label{sa5def5}
\end{dfn}

\subsection{Motivic vanishing cycles, and d-critical loci}
\label{sa52}

Following Denef and Loeser \cite{DeLo}, we define motivic nearby
cycles, motivic Milnor fibres, and motivic vanishing cycles:

\begin{dfn} Let $U$ be a smooth $\K$-scheme and $f:U\ra\bA^1$ a
regular function, and set $U_0=f^{-1}(0)\subseteq U$. Then Denef and
Loeser \cite[\S 3.5]{DeLo} and Looijenga \cite[\S 5]{Looi} define the
{\it motivic nearby cycle} of $f$, an element $MF_{U,f}^{\rm mot}$
of $\cM_{U_0}^{\hat\mu}$ or $\oM_{U_0}^{\hat\mu}$. It has an
intrinsic definition using arc spaces and the motivic zeta function,
which we will not explain, but we will give a formula \cite[\S
3.3]{DeLo}, \cite[\S 5]{Looi} for $MF_{U,f}^{\rm mot}$ involving
choosing a resolution of $f$.

If $f=0$ then $MF_{U,f}^{\rm mot}=0$, so suppose $f$ is not
constant. By Hironaka's Theorem \cite{Hiro} we can choose a {\it
resolution\/} $(\ti U,\pi)$ of $f$. That is, $\ti U$ is a smooth
$\K$-scheme and $\pi:\ti U\ra U$ a proper morphism, such that
$\pi\vert_{\ti U\sm \pi^{-1}(U_0)}:\ti U\sm \pi^{-1}(U_0)\ra U \sm
U_0$ is an isomorphism, and $\pi^{-1}(U_0)^\red$ has only normal
crossings as a $\K$-subscheme of~$\ti U$.

Write $E_i$, $i\in J$ for the irreducible components of
$\pi^{-1}(U_0)$. For each $i\in J$, denote by $N_i$ the multiplicity
of $E_i$ in the divisor of $f\ci\pi$ on $\ti U$, and by $\nu_i - 1$ the
multiplicity of $E_i$ in the divisor of $\pi^*(\d x)$, where $\d x$
is a local non vanishing volume form at any point of $\pi(E_i)$. For
$I \subset J$, we consider the smooth $\K$-scheme~$E^\ci_I=
\bigl(\bigcap_{i \in I}E_i\bigr) \sm \bigl(\bigcup_{j \in J \sm I}
E_j\bigr)$.

Let $m_I=\gcd(N_i)_{i \in I}$. We introduce an unramified Galois
cover $\ti E^\ci_I$ of $E^\ci_I$, with Galois group $\mu_{m_I}$, as
follows. Let $\ti U'$ be an affine Zariski open subset of $\ti U$, such
that, on $\ti U'$, $f \ci \pi = uv^{m_I}$, with $u:\ti U'\ra\bA^1\sm\{0\}$
and $v:\ti U'\ra\bA^1$. Then the restriction of $\ti E^\ci_I$ above
$E^\ci_I \cap \ti U'$, denoted by $\ti E_I^\ci \cap \ti U'$, is defined as
\begin{equation*}
\smash{\ti E_I^\ci \cap \ti U'=\bigl\{(z,w)\in\bA^1\t(E^\ci_I \cap \ti U'):z^{m_I}=u(w)^{-1}\bigr\}.}
\end{equation*}
Gluing together the covers $\ti E^\ci_I \cap \ti U'$ in the
obvious way, we obtain the cover $\ti E^\ci_I$ of $E^\ci_I$ which
has a natural $\mu_{m_I}$-action $\rho_I$, obtained by multiplying
the $z$-coordinate by elements of $\mu_{m_I}$. This
$\mu_{m_I}$-action on $\ti E^\ci_I$ induces a $\hat\mu$-action
$\hat\rho_I$ on $\ti E^\ci_I$. Then
\e
\smash{MF_{U,f}^{\rm mot}=
\ts\sum_{\es\ne I\subseteq J}(1-\bL)^{|I|-1}
\bigl[\ti E^\ci_I,\pi_{U_0},\hat\rho_I\bigr]
\quad\text{in $\cM_{U_0}^{\hat\mu}$.}}
\label{sa5eq14}
\e
It is independent of the choice of resolution $(\ti U,\pi)$. The
fibre $MF_{U,f}^{\rm mot}\vert_x$ at each $x\in U_0$ is called the
{\it motivic Milnor fibre\/} of $f$ at~$x$.

Now let $X=\Crit(f)\subseteq U$, as a closed $\K$-subscheme of $U$.
Since $f$ is constant on the reduced scheme $X^\red$, $f(X)$ is
finite, and we may write $X=\coprod_{c\in f(X)}X_c$, where
$X_c\subseteq X$ is the open and closed $\K$-subscheme
with~$X_c^\red=f\vert_{X^\red}^{-1}(c)$.

Consider the restriction $MF_{U,f}^{\rm mot}\vert_{U_0\sm X_0}$ in
$\cM_{U_0\sm X_0}^{\hat\mu}$ or $\oM_{U_0\sm X_0}^{\hat\mu}$. We can
choose $(\ti U,\pi)$ above with $\pi\vert_{\ti
U\sm\pi^{-1}(X_0)}:\ti U\sm\pi^{-1}(X_0)\ra U\sm X_0$ an
isomorphism. Write $D_1,\ldots,D_k$ for the irreducible components
of $\pi^{-1}(U_0\sm X_0)\cong U_0\sm X_0$. They are disjoint as
$\pi^{-1}(U_0\sm X_0)$ is nonsingular. The closures
$\,\ov{\!D}_1,\ldots,\,\ov{\!D}_k$ (which need not be disjoint) are
among the divisors $E_i$, so we write $\,\ov{\!D}_a=E_{i_a}$ for
$a=1,\ldots,k$, with $\{i_1,\ldots,i_k\}\subseteq I$. Clearly
$N_{i_a}=\nu_{i_a}=1$ for $a=1,\ldots,k$.

Then in \eq{sa5eq14} the only nonzero contributions to $MF_{U,f}^{\rm
mot}\vert_{U_0\sm X_0}$ are from $I=\{i_a\}$ for $a=1,\ldots,k$,
with $\ti E^\ci_{\{i_a\}}\cong E^\ci_{\{i_a\}}\cong D_a$, and the
$\hat\mu$-action on $\ti E^\ci_{\{i_a\}}$ is trivial as it factors
through the action of $\mu_1=\{1\}$. Hence
\begin{align*}
MF_{U,f}^{\rm mot}\vert_{U_0\sm X_0}&\!=\!\!\ts\sum\limits_{a=1}^k
\bigl[\ti E^\ci_{\{i_a\}},\pi_{U_0\sm X_0},\hat\io\bigr]
\!=\!\!\ts\sum\limits_{a=1}^k\bigl[D_a,\pi_{U_0\sm X_0},\hat\io\bigr]
\!=\!\bigl[U_0\sm X_0,\id_{U_0\sm X_0},\hat\io\bigr].
\end{align*}
Therefore $[U_0,\id_{U_0},\hat\io]-MF_{U,f}^{\rm mot}$ is supported on
$X_0\subseteq U_0$, and by restricting to $X_0$ we regard it as an
element of $\cM_{X_0}^{\hat\mu}$ or~$\oM_{X_0}^{\hat\mu}$.

Define the {\it motivic vanishing cycle\/} $MF_{U,f}^{\rm
mot,\phi}$ of $f$ in $\cM_X^{\hat\mu}$ or $\oM_X^{\hat\mu}$ by
\e
\smash{MF_{U,f}^{\rm mot,\phi}\big\vert_{X_c}=\bL^{-\dim U/2}\od
\bigl([U_c,\id_{U_c},\hat\io]-MF_{U,f-c}^{\rm mot}\bigr)\big\vert_{X_c}}
\label{sa5eq15}
\e
for each $c\in f(X)$, where $\od$ and $\bL^{-\dim U/2}$ are as in
Definitions \ref{sa5def3} and~\ref{sa5def4}.
\label{sa5def6}
\end{dfn}

Here is \cite[Th.~5.10]{BJM}, which we will generalize to stacks in
Theorem~\ref{sa5thm2}.

\begin{thm} Let\/ $(X,s)$ be an algebraic d-critical locus with
orientation $K_{X,s}^{1/2},$ for $X$ of finite type. Then there
exists a unique motive $MF_{X,s}\in\smash{\oM^{\hat\mu}_X}$ with the
property that if\/ $(R,U,f,i)$ is a critical chart on $(X,s),$ then
\e
\smash{MF_{X,s}\vert_R=i^*\bigl(MF_{U,f}^{\rm mot,\phi}\bigr)\od\Up
(Q_{R,U,f,i})\quad\text{in\/ $\oM^{\hat\mu}_R,$}}
\label{sa5eq16}
\e
where $Q_{R,U,f,i}\ra R$ is the principal\/ $\Z/2\Z$-bundle
parametrizing local isomorphisms $\al:K_{X,s}^{1/2}
\vert_{R^\red}\!\ra\! i^*(K_U)\vert_{R^\red}$ with\/ $\al\!\ot\!\al\!=\!
\io_{R,U,f,i},$ for $\io_{R,U,f,i}$ as in\/~\eq{sa3eq4}.

\label{sa5thm1}
\end{thm}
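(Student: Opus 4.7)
The plan is to construct $MF_{X,s}$ by gluing local motives attached to critical charts, using the Zariski-local nature of $\oM^{\hat\mu}_X$ (equivalent to the cut-and-paste relation \eq{sa5eq1}) to assemble them into a single element. For each critical chart $(R,U,f,i)$ on $(X,s)$, define
\begin{equation*}
MF_{R,U,f,i}:=i^*\bigl(MF_{U,f}^{\rm mot,\phi}\bigr)\od\Up(Q_{R,U,f,i})\in\oM^{\hat\mu}_R.
\end{equation*}
By \cite[Prop.~2.8]{Joyc2}, critical charts form a Zariski open cover of $X$, and as motives on schemes satisfy Zariski descent, it suffices to verify that $MF_{R,U,f,i}\vert_{R\cap S}=MF_{S,V,g,j}\vert_{R\cap S}$ for any two critical charts $(R,U,f,i),(S,V,g,j)$. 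Uniqueness is then automatic.

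To compare two charts on their overlap, I would use Theorem \ref{sa3thm2}: each point of $R\cap S$ has a neighbourhood covered by subcharts that embed into a common third chart $(T,W,h,k)$ via $\Phi:(R',U',f',i')\hookra(T,W,h,k)$ and $\Psi:(S',V',g',j')\hookra(T,W,h,k)$. Thus the whole question reduces to showing, for a single embedding $\Phi:(R,U,f,i)\hookra(T,W,h,k)$ of critical charts, the identity
\begin{equation*}
i^*\bigl(MF_{U,f}^{\rm mot,\phi}\bigr)\od\Up(Q_{R,U,f,i})=k\vert_R^*\bigl(MF_{W,h}^{\rm mot,\phi}\bigr)\od\Up(Q_{T,W,h,k})\vert_R \quad\text{in $\oM^{\hat\mu}_R$.}
\end{equation*}

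The main technical input is a \emph{motivic Thom--Sebastiani comparison for embeddings of critical charts}, which is the motivic analogue of \cite[Th.~5.4]{BBDJS} used in the perverse-sheaf proof above. Locally around $i(R)\subseteq W$, after choosing a splitting of the normal bundle of $\Phi(U)\subseteq W$, there is an \'etale presentation of $W$ as $U\t U^\perp$ with $h=f\boxplus q$ for $q$ a non-degenerate quadratic form on $U^\perp$. The motivic Thom--Sebastiani theorem of Denef--Loeser then gives
\begin{equation*}
k\vert_R^*\bigl(MF_{W,h}^{\rm mot,\phi}\bigr)\cong i^*\bigl(MF_{U,f}^{\rm mot,\phi}\bigr)\od MF_{U^\perp,q}^{\rm mot,\phi}\vert_R,
\end{equation*}
and an explicit computation with $MF_{U^\perp,q}^{\rm mot,\phi}$ (e.g.\ \cite[\S 2.8]{BJM}) identifies it with $\Up(P_q)$ for the principal $\Z/2\Z$-bundle $P_q$ of orientations of $U^\perp$ associated to the form $q$. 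On the other side, the definition of $Q_{R,U,f,i},Q_{T,W,h,k}$ via square roots $\al\ot\al=\io$ together with the compatibility of $\io_{R,U,f,i}$ and $\io_{T,W,h,k}$ under the embedding (i.e.\ the exact sequence \eq{sa3eq5}) yields a canonical isomorphism $Q_{T,W,h,k}\vert_R\cong Q_{R,U,f,i}\ot_{\Z/2\Z}P_q$ of principal $\Z/2\Z$-bundles. Multiplicativity of $\Up$ under $\ot_{\Z/2\Z}$ \emph{in the quotient ring} $\oM^{\hat\mu}_R$ (which is exactly why we pass from $\cM^{\hat\mu}_R$ to $\oM^{\hat\mu}_R$) then delivers the required identity.

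The hard part will be the motivic Thom--Sebastiani step: the classical Thom--Sebastiani formula for $MF^{\rm mot}$ holds essentially in $\cM^{\hat\mu}$, and one must be careful that the $\boxplus$-decomposition $h=f\boxplus q$ is available only \'etale-locally, so the comparison may require \'etale descent and/or further localization. Once the embedding case is verified, the existence and uniqueness of $MF_{X,s}$, as well as the stated property \eq{sa5eq16}, follow by the gluing argument sketched above. By a standard disjoint-union-of-charts argument (as in the proof of Theorem \ref{sa3thm6} in \S\ref{sa34}), the resulting motive is independent of the cover of critical charts chosen.
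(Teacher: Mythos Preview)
The paper does not prove this theorem; it is quoted verbatim as \cite[Th.~5.10]{BJM} and used as input. Your sketch is essentially the strategy of the proof in \cite{BJM}: define the local motive on each critical chart, reduce the comparison on overlaps to the case of a single embedding of charts via the common-refinement Theorem~\ref{sa3thm2}, and then handle an embedding $\Phi:(R,U,f,i)\hookra(T,W,h,k)$ by combining the motivic Thom--Sebastiani theorem with the explicit identification of the motivic vanishing cycle of a nondegenerate quadratic form $q$ with $\Up$ of its orientation $\Z/2\Z$-bundle, together with the multiplicativity $\Up(P\ot_{\Z/2\Z}Q)=\Up(P)\od\Up(Q)$ that holds in the quotient ring $\oM^{\hat\mu}_R$. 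So your outline matches the actual argument in \cite{BJM}.
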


We prove a result on smooth pullbacks and pushforwards of the
motives $MF_{X,s}$ of Theorem \ref{sa5thm1}, a motivic analogue of
Proposition~\ref{sa4prop3}(a).

\begin{prop} Let\/ $\phi:(X,s)\ra (Y,t)$ be a morphism of (finite
type) algebraic d-critical loci in the sense of\/ {\rm\S\ref{sa31},}
and suppose $\phi:X\ra Y$ is smooth of relative dimension $n$. Let\/
$K_{Y,t}^{1/2}$ be an orientation for $(Y,t),$ so that Corollary\/
{\rm\ref{sa3cor1}} defines an induced orientation\/ $\smash{K_{X,s}^{1/2}}$ for $(X,s)$. Theorem\/ {\rm\ref{sa5thm1}} now defines motives
$MF_{X,s},MF_{Y,t}$ on $X,Y$. These are related by
\ea
\phi^*\bigl(MF_{Y,t}\bigr)&=\bL^{n/2}\od MF_{X,s}\in\oM^{\hat\mu}_X,
\label{sa5eq17}\\
\phi_*\bigl(MF_{X,s}\bigr)&=\bL^{-n/2}\od MF_{Y,t} \od
[X,\phi,\hat\io] \in \oM^{\hat\mu}_Y.
\label{sa5eq18}
\ea

\label{sa5prop1}
\end{prop}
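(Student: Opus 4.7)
\medskip

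\noindent\textbf{Proof plan.} The strategy is to reduce both identities to local calculations on critical charts, using the characterization of $MF_{X,s}$ and $MF_{Y,t}$ from Theorem \ref{sa5thm1}.

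First, I would establish a local model for smooth morphisms of d-critical loci. Given $x\in X$ with $y=\phi(x)\in Y$, following the proof of Proposition \ref{sa3prop1} in \cite{Joyc2}, I would construct critical charts $(R,U,f,i)$ on $(X,s)$ and $(S,V,g,j)$ on $(Y,t)$ with $x\in R$, $y\in S$, $\phi(R)\subseteq S$, together with a morphism $\Phi:U\ra V$ smooth of relative dimension $n$ satisfying $f=g\ci\Phi$ and $\Phi\ci i=j\ci\phi\vert_R$. Such charts can be arranged to cover $X$ in the Zariski topology. Next, I would verify that the $\Z/2\Z$-bundle $\phi\vert_R^*(Q_{S,V,g,j})$ is canonically isomorphic to $Q_{R,U,f,i}$: this is a direct consequence of Corollary \ref{sa3cor1}, since the induced orientation $K_{X,s}^{1/2}=\phi\vert_{X^\red}^*(K_{Y,t}^{1/2})\ot(\La^{\rm top}T^*_{X/Y})\vert_{X^\red}$ is tailored so that pulling back the isomorphism $\io_{S,V,g,j}$ squared-by-squared gives $\io_{R,U,f,i}$.

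The key analytic ingredient is a motivic smooth pullback formula: for $\Phi:U\ra V$ smooth of relative dimension $n$ and $g:V\ra\bA^1$ with $f=g\ci\Phi$, writing $X_f=\Crit(f)$ and $Y_g=\Crit(g)$, one has
\e
\smash{\Phi\vert_{X_f}^*\bigl(MF_{V,g}^{\rm mot,\phi}\bigr)=\bL^{n/2}\od MF_{U,f}^{\rm mot,\phi}\quad\text{in\/ $\oM^\stm_{X_f}$.}}
\label{smooth_pullback}
\e
This follows from the standard smooth base change for motivic nearby cycles (as in Denef--Loeser \cite{DeLo}, or Guibert--Loeser--Merle), applied to \eq{sa5eq15}: the shift $\bL^{n/2}$ comes precisely from $\dim U-\dim V=n$ in the dimensional normalization of \eq{sa5eq15}. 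Granting \eq{smooth_pullback}, on each chart $R$ as above,
\begin{align*}
\phi\vert_R^*\bigl(MF_{Y,t}\vert_S\bigr)&=\phi\vert_R^*\bigl(j^*(MF_{V,g}^{\rm mot,\phi})\od \Up(Q_{S,V,g,j})\bigr) \\
&= i^*\bigl(\Phi\vert_{X_f}^*(MF_{V,g}^{\rm mot,\phi})\bigr)\od \Up\bigl(\phi\vert_R^*(Q_{S,V,g,j})\bigr)\\
&=i^*\bigl(\bL^{n/2}\od MF_{U,f}^{\rm mot,\phi}\bigr)\od \Up(Q_{R,U,f,i})=\bL^{n/2}\od MF_{X,s}\vert_R,
\end{align*}
using \eq{sa5eq16} in the first and last equalities, compatibility of $\phi^*$ with $\od$ and $\Up$ in the second, \eq{smooth_pullback} and the $\Z/2\Z$-bundle identification in the third. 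Since such $R$ cover $X$, equation \eq{sa5eq17} follows by the uniqueness clause of Theorem \ref{sa5thm1}.

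For \eq{sa5eq18}, I would apply $\phi_*$ to \eq{sa5eq17}. The right-hand side becomes $\bL^{n/2}\od\phi_*(MF_{X,s})$, since $\bL^{n/2}$ is pulled back from $\Spec\K$ and commutes through $\phi_*$. For the left-hand side, the projection formula $\phi_*\bigl(\phi^*(M)\od N\bigr)=M\od\phi_*(N)$ in $\oM^\stm$ (with $N=[X,\id_X,\hat\io]$) gives
\begin{equation*}
\phi_*\bigl(\phi^*(MF_{Y,t})\bigr)=MF_{Y,t}\od\phi_*\bigl([X,\id_X,\hat\io]\bigr)=MF_{Y,t}\od[X,\phi,\hat\io].
\end{equation*}
Equating and multiplying by $\bL^{-n/2}$ yields \eq{sa5eq18}. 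The main obstacle is establishing \eq{smooth_pullback} carefully in the $\oM^\stm$-framework, which requires tracing through the Denef--Loeser resolution formula \eq{sa5eq14} under smooth pullback and checking compatibility of the $\hat\mu$-actions on the Galois covers $\ti E^\ci_I$; once this is in place the rest is formal bookkeeping with orientations and projection formulas.
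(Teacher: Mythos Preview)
Your proposal is correct and follows essentially the same approach as the paper. The paper makes your key step \eq{smooth_pullback} fully explicit rather than citing it: given a resolution $\pi:\ti V\ra V$ of $g$, it observes that $\ti U=U\t_{\Phi,V,\pi}\ti V$ is a resolution of $f$ (since $\Phi$ is smooth), identifies the exceptional components and their covers $\ti E^\ci_I$ as pullbacks of those for $g$, and reads off $MF_{U,f}^{\rm mot}=\Phi\vert_{U_0}^*(MF_{V,g}^{\rm mot})$ directly from \eq{sa5eq14}; the factor $\bL^{n/2}$ then appears exactly as you say from the normalization in \eq{sa5eq15}. Your treatment of the $\Z/2\Z$-bundles $Q$ and the derivation of \eq{sa5eq18} via $\phi_*\ci\phi^*(M)=M\od[X,\phi,\hat\io]$ match the paper's argument precisely (note the ambient ring is $\oM^{\hat\mu}$, not $\oM^\stm$, but this is cosmetic).
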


\begin{proof} If $x\in X$ with $\phi(x)=y\in Y$ then the proof of
Proposition \ref{sa3prop1} above in \cite{Joyc2} shows we may choose
critical charts $(R,U,f,i), (S,V,g,j)$ on $(X,s),(Y,t)$ with $x\in
R$, $y\in\phi(R)\subseteq S$ of minimal dimensions $\dim U=\dim
T_xX$, $\dim V=\dim T_yY$, and $\Phi:U\ra V$ smooth of relative
dimension $n$ with $f=g\ci\Phi$ and~$\Phi\ci i=j\ci\phi$.

Let $\pi:\ti V \ra V$ be an embedded resolution of singularities of
$g$. Then $\ti U:=U\t_{\Phi,V,\pi}\ti V$ is an embedded resolution
of singularities of $f$, since $\Phi$ is smooth and $f=g\ci\Phi$. As
in Definition \ref{sa5def6}, let $F_i$ for $i\in J$ be the
irreducible components of $\pi^{-1}(V_0)$, so that
$\pi^{-1}(V_0)=\bigcup_{i\in J}F_i$, with multiplicities $N_i$ in
the divisor of $g\ci\pi$ on $\ti V$, and $\nu_i-1$ in the divisor of
$\pi^*(\d x)$, and define $F^\ci_I=\bigl(\bigcap_{i \in I}F_i\bigr)
\sm \bigl(\bigcup_{j \in J\sm I}F_j\bigr)$ and covers $\ti
F^\ci_I\ra F^\ci_I$ for all~$I\subseteq J$.

Define $E_i=U\t_{\Phi,V,\pi\vert_{F_i}}F_i\subset\pi^{-1}(U_0)
\subset\ti U$. Then $\pi^{-1}(U_0)=\bigcup_{i\in J}E_i$. The $E_i$
need not be irreducible, or nonempty, but this is not important.
Neglecting this, we can treat the $E_i$, $i\in J$ as the components
for $(\ti U,\pi)$ in Definition \ref{sa5def6}, and then they have
the same multiplicities $N_i,\nu_i$ as the $F_i$ for $(\ti V,\pi)$,
and the $E^\ci_I,\ti E^\ci_I$ for $I\subseteq J$ defined in
Definition \ref{sa5def6} satisfy $E_I^\ci\cong U\t_VF_I^\ci$ and $\ti
E_I^\ci\cong U\t_V\ti F_I^\ci$. Thus we have
\begin{align*}
MF_{U,f}^{\rm mot}&=\ts\sum_{\es\ne I\subseteq J}(1-\bL)^{|I|-1}
\bigl[\ti E^\ci_I,\pi_{U_0},\hat\rho_I\bigr]\\
&=\ts\sum_{\es\ne I\subseteq J}(1-\bL)^{|I|-1}
\bigl[\ti F^\ci_I\t_{\pi_{V_0},V_0,\Phi\vert_{U_0}}U_0,
\pi_{U_0},\hat\rho_I\bigr]\\
&=\ts\Phi\vert_{U_0}^*\raisebox{-1pt}{$\displaystyle\Bigl[$}
\sum_{\es\ne I\subseteq J}(1-\bL)^{|I|-1}
\bigl[\ti F^\ci_I,\pi_{V_0},\hat\rho_I\bigr]
\raisebox{-1pt}{$\displaystyle\Bigr]$}
=\Phi\vert_{U_0}^*\bigl(MF_{V,g}^{\rm mot}\bigr).
\end{align*}
So from \eq{sa5eq15} we deduce that
\e
\smash{\Phi\vert_{\Crit(f)}^*(MF_{V,g}^{\rm mot,\phi}) =\bL^{n/2}\od
MF_{U,f}^{\rm mot,\phi},}
\label{sa5eq19}
\e
using $\Phi\vert_{U_c}^*\bigl([V_c,\id_{V_c},
\hat\io]\bigr)=[U_c,\id_{U_c},\hat\io]$, where the factor
$\bL^{n/2}$ is to convert the factor $\bL^{-\dim U/2}$ in
$MF_{U,f}^{\rm mot,\phi}$ to the factor $\bL^{-\dim V/2}$ in
$MF_{V,g}^{\rm mot,\phi}$.

Combining \eq{sa5eq19} with \eq{sa5eq16} for $(X,s),(R,U,f,i)$ and
the pullback of \eq{sa5eq16} for $(Y,t),(S,V,g,j)$ by
$\phi\vert_R:R\ra S$, and noting that $\phi^*\ci
j^*=i^*\ci\Phi\vert_{\Crit(f)}^*$ since $j\ci\phi=\Phi\ci i$, we
deduce the restriction of \eq{sa5eq17} to $R\subseteq X$. As we can
cover $X$ by such open $R$, this proves \eq{sa5eq17}. Equation
\eq{sa5eq18} follows by applying $\phi_*$ and noting that
$\phi_*\ci\phi^*(M)=M\od [X,\phi,\hat\io]$ for all $\phi:X\ra Y$
and~$M\in\oM^{\hat\mu}_Y$.
\end{proof}

\subsection{Rings of motives over Artin stacks}
\label{sa53}

We now generalize the material of \S\ref{sa51} to Artin stacks. Our
definitions are new, but very similar to work by Joyce \cite{Joyc1}
on `stack functions', and Kontsevich and Soibelman \cite[\S 4.1--\S
4.2]{KoSo2}. As in \cite{Joyc1}, we restrict our attention to Artin
$\K$-stacks $X$ (always assumed of finite type) with {\it affine
geometric stabilizers}. In \S\ref{sa54}--\S\ref{sa55} we will
restrict further, to stacks which are {\it locally a global
quotient}.

\begin{dfn} An Artin $\K$-stack $X$ has {\it affine geometric
stabilizers\/} if the stabilizer group $\Iso_X(x)$ is an affine
algebraic group for all points~$x\in X$.

An Artin $\K$-stack $X$ is {\it locally a global quotient\/} if we
may cover $X$ by Zariski open $\K$-substacks $Y\subseteq X$
equivalent to global quotients $[S/\GL(n,\K)]$, where $S$ is a
$\K$-scheme with a $\GL(n,\K)$-action.

If $X$ is locally a global quotient then it has affine geometric
stabilizers, since the stabilizer groups of $[S/\GL(n,\K)]$ are
closed $\K$-subgroups of $\GL(n,\K)$, and so are affine. The authors
do not know any example of an Artin $\K$-stack with affine geometric
stabilizers which is not locally a global quotient.
\label{sa5def7}
\end{dfn}

Deligne--Mumford stacks have affine geometric stabilizers, and are
locally a global quotient if their stabilizers are generically
trivial. If $\cM$ is a moduli stack of coherent sheaves $F$ on a
projective scheme $Y$, then using Quot-schemes one can show that
$\cM$ is locally a global quotient. If $\cM$ is a moduli stack of
complexes $F^\bu$ in $D^b\coh(Y)$ with $\Ext^{<0}(F^\bu,F^\bu)=0$
then $\cM$ has affine geometric stabilizers, since $\Iso_\cM(F^\bu)$
is the invertible elements in the finite-dimensional algebra
$\Hom(F^\bu,F^\bu)$, and so is affine. We require affine geometric
stabilizers to use a result of Kresch~\cite[Prop.~3.5.9]{Kres}:

\begin{prop}[Kresch] Let\/ $X$ be a (finite type) Artin $\K$-stack
with affine geometric stabilizers. Then $X$ admits a stratification
$X=\coprod_{i\in I}X_i,$ for $I$ a finite set and\/ $X_i\subseteq X$
a locally closed\/ $\K$-substack, such that $X_i$ is equivalent to a
global quotient stack\/ $[S_i/\GL(n_i,\K)]$ for each\/ $i\in I,$
where $S_i$ is a (finite type)\/ $\K$-scheme with an action of\/
$\GL(n_i,\K)$. Conversely, any Artin $\K$-stack\/ $X$ admitting such
a stratification has affine geometric stabilizers.
\label{sa5prop2}
\end{prop}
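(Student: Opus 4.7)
The converse direction is immediate: if $X=\coprod_{i\in I}X_i$ with each $X_i\simeq[S_i/\GL(n_i,\K)]$, then any point $x\in X$ lies in a unique stratum $X_i$, and $\Iso_X(x)\cong\Iso_{X_i}(x)$ is the stabilizer of a chosen lift of $x$ in $S_i$, which sits as a closed $\K$-subgroup of $\GL(n_i,\K)$ and is therefore an affine algebraic group.

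For the forward direction I would argue by Noetherian induction on closed substacks of $X$, which is legitimate because $X$ is of finite type over $\K$ and so has a Noetherian underlying topological space. By this reduction it suffices to prove that every nonempty finite type Artin $\K$-stack with affine geometric stabilizers contains a nonempty open substack equivalent to some global quotient $[S/\GL(n,\K)]$; applying this fact inductively to the closed complement (which inherits affine geometric stabilizers) produces a finite stratification of the required form.

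To produce such an open substack, pick a generic point $x_0\in X$ and set $G_0:=\Iso_X(x_0)$, which by hypothesis is an affine algebraic $\K$-group. Choose a faithful representation, i.e.\ a closed embedding $\rho:G_0\hookra\GL(n,\K)$ for some $n\ge 0$. The plan is to construct, after shrinking $X$ to a suitable open neighbourhood $U$ of $x_0$, a smooth surjective morphism $\pi:S\ra U$ on which $\GL(n,\K)$ acts freely making $\pi$ a principal $\GL(n,\K)$-bundle with $S$ representable by a $\K$-scheme; by faithfully flat descent it then follows that $U\simeq[S/\GL(n,\K)]$. Concretely, $\rho$ induces a morphism of classifying stacks $BG_0\ra B\GL(n,\K)$, and the $\K$-point $x_0:\Spec\K\ra X$ factors through $BG_0$, giving $\Spec\K\ra BG_0\ra B\GL(n,\K)$. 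Standard spreading-out arguments allow this to be extended to a morphism $V\ra B\GL(n,\K)$ for some Zariski open substack $V\subseteq X$ containing $x_0$, and the fibre product $S:=V\t_{B\GL(n,\K)}\Spec\K$ is then a principal $\GL(n,\K)$-bundle over $V$, a priori only as an algebraic space.

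The main obstacle, and the delicate part of Kresch's argument, is ensuring that the total space $S$ is actually representable by a scheme after possibly shrinking $V$ further. The idea is to use the classical fact that any finite type algebraic space contains a dense open subscheme, and then to arrange such a dense open to be $\GL(n,\K)$-invariant, so that descending along $\pi$ yields a Zariski open substack $U\subseteq V$ on which $S\vert_U$ is a $\K$-scheme with a $\GL(n,\K)$-action and $U\simeq[S\vert_U/\GL(n,\K)]$. The $\GL(n,\K)$-equivariance step is the technical heart of the argument and is handled via an equivariant Sumihiro-style embedding/approximation analysis as carried out in \cite[\S 3.5]{Kres}.
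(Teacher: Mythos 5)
The paper does not prove this proposition; it is quoted from Kresch \cite[Prop.~3.5.9]{Kres}, so I can only assess your attempt on its own terms rather than compare it with a proof in the paper.

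Your converse direction and the Noetherian-induction reduction (to finding a nonempty open global-quotient substack) are both fine. The gap is in the step that produces that open substack. What one needs is a rank-$n$ vector bundle $E$ on some nonempty open $V\subseteq X$ whose classifying map $V\ra B\GL(n,\K)$ is \emph{representable}, i.e.\ such that for every $x\in V$ the stabilizer homomorphism $\Iso_X(x)\ra\GL(E|_x)$ is injective; then the frame bundle $S=\mathrm{Isom}(\O_V^n,E)$ is an algebraic space carrying a free $\GL(n,\K)$-action, and $V\simeq[S/\GL(n,\K)]$. The datum that must be extended from the residual gerbe $BG_0\subseteq X$ at $x_0$ is therefore the morphism $BG_0\ra B\GL(n,\K)$ classifying the faithful representation $\rho$. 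Your text instead proposes to spread out the composite $\Spec\K\ra BG_0\ra B\GL(n,\K)$, which is just a trivial rank-$n$ bundle over a point; spreading that out gives the trivial bundle on $V$, whose classifying map factors through $\Spec\K\ra B\GL(n,\K)$ and so is never representable when $V$ has nontrivial stabilizers: the resulting $S$ would be $V\t\GL(n,\K)$, a scheme only if $V$ already was. So the morphism you extend is not the right one, and the construction as written produces nothing.

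Even read in the intended sense (extending $BG_0\ra B\GL(n,\K)$ to $V\ra B\GL(n,\K)$), the phrase ``standard spreading-out arguments'' conceals the real content: one must reduce to the reduced irreducible case, justify that vector bundles on the residual gerbe are the filtered colimit of vector bundles on nonempty open substacks, and then shrink $V$ further so that faithfulness of the stabilizer action on the fibres persists away from $x_0$. Only after all that does the issue you isolate in your last paragraph --- cutting the algebraic space $S$ down to a $\GL(n,\K)$-invariant open subscheme via Sumihiro-type equivariant approximation --- arise. Your outline correctly locates both places where the difficulties live, and correctly points to Kresch's \S 3.5 for the second, but it resolves neither the bundle-extension step nor the scheme-representability step, and as written the spreading-out step targets the wrong object.
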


For the rest of this paper, all Artin $\K$-stacks $X$ are assumed to
have affine geometric stabilizers. Here are the analogues of
Definitions \ref{sa5def1} and~\ref{sa5def2}:

\begin{dfn} Let $X$ be an Artin $\K$-stack (always assumed to be of
finite type, with affine geometric stabilizers). Consider pairs
$(R,\rho)$, where $R$ is a $\K$-scheme and $\rho:R\ra X$ a
1-morphism. Call two pairs $(R,\rho)$, $(R',\rho')$ {\it
equivalent\/} if there exists an isomorphism $\io:R\ra R'$ such that
$\rho'\ci\io$ and $\rho$ are 2-isomorphic 1-morphisms $R\ra X$.
Write $[R,\rho]$ for the equivalence class of $(R,\rho)$. Define the
Grothendieck ring $K_0(\Sch_X)$ of the category of $\K$-schemes over
$X$ to be the abelian group generated by equivalence classes
$[R,\rho]$, such that as for \eq{sa5eq1} for each closed
$\K$-subscheme $S$ of $R$ we have
\begin{equation*}
\smash{[R,\rho]=[S,\rho\vert_S]+[R\sm S,\rho\vert_{R\sm S}].}
\end{equation*}
When $X=\Spec\K$ we write $K_0(\Sch_\K)$ instead of~$K_0(\Sch_X)$.

Define a biadditive, commutative, associative product `$\,\cdot\,$'
on $K_0(\Sch_X)$ as in \eq{sa5eq2}. It makes $K_0(\Sch_X)$ into a
commutative ring, in general without identity. If $X$ is a
$\K$-scheme $K_0(\Sch_X)$ is as in Definition \ref{sa5def1}, with
identity~$[X,\id_X]$.

For Artin $\K$-stacks $X,Y$, define a biadditive, commutative,
associative {\it external tensor product\/} $\boxt: K_0(\Sch_X)\t
K_0(\Sch_Y)\ra K_0(\Sch_{X\t Y})$ by \eq{sa5eq4}. Taking $Y=\Spec\K$
we see that $\boxt$ makes $K_0(\Sch_X)$ into a module
over~$K_0(\Sch_\K)$.

Next we will define a stack analogue $\cM_X^\stk$ of the motivic
ring $\cM_X$ of \eq{sa5eq3} for $\K$-schemes $X$. Since we have no
identity in $K_0(\Sch_X)$ if $X$ is not a scheme, and we have not
defined a Tate motive $\bL$ in $K_0(\Sch_X)$, the analogue of
\eq{sa5eq3} does not make sense. Instead, we use the
$K_0(\Sch_\K)$-module structure, and define
\e
\smash{\cM_X^\stk=K_0(\Sch_X)\ot_{K_0(\Sch_\K)}K_0(\Sch_\K)\bigl[\bL^{-1},(\bL^k-1)^{-1},\; k=1,2,\ldots\bigr],}
\label{sa5eq20}
\e
where $\bL\in K_0(\Sch_\K)$ is as in Definition \ref{sa5def1}. The
product `$\,\cdot\,$' descends to $\cM^\stk_X$. When $X=\Spec\K$ we
write $\cM^\stk_\K$ instead of~$\cM^\stk_X$.

Note that for $X$ a $\K$-scheme, $\cM_X^\stk$ is not isomorphic to
$\cM_X$ in \eq{sa5eq3}, since we invert $\bL^k-1$ in $\cM_X^\stk$
but not in $\cM_X$. There is a natural projection $\cM_X\ra \cM_X^\stk$. The reason we invert $\bL^k\!-\!1$ as well as $\bL$ is that the motive of $\GL(n,\K)$ in $\cM_\K$~is
\begin{equation*}
\smash{[\GL(n,\K)]:=[\GL(n,\K),\pi_{\Spec\K}]=\bL^{n(n-1)/2}
\ts\prod_{k=1}^n(\bL^k-1),}
\end{equation*}
so that $[\GL(n,\K)]$ is invertible in $\cM^\stk_\K$.

Let $X$ be an Artin $\K$-stack (as usual of finite type, with affine
geometric stabilizers). Then Proposition \ref{sa5prop2} gives a
finite stratification $X=\coprod_{i\in I}X_i$ with
$X_i\simeq[S_i/\GL(n_i,\K)]$. Write $\pi_i:S_i\ra X$ for the
composition of 1-morphisms
$S_i\ra[S_i/\GL(n_i,\K)]\,{\buildrel\sim\over\longra}\,X_i\hookra
X$. Define elements $1_X,\bL\in\cM_X^\stk$ by
\e
\begin{split}
1_X&=\ts\sum_{i\in I}[\GL(n_i,\K)]^{-1}\boxt[S_i,\pi_i],\\
\bL&=\ts\sum_{i\in I}[\GL(n_i,\K)]^{-1}\boxt[\bA^1\t S_i,\pi_i\ci\pi_{S_i}],
\end{split}
\label{sa5eq21}
\e
where $[\GL(n_i,\K)]^{-1}\in\cM^\stk_\K$ exists as above. It is easy to show that these $1_X,\bL$ are independent of the choice of
$I,X_i,S_i,n_i$, and $1_X$ is the identity in~$(\cM_X^\stk,\cdot)$.

Let $\phi:X\ra Y$ be a 1-morphism of Artin $\K$-stacks. Define the
{\it pushforwards\/} $\phi_*: K_0(\Sch_X)\ra K_0(\Sch_Y)$ and
$\phi_*:\cM_X^\stk\ra \cM^\stk_Y$ by \eq{sa5eq5}. If $\phi$ is
representable in $\K$-schemes we may also define {\it pullbacks\/}
$\phi^*:K_0(\Sch_Y)\ra K_0(\Sch_X)$ and $\phi^*:\cM^\stk_Y\ra\cM^\stk_X$ by \eq{sa5eq6}. (Here $\phi$ is {\it representable in\/ $\K$-schemes} if $X\t_{\phi,Y,u}U$ is a $\K$-scheme for all $u:U\ra Y$ with $U$ a $\K$-scheme.) But if $\phi$ is not representable in $\K$-schemes then $R_i\t_{\rho_i,Y,\phi}X$ in \eq{sa5eq6} may not be a $\K$-scheme, so \eq{sa5eq6} does not make sense.

However, for general 1-morphisms $\phi:X\ra Y$ we can still define a
pullback morphism $\phi^*:\cM^\stk_Y\ra\cM^\stk_X$ as follows.
Proposition \ref{sa5prop2} gives a finite stratification
$X=\coprod_{i\in I}X_i$ with $X_i\simeq[S_i/\GL(n_i,\K)]$. Let
$\pi_i:S_i\ra X$ be as above, and define a group morphism
$\phi^*:\cM^\stk_Y\ra\cM^\stk_X$ by
\e
\smash{\phi^*:\ts\sum\limits_{j=1}^nc_j[R_j,\rho_j]\longmapsto
\sum\limits_{j=1}^nc_j
\sum\limits_{i\in I}[\GL(n_i,\K)]^{-1}\boxt
[R_j\t_{\rho_j,Y,\phi\ci\pi_i}S_i,\pi_X].}
\label{sa5eq22}
\e
If $\phi$ is representable in $\K$-schemes, this is the result of multiplying \eq{sa5eq6} by equation \eq{sa5eq21} for $1_X$, and so the two definitions of $\phi^*$ agree. As for $1_X,\bL$ one can
show that $\phi^*$ is independent of the choice of $I,X_i,S_i,n_i$,
and that pullbacks $\phi^*$ have the usual functoriality properties.
As in \cite[Th.~3.5]{Joyc1}, the analogue of \eq{sa5eq7} holds for
2-Cartesian squares in Artin $\K$-stacks.
\label{sa5def8}
\end{dfn}

\begin{dfn} Let $X$ be an Artin $\K$-stack. Consider triples
$(R,\rho,\hat r)$, where $R$ is a $\K$-scheme, $\rho:R\ra X$ a
1-morphism, and $\hat r:\hat\mu\t R\ra R$ a good $\hat\mu$-action on
$R$, in the sense of Definition \ref{sa5def2}. Call two such triples
$(R,\rho,\hat r),(R',\rho',\hat r')$ {\it equivalent\/} if there
exists a $\hat\mu$-equivariant isomorphism $\io:R\ra R'$ and a
2-isomorphism $\rho\cong\rho'\ci\io$. Write $[R,\rho,\hat r]$ for
the equivalence class of $(R,\rho,\hat r)$.

The {\it monodromic Grothendieck group} $K^{\hat\mu}_0(\Sch_X)$ is
the abelian group generated by such equivalence classes
$[R,\rho,\hat r]$, with relations (i),(ii) as in Definition
\ref{sa5def2}, except that we require a 2-isomorphism
$\rho_2\cong\rho_1\ci\pi$ rather than equality $\rho_2=\rho_1\ci\pi$
in (ii). Define a biadditive, commutative, associative product
`$\,\cdot\,$' on $K^{\hat\mu}_0(\Sch_X)$ as in \eq{sa5eq8}. As for
$K_0(\Sch_X)$ in Definition \ref{sa5def8}, this makes
$K_0^{\hat\mu}(\Sch_X)$ into a commutative ring, in general without
identity. If $X$ is a $\K$-scheme $K_0^{\hat\mu}(\Sch_X)$ is as in
Definition \ref{sa5def2}, with identity~$[X,\id_X,\hat\io]$.

For Artin $\K$-stacks $X,Y$, define a biadditive, commutative,
associative {\it external tensor product\/} $\boxt:
K^{\hat\mu}_0(\Sch_X) \!\t\! K^{\hat\mu}_0(\Sch_Y) \!\ra\!
K^{\hat\mu}_0(\Sch_{X\t Y})$ by \eq{sa5eq9}. Taking $Y=\Spec\K$,
this makes $K_0^{\hat\mu}(\Sch_X)$ into a module
over~$K_0^{\hat\mu}(\Sch_\K)$.

As for \eq{sa5eq20}, using the $K_0^{\hat\mu}(\Sch_\K)$-module
structure on $K_0^{\hat\mu}(\Sch_X)$ define
\begin{equation*}
\smash{\cM_X^\stm=K_0^{\hat\mu}(\Sch_X)\ot_{K_0^{\hat\mu}(\Sch_\K)}
K_0^{\hat\mu}(\Sch_\K)\bigl[\bL^{-1},(\bL^k -1)^{-1},\; k=1,2,\ldots\bigr].}
\end{equation*}
The product `$\,\cdot\,$' descends to $\cM^\stm_X$. When $X=\Spec\K$
we write $\cM^\stm_\K$ instead of $\cM^\stm_X$. Using the data
$X_i,S_i,n_i$ of Proposition \ref{sa5prop2}, as in \eq{sa5eq21}
define elements $1_X,\bL\in\cM_X^\stm$ by
\e
\begin{split}
1_X&=\ts\sum_{i\in I}[\GL(n_i,\K)]^{-1}\boxt[S_i,\pi_i,\hat\io],\\
\bL&=\ts\sum_{i\in I}[\GL(n_i,\K)]^{-1}\boxt[\bA^1\t S_i,
\pi_i\ci\pi_{S_i},\hat\io].
\end{split}
\label{sa5eq23}
\e
These are independent of choices, and $1_X$ is the identity
in~$\cM^\stm_X$.

Let $\phi:X\ra Y$ be a 1-morphism of Artin $\K$-stacks. Define the
{\it pushforwards\/} $\phi_*: K_0^{\hat\mu}(\Sch_X)\ra
K_0^{\hat\mu}(\Sch_Y)$ and $\phi_*:\cM_X^\stm\ra \cM^\stm_Y$ by the
analogue of \eq{sa5eq5}. If $\phi$ is representable in $\K$-schemes we may also define {\it pullbacks\/} $\phi^*:K_0^{\hat\mu}(\Sch_Y)\ra
K_0^{\hat\mu}(\Sch_X)$ and $\phi^*:\cM^\stm_Y\ra\cM^\stm_X$ by the
analogue of \eq{sa5eq6}. If $\phi$ is not representable in $\K$-schemes, we can still define $\phi^*:\cM^\stm_Y\ra\cM^\stm_X$ by the analogue of \eq{sa5eq22}. Pushforwards and pullbacks have the usual
functoriality properties, and the analogue of \eq{sa5eq7} holds for
2-Cartesian squares in~$\Art_\K$.

As for \eq{sa5eq10}, there are natural morphisms of commutative rings
\begin{align*}
i_X:K_0(\Sch_X)&\longra K_0^{\hat\mu}(\Sch_X),& i_X:\cM^\stk_X
&\longra \cM^\stm_X, \\
\Pi_X:K_0^{\hat\mu}(\Sch_X)&\longra K_0(\Sch_X),& \Pi_X:
\cM^\stm_X &\longra \cM^\stk_X,
\end{align*}
given by $i_X:[R,\rho]\mapsto[R,\rho,\hat\io]$ and
$\Pi_X:[R,\rho,\hat r]\mapsto[R,\rho]$. If $X$ is a $\K$-scheme, there is a natural projection $\cM_X^{\hat\mu}\ra\cM_X^\stm$.
\label{sa5def9}
\end{dfn}

The analogue of Definition \ref{sa5def3}, defining another
associative, commutative product `$\od$' on
$K_0^{\hat\mu}(\Sch_X),\cM_X^\stm$ and an external version `$\bd$',
works essentially without change. For the analogue of Definition
\ref{sa5def4}, following \eq{sa5eq13} we define $\bL^{1/2}$ in
$\cM_X^\stm$ only by $\bL^{1/2}=1_X \bd \bL^{1/2}_{\K}\in\cM_X^\stm$,  where $1_X$ is as in \eq{sa5eq23}, and
$\bL^{1/2}_{\K}\in\cM_\K^\stm$ as in \eq{sa5eq12}. Then
$\bL^{1/2}\od \bL^{1/2}=\bL$ in $\cM_X^\stm$, and we can define
$\bL^{n/2}$ in $\cM^\stm_X$ for all $n\in \Z$ in the obvious way.

Here is the stack analogue of Definition~\ref{sa5def5}:

\begin{dfn} For each Artin $\K$-stack $Y$, define $I_Y^\stm$ to be
the ideal in the commutative ring $\bigl(\cM_Y^\stm,\od\bigr)$
generated by elements
$\phi_*\bigl(\Up^\stk(P\ot_{\Z/2\Z}Q)-\Up(P)^\stk\od\Up^\stk(Q)\bigr)$
for all 1-morphisms $\phi:X\ra Y$ with $X$ a $\K$-scheme and
principal $\Z/2\Z$-bundles $P,Q\ra X$, where
$\Up^\stk(P),\Up^\stk(Q),\Up^\stk(P\ot_{\Z/2\Z}Q)$ are the images in
$\cM_X^\stm$ of the elements $\Up(P),\Up(Q),\Up(P\ot_{\Z/2\Z}Q)$ in
$\cM_X^{\hat\mu}$ from Definition \ref{sa5def5}. Define
$\oM_Y^\stm=\cM_Y^\stm/I_Y^\stm$ to be the quotient, as a
commutative ring with multiplication `$\od$', with
projection~$\Pi_Y^{\hat\mu}:\cM_Y^{\hat\mu}\ra\oM_Y^{\hat\mu}$.

The second multiplication `$\,\cdot\,$', external product $\boxt$,
and projection $\Pi_Y:\cM_Y^\stm\ra\cM^\stk_Y$ on $\cM_Y^\stm$ do
not descend to $\oM_Y^\stm$. The other structures $\od,\ab\bd,\ab
1_Y,\ab\bL,\ab\phi_*,\phi^*,i_Y,\bL^{1/2}$ do descend to
$\oM_Y^\stm$. If $X$ is a $\K$-scheme, we have a natural projection
$\oM^{\hat\mu}_X\ra\oM^\stm_X$. So in particular, the motives
$MF_{X,s}\in\oM^{\hat\mu}_X$ in Theorem \ref{sa5thm1} also make
sense in $\oM^\stm_X$. We will use this in Theorem~\ref{sa5thm2}.
\label{sa5def10}
\end{dfn}

\subsection{The main result}
\label{sa54}

Here is the main result of this section, the analogue of Theorem
\ref{sa5thm1} from \cite{BJM}. The proof uses our previous results from \cite{BJM,Joyc2}, the theory of rings of motives $\oM^\stm_X$ on Artin stacks $X$ from \S\ref{sa53}, and two new ingredients: Proposition \ref{sa5prop1}, which says that the motives $MF_{X,s}$ from \cite[Th.~5.10]{BJM} pull back as one would expect under smooth morphisms of d-critical loci, and Proposition \ref{sa5prop3}, which is a cunning trick to get round the fact that motives {\it do not have descent in the smooth topology}, that is, we do not have a motivic analogue of Theorem~\ref{sa4thm1}.

\begin{thm} Let\/ $(X,s)$ be an oriented d-critical stack, with
orientation $K_{X,s}^{1/2},$ where $X$ is assumed of finite type and
locally a global quotient. Then there exists a unique motive
$MF_{X,s}\in\oM^\stm_X$ such that if\/ $T$ is a finite type
$\K$-scheme and\/ $t:T\ra X$ is smooth of relative dimension $n,$ so
that\/ $(T,s(T,t))$ is an algebraic d-critical locus over $\K$ with
natural orientation $K_{T,s(T,t)}^{1/2}$ as in Lemma\/
{\rm\ref{sa3lem1},} then
\e
t^*\bigl(MF_{X,s}\bigr)=\bL^{n/2}\od MF_{T,s(T,t)}\quad\text{in\/ $\oM_T^\stm,$}
\label{sa5eq24}
\e
where $MF_{T,s(T,t)}\in\oM^\stm_T$ is as in Theorem\/
{\rm\ref{sa5thm1},} projected from $\oM^{\hat\mu}_T$ in
{\rm\S\ref{sa51}} to $\oM^\stm_T$ in {\rm\S\ref{sa53},} and\/
$t^*:\oM^\stm_X\ra\oM^\stm_T$ is the pullback.
\label{sa5thm2}
\end{thm}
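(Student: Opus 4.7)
The plan is to imitate the recipe for $1_X$ and $\bL$ in \eqref{sa5eq21} and \eqref{sa5eq23}: use Proposition \ref{sa5prop2} to choose a Kresch stratification $X=\coprod_{i\in I}X_i$ with each $X_i\simeq[S_i/\GL(n_i,\K)]$, so the atlas map $\pi_i:S_i\ra X_i$ is smooth of relative dimension $n_i^2=\dim\GL(n_i,\K)$. Let $\iota_i:S_i\ra X_i\hookra X$ denote the composition; since $\pi_i$ is smooth, Lemma \ref{sa3lem1} furnishes $(S_i,s(S_i,\iota_i))$ with a natural oriented algebraic d-critical structure pulled back from $(X,s)$, so Theorem \ref{sa5thm1} supplies motives $MF_{S_i,s(S_i,\iota_i)}\in\oM^\stm_{S_i}$, and I would set
\begin{equation*}
MF_{X,s}\,:=\,\ts\sum_{i\in I}[\GL(n_i,\K)]^{-1}\boxt\iota_{i,*}\bigl(\bL^{n_i^2/2}\od MF_{S_i,s(S_i,\iota_i)}\bigr)\in\oM^\stm_X,
\end{equation*}
in direct parallel with the formulae defining $1_X$ and $\bL$ in $\oM^\stm_X$.

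Next, I would verify the defining property \eqref{sa5eq24} for a smooth $t:T\ra X$ of relative dimension $n$ with $T$ a finite type $\K$-scheme. Pulling back the stratification gives $T=\coprod_iT_i$ with $T_i:=t^{-1}(X_i)$ locally closed in $T,$ and for each $i\in I$ a $2$-Cartesian square
\begin{equation*}
\xymatrix@C=35pt@R=13pt{\tilde S_i:=T\t_XS_i \ar[r]^(0.72){\tilde t_i} \ar[d]_{\tilde\pi_i} & S_i \ar[d]^{\pi_i} \\ T_i \ar[r]^{t_i} & X_i,}
\end{equation*}
in which $\tilde t_i$ is smooth of relative dimension $n$ (pullback of $t_i$, itself the smooth base change of $t$) and $\tilde\pi_i$ is smooth of relative dimension $n_i^2$ (a principal $\GL(n_i,\K)$-bundle, pullback of $\pi_i$). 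Base change (the stack analogue of \eqref{sa5eq7}) converts $t^*\iota_{i,*}$ into $j_{i,*}\tilde\pi_{i,*}\tilde t_i^*,$ where $j_i:T_i\hookra T.$ Applying Proposition \ref{sa5prop1} first via \eqref{sa5eq17} to the pullback $\tilde t_i^*$ and then via \eqref{sa5eq18} to the pushforward $\tilde\pi_{i,*},$ together with the motivic identity $[\tilde S_i,\tilde\pi_i,\hat\io]=[\GL(n_i,\K)]\boxt 1_{T_i}$ (valid because $\GL(n_i,\K)$ is special, so every principal bundle is Zariski locally trivial), the two $\bL^{\pm n_i^2/2}$ factors cancel and so do the two $[\GL(n_i,\K)]^{\pm 1}$ factors, leaving $\sum_ij_{i,*}\bigl(\bL^{n/2}\od MF_{T_i,s(T_i,t_i)}\bigr)=\bL^{n/2}\od MF_{T,s(T,t)}.$ The last equality uses additivity of $MF_{T,\cdot}$ over locally closed stratifications of $T,$ which is immediate from the critical chart characterization \eqref{sa5eq16} together with the scissor relations in~$\oM^\stm_T.$

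Independence of the chosen Kresch stratification, and uniqueness of $MF_{X,s}$ satisfying \eqref{sa5eq24}, go together. For independence, any two Kresch stratifications admit a common refinement obtained by intersecting strata and reapplying Proposition \ref{sa5prop2}; a refinement of $[S_i/\GL(n_i,\K)]$ into $\GL(n_i,\K)$-invariant Zariski open and closed pieces leaves the $i$th summand above unchanged by the scissor relations in $\oM^\stm_X$. For uniqueness, any alternative $MF'_{X,s}$ obeying \eqref{sa5eq24} yields the same collection of pullbacks $\{\pi_i^*(MF_{X,s}|_{X_i})\}_{i\in I}=\{\bL^{n_i^2/2}\od MF_{S_i,s(S_i,\iota_i)}\}_{i\in I}$ on the stratum atlases, and the auxiliary result of \S\ref{sa53} (the ``cunning trick'' referred to in the opening of this section) reconstructs $MF_{X,s}$ uniquely from such stratum data, forcing~$MF'_{X,s}=MF_{X,s}.$

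The main obstacle is precisely the absence of a motivic analogue of the smooth descent Theorem \ref{sa4thm1}: in contrast to the perverse sheaf argument of \S\ref{sa44}, one cannot declare $MF_{X,s}$ to be the object whose pullback to every smooth $t:T\ra X$ is $\bL^{n/2}\od MF_{T,s(T,t)}$ and simply glue, because motives do not satisfy smooth descent. The cunning trick replaces smooth descent by a weaker compatibility statement tailored to principal $\GL(n,\K)$-bundle atlases and to the explicit stratification-based presentation of $\oM^\stm_X$ built into Definition \ref{sa5def9}, and it is this technical device which legitimates the stratum-by-stratum formula above and supplies the uniqueness that turns existence on a chosen stratification into a canonical construction.
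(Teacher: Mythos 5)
Your construction falls into a trap: the map $\iota_i:S_i\to X_i\hookrightarrow X$ is \emph{not} smooth when $X_i$ is a proper locally closed substack of $X$ (a locally closed immersion is not smooth unless it is an open immersion), so $s(S_i,\iota_i)$ is simply not defined by the d-critical stack structure, Lemma \ref{sa3lem1} does not apply, and consequently $MF_{S_i,s(S_i,\iota_i)}$ has no meaning. The justification ``since $\pi_i$ is smooth'' conflates smoothness of $\pi_i:S_i\to X_i$ with smoothness of $\iota_i:S_i\to X$, which is what one actually needs. The same problem propagates into the verification step, where applying equation \eqref{sa5eq17} to $\tilde t_i$ requires a morphism of algebraic d-critical loci over $X$, and the $S_i$ side of that morphism is missing a d-critical structure. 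The fix used in the paper is not Proposition \ref{sa5prop2} directly, but a refinement (Proposition \ref{sa5prop3}) which trades the Kresch stratification for a Zariski open cover $X=Y_1\cup\cdots\cup Y_m$ with $Y_j\simeq[S_j/\GL(n_j,\K)]$; the $\phi_j:S_j\to Y_j\hookrightarrow X$ are then genuinely smooth, so $s(S_j,\phi_j)$ exists, and the locally closed $X_j=Y_j\setminus(Y_1\cup\cdots\cup Y_{j-1})$ are introduced only to guarantee that $[S_j\times_X X_j,\pi_{X_j}]$ is an invertible motive on each stratum.

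The other soft spot is uniqueness. You appeal to an ``auxiliary result of \S\ref{sa53}'' that reconstructs $MF_{X,s}$ from stratum data, but no such result is stated there; the ring $\oM^\stm_X$ is built in Definition \ref{sa5def9} without any descent-type uniqueness theorem. The actual ``cunning trick'' is Proposition \ref{sa5prop3} itself: because $[S_j\times_X X_j,\pi_{X_j},\hat\iota]$ is invertible in $\oM^\stm_{X_j}$, the algebraic manipulation in equation \eqref{sa5eq26} shows that any motive satisfying \eqref{sa5eq24} for the finitely many smooth $\phi_j:S_j\to X$ is already equal to the explicit sum, which gives both the construction and the uniqueness in one stroke. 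Your ``common refinement of stratifications'' argument cannot be made to work as stated, because the summands in your formula are not well-defined in the first place.
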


We discuss how to relax the assumptions in Theorem \ref{sa5thm2}
that $X$ is of {\it finite type}, and {\it locally a global
quotient}.

\begin{rem}{\bf(a)} Let $X$ be an Artin $\K$-stack locally of
finite type (but not necessarily of finite type), with affine
geometric stabilizers. Then one can define motivic rings
$K_0(\Sch_X),\ab\cM^\stk_X,\ab
K_0^{\hat\mu}(\Sch_X),\ab \cM^\stm_X,\oM^\stm_X$ generalizing those
in \S\ref{sa53}, using the idea of `local stack functions'
$\mathop{\rm LSF}(X)$ from Joyce~\cite[Def.~3.9]{Joyc1}.

Elements of $K_0(\Sch_X)$ are $\sim$-equivalence classes of sums
$\sum_{i\in I}c_i[R_i,\rho_i]$ for $I$ a possibly infinite indexing
set, $R_i$ a $\K$-scheme locally of finite type, $\rho_i:R_i\ra X$ a
finite type 1-morphism, and $c_i\in\Z$ for $i\in I$, such that for
any finite type $\K$-substack $Y\subseteq X$, we have
$R_i\t_XY\ne\es$ for only finitely many $i\in I$. We set $\sum_{i\in
I}c_i[R_i,\rho_i]\sim\sum_{j\in J}d_j[S_j,\si_j]$ if for all finite
type $Y\subseteq X$, we have $\sum_{i\in I}c_i[R_i\t_{\rho_i,X,{\rm
inc}}Y,\pi_Y]=\sum_{j\in J}d_j[S_j\t_{\si_j,X,{\rm inc}}Y,\pi_Y]$ in
$K_0(\Sch_Y)$, where $K_0(\Sch_Y)$ is as in \S\ref{sa53} as $Y$ is
of finite type.

Then pushforwards $\phi_*$ on $K_0(\Sch_X),\cM^\stk_X,\ldots$ can be
defined only if $\phi:X\ra Y$ is a finite type 1-morphism, but
pullbacks $\phi^*$ can be defined for arbitrary $\phi$ (requiring
$\phi$ representable in $\K$-schemes for~$K_0(\Sch_X),K_0^{\hat\mu}(\Sch_X)$).

As discussed in \cite[Rem.~5.11]{BJM} for $\K$-schemes, it is now easy to generalize Theorem \ref{sa5thm2} to d-critical stacks $(X,s)$ which
are locally of finite type rather than of finite type, giving a
unique $MF_{X,s}\in\oM^\stm_X$ satisfying \eq{sa5eq24}, where it is enough to consider only finite type $\K$-schemes $T$. Note that we
cannot push $MF_{X,s}$ forward to $\oM^\stm_\K$ if $X$ is not of
finite type, since $\pi:X\ra\Spec\K$ is not a finite type
1-morphism, and $\pi_*:\oM^\stm_X\ra\oM^\stm_\K$ is not defined.
\smallskip

\noindent{\bf(b)} The assumption in Theorem \ref{sa5thm2} that $X$
is locally a global quotient is used to prove Proposition
\ref{sa5prop3} in \S\ref{sa55}. We would have preferred to make the
weaker assumption that $X$ has affine geometric stabilizers.

The issue is this: we want to characterize $MF_{X,s}\in\oM^\stm_X$
by prescribing $t^*(MF_{X,s})\in\smash{\oM^\stm_T}$ whenever $T$ is
a $\K$-scheme and $t:T\ra X$ is a smooth 1-morphism. However, if $X$
is not locally a global quotient, it seems conceivable this may not
determine $MF_{X,s}$ uniquely, as there might exist $0\ne
M\in\oM^\stm_X$ with $t^*(M)=0$ for all such~$t:T\ra X$.

One way to fix this might be to expand our whole set-up to include a
suitable class of formal schemes, and then prescribe $t^*(MF_{X,s})$
when $T$ is a formal scheme and $t:T\ra X$ a smooth 1-morphism. If
$X$ has affine geometric stabilizers, there should be enough such
$t:T\ra X$ to determine $MF_{X,s}$ uniquely.

\label{sa5rem1}
\end{rem}

Combining Theorems \ref{sa2thm6}, \ref{sa3thm6}, \ref{sa5thm2} and
Corollary \ref{sa3cor2}, and noting as in \S\ref{sa51} that moduli
stacks of coherent sheaves are locally global quotients, yields:

\begin{cor} Let\/ $(\bX,\om)$ be a $-1$-shifted symplectic derived
Artin $\K$-stack in the sense of Pantev et al.\ {\rm\cite{PTVV},}
and\/ $X=t_0(\bX)$ the associated classical Artin\/ $\K$-stack,
assumed of finite type and locally a global quotient. Suppose we are
given a square root\/ $\det(\bL_\bX)\vert_X^{1/2}$ for
$\det(\bL_\bX) \vert_X$. Then we may define a natural motive
$MF_{\bX,\om}\in\oM^\stm_X,$ which is characterized
by the fact that given a diagram
\begin{equation*}
\smash{\xymatrix@C=60pt{ \bU=\bs\Crit(f:U\ra\bA^1) & \bV \ar[l]_(0.3){\bs i} \ar[r]^{\bs\vp} & \bX }}
\end{equation*}
such that\/ $U$ is a smooth\/ $\K$-scheme, $\bs\vp$ is smooth of
dimension $n,$ $\bL_{\bV/\bU} \simeq \bT_{\bV/\bX}[2],$
$\bs\vp^*(\om_\bX)\sim \bs i^*(\om_\bU)$ for $\om_\bU$ the natural\/
$-1$-shifted symplectic structure on $\bU=\bs\Crit(f:U\ra\bA^1),$
and\/ $\vp^*(\det(\bL_\bX)\vert_X^{1/2})\cong
i^*(K_U)\ot\La^n\bT_{\bV/\bX},$ then~$\vp^*(MF_{\bX,\om})=\bL^{n/2}\od i^*(MF^{{\rm mot},\phi}_{U,f})$ in $\oM^\stm_V$.
\label{sa5cor1}
\end{cor}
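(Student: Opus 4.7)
The plan is to reduce Corollary \ref{sa5cor1} to Theorem \ref{sa5thm2} via the truncation functor of Theorem \ref{sa3thm6}, using the d-critical structure and canonical bundle isomorphism provided there to transport the given orientation datum.

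First I would apply Theorem \ref{sa3thm6} to produce the unique d-critical structure $s\in H^0(\cSz_X)$ on $X=t_0(\bX)$, together with the canonical isomorphism $K_{X,s}\cong\det(\bL_\bX)\vert_{X^\red}$ of Theorem \ref{sa3thm6}(b). Restricting the given square root $\det(\bL_\bX)\vert_X^{1/2}$ to $X^\red$ and transporting through this isomorphism yields an orientation $K_{X,s}^{1/2}$ for $(X,s)$. Since $X$ is of finite type and locally a global quotient, Theorem \ref{sa5thm2} applies and produces a unique motive $MF_{X,s}\in\oM^\stm_X$; I define $MF_{\bX,\om}:=MF_{X,s}$.

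Next I would verify the local characterization. Let $\bs\vp:\bV\to\bX$, $\bs i:\bV\to\bU=\bs\Crit(f)$ be as in the statement, and write $\vp=t_0(\bs\vp):V\to X$, $i=t_0(\bs i):V\to U$. By Theorem \ref{sa2thm6}(b) in the case $k=-1$ (together with Corollary \ref{sa2cor1}), $i$ identifies $V$ with $\Crit(f)\subseteq U$, and $\vp$ is smooth of relative dimension $n$. Theorem \ref{sa3thm6}(a) then tells us that $s(V,\vp)$ agrees with the canonical d-critical structure $s_V$ on $V=\Crit(f)$, so $(V,U,f,i)$ is a critical chart for the d-critical locus $(V,s(V,\vp))$. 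Applying Theorem \ref{sa5thm2} with $T=V$, $t=\vp$ gives
\[
\vp^*(MF_{\bX,\om})=\bL^{n/2}\od MF_{V,s(V,\vp)}\quad\text{in }\oM^\stm_V,
\]
and by Theorem \ref{sa5thm1} applied to the critical chart $(V,U,f,i)$,
\[
MF_{V,s(V,\vp)}=i^*\bigl(MF^{{\rm mot},\phi}_{U,f}\bigr)\od\Up(Q_{V,U,f,i}).
\]

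The remaining step, and the main obstacle, is to show that the hypothesis $\vp^*(\det(\bL_\bX)\vert_X^{1/2})\cong i^*(K_U)\ot\La^n\bT_{\bV/\bX}$ forces the principal $\Z/2\Z$-bundle $Q_{V,U,f,i}$ to be canonically trivial, so that $\Up(Q_{V,U,f,i})=1_V$ in $\oM^\stm_V$ and the formula becomes the desired one. For this I would unwind the definitions: Lemma \ref{sa3lem1} (combined with the isomorphism \eq{sa3eq13} in Theorem \ref{sa3thm5}(b)) gives $K_{V,s(V,\vp)}^{1/2}\cong\vp^*(K_{X,s}^{1/2})\ot(\La^n T^*_{V/X})\vert_{V^\red}$, and via Theorem \ref{sa3thm6}(b) the pullback $\vp^*(K_{X,s}^{1/2})$ becomes $\vp^*(\det(\bL_\bX)\vert_X^{1/2})\vert_{V^\red}$. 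Since $\bL_{\bV/\bX}$ is concentrated in degree $0$, $\La^n\bT_{\bV/\bX}=(\La^n T^*_{V/X})^{-1}$, so the hypothesis yields a canonical isomorphism $\al:K_{V,s(V,\vp)}^{1/2}\cong i^*(K_U)\vert_{V^\red}$. I then need to check $\al\ot\al=\io_{V,U,f,i}$, which amounts to tracing the explicit identification in the proof of Theorem \ref{sa3thm6}(b) of $K_{V,s(V,\vp)}$ with $i^*(K_U^{\ot 2})$ via the Hessian of $f$; this is where the bookkeeping is genuinely nontrivial, but all the ingredients are already present in equations \eq{sa3eq17}--\eq{sa3eq25} of that proof and in the definition of $\io_{R,U,f,i}$ of Theorem \ref{sa3thm3}(b). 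Granting this compatibility, $Q_{V,U,f,i}$ has a canonical trivialization, $\Up(Q_{V,U,f,i})=1_V$, and the corollary follows. Uniqueness of $MF_{\bX,\om}$ is then inherited from the uniqueness in Theorem \ref{sa5thm2}, since the diagrams considered cover $X$ in the smooth topology by Corollary \ref{sa2cor1}.
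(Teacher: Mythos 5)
Your proposal is correct and takes essentially the same route as the paper, which obtains this corollary simply by combining Theorem \ref{sa3thm6} (the d-critical structure $s$ and the isomorphism $K_{X,s}\cong\det(\bL_\bX)\vert_{X^\red}$, used to transport the square root to an orientation), Theorem \ref{sa5thm2} (giving $MF_{\bX,\om}:=MF_{X,s}$), and Corollary \ref{sa2cor1} together with Theorem \ref{sa5thm1} for the local critical-chart formula, with the orientation hypothesis trivializing $Q_{V,U,f,i}$. The compatibility $\al\ot\al=\io_{V,U,f,i}$ that you defer is exactly what is built into equations \eq{sa3eq16}--\eq{sa3eq25} in the proof of Theorem \ref{sa3thm6}(b), so no ingredient beyond those you cite is needed.
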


\begin{cor} Let\/ $Y$ be a Calabi--Yau\/ $3$-fold over\/ $\K,$
and\/ $\cM$ a finite type classical moduli\/ $\K$-stack of coherent
sheaves in $\coh(Y),$ with natural obstruction theory\/
$\phi:\cE^\bu\ra\bL_\cM$. Suppose we are given a square root\/
$\det(\cE^\bu)^{1/2}$ for $\det(\cE^\bu)$. Then we may define a
natural motive $MF_\cM\in\oM^\stm_\cM$.
\label{sa5cor2}
\end{cor}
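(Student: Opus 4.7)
The plan is to deduce this corollary directly from Corollary \ref{sa5cor1} (or equivalently, from the chain Theorem \ref{sa3thm6} followed by Theorem \ref{sa5thm2}), by checking each of its hypotheses for the moduli stack $\cM$ of coherent sheaves on the Calabi--Yau 3-fold $Y$.

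First I would pass to the derived moduli stack $\bcM$ of coherent sheaves on $Y$, so that $\cM=t_0(\bcM)$. By Pantev--To\"en--Vaqui\'e--Vezzosi \cite[\S 2.1]{PTVV}, since $Y$ is a Calabi--Yau $3$-fold, $\bcM$ carries a canonical $-1$-shifted symplectic structure $\om_{\bcM}$. Moreover, in derived algebraic geometry the natural obstruction theory $\phi:\cE^\bu\ra\bL_\cM$ on $\cM$ is identified with the classical truncation $\bL_{t_0}:\bL_{\bcM}\vert_\cM\ra\bL_\cM$, so that $\cE^\bu\simeq\bL_{\bcM}\vert_\cM$. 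Consequently $\det(\cE^\bu)\cong\det(\bL_{\bcM})\vert_\cM$, and the hypothesised square root $\det(\cE^\bu)^{1/2}$ furnishes precisely the square root $\det(\bL_{\bcM})\vert_\cM^{1/2}$ required in Corollary \ref{sa5cor1}.

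Next I would verify the remaining hypotheses of Corollary \ref{sa5cor1}: that $\cM$ is of finite type (which is assumed) and that $\cM$ is locally a global quotient. The latter is recalled in \S\ref{sa53} immediately after Definition \ref{sa5def7}: using Quot-schemes one sees that any classical moduli stack of coherent sheaves on a projective scheme $Y$ is locally a global quotient of the form $[S/\GL(n,\K)]$, so in particular has affine geometric stabilizers. Applying Corollary \ref{sa5cor1} to $(\bcM,\om_{\bcM})$ with this orientation then produces a motive $MF_{\bcM,\om_{\bcM}}\in\oM^\stm_\cM$, and I would simply set $MF_\cM:=MF_{\bcM,\om_{\bcM}}$.

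There is no substantive obstacle here: the corollary is a straightforward specialisation of the general machinery. The only points demanding a little care are bookkeeping ones: checking that the isomorphism $\det(\cE^\bu)\cong\det(\bL_{\bcM})\vert_\cM$ takes the given square root to an orientation in the sense of Definition \ref{sa3def4} (via the isomorphism $K_{\cM,s}\cong\det(\bL_{\bcM})\vert_{\cM^\red}$ of Theorem \ref{sa3thm6}(b) and Corollary \ref{sa3cor2}), and checking that the finite-type and locally-global-quotient hypotheses needed for Theorem \ref{sa5thm2} are indeed met. Both are immediate from the setup, so the proof amounts to a short verification and invocation of the previously established results.
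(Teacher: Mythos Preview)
Your proposal is correct and follows essentially the same route as the paper: the paper states both Corollaries \ref{sa5cor1} and \ref{sa5cor2} together as immediate consequences of combining Theorems \ref{sa2thm6}, \ref{sa3thm6}, \ref{sa5thm2} and Corollary \ref{sa3cor2}, together with the observation that moduli stacks of coherent sheaves are locally global quotients. You have simply spelled out this deduction in slightly more detail, correctly identifying the obstruction theory with $\bL_{\bcM}\vert_\cM$ and verifying the finite-type and locally-global-quotient hypotheses.
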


Corollary \ref{sa5cor2} is relevant to Kontsevich and Soibelman's
theory of {\it motivic Donaldson--Thomas invariants\/} \cite{KoSo1}.
Our square root $\det(\cE^\bu)^{1/2}$ roughly coincides with their
{\it orientation data\/} \cite[\S 5]{KoSo1}. In \cite[\S
6.2]{KoSo1}, given a finite type moduli stack $\cM$ of coherent
sheaves on a Calabi--Yau 3-fold $Y$ with orientation data, they
define a motive $\int_\cM 1$ in a ring $D^\mu$ isomorphic to our
$\oM^\stm_\K$. We expect this should agree with $\pi_*(MF_\cM)$ in
our notation, with $\pi:\cM\ra\Spec\K$ the projection. This
$\int_\cM 1$ is roughly the motivic Donaldson--Thomas invariant of
$\cM$. Their construction involves expressing $\cM$ near each point
in terms of the critical locus of a formal power series. Kontsevich
and Soibelman's constructions were partly conjectural, and our
results may fill some gaps in their theory.

\begin{ex} As in \cite[Def.~2.1]{Joyc1}, an algebraic $\K$-group
$G$ is called {\it special\/} if every \'etale locally trivial
principal $G$-bundle over a $\K$-scheme is Zariski locally trivial.
Any special $\K$-group can be embedded as a closed $\K$-subgroup
$G\subseteq\GL(n,\K)$, and then $\GL(n,\K)\ra \GL(n,\K)/G$ is a
Zariski locally trivial principal $G$-bundle, so taking motives in
$\cM^\stk_\K$ gives $[\GL(n,\K)]=[G]\cdot[\GL(n,\K)/G]$. Hence $[G]$
is invertible in $\cM^\stk_\K$, with $[G]^{-1}=[\GL(n,\K)/G]\cdot
[\GL(n,\K)]^{-1}$.

Some examples of special $\K$-groups are
$\bG_m,\GL(n,\K),\mathop{\rm SL}(n,\K), \mathop{\rm Sp}(2n,\K)$, and
the group of invertible elements $A^\t$ of any finite-dimensional
$\K$-algebra $A$. Products of special groups are special. Special
$\K$-groups are always affine and connected, so nontrivial finite
groups are not special.

Suppose a special $\K$-group $G$ of dimension $n$ acts on a finite
type, oriented algebraic d-critical locus $(T,s')$ over $\K$
preserving $s'\in H^0(\cSz_T)$ and the orientation $K_{T,s'}^{1/2}$.
Write $X=[T/G]$ for the quotient stack and $t:T\ra X$ for the
projection. Then $s'$ descends to a unique d-critical structure $s$
on $X$ with $s'=s(T,t)$ as in Example \ref{sa3ex}, and using Theorem
\ref{sa3thm5} we also find that the orientation $K_{T,s'}^{1/2}$
descends to a unique orientation $K_{X,s}^{1/2}$ on the d-critical
stack $(X,s)$ with~$K_{X,s}^{1/2}(T^\red,t^\red)\cong
K_{T,s'}^{1/2}\ot \bigl(\La^{\rm
top}T^*_{T/X}\bigr)\big\vert{}_{T^\red}^{\ot^{-1}}$.

Theorem \ref{sa5thm2} gives $MF_{X,s}\in\oM^\stm_X$ with
$t^*\bigl(MF_{X,s}\bigr)=\bL^{n/2}\od MF_{T,s'}$ in $\oM^\stm_T$.
Applying $t_*$ and using $t_*\ci t^*(M)=[T,t,\hat\io]\od M$ for
$M\in \oM^\stm_X$ gives
\e
\smash{MF_{X,s}\od [T,t,\hat\io]=\bL^{n/2}\od t_*(MF_{T,s'}).}
\label{sa5eq25}
\e
Now $t:T\ra X$ is a principal $G$-bundle, and so Zariski locally
trivial as $G$ is special. Therefore $[T,t,\hat\io]=[G,\hat\io]\bd
1_X$, where $[G,\hat\io]=i_\K([G])\in\cM^\stm_\K$. As $[G]$ is
invertible, so is $[G,\hat\io]$. Thus multiplying \eq{sa5eq25} by
$[G,\hat\io]^{-1}$ gives
\begin{equation*}
\smash{MF_{X,s}=[G,\hat\io]^{-1}\bd\bigl(\bL^{n/2}\od t_*(MF_{T,s'})\bigr).}
\end{equation*}\vskip -10pt

\label{sa5ex}
\end{ex}

\subsection{Proof of Theorem \ref{sa5thm2}}
\label{sa55}

We begin with the following result, related to
Proposition~\ref{sa5prop2}.

\begin{prop} Let\/ $X$ be a finite type Artin $\K$-stack which is
locally a global quotient. Then we can find a stratification
$X=\coprod_{j\in J}X_j,$ for $J$ a finite set and\/ $X_j\subseteq X$
a locally closed\/ $\K$-substack, and\/ $1$-morphisms $\phi_j:S_j\ra
X$ smooth of relative dimension $n_j$ with\/ $S_j$ a $\K$-scheme
such that\/ $[S_j\t_X X_j,\pi_{X_j}]$ is an invertible element of\/
$\cM^\stk_{X_j}$ for all\/~$j\in J$.
\label{sa5prop3}
\end{prop}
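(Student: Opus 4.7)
The plan is to work directly with the Zariski open cover of $X$ by global quotients, rather than invoke Kresch's Proposition~\ref{sa5prop2}. Since $X$ is of finite type and locally a global quotient, I would first choose finitely many Zariski open substacks $Y_1,\ldots,Y_N\subseteq X$ covering $X,$ with $Y_j\simeq [T_j/G_j]$ for $G_j=\GL(m_j,\K)$ and $T_j$ a $\K$-scheme carrying a $G_j$-action. Setting $X_j=Y_j\sm(Y_1\cup\cdots\cup Y_{j-1})$ then yields a stratification by locally closed substacks of $X$ with $X_j\subseteq Y_j,$ and any locally closed substack of $[T_j/G_j]$ has the form $[T_j'/G_j]$ for some $G_j$-invariant locally closed subscheme $T_j'\subseteq T_j,$ so $X_j\simeq[T_j'/G_j]$ and $T_j'=\pi_j^{-1}(X_j),$ where $\pi_j:T_j\ra Y_j$ is the quotient morphism.

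Next I would take $S_j=T_j,$ let $n_j=\dim G_j=m_j^2,$ and define $\phi_j:S_j\ra X$ to be the composition of $\pi_j:T_j\ra Y_j$ (a principal $G_j$-bundle, hence smooth of relative dimension $n_j$) with the open immersion $Y_j\hookra X$ (smooth of relative dimension $0$); thus $\phi_j$ is smooth of relative dimension $n_j.$ Computing the fibre product: $S_j\t_X X_j=T_j\t_{Y_j}X_j=\pi_j^{-1}(X_j)=T_j',$ and the induced projection $\pi_{X_j}:T_j'\ra X_j\simeq[T_j'/G_j]$ is just the quotient map by $G_j.$

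It remains to show that $[T_j',\pi_{X_j}]$ is invertible in $\cM^\stk_{X_j},$ which is a direct computation. Applying formula \eq{sa5eq21} with the trivial one-piece stratification $X_j=[T_j'/G_j]$ yields $1_{X_j}=\bigl[\GL(m_j,\K)\bigr]^{-1}\boxt[T_j',\pi_{X_j}],$ so $[T_j',\pi_{X_j}]=\bigl[\GL(m_j,\K)\bigr]\boxt 1_{X_j}$ in $\cM^\stk_{X_j}.$ Using the easily verified identity $(a\boxt M)\cdot(b\boxt N)=(a\cdot b)\boxt(M\cdot N)$ (for $a,b\in\cM^\stk_\K$ and $M,N\in\cM^\stk_{X_j}$), a two-sided $\cdot$-inverse is then given by $\bigl[\GL(m_j,\K)\bigr]^{-1}\boxt 1_{X_j},$ where $\bigl[\GL(m_j,\K)\bigr]^{-1}$ exists in $\cM^\stk_\K$ because $\bigl[\GL(m_j,\K)\bigr]=\bL^{m_j(m_j-1)/2}\prod_{k=1}^{m_j}(\bL^k-1)$ is invertible by the very definition \eq{sa5eq20} of $\cM^\stk_\K.$

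The main point, rather than a genuine obstacle, is the coordinated choice of stratification and maps $\phi_j$: working directly with the Zariski cover by global quotients allows $\phi_j$ to be the quotient atlas $T_j\ra Y_j$ composed with an open immersion $Y_j\hookra X,$ giving a map smooth on all of $T_j.$ A general Kresch stratification of $X$ would also produce pieces $X_j\simeq[T_j'/G_j]$ with atlas $T_j'\ra X_j,$ but would not in general extend this to a smooth map $T_j'\ra X,$ since $X_j\hookra X$ is only locally closed; this subtlety is exactly what the formulation of Proposition~\ref{sa5prop3} is designed to finesse, and it is the reason the hypothesis ``locally a global quotient'' (stronger than having affine geometric stabilizers) is invoked here.
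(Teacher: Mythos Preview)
Your proof is correct and essentially identical to the paper's: both take the Zariski open cover $Y_j\simeq[S_j/\GL(n_j,\K)]$, stratify by $X_j=Y_j\sm(Y_1\cup\cdots\cup Y_{j-1})$, let $\phi_j$ be the composition of the quotient map with the open inclusion, and compute that $[S_j\t_X X_j,\pi_{X_j}]=[\GL(n_j,\K)]\boxt 1_{X_j}$ is invertible. The only cosmetic difference is that the paper phrases the invertibility step via Zariski local triviality of principal $\GL$-bundles, whereas you invoke \eq{sa5eq21} directly; your closing paragraph explaining why a general Kresch stratification would not suffice is a helpful addition not present in the paper.
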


\begin{proof} As $X$ is finite type and locally a global quotient,
there exist Zariski open $\K$-substacks $Y_j\subseteq X$ and
equivalences $Y_j\simeq[S_j/\GL(n_j,\K)]$ for $j=1,\ldots,m$, where
$S_j$ is a $\K$-scheme with a $\GL(n_j,\K)$-action, such that
$X=Y_1\cup\cdots\cup Y_m$. Define $\phi_j:S_j\ra X$ to be the
composition $S_j\ra[S_j/\GL(n_j,\K)]\,{\buildrel\sim\over\longra}\,
Y_j\hookra X$. For $j=1,\ldots,m$, define a locally closed
$\K$-substack $X_j\subseteq X$ by $X_j=Y_j\sm(Y_1\cup\cdots\cup
Y_{j-1})$. Set $J=\{1,\ldots,m\}$. Then $X=\coprod_{j\in J}X_j$
as~$X=Y_1\cup\cdots\cup Y_m$.

Since $X_j\subseteq Y_j$ and $\phi_j:S_j\ra Y_j$ is a principal
$\GL(n_j,\K)$-bundle, we see that $\pi_{X_j}:S_j\t_X X_j\ra X_j$ is
a principal $\GL(n_j,\K)$-bundle, which is automatically Zariski
locally trivial. Hence $[S_j\t_X X_j,\pi_{X_j}]=[\GL(n_i,\K)]\bd
1_{X_j}$, which is invertible in $\cM^\stk_{X_j}$ with inverse
$[\GL(n_i,\K)]^{-1}\bd 1_{X_j}$.
\end{proof}

We now prove Theorem \ref{sa5thm2}. Suppose first that there exists
$MF_{X,s}\in\oM^\stm_X$ such that \eq{sa5eq24} holds for all $t:T\ra
X$ smooth of dimension $n$ with $T$ a $\K$-scheme. Let
$J,X_j,S_j,\phi_j,n_j$ be as in Proposition \ref{sa5prop3}, and
write $\io_j:X_j\hookra X$ for the inclusion. Then we have
\ea
&MF_{X,s}=\ts\sum_{j\in J}(\io_j)_*\bigl(\io_j^*(MF_{X,s})\bigr)
\nonumber\\
&\;=\ts\sum_{j\in J}(\io_j)_*\bigl([S_j\t_X
X_j,\pi_{X_j},\hat\io]^{-1}\od [S_j\t_X X_j,\pi_{X_j},\hat\io]\od
\io_j^*(MF_{X,s})\bigr)
\nonumber\\
&\;=\ts\sum_{j\in J}(\io_j)_*\bigl([S_j\t_X
X_j,\pi_{X_j},\hat\io]^{-1}\od\io_j^*\bigl([S_j,\phi_j,\hat\io]\od
MF_{X,s})\bigr)\bigr)
\label{sa5eq26}\\
&\;=\ts\sum_{j\in J}(\io_j)_*\bigl([S_j\t_X
X_j,\pi_{X_j},\hat\io]^{-1}\bigr)\od\bigl((\phi_j)_*\ci\phi_j^*(MF_{X,s})\bigr)
\nonumber\\
&\;=\ts\sum_{j\in J}(\io_j)_*\bigl([S_j\t_X
X_j,\pi_{X_j},\hat\io]^{-1}\bigr)\od\bigl((\phi_j)_*\bigl(\bL^{n_j/2}\od
MF_{S_j,s(S_j,\phi_j)}\bigr)\bigr),
\nonumber
\ea
using $X=\coprod_{j\in J}X_j$ in the first step, $[S_j\t_X
X_j,\pi_{X_j}]$ invertible in $\cM^\stk_{X_j}$ so that
$[S_j\t_X X_j,\pi_{X_j},\hat\io]=i_{X_j}([S_j\t_X X_j,\pi_{X_j}])$
is invertible in $\oM^\stm_{X_j}$ in the second, $[S_j\t_X
X_j,\pi_{X_j},\hat\io]\!=\!\io_j^*([S_j,\phi_j,\hat\io])$ and $\io_j^*$
multiplicative for $\od$ in the third,
$[S_j,\phi_j,\hat\io]\od\!=\!(\phi_j)_*\!\ci\!\phi_j^*$ and $(\io_j)_*(M\!\od\!\io_j^*(N))\!=\!((\io_j)_*\!\ci\!\io_j^*(M))\od N$ in the fourth, and
\eq{sa5eq24} with $S_j,\phi_j,n_j$ in place of $T,t,n$ in the fifth.
Equation \eq{sa5eq26} proves $MF_{X,s}$ in Theorem \ref{sa5thm2} is
unique if it exists, and gives a formula for it.
\enlargethispage{5pt}

Now define $MF_{X,s}$ to be the bottom line of \eq{sa5eq26}. Suppose
$t:T\ra X$ is smooth of dimension $n$, with $T$ a $\K$-scheme. Define $T_j=X_j\t_{\io_j,X,t}T\subseteq T$ and $U_j=S_j\t_{\phi_j,X,t}T$ for each $j\in J$. Then $T_j,U_j$ are $\K$-schemes as $X_j\hookra X$ and $S_j\ra X$ are representable in $\K$-schemes, and we have 2-Cartesian
squares
\e
\begin{gathered}
\xymatrix@C=70pt@R=11pt{ *+[r]{T_j} \ar[r]_(0.35){\pi_T} \ar[d]^{\pi_{X_j}}
\drtwocell_{}\omit^{}\omit{^{}} & *+[l]{T} \ar[d]_t \\
*+[r]{X_j} \ar[r]^(0.65){\io_j} & *+[l]{X,\!\!} }\qquad
\xymatrix@C=70pt@R=11pt{ *+[r]{U_j} \ar[r]_(0.35){\Pi_T} \ar[d]^{\Pi_{S_j}}
\drtwocell_{}\omit^{}\omit{^{}} & *+[l]{T} \ar[d]_t \\
*+[r]{S_j} \ar[r]^(0.65){\phi_j} & *+[l]{X.\!\!} }
\end{gathered}
\label{sa5eq27}
\e
Then\begin{small}\begin{align} &t^*\bigl(MF_{X,s}\bigr)=\ts\sum_{j\in
J}\! t^*\!\ci \!(\io_j)_*\bigl([S_j\!\t_X\!
X_j,\pi_{X_j},\hat\io]^{-1}\bigr)\!\od
t^*\!\ci\!(\phi_j)_*\bigl(\bL^{n_j/2}\!\od\!
MF_{S_j,s(S_j,\phi_j)}\bigr)
\nonumber\\
&=\ts\sum_{j\in J} (\pi_T)_*\ci\pi_{X_j}^* \bigl([S_j\t_X
X_j,\pi_{X_j},\hat\io]^{-1}\bigr)\od
(\Pi_T)_*\ci\Pi_{S_j}^*\bigl(\bL^{n_j/2}\od
MF_{S_j,s(S_j,\phi_j)}\bigr)
\nonumber\\
&=\ts\sum_{j\in J} (\pi_T)_*\bigl(\bigl(\pi_{X_j}^*([S_j\!\t_X\!
X_j,\pi_{X_j},\hat\io])\bigr){}^{-1}\bigr)\!\od\!
(\Pi_T)_*\bigl(\bL^{(n+n_j)/2}\!\od\!
MF_{U_j,s(U_j,\phi_j\ci\Pi_{S_j})}\bigr)
\nonumber\\
&=\ts\sum_{j\in J}(\pi_T)_*\bigl([S_j\t_X
X_j\t_{X_j}T_j,\pi_{T_j},\hat\io]^{-1} \bigr)\od (\Pi_T)_*\ci\Pi_T^*
\bigl(\bL^{n/2}\od MF_{T,s(T,t)}\bigr)
\nonumber\\
&=\ts\sum_{j\in J}
(\pi_T)_*\bigl([U_j\t_TT_j,\pi_{T_j},\hat\io]^{-1}\bigr)\od
[U_j,\Pi_T,\hat\io]\od\bL^{n/2}\od MF_{T,s(T,t)}
\nonumber\\
&=\ts\sum_{j\in J}(\pi_T)_*
\bigl(\pi_T^*([U_j,\Pi_T,\hat\io])^{-1}\od\pi_T^*
([U_j,\Pi_T,\hat\io])\bigr)\od  \bL^{n/2}\od MF_{T,s(T,t)}
\nonumber\\
&=\ts\sum_{j\in J}(\pi_T)_*\bigl(1_{T_j}\bigr)\od \bL^{n/2}\od
MF_{T,s(T,t)} =\bigl(\sum_{j\in
J}[T_j,\pi_T,\hat\io]\bigr)\od \bL^{n/2}\od
MF_{T,s(T,t)}
\nonumber\\
&=[T,\id_T,\hat\io]\od \bL^{n/2}\od MF_{T,s(T,t)}=\bL^{n/2}\od
MF_{T,s(T,t)},
\label{sa5eq28}
\end{align}\end{small}using \eq{sa5eq26} and $t^*$ multiplicative
for $\od$ in the first step, the analogue of \eq{sa5eq7} for the
2-Cartesian squares \eq{sa5eq27} in the second, that $\pi_{X_j}^*$
is a ring morphism for $\od$ and \eq{sa5eq17} for the morphism
$\Pi_{S_j}:(U_j,s(U_j,\phi_j\ci\Pi_{S_j}))\ra (S_j,s(S_j,\phi_j))$
of oriented d-critical loci which is smooth of dimension $n$ in the
third, the definition of $\pi_{X_j}^*$ and \eq{sa5eq17} for
$\Pi_T:(U_j,s(U_j,\phi_j\ci\Pi_{S_j}))\ra (T,s(T,t))$ smooth of
dimension $n_j$ in the fourth, $S_j\t_X X_j\t_{X_j}T_j\cong S_j\t_X
T_j=U_j\t_TT_j$ and $(\Pi_T)_*\ci\Pi_T^*=[U_j,\Pi_T,\hat\io]\od$ in the
fifth, $(\pi_T)_*(M)\od N=(\pi_T)_*(M\od\pi_T^*(N))$ in the sixth,
and $T=\coprod_jT_j$ in the ninth.

Equation \eq{sa5eq28} proves \eq{sa5eq24} for all $t:T\ra X$ smooth
of dimension $n$ with $T$ a $\K$-scheme, as we want, for $MF_{X,s}$
the bottom line of \eq{sa5eq26}. The argument of \eq{sa5eq26} shows
$MF_{X,s}$ is unique, and is in particular independent of the choice
of $J,X_j,S_j,\phi_j,n_j$ in Proposition \ref{sa5prop3}. This
completes the proof.

\medskip

\noindent Address for Christopher Brav:

\noindent Institute for Advanced Study, Einstein Drive, Princeton, NJ 08540, U.S.A.

\noindent E-mail: {\tt brav@ias.edu}.
\smallskip

\noindent Address for Oren Ben-Bassat:

\noindent Department of Mathematics, University of Haifa, Haifa, Israel.

\noindent E-mail: {\tt ben-bassat@math.haifa.ac.il}.
\smallskip
\noindent Address for Vittoria Bussi and Dominic Joyce:

\noindent The Mathematical Institute, Radcliffe Observatory Quarter, Woodstock Road, Oxford, OX2 6GG, U.K.

\noindent E-mails: {\tt bussi@maths.ox.ac.uk,

\noindent joyce@maths.ox.ac.uk.}

\end{document}